\newcommand{\R}{{\mathbb R}}
\newcommand{\N}{{\mathbb N}}
\newcommand{\C}{{\mathbb C}}
\newcommand{\SF}{{\mathbb S}^{N}_+}
\newcommand{\e }{\varepsilon}
\newcommand{\dive }{\mathop{\rm div}}
\def\ds{\displaystyle}
\def\endproof{\hfill $\Box$\par\vskip3mm}
\renewcommand{\ge }{\geqslant}
\renewcommand{\geq }{\geqslant}
\renewcommand{\le }{\leqslant}
\renewcommand{\leq }{\leqslant}
\newcommand{\nero }{\color{black}}
\newenvironment{pf}{\noindent{\sc Proof}.\enspace}{\hfill\qed\medskip}
\newtheorem{Theorem}{Theorem}[section]
\newtheorem{Lemma}[Theorem]{Lemma}
\newtheorem{Proposition}[Theorem]{Proposition}
\theoremstyle{definition}
\newtheorem{remark}[Theorem]{Remark}
\newtheorem{OP}[Theorem]{Open Problem}
\begin{document}

\title[Higher order fractional Laplace equation]{Unique continuation principles for a higher order fractional Laplace equation}

\author[Veronica Felli \and Alberto Ferrero]{Veronica Felli \and Alberto Ferrero}
\address{\hbox{\parbox{5.7in}{\medskip\noindent{Veronica Felli, \\
Universit\`a di Milano
        Bicocca,\\
        Dipartimento di Scienza dei Materiali, \\
        Via Cozzi
        55, 20125 Milano, Italy. \\[3pt]
        Alberto Ferrero, \\
        Universit\`a del Piemonte Orientale, \\
        Dipartimento di Scienze e Innovazione Tecnologica, \\
        Viale Teresa Michel 11, 15121 Alessandria, Italy. \\[3pt]
        \em{E-mail addresses: }{\tt veronica.felli@unimib.it,
          alberto.ferrero@uniupo.it}.}}}}

\date{September 25, 2018}

\thanks{
The authors are partially supported by the INDAM-GNAMPA 2018 grant ``Formula di monotonia e applicazioni: problemi frazionari e stabilit\`a spettrale rispetto a perturbazioni del dominio''.
 V. Felli is partially supported by the PRIN 2015
grant ``Variational methods, with applications to problems in
mathematical physics and geometry''.\\
\indent
 2010 {\it Mathematics Subject Classification.}  35R11, 35B40, 35B60, 35B65 \\
  \indent {\it Keywords.} Fractional elliptic equations, Asymptotic behavior of solutions, Unique continuation property.
}

\begin{abstract}
   \noindent
In this paper we prove strong unique continuation principle and unique
   continuation from sets of positive measure  for solutions of a higher order
   fractional Laplace equation in an open domain. Our proofs are based on
   the Caffarelli-Silvestre \cite{CS} extension method combined with an
   Almgren type monotonicity formula. The corresponding extended
   problem is formulated as a systems of two second order 
   equations with singular or degenerate weights in a half-space, for which asymptotics estimates are
   derived by a blow-up analysis.
\end{abstract}

\maketitle

\section{Introduction and main results}\label{intro}
We study the following higher order fractional Laplace equation
\begin{equation} \label{eq:problema}
(-\Delta)^s u=0  \qquad \text{in } \Omega,
\end{equation}
where $1<s<2$, $\Omega\subset \R^N$ is  an open  domain
with $N>2s$, and the fractional Laplacian $(-\Delta)^s$ of a
function $u$ defined over the whole $\R^N$ is defined by means of
the Fourier transform:
\[
\widehat{\!(-\Delta)^s u}(\xi)=|\xi|^{2s}\widehat u(\xi) \, .
\]
Here by Fourier transform in $\R^N$ we mean
\begin{equation*}
\widehat u(\xi)=\mathcal F u(\xi):=\frac 1{(2\pi)^{N/2}}
\int_{\R^N} e^{-ix \cdot\xi}u(x)\, dx \, .
\end{equation*}
In the sequel we will explain in more details what we mean by a
weak solution of \eqref{eq:problema}. Our main
purpose is to prove the validity of unique continuation principles
for solutions to \eqref{eq:problema}. 

Unique continuation properties and qualitative local behavior of
solutions to fractional elliptic problems are a subject which was
widely studied in the last years. In \cite{FF}, the authors study
a semilinear fractional elliptic problem containing a singular
potential of Hardy type, a perturbation potential with a lower
order singularity and a nonlinearity that is at most critical
with respect to a suitable Sobolev exponent. In that paper the
fractional differential operator is $(-\Delta)^s$ with power $0<s<1$;
see also \cite{FF2} for analogous results for relativistic Schr\"odinger
operators. Unique continuation for fractional
Laplacians with power $s\in(0,1)$ was also investigated in \cite{Ruland} in presence of rough
potentials and in \cite{Yu} for fractional operators with variable coefficients.

Other results concerning qualitative properties of solutions of
equations with the fractional Laplace operator $(-\Delta)^s$ can
be found in \cite{CaSi, JLX1, JLX2, TX}. For
more details on basic results on the fractional Laplace operator
see \cite{BCdPS, CS, DNPV}.

Up to our knowledge, unique continuation properties for higher order
fractional elliptic equations were first studied in the paper
\cite{Y}. Here the author states a strong unique continuation
property for the Laplace equation \eqref{eq:problema} for any
noninteger $s>0$.

More precisely, in \cite[Corollary 5.5]{Y} it is stated that
the solutions to \eqref{eq:problema} vanishing of infinite order at a
point are necessarily null in $\Omega$. In \cite{Y} the proof of this
result is not written in details; it is just observed that, following
the classical argument by Garofalo and Lin \cite{GL}, the boundedness
of the Almgren frequency function for solutions of some extended
problem, together with the Caffarelli-Silvestre type extension result given
in \cite{Y}, suffices to provide the strong unique continuation
property. However, we think that the boundedness of the frequency
function proved in \cite{Y} only shows the validity of a unique
continuation principle for the extended function $U$ (see
\eqref{eq:aux-prob}) and not for the solution $u$ of equation
\eqref{eq:problema}; indeed, it is nontrivial to exclude
that $u$ vanishes of infinite order at a point when $U$ does not. A
first goal of the present paper is to give a complete proof of
\cite[Corollary 5.5]{Y} excluding such an occurrence  by means of a
blow-up analysis  and a complete classification of local asymptotics
of solutions for the extended problem. 
Nevertheless, we acknowledge the fundamental role of paper
\cite{Y} since part of our approach to the unique continuation
principle takes inspiration from the Caffarelli-Silvestre procedure
\cite{CS} and the Almgren monotonicity formula
performed by \cite{Y} in the higher order setting.

The problem of unique continuation for higher order
fractional Laplacians was also studied by Seo in \cite{seo0, seo1, seo2}
in presence of potentials in Morrey spaces; more precisely,
in \cite{seo0, seo1, seo2} Seo
uses Carleman inequalities to prove a \emph{weak} unique
continuation result, i.e. vanishing of solutions which are zero on an
open set; we recall that the \emph{strong} unique
continuation  property instead requires the weaker assumption of
infinite vanishing order at  a point.

The major contribution of the present paper goes beyond bridging
monotonicity formula for the extended problem and unique continuation
for the original nonlocal equation, since our local analysis provides
sharp results on the asymptotic behavior of solutions for the above
mentioned extended problem, see \eqref{eq:aux-prob}, \eqref{eq:system}
below. Moreover our analysis allows us to prove a second version of
the unique continuation principle which has, as an assumption,
vanishing of solutions of \eqref{eq:problema} on sets of positive
measure.

As already mentioned above, our approach is based on the
Caffarelli-Silvestre procedure \cite{CS} and on an Almgren type
monotonicity formula. But differently from \cite{Y}, we combine the
Almgren formula with a blow-up procedure with the purpose of proving
asymptotic formulas for solutions of the extended problem.  And it is
by mean of this asymptotic formula that we are able to prove the
validity of the two versions of the unique continuation principle.  Up
to now, we succeeded in applying our method only to the fractional
Laplace equation but we believe that similar results can be obtained
in a more general setting by adding to equation \eqref{eq:problema}
linear terms with singular potentials and subcritical nonlinearities,
see Open Problem \ref{op:1.5} for a more detailed explanation. A first
step towards this goal is achieved in \cite{FeFe}, where we prove the
validity of an asymptotic formula and of unique continuation
principles for problem
\begin{equation*}
(-\Delta)^{3/2} u=h(x)u
\end{equation*}
in  open  domains of $\R^N$.
The special case $s=\frac 32$ represents the
``middle case'' between the classical Laplace operator $-\Delta$
and the bilaplacian $(-\Delta)^2$ and produces a significant
simplification when dealing with the Caffarelli-Silvestre
extension, see \eqref{eq:aux-prob} for more details.

Before stating the main results of the paper we introduce a suitable
notion of weak solutions to \eqref{eq:problema}. We define
$\mathcal D^{s,2}(\R^N)$ as the completion of the space
$C^\infty_c(\R^N)$ of $C^\infty$ real compact supported functions,
with respect to the scalar product
\begin{equation*}
(u,v)_{\mathcal D^{s,2}(\R^N)}:=\int_{\R^N} |\xi|^{2s}\, \widehat
u(\xi)\overline{\widehat v(\xi)} \, d\xi \, .
\end{equation*}
We define a solution of \eqref{eq:problema}
as a function $u\in \mathcal D^{s,2}(\R^N)$ satisfying
\begin{equation} \label{eq:problema-variazionale}
(u,\varphi)_{\mathcal D^{s,2}(\R^N)}=0 \qquad \text{for any }
\varphi\in C^\infty_c(\Omega) \, .
\end{equation}
For a motivation of this definition see \cite{DNPV}, where a detailed
treatise on fractional Sobolev spaces and on $(-\Delta)^s$ in the
case $0<s<1$ is provided. See also \cite[(7)]{FF} for the definition of
solution of a nonlinear problem with $(-\Delta)^s$ in the case
$0<s<1$.

The first main result of the paper is the
following strong unique continuation principle. 

\begin{Theorem} \label{t:SUCP-1} Assume that $1<s<2$ and $N>2s$. Let
  $u\in \mathcal D^{s,2}(\R^N)$ be a nontrivial solution of
  \eqref{eq:problema}. Let us also assume that
  $(-\Delta)^s u\in (\mathcal D^{s-1,2}(\R^N))^\star$, where
  $(\mathcal D^{s-1,2}(\R^N))^\star$ denotes the dual space of
  $\mathcal D^{s-1,2}(\R^N)$, in the sense that the linear functional
  $\varphi\mapsto \int_{\R^N} |\xi|^{2s} \widehat
  u(\xi)\overline{\widehat \varphi(\xi)} \, d\xi$,
  $\varphi\in C^\infty_c(\R^N)$, is continuous with respect to the
  norm induced by $\mathcal D^{s-1,2}(\R^N)$. If there exists
  $x_0 \in \Omega$ such that $u(x)=O(|x-x_0|^k)$ as $x\to x_0$ for any
  $k\in \N$, then $u\equiv 0$ in~$\Omega$.
\end{Theorem}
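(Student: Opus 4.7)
The plan is to carry out the programme announced in the introduction: reduce \eqref{eq:problema} to a local problem on the upper half-space by a Caffarelli--Silvestre type extension adapted to the range $s\in(1,2)$, establish an Almgren monotonicity formula for the extended problem, combine it with a blow-up procedure to obtain a sharp asymptotic expansion at $x_0$, and finally transfer this information back to $u$ itself in order to contradict the infinite order vanishing.

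First I would lift $u$ to an extension defined on the half-space $\R^{N+1}_+=\{(x,y):x\in\R^N,\ y>0\}$. Writing $s=1+\sigma$ with $\sigma\in(0,1)$, the appropriate object is a \emph{pair} $(U,V)$ solving a system \eqref{eq:aux-prob} of two second order equations with weights $y^a$, $a=3-2s\in(-1,1)$, whose weighted normal traces on $\{y=0\}$ recover $u$ and $(-\Delta)^{s}u$; the assumption $(-\Delta)^s u\in(\mathcal D^{s-1,2}(\R^N))^\star$ is precisely what is needed to make the variational formulation of this system well posed. On any half-ball $B_r^+\subset \R^{N+1}_+$ centred at $(x_0,0)$ the system then admits energy and Pohozaev type identities associated with the Muckenhoupt weight $y^a$.

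Next I would introduce the Almgren frequency $N(r)=rE(r)/H(r)$, where $H(r)$ is a suitable weighted $L^2$-mean of $(U,V)$ on the hemisphere $\partial^+B_r^+$ and $E(r)$ the corresponding weighted Dirichlet energy on $B_r^+$. Differentiating $H$ and $E$ and using the Pohozaev identities, one shows that $N$ is monotone non-decreasing up to absolutely integrable remainders, so that $\gamma:=\lim_{r\to 0^+}N(r)$ exists and is finite, and the usual identity for $(\log H)'$ translates into a doubling estimate $H(2r)\le CH(r)$ preventing super-polynomial decay of $H$ unless $(U,V)\equiv 0$. Setting $(U_\lambda,V_\lambda)(x,y)=(U,V)(x_0+\lambda x,\lambda y)/\sqrt{H(\lambda)}$ and exploiting the uniform bound on $N$, I would extract a subsequential limit $(\widetilde U,\widetilde V)$ in the natural weighted Sobolev space; this limit is a nontrivial $\gamma$-homogeneous global solution of the extended system, determined by an angular eigenfunction on the half-sphere whose admissible eigenvalues form the discrete set already identified in \cite{Y}. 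Ruling out oscillation of the rescaled family by a standard ODE argument on $H$ and $E$ yields the sharp expansion $U(x_0+\lambda x,\lambda y)\sim \lambda^{\gamma}\widetilde U(x,y)$ as $\lambda\to 0^+$, and in particular a nontrivial leading trace $\widetilde u:=\widetilde U(\cdot,0)\not\equiv 0$ governing $u$ near $x_0$.

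The main obstacle is exactly the step flagged in the introduction as missing in \cite{Y}: transferring infinite order vanishing from $u$ to the extension $(U,V)$. Assume for contradiction that $u(x)=O(|x-x_0|^k)$ for every $k\in\N$ but $u\not\equiv 0$; then the blow-up analysis above produces a finite homogeneity $\gamma$ and a nontrivial leading trace $\widetilde u$. To reach a contradiction I would express $U$ and $V$ on $B_r^+$ via the Poisson-type representation formulas associated with the degenerate operator $\dive(y^a\nabla\,\cdot\,)$ and the boundary data inherited from $u$ and $(-\Delta)^{\sigma}u$ on $\R^N$, splitting each integral into a local part on $B_{2r}(x_0)$, controlled to arbitrary polynomial order in $r$ by the infinite vanishing hypothesis, and a far-field tail which is smooth at $x_0$ because $u$ solves $(-\Delta)^s u=0$ away from $x_0$ and lies in $\mathcal D^{s,2}(\R^N)$. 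Feeding these bounds into $H(r)$ shows that $H$ decays faster than every polynomial, which contradicts the doubling lower bound; hence $\widetilde u\equiv 0$, and therefore $u\equiv 0$ in a neighbourhood of $x_0$. A standard connectedness argument (the set of points of $\Omega$ where $u$ vanishes of infinite order is both open and relatively closed in $\Omega$) finally extends $u\equiv 0$ to all of $\Omega$.
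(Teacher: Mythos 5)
Your general scaffolding (extension to a degenerate second-order system on $\R^{N+1}_+$, Pohozaev identities, Almgren frequency, blow-up) matches the paper, but the final step---the transfer from infinite vanishing of $u$ to a contradiction---is where your argument breaks down, and unfortunately this is exactly the step the paper identifies as the missing ingredient in~\cite{Y}.

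The central flaw is the assertion that the Poisson-type splitting would show ``$H(r)$ decays faster than every polynomial''. That is false: infinite-order vanishing of $u$ at $x_0$ does \emph{not} force infinite-order vanishing of the extension $U$ at $z_0=(x_0,0)$. Indeed $U(x_0,t)$ for $t>0$ is a nonlocal average of $u$ over all of $\R^N$, and the far-field contribution (which you describe as ``smooth at $x_0$'') has a Taylor expansion whose first nonzero term is typically of fixed positive order (e.g.\ $O(t^{2(s-1)})$), so $H(r)$ retains a finite vanishing rate. This is precisely what the paper quantifies: the blow-up always produces a \emph{finite} homogeneity $\gamma=\sigma_\ell^+$ with $r^{-2\gamma}H(r)$ bounded below away from zero, and Remark~\ref{r:1.5} explicitly warns that the angular profile $\Psi_1$ of the $U$-limit \emph{can} vanish identically on $\partial\SF$ even though the limit itself is nontrivial. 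Consequently your claim of a ``nontrivial leading trace $\widetilde u$'' is also unjustified: the trace of the blow-up limit may be zero, in which case infinite vanishing of $u$ is entirely compatible with a nontrivial $(\widetilde U,\widetilde V)$ of finite homogeneity, and no contradiction arises from the $U$-component alone.

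What is missing is the additional structural fact the paper builds and then exploits: the trace of $V=\Delta_b U$ on $\{t=0\}$ equals $\kappa_{N,b}\Delta u$ (established through an auxiliary minimization producing $\widetilde V$ and a Liouville-type argument for the weighted harmonic function $V_n-\widetilde V_n$). Combined with Remark~\ref{rem:nuovo} (nontrivial eigenfunctions of \eqref{eq:Eigen-Weight} cannot vanish identically on $\partial\SF$), Lemma~\ref{t:5.5} shows that if $\Psi_1$ vanishes (forced by the infinite-order vanishing of $u$) then $\Psi_2\neq0$, hence $v=\operatorname{Tr}(V)$ vanishes at $x_0$ to exactly finite order $\gamma$. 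The contradiction then comes from the scalar equation $\kappa_{N,b}\Delta u=v$: scaling and testing against $\varphi\in C^\infty_c$, the left side vanishes by the hypothesis on $u$ while the right side converges to a nonzero pairing with $|x|^\gamma\Psi_2(x/|x|,0)$. Without the identification $\operatorname{Tr}(V)=\kappa_{N,b}\Delta u$ and the use of the $V$-component of the blow-up, the argument cannot close.
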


Now we state a second  version of unique
continuation principle where the condition on vanishing of
infinite order around a point assumed in Theorem \ref{t:SUCP-1} is
replaced by vanishing on a set of positive measure.

\begin{Theorem} \label{t:SUCP-2}
Assume that $1<s<2$ and $N>2s$. Let $u\in \mathcal D^{s,2}(\R^N)$
be a nontrivial solution of \eqref{eq:problema}. Let us also assume that
$(-\Delta)^s u\in (\mathcal D^{s-1,2}(\R^N))^\star$ in the sense explained in the statement of Theorem \ref{t:SUCP-1}. If there exists a
measurable set $E\subset \Omega$ of positive measure such that
$u\equiv 0$ on $E$, then $u\equiv 0$ in $\Omega$.
\end{Theorem}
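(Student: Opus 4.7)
The plan is to reduce Theorem \ref{t:SUCP-2} to the strong unique continuation principle in Theorem \ref{t:SUCP-1} by means of the Lebesgue density theorem, combined with the sharp local asymptotic expansion for solutions of \eqref{eq:problema} that emerges from the blow-up analysis of the extended problem carried out in this paper. The steps are, in order: pick a density point of $E$, exclude infinite vanishing order via Theorem \ref{t:SUCP-1}, extract a nontrivial blow-up profile from the asymptotic formula, and show that the density-one condition forces this profile to be zero.

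Argue by contradiction, assuming $u\not\equiv 0$ in $\Omega$. Since $|E|>0$, Lebesgue's differentiation theorem produces a density-one point $x_0\in E\cap\Omega$:
\begin{equation*}
\lim_{r\to 0^+}\frac{|E\cap B_r(x_0)|}{|B_r(x_0)|}=1.
\end{equation*}
If $u$ vanished of infinite order at $x_0$, Theorem \ref{t:SUCP-1} would force $u\equiv 0$ in $\Omega$, contradicting the standing hypothesis. Hence $u$ has finite vanishing order at $x_0$, and the paper's asymptotic analysis---based on the Almgren monotonicity formula and the blow-up procedure for the Caffarelli-Silvestre type extension---provides, at this finite-order point, a real number $k\ge 0$ and a nontrivial function $\psi$ on $\SN$ (an eigenfunction of a suitable spectral problem on the sphere) such that the rescalings
\begin{equation*}
u_r(y):=r^{-k}u(x_0+ry)
\end{equation*}
converge, as $r\to 0^+$, to the $k$-homogeneous profile $\Psi(y):=|y|^k\psi(y/|y|)$ in $L^2(B_1)$.

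To close the argument, set $E_r:=r^{-1}(E-x_0)$ and observe $u_r\equiv 0$ on $E_r$. The density-one property at $x_0$ gives $|E_r\cap B_1|\to |B_1|$, hence $|B_1\setminus E_r|\to 0$. Splitting the integral and using the vanishing of $u_r$ on $E_r$,
\begin{equation*}
\int_{E_r\cap B_1}\Psi^2\,dy=\int_{E_r\cap B_1}(\Psi-u_r)^2\,dy\leq \int_{B_1}(\Psi-u_r)^2\,dy\longrightarrow 0,
\end{equation*}
while $\int_{B_1\setminus E_r}\Psi^2\,dy\to 0$ by absolute continuity of the integral (since $\Psi\in L^2(B_1)$). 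Adding the two limits yields $\int_{B_1}\Psi^2\,dy=0$, so $\psi\equiv 0$ on $\SN$, contradicting the nontriviality of the blow-up limit.

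The main obstacle is not this Lebesgue-density reduction, which is standard in the Garofalo-Lin tradition, but rather securing the underlying asymptotic formula in a mode of convergence strong enough to pass to the limit above: namely, producing a \emph{nontrivial} limit profile $\Psi$ together with $L^2$-convergence of the rescalings $u_r$. This is precisely what the blow-up analysis, coupled with the Almgren monotonicity formula for the system formulation \eqref{eq:system} of the extended problem \eqref{eq:aux-prob} announced in the introduction, is designed to deliver; once that local asymptotic result for $u$ is in hand, the proof of Theorem \ref{t:SUCP-2} follows as sketched.
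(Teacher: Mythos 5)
There is a genuine gap, and it is exactly the one that Remark \ref{r:1.5} in the paper warns about: you cannot conclude from the blow-up analysis that the rescalings of $u$ \emph{itself} converge to a nontrivial profile. The blow-up procedure (Theorem \ref{t:asym-est}, and more precisely Lemma \ref{t:5.5}) yields a nontrivial asymptotic profile for the \emph{extension} $U$ (and similarly for $V=\Delta_b U$), namely $|z|^{\sigma_\ell^+}\Psi_1(z/|z|)$ and $|z|^{\sigma_\ell^+}\Psi_2(z/|z|)$ with $(\Psi_1,\Psi_2)\not\equiv(0,0)$. The function $u$ is the trace of $U$ on $\partial\SF$, so the limit of $r^{-\sigma_\ell^+}u(x_0+rx)$ is $|x|^{\sigma_\ell^+}\Psi_1(x/|x|,0)$, and nothing forbids $\Psi_1\equiv 0$ (so that all the nontriviality sits in $\Psi_2$, i.e.\ in $V$). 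In other words, your claim that there is ``a real number $k\ge 0$ and a nontrivial function $\psi$ on $\SN$'' such that $r^{-k}u(x_0+ry)$ converges to $|y|^k\psi(y/|y|)$ in $L^2(B_1)$ does not follow from the paper's asymptotics, and appealing to Theorem \ref{t:SUCP-1} to say that $u$ has ``finite vanishing order'' does not repair this: finite vanishing order of $u$ is compatible with the $u$-profile in the blow-up limit being identically zero while the $V$-profile is nontrivial. The step ``extract a nontrivial blow-up profile from the asymptotic formula'' simply has no justification for $u$ alone.

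The paper's actual proof circumvents this by arguing on the \emph{pair} $(u,v)$, where $v=\operatorname{Tr}(V)=\kappa_{N,b}\Delta u$ on $B_R'(y_0)$. It first shows $u\in H^2_{\rm loc}$ (since $\Delta u=v/\kappa_{N,b}\in L^{2^*(N,s-1)}$), so that $u\equiv 0$ on a set of positive measure forces $\nabla u=0$ and then $\Delta u=0$ a.e.\ on that set; hence \emph{both} $u$ and $v$ vanish a.e.\ on $E'=E\cap B_R'(y_0)$. Taking a density point $x_0$ of $E'$ and applying Lemma \ref{t:5.5}, one gets limit profiles $\Psi_1,\Psi_2$ that are genuine eigenfunctions (linear combinations within a single eigenspace), at least one of which is nonzero, and therefore nonzero on $\partial\SF$ by Remark \ref{rem:nuovo}. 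The H\"older/$L^{2^*}$ density argument you sketch is then run on whichever of $\Psi_1,\Psi_2$ is nonzero, producing the contradiction. Your Lebesgue-density scheme is structurally the right one, but it must be carried out for the couple $(u,v)$, not for $u$ alone — and the step establishing that $v$ also vanishes on a positive-measure set is an essential ingredient you omit.
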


As we mentioned before the statement of the main results, we
believe that it should be interesting to extend unique continuation principles to solutions of more
general elliptic fractional equations. We leave this question as
an open problem.

\begin{OP}\label{op:1.5} {\rm Let $u\in \mathcal D^{s,2}(\R^N)$ be a weak solution of
\begin{equation}  \label{eq:problema-nonh}
(-\Delta)^s u=h(x)u+f(x,u)  \quad \text{in } \Omega,
\end{equation}
with $h$ and $f$ satisfying
\begin{equation*}
h\in W^{1,\infty}_{{\rm loc}}(\Omega\setminus\{0\})\, , \qquad
|h(x)|+|x\cdot \nabla h(x)|\le C_h |x|^{-2(s-1)+\e} \quad \text{ for a.e. }
x\in \Omega,
\end{equation*}
and
\begin{equation*}
f\in C^1(\Omega\times \R) ,
\quad |f(x,\sigma)|\le C_f |\sigma|^{p-1} \qquad \text{for a.e. } x\in
\Omega \ \text{and} \ \sigma \in \R,
\end{equation*}
where $2<p<2^*(N,s-1):=\frac{2N}{N-2(s-1)}$. Study the validity of
the two versions of unique continuation principle contained in
Theorems \ref{t:SUCP-1}-\ref{t:SUCP-2} for solutions of
\eqref{eq:problema-nonh}.
\endproof }
\end{OP}

Now, we explain in more details what we mean by
the previously mentioned extended problem and we state which kind of
asymptotic estimate we will prove on its solutions. Let
$u\in \mathcal D^{s,2}(\R^N)$ be a solution of \eqref{eq:problema} in
the sense given in \eqref{eq:problema-variazionale} and let
$U\in \mathcal D_b$ be a solution to the problem
\begin{equation} \label{eq:aux-prob}
\begin{cases}
\Delta_b^2 U=0 & \qquad \text{in } \R^{N+1}_+\, , \\[5pt]
U(\cdot,0)=u(\cdot) & \qquad \text{in } \R^N \, ,\\[5pt]
\lim_{t \to 0^+ } t^b U_t(\cdot,t)\equiv 0 & \qquad \text{in }
\R^N \, ,
\end{cases}
\end{equation}
where $b=3-2s\in (-1,1)$, $\mathcal D_b$ is the functional space
introduced in Section \ref{s:alternative-formulation}, and $\Delta_b$
is the operator defined at the beginning of Section
\ref{s:preliminary}.  It is possible to prove existence and uniqueness
of solutions of \eqref{eq:aux-prob} for any function
$u\in \mathcal D^{s,2}(\R^N)$ as one can see from Section
\ref{s:alternative-formulation}.

Now, let $x_0\in \Omega$ and let $R>0$ be such that $B_{2R}'(x_0) \subset \Omega$ where, according with \eqref{eq:sets}, $B_{2R}'(x_0)$ denotes the open ball in $\R^N$ of radius $2R$ centered at $x_0$. Then, if $u\in \mathcal D^{s,2}(\R^N)$ is a solution of
\eqref{eq:problema}, putting $V:=\Delta_b U$, the couple $(U,V)\in H^1(B_R^+(x_0);t^b)\times H^1(B_R^+(x_0);t^b)$ weakly solves the system
\begin{equation}\label{eq:system}
  \begin{cases}
\Delta_b U=V &\text{in }B_R^+(x_0) \, ,\\
\Delta_b V=0 &\text{in }B_R^+(x_0) \, ,\\
\lim_{t \to 0^+ } t^b U_t(\cdot,0)= 0 &\text{in } B_R'(x_0) \, ,\\
\lim_{t \to 0^+ } t^b V_t(\cdot,0)=0 &\text{in } B_R'(x_0) \, ,
  \end{cases}
\end{equation}
see \eqref{eq:sets} and the successive part of Section \ref{s:preliminary} for the definition of the weighted Sobolev space $H^1(B_R^+(x_0);t^b)$. This means
that the couple $(U,V)$ satisfies
\begin{equation*}
\int_{B_R^+(x_0)} t^b \nabla U\nabla \varphi \, dz=-\int_{B_R^+(x_0)} t^b V\varphi \, dz \quad \text{and} \quad
\int_{B_R^+(x_0)} t^b \nabla V\nabla \varphi \, dz=0
\end{equation*}
for any $\varphi\in H^1_0(\Sigma_R^+(x_0);t^b)$ with $H^1_0(\Sigma_R^+(x_0);t^b)$ as in Section \ref{s:preliminary}.

In order to state our result on the local behavior of solutions of \eqref{eq:system}, we introduce the following eigenvalue problem:
\begin{equation} \label{eq:Eigen-Weight}
\begin{cases}
  -{\rm div}_{\SF}(\theta_{N+1}^b \nabla_{\SF} \Psi)=\mu\,
  \theta_{N+1}^b \Psi
  & \qquad \text{in } \SF, \\[6pt]
  {\ds \lim_{\theta_{N+1}\to 0^+} \theta_{N+1}^b \nabla_{\SF} \Psi
    \cdot {\bf e}_{N+1}=0} & \qquad \text{on }\partial \SF,
\end{cases}
\end{equation}
where ${\bf e}_{N+1}=(0,\dots,0,1)\in \R^{N+1}$, $\SF=\{(\theta_1,\dots,\theta_{N+1})\in \mathbb
S^N:\theta_{N+1}>0\}$ and $\mathbb S^N$ is
the $N$-dimensional unit sphere in $\R^{N+1}$.

\begin{remark}\label{rem:nuovo} We observe that the
    eigenfunctions of problem \eqref{eq:Eigen-Weight} cannot vanish
    identically on $\partial \SF$; indeed, if an eigenfunction $\Psi$
    vanishes on $\partial \SF$, then the function
    $W(z):=|z|^{\sigma_\ell^+}\Psi(z/|z|)$ (with
    $\sigma_\ell^+=-\tfrac{N+b-1}2+\sqrt{\mu_\ell+(N+b-1)^2/4}$
    and $\mu_\ell$ being the eigenvalue associated to $\Psi$) would be
    a weak solution to the equation ${\rm div}(t^b\nabla W)=0$ in
    $\R^{N+1}_+$ satisfying both Dirichlet and weighted Neumann
    homogeneous boundary conditions; then its trivial extension to the
    entire space $\R^{N+1}$ would violate the unique continuation
    principle for elliptic equations with Muckenhoupt weights proved
    in \cite{tao-zhang} (see also \cite{GL}, \cite[Corollary
    3.3]{STT}, and
    \cite[Proposition 2.2]{Ruland}).
\end{remark}

By classical spectral theory the eigenvalue problem
\eqref{eq:Eigen-Weight} admits a diverging sequence of real
eigenvalues with finite multiplicity. We denote these distinct
eigenvalues by $\mu_n$ and their multiplicity by
$M_n$ with $n\in \N\cup \{0\}$. Moreover, for any $n\ge
0$ let $\{Y_{n,m}\}_{m=1,\dots,M_n}$ be a
$L^2(\SF;\theta_{N+1}^b)$-orthonormal basis of the eigenspace of
$\mu_n$.

We now state the main result on solutions to system \eqref{eq:system}.

\begin{Theorem} \label{t:asym-est} Assume that $1<s<2$, $N>2s$ and let
  $b=3-2s\in (-1,1)$. For some $x_0\in \R^N$ let
  $(U,V)\in H^1(B_R^+(x_0);t^b)\times H^1(B_R^+(x_0);t^b)$ be a
  nontrivial weak solution of \eqref{eq:system}.  Then there
  exists $\delta_1>0$, a linear combination
    $\Psi_1\not\equiv 0$ of
    eigenfunctions of \eqref{eq:Eigen-Weight}, possibly corresponding
    to different eigenvalues, and $\alpha\in (0,1)$ such that
\begin{equation*}
 \lambda^{-\delta_1} U(z_0+\lambda(z-z_0)) \to |z-z_0|^{\delta_1} \Psi_1\left(\tfrac
{z-z_0}{|z-z_0|}\right)
\end{equation*}
in $H^1(B_1^+(x_0);t^b)$ and 
in $C^{1,\alpha}_{{\rm loc}}(B_1^+(x_0))$ as $\lambda\to 0^+$ where we
put $z_0=(x_0,0)\in \R^{N+1}$.  Furthermore, if $V\not\equiv0$, there
exists $\delta_2>0$, a linear combination $\Psi_2\not\equiv 0$ of
eigenfunctions of \eqref{eq:Eigen-Weight}, possibly corresponding to
different eigenvalues, and $\alpha\in (0,1)$ such that 
\begin{equation*}
\lambda^{-\delta_2} V(z_0+\lambda(z-z_0)) \to |z-z_0|^{\delta_2} \Psi_2\left(\tfrac
{z-z_0}{|z-z_0|}\right)
\end{equation*}
in $H^1(B_1^+(x_0);t^b)$ and  in $C^{1,\alpha}_{{\rm loc}}(B_1^+(x_0))$.
\end{Theorem}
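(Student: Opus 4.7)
My plan is to attack $V$ first, since it satisfies the homogeneous equation $\Delta_b V=0$ with the weighted Neumann boundary condition and is decoupled from $U$, and then to use the resulting precise asymptotics of $V$ to drive the analysis of $U$, which is coupled to $V$ through the inhomogeneous equation $\Delta_b U=V$. The engine in both cases is a combination of an Almgren-type monotonicity formula on the weighted space $H^1(B_r^+(z_0);t^b)$ and a blow-up procedure, exactly as in the Garofalo--Lin strategy adapted to the Muckenhoupt setting.

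For $V\not\equiv 0$, I would introduce the height function
\begin{equation*}
H_V(r):=\frac{1}{r^{N+b}}\int_{\partial B_r^+(z_0)} t^b V^2\,dS,
\end{equation*}
the Dirichlet energy
\begin{equation*}
E_V(r):=\frac{1}{r^{N+b-1}}\int_{B_r^+(z_0)} t^b|\nabla V|^2\,dz,
\end{equation*}
and the Almgren frequency $\mathcal N_V(r):=E_V(r)/H_V(r)$. A Pohozaev-type identity, obtained by testing $\Delta_b V=0$ against $(z-z_0)\cdot\nabla V$ and exploiting the weighted Neumann condition to kill the boundary contribution on $\{t=0\}$, shows that $\mathcal N_V$ is nondecreasing and bounded, so $\delta_2:=\lim_{r\to 0^+}\mathcal N_V(r)$ exists and is finite. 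The rescaled functions $V_\lambda(z):=V(z_0+\lambda z)/\sqrt{H_V(\lambda)}$ are then bounded in $H^1(B_1^+;t^b)$; up to subsequences they converge weakly to a limit $\bar V$, which is homogeneous of degree $\delta_2$ because $\mathcal N_V$ is constant in the limit. Inserting $\bar V(z)=|z|^{\delta_2}\Psi_2(z/|z|)$ into the equation forces $\Psi_2$ to be a linear combination of eigenfunctions of \eqref{eq:Eigen-Weight} corresponding to eigenvalues $\mu_\ell$ with $\sigma_\ell^+=\delta_2$; Remark \ref{rem:nuovo} ensures the boundary conditions are compatible. Finally, decomposing $V(z_0+r\theta)=\sum_{n,m}\phi_{n,m}(r)Y_{n,m}(\theta)$ in the $L^2(\SF;\theta_{N+1}^b)$-basis reduces each Fourier mode to an explicit Euler-type ODE; a Cauchy argument together with the monotonicity of $\mathcal N_V$ upgrades the convergence to $\lambda^{-\delta_2}V(z_0+\lambda z)\to|z|^{\delta_2}\Psi_2(z/|z|)$ strongly in $H^1(B_1^+;t^b)$. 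The weighted De Giorgi--Nash--Moser--Schauder theory for $A_2$-elliptic equations then promotes this to $C^{1,\alpha}_{\mathrm{loc}}$ convergence.

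Passing to $U$, the inhomogeneous equation $\Delta_b U=V$ forces error terms in the analogous monotonicity identity. I would study the modified frequency
\begin{equation*}
\mathcal N_U(r):=\frac{r\Bigl(\int_{B_r^+(z_0)} t^b|\nabla U|^2\,dz+\int_{B_r^+(z_0)} t^b V\,U\,dz\Bigr)}{\int_{\partial B_r^+(z_0)} t^b U^2\,dS};
\end{equation*}
differentiating produces remainders controlled by quantities of the form $r^{-1}\int_{B_r^+} t^b V^2$ and $\int_{B_r^+} t^b\,V\,(z-z_0)\cdot\nabla U$, which, thanks to the already-established homogeneity $V(z_0+\lambda z)=O(\lambda^{\delta_2})$, are integrable in $r$. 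This shows that $\mathcal N_U(r)$ admits a finite limit $\delta_1\ge 0$, and a standard contradiction argument excludes $\delta_1=0$ provided $U\not\equiv 0$ (which always holds, since the triviality of $U$ would trivially force $V=\Delta_b U\equiv 0$, contradicting nontriviality of the pair). The blow-up $U_\lambda(z):=U(z_0+\lambda z)/\sqrt{H_U(\lambda)}$ is then bounded in $H^1(B_1^+;t^b)$; in its equation the rescaled source term has size of order $\lambda^{\delta_2+2}/\sqrt{H_U(\lambda)}$, which is negligible precisely when $\delta_1<\delta_2+2$. The remaining step is to identify the blow-up limit as $|z|^{\delta_1}\Psi_1(z/|z|)$ with $\Psi_1$ a combination of eigenfunctions of \eqref{eq:Eigen-Weight}, and then to repeat the Fourier--Cauchy argument used for $V$ to pass from convergence up to renormalization to the unnormalized convergence $\lambda^{-\delta_1}U(z_0+\lambda z)\to|z|^{\delta_1}\Psi_1(z/|z|)$.

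The main obstacle I anticipate is precisely this inhomogeneous step: controlling the error terms produced by $V$ uniformly in $r$, excluding the appearance of logarithmic corrections in the possible resonant case $\delta_2+2=\sigma_\ell^+$, and showing that the blow-up limit of $U$ cannot be dictated by a particular solution of $\Delta_b U=V$ of sub-leading order rather than by a genuine homogeneous mode. All other steps, including the weighted Schauder upgrade to $C^{1,\alpha}_{\mathrm{loc}}$, are by now rather standard technology in the Caffarelli--Silvestre extension framework.
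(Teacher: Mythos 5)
You take a genuinely different route than the paper: you analyze $V$ first via its own Almgren quotient $\mathcal N_V$, then try to bootstrap to $U$ through a separate modified quotient $\mathcal N_U$ with error terms from the inhomogeneity $\Delta_b U=V$. The paper instead builds \emph{one} Almgren quotient for the pair, with $D(r)=r^{-N-b+1}\int_{B_r^+}t^b(|\nabla U|^2+|\nabla V|^2+UV)\,dz$ and $H(r)=r^{-N-b}\int_{S_r^+}t^b(U^2+V^2)\,dS$, proves boundedness of $\mathcal N=D/H$, and then recovers the individual asymptotics of $U$ and $V$ from an \emph{explicit} Fourier expansion in the spherical eigenbasis $\{Y_{k,m}\}$ (Lemmas~\ref{l:5.3}, \ref{t:5.5}, Theorem~\ref{p:5.6}): each mode $\varphi_{\ell,m}$ of $U$ solves the coupled ODE \eqref{eq:syst-ode} whose general $H^1$-admissible solution is exactly $c_1^{\ell,m}\lambda^{\sigma_\ell^+}+\tfrac{d_1^{\ell,m}}{K(N,b,\ell)}\lambda^{\sigma_\ell^++2}$. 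The paper's choice of including $V^2$ in $H(r)$ is what keeps the denominator alive even when $U$ vanishes faster than $V$; the recursive mode-subtraction in Theorem~\ref{p:5.6} then identifies the true leading behavior of $U$.

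The obstacle you flag at the end is a genuine gap, and it is the crux. When $U$ vanishes faster than $V$, your $H_U(r)=r^{-N-b}\int_{S_r^+}t^bU^2\,dS$ decays faster than $r^{2\delta_2}$, so the rescaled source $\lambda^2 V(\lambda\cdot)/\sqrt{H_U(\lambda)}$ in the equation for $U_\lambda$ need not vanish; your monotonicity estimate for $\mathcal N_U$ then has error terms that are not integrable in $r$, and the blow-up limit of $U_\lambda$ is governed by the particular solution of $\Delta_b U=V$ rather than by a harmonic mode. This is not a hypothetical corner case: in the scheme of Theorem~\ref{p:5.6}, the cases $\Sigma=\emptyset$ and $\sigma_J^+=\sigma_\ell^++2$ both yield $\delta_1=\sigma_\ell^++2=\delta_2+2$, with $\Psi_1$ carrying the nontrivial contribution $\sum_m\alpha'_{\ell,m}K(N,b,\ell)^{-1}Y_{\ell,m}$ coming from the source $V$, possibly superposed with an eigenfunction at a different eigenvalue. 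You need to prove that this borderline case produces a genuine power-law profile (a linear combination of eigenfunctions, as the theorem asserts) and no logarithm. The paper settles this mode-by-mode: since $K(N,b,\ell)=2(2\sigma_\ell^++N+b+1)>0$, the particular solution at mode $\ell$ is a clean power $\lambda^{\sigma_\ell^++2}$, never a $\lambda^{\sigma_\ell^+}\log\lambda$; the ``resonance'' $\sigma_\ell^++2=\sigma_k^+$ you worry about occurs only between \emph{different} Fourier modes, where the two contributions simply add, with no logarithmic degeneration. As written, your proposal reduces to the same Fourier--ODE computation the paper carries out, except that without the combined quotient (and its consequence Lemma~\ref{l:D(r)ge}, \eqref{eq:terza}) you cannot a priori control the normalization $H_U(\lambda)$ needed for the blow-up of $U$ to be nondegenerate in the source-dominated regime.
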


We observe that Theorem \ref{t:asym-est} implies a unique
continuation principle from  boundary points for solutions to
\eqref{eq:system}; we refer to \cite{ae,aek,FF-edinb,KukavicaNystrom,tz1} for
unique continuation from the boundary established via 
Almgren monotonicity formula.

\begin{remark} \label{r:1.5} We observe that Theorem \ref{t:asym-est}
  in general does not provide a sharp asymptotic formula around
  $x_0\in \Omega$ for solutions to the original problem
  \eqref{eq:problema} when $u$ and $U$ are as in \eqref{eq:aux-prob},
  even if $u$ is the restriction to $B_R'(x_0)$ of $U$. This because
  we cannot exclude that the function $\Psi_1$ in Theorem
  \ref{t:asym-est} vanishes identically on $\partial \mathbb S^N$;
  what we can say is that this event cannot occur if $\Psi_1$ is an
  eigenfunction of \eqref{eq:Eigen-Weight} as explained in Remark
  \ref{rem:nuovo}. For this reason the unique continuation principles
  stated in Theorems \ref{t:SUCP-1}--\ref{t:SUCP-2} are not a direct
  consequence of Theorem \ref{t:asym-est} and additional arguments
  have to be employed in their proofs in order to exploit the
  asymptotic estimates of Theorem \ref{t:asym-est}.
\end{remark}

We observe that the proof of Theorem \ref{t:asym-est} presents
substantial additional difficulties with respect to the lower order
case $s\in(0,1)$ treated in \cite{FF}, since the corresponding
Dirichlet-to-Neumann local problem is a fourth order equation (see
\eqref{eq:aux-prob}) which is equivalent to the second order system
\eqref{eq:system} with singular/degenerate weights and Neumann
boundary conditions. In particular, several steps in our procedure,
such as regularity and blow-up analysis, turn out to be more delicate
for systems than for the single equation arising from the
Caffarelli-Silvestre extension in the lower order case $s\in(0,1)$.

We conclude this section by explaining how the rest of the paper
is structured. Section \ref{s:preliminary} is devoted to some
preliminary results and notations which will be used in the proofs
of the main statements. In Section \ref{s:alternative-formulation}
we introduce a Caffarelli-Silvestre type extension for functions
$u\in \mathcal D^{s,2}(\R^N)$ and we provide an alternative
formulation for problem \eqref{eq:problema}. In Section
\ref{s:Almgren} we introduce an Almgren-type function and
we prove a related monotonicity formula. In Section
\ref{s:blow-up} we perform a blow-up procedure and we prove
asymptotic estimates for the extended functions introduced in
Section \ref{s:alternative-formulation}. Section
\ref{s:main-results} contains the proofs of the main results
of the paper. Finally, Section \ref{s:Appendix} is an appendix
devoted to weighted Sobolev spaces and related inequalities,
H\"older regularity for solutions of a class of second order elliptic
equations and systems
with variable
coefficients, and
some properties of first kind Bessel functions.

\section{Preliminaries and notations} \label{s:preliminary}

{\bf Notations.} We list below some notations used throughout the
paper.

\begin{itemize}

\item $\R^{N+1}_+=\{z=(z_1,\dots,z_{N+1})\in
\R^{N+1}:z_{N+1}>0\}$.

\item $\mathbb S^N=\{z\in \R^{N+1}:|z|=1\}$ denotes the unit
$N$-dimensional sphere in $\R^{N+1}$.

\item $\SF=\{(\theta_1,\dots,\theta_{N+1})\in \mathbb
S^N:\theta_{N+1}>0\}=\mathbb S^N\cap \R^{N+1}_+$.

\item $dS$ denotes the surface element in boundary integrals.

\item $dz=dx\,dt$, $z=(x,t)\in \R^N\times \R$, denotes the
$(N+1)$-dimensional volume element.

\item $\Delta_b U=\Delta U+\frac bt U_t$ for any function
$U=U(x,t)$ with $x\in \R^N$ and $t\in\R$, where $\Delta U$ denotes
the classical Laplacian in $\R^{N+1}$ and $U_t$ the partial
derivative with respect to $t$.
\end{itemize}

\medskip

The main purpose of this section is to prove a regularity result
for the boundary value problem \eqref{eq:reg-H1}.
We observe that such a regularity result is needed to make the Almgren
quotient \eqref{eq:def-N} well-defined and seems to be taken for
granted in \cite{Y} although not at all trivial.
 To prove the needed regularity we introduce two auxiliary problems, namely the
eigenvalue problem \eqref{eq:eigen-prob} and the Poisson type
equation \eqref{eq:Poisson}.

For any $x_0\in \R^N$, $t_0\in \R$ and $R>0$ we define
\begin{align} \label{eq:sets}
& B_R(x_0,t_0):=\{(x,t)\in \R^{N+1}: |x-x_0|^2+|t-t_0|^2<R^2\} \, , \\[5pt]
\notag & B_R^+(x_0):=\{(x,t)\in B_R(x_0,0): t>0\} \, , \quad
B_R^-(x_0):=\{(x,t)\in B_R(x_0,0): t<0\} \, , \\[5pt]
\notag & B_R'(x_0):=\{x\in \R^N:|x-x_0|<R\} \, , \\[5pt]
\notag & S_R^+(x_0):=\{(x,t)\in \partial B_R(x_0,0):t>0\} \, , \quad S_R^-(x_0):=\{(x,t)\in \partial B_R(x_0,0):t<0\} \, , \\[5pt]
\notag & \Sigma_R^+(x_0):=B_R^+(x_0)\cup (B_R'(x_0)\times \{0\})
\, , \quad \Sigma_R^-(x_0):=B_R^-(x_0)\cup (B_R'(x_0)\times \{0\})
\, ,
\\[5pt]
\notag & Q_R(x_0):=B_R'(x_0)\times (-R,R) \, , \\[5pt]
\notag & Q_R^+(x_0):=B_R'(x_0)\times (0,R) \, , \quad
Q_R^-(x_0):=B_R'(x_0)\times (-R,0) \, ,
\\[5pt]
\notag & \Gamma_R^+(x_0):=B_R'(x_0)\times [0,R) \, , \quad
\Gamma_R^-(x_0):=B_R'(x_0)\times (-R,0] \, .
\end{align}
Given $b\in (-1,1)$, for any $x_0\in \R^N$ and $R>0$ we define the
weighted Sobolev space $H^1(B_R^+(x_0);t^b)$ of functions $U\in
L^2(B_R^+(x_0);t^b)$ such that $\nabla U\in
L^2(B_R^+(x_0);t^b)$, endowed with the norm
$$
\|U\|_{H^1(B_R^+(x_0);t^b)}:=\left(\int_{B_R^+(x_0)} t^{b} |\nabla
U(x,t)|^2 dx\,dt+\int_{B_R^+(x_0)} t^{b} (U(x,t))^2 dx\,dt
\right)^{1/2} \, .
$$
We also define the space $H^1_0(\Sigma_R^+(x_0);t^b)$ as the
closure in $H^1(B_R^+(x_0);t^b)$ of $C^\infty_c(\Sigma_R^+(x_0))$.

In a completely similar way, we can introduce the Hilbert space
$H^1(Q_R^+(x_0);t^b)$ and its subspace
$H^1_0(\Gamma_R^+(x_0);t^b)$ defined as the closure in
$H^1(Q_R^+(x_0);t^b)$ of $C^\infty_c(\Gamma_R^+(x_0))$.

We observe that thanks to \eqref{eq:PA2-2} the spaces
$H^1_0(\Sigma_R^+(x_0);t^b)$ and $H^1_0(\Gamma_R^+(x_0);t^b)$ may
be endowed with the equivalent norms
\begin{equation*}
\|U\|_{H^1_0(\Sigma_R^+(x_0);t^b)}:=\left(\int_{B_R^+(x_0)} t^b
|\nabla U|^2 dx\,dt \right)^{\frac 12} \, , \quad
\|U\|_{H^1_0(\Gamma_R^+(x_0);t^b)}:=\left(\int_{Q_R^+(x_0)} t^b
|\nabla U|^2 dx\,dt\right)^{\frac 12} \, .
\end{equation*}
For any $x_0\in \Omega$ let $R>0$ be such that
$B_{2R}'(x_0)\subset \Omega$; here and in the sequel
$\Omega\subset \R^N$ is an open  domain.

Let us consider the eigenvalue problem
\begin{equation} \label{eq:eigen-prob}
\begin{cases}
-\Delta_b U=\lambda U  & \qquad \text{in } Q_{2R}^+(x_0) \, ,\\[5pt]
U=0 & \qquad \text{on } [\partial B_{2R}'(x_0)\times (0,2R)]\cup
[B_{2R}'(x_0) \times \{2R\}] \, , \\[5pt]
{\ds \lim_{t\to 0^+} t^b U_t(\cdot,t)\equiv 0 } & \qquad \text{on
} B_{2R}'(x_0) ,
\end{cases}
\end{equation}
in a weak sense, i.e.
\begin{equation} \label{eq:EPW}
\int_{Q_{2R}^+(x_0)}t^b\nabla U\nabla \varphi\,dx\,dt=\lambda
\int_{Q_{2R}^+(x_0)}t^b U\varphi\,dx\,dt,\quad \text{for all
}\varphi\in H^1_0(\Gamma_R^+(x_0);t^b).
\end{equation}
In the following proposition we construct a complete orthonormal
system for $L^2(Q_{2R}^+(x_0);t^b)$ consisting of eigenfunctions
of \eqref{eq:eigen-prob}.

\begin{Proposition} \label{p:eigenfunctions}
Let $b\in (-1,1)$, $x_0\in \Omega$ and let $R>0$ be such that
$B_{2R}'(x_0)\subset \Omega$. Define
\begin{equation*} e_{n,m}(x,t):=\gamma_m \, t^\alpha
J_{-\alpha}\left(\tfrac{j_{-\alpha,m}}{2R}\, t\right)e_n(x) \,
\qquad \text{for any } n,m\in \N\setminus\{0\}
\end{equation*}
and
\begin{equation*}
\lambda_{n,m}:=\mu_n+\tfrac{j_{-\alpha,m}^2}{4R^2} \, , \qquad
\text{for any } n,m\in \N\setminus\{0\}
\end{equation*}
where $\alpha:=\frac{1-b}2$, $J_{-\alpha}$ is the first kind
Bessel function with index $-\alpha$,
\begin{equation*}
0<j_{-\alpha,1}<j_{-\alpha,2}<\cdots <j_{-\alpha,m}<\cdots
\end{equation*}
are the zeros of $J_{-\alpha}$, $\gamma_m:=\Big\{\int_0^{2R} t
\big[J_{-\alpha}\big(\frac{j_{-\alpha,m}}{2R}\,
t\big)\big]^2 dt\Big\}^{-1/2}$, $\{e_n\}_{n\ge 1}$ denotes a
complete system, orthonormal in $L^2(B_{2R}'(x_0))$, of
eigenfunctions of $-\Delta$ in $B_{2R}'(x_0)$ with homogeneous
Dirichlet boundary conditions and $\mu_1<\mu_2\le \dots \le
\mu_n\le \dots$ the corresponding eigenvalues.

Then for any $n,m\in \N\setminus\{0\}$, $e_{n,m}$ is an
eigenfunction of \eqref{eq:eigen-prob} with corresponding
eigenvalue $\lambda_{n,m}$. Moreover the set $\{e_{n,m}:n,m\in
\N\setminus\{0\}\}$ is a complete orthonormal system for
$L^2(Q_{2R}^+(x_0);t^b)$.
\end{Proposition}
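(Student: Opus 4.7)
The natural strategy is separation of variables: since $-\Delta_b = -\Delta_x - \partial_t^2 - (b/t)\partial_t$ and the domain $Q_{2R}^+(x_0) = B_{2R}'(x_0)\times (0,2R)$ is a product with Dirichlet condition on the lateral and upper faces, I look for eigenfunctions of the form $U(x,t) = e(x)\,g(t)$. Substituting into \eqref{eq:EPW} and choosing $e = e_n$ with $-\Delta e_n = \mu_n e_n$ in $B_{2R}'(x_0)$ under homogeneous Dirichlet conditions reduces the problem to the one-dimensional eigenvalue problem $-g'' - (b/t)g' = (\lambda - \mu_n)\,g$ on $(0,2R)$, with $g(2R)=0$ and $\lim_{t\to 0^+} t^b g'(t)=0$.

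To solve this ODE, I set $\alpha = (1-b)/2$ and write $g(t) = t^\alpha h(ct)$; a direct computation, using $2\alpha + b = 1$ and the identity $\alpha(\alpha - 1 + b) = -\alpha^2$ (since $\alpha + b = (1+b)/2$, so $\alpha - 1 + b = -\alpha$), transforms the equation into the Bessel equation $h''(r) + r^{-1}h'(r) + (1 - \alpha^2/r^2)h(r) = 0$ at $r = ct$, with $c^2 = \lambda - \mu_n$. Because $-\alpha \in (-1,0)$, the solution regular at the origin is $h = J_{-\alpha}$, giving $g(t) = t^\alpha J_{-\alpha}(ct)$; the Dirichlet condition $g(2R)=0$ then forces $c = j_{-\alpha,m}/(2R)$ for some $m\ge1$, hence $\lambda_{n,m} = \mu_n + j_{-\alpha,m}^2/(4R^2)$. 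The weighted Neumann condition at $t=0$ is verified using the power series $J_{-\alpha}(r) = \sum_{k\ge 0} \frac{(-1)^k (r/2)^{2k-\alpha}}{k!\,\Gamma(k+1-\alpha)}$: the two potentially singular contributions $\alpha t^{\alpha-1}J_{-\alpha}(ct)$ and $ct^\alpha J_{-\alpha}'(ct)$ in $g'(t)$ cancel at leading order, leaving $g'(t) = O(t)$ so that $t^b g'(t)\to 0$ for every $b\in(-1,1)$.

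Orthonormality follows from the tensor factorisation
$$
\int_{Q_{2R}^+(x_0)} t^b\, e_{n,m}\, e_{n',m'}\,dx\,dt = \delta_{nn'}\,\gamma_m \gamma_{m'}\int_0^{2R} t^{b+2\alpha} J_{-\alpha}\bigl(\tfrac{j_{-\alpha,m}}{2R}t\bigr)\,J_{-\alpha}\bigl(\tfrac{j_{-\alpha,m'}}{2R}t\bigr)\,dt ,
$$
and since $b + 2\alpha = 1$, the remaining one-dimensional integral is the classical Bessel orthogonality relation at the Dirichlet zeros of $J_{-\alpha}$, vanishing for $m\ne m'$ and normalised to $1$ by the definition of $\gamma_m$.

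For completeness, the key step is the isometry $\phi : L^2((0,2R);t^b\,dt) \to L^2((0,2R);t\,dt)$, $\phi(f)(t) = t^{-\alpha}f(t)$ (which is an isometry thanks to $1-2\alpha = b$), under which the $t$-factors $\gamma_m t^\alpha J_{-\alpha}(j_{-\alpha,m}t/(2R))$ become $\gamma_m J_{-\alpha}(j_{-\alpha,m}t/(2R))$. The classical Fourier--Bessel completeness theorem applies for index $-\alpha > -1$, so these are a complete orthonormal system of $L^2((0,2R);t\,dt)$; transporting back through $\phi$ gives completeness of the $t$-factors in $L^2((0,2R);t^b\,dt)$. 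A standard tensor-product approximation argument, using the completeness of $\{e_n\}$ in $L^2(B_{2R}'(x_0))$, then yields completeness of $\{e_{n,m}\}$ in $L^2(Q_{2R}^+(x_0);t^b)$. The only genuinely delicate step is the boundary behaviour at $t=0$ described above, where the indicial exponents of the weighted Bessel ODE are $0$ and $1-b$ and one must verify explicitly that the regular Frobenius solution meets the weighted Neumann condition; the remaining steps are computational verifications and applications of classical Bessel function theory.
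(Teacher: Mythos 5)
Your proof is correct, and for the construction of the eigenfunctions it follows essentially the same separation-of-variables route as the paper: reducing to the ODE $-g''-(b/t)g'=(\lambda-\mu_n)g$ via the expansion in the $\{e_n\}$, transforming to a Bessel equation by the substitution $g=t^\alpha h(ct)$, selecting $J_{-\alpha}$ by regularity, and checking the weighted Neumann condition at $t=0$ from the power series. The paper is more pedantic than you in explicitly ruling out the case $\lambda-\mu_n\le 0$ (showing the Euler and modified-Bessel cases give no admissible solutions), whereas you implicitly restrict to $\lambda-\mu_n>0$; this difference is innocuous for your argument because you do not need to classify all eigenvalues.

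Where you genuinely diverge from the paper is the completeness step. The paper establishes completeness abstractly: it invokes the compactness of $H^1_0(\Gamma_{2R}^+(x_0);t^b)\hookrightarrow L^2(Q_{2R}^+(x_0);t^b)$ (their Proposition \ref{p:compactness-2}) and the spectral theorem for compact self-adjoint operators, which together say that \emph{all} eigenfunctions form a complete orthonormal system; the classification argument is then essential to confirm that the $e_{n,m}$ exhaust the eigenfunctions. You instead use the isometry $L^2((0,2R);t^b\,dt)\cong L^2((0,2R);t\,dt)$ (valid because $2\alpha+b=1$), reduce to the classical Fourier--Bessel completeness theorem for index $-\alpha>-1$, and then tensor with the known completeness of $\{e_n\}$ in $L^2(B'_{2R}(x_0))$. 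This is a more elementary and self-contained route: it does not require the weighted compact embedding result or classifying all eigenfunctions, only standard one-dimensional Fourier--Bessel theory and the product-measure tensor decomposition $L^2(Q_{2R}^+(x_0);t^b)\cong L^2(B'_{2R}(x_0))\otimes L^2((0,2R);t^b\,dt)$. Both arguments are correct; yours trades a slightly more hands-on classical computation for independence from the abstract compactness machinery the paper develops in its appendix.
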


\begin{proof}
We look for nontrivial solutions of \eqref{eq:eigen-prob} in the
form
\begin{equation*}
U(x,t)=\sum_{n=1}^{+\infty} A_n(t)e_n(x) \, .
\end{equation*}
By \eqref{eq:EPW} with a choice of the test function of the type
$\varphi(x,t)=\eta(t)e_n(x)$, with $\eta=\eta(t)$ one variable
test function, we see that
\begin{equation} \label{eq:ode-An}
t^2 A_n''(t)+btA_n'(t)+(\lambda-\mu_n)t^2 A_n(t)=0
\end{equation}
and
\begin{equation} \label{eq:bc-ode} \lim_{t\to 0^+} t^b
A_n'(t)=0 \, , \qquad A_n(2R)=0 \, .
\end{equation}
If we put $z_n(t):=t^{-\alpha}A_n(t)$ then $z_n$ solves
\begin{equation} \label{eq:zn}
t^2 z_n''(t)+tz_n'(t)+[(\lambda-\mu_n)t^2-\alpha^2]z_n(t)=0 \, .
\end{equation}
When $\lambda-\mu_n=0$, \eqref{eq:zn} becomes an Euler equation.
Therefore, exploiting the explicit representation of solutions of
\eqref{eq:zn}, one can check that problem
\eqref{eq:ode-An}-\eqref{eq:bc-ode} does not admit any nontrivial
solution.

Let us assume that $\lambda-\mu_n\neq 0$. Depending on the sign of
$\lambda-\mu_n$ one may proceed with the following rescaling
\begin{equation*}
y_n(t):=z_n\Big(\tfrac{t}{\sqrt{|\lambda-\mu_n|}}\Big)
\end{equation*}
to obtain
\begin{equation} \label{eq:bessel-type}
t^2 y_n''(t)+ty_n'(t)+{\rm sign}(\lambda-\mu_n) t^2
y_n(t)-\alpha^2 y_n(t)=0 \, .
\end{equation}
If $\lambda-\mu_n>0$, \eqref{eq:bessel-type} becomes a Bessel
equation while if $\lambda-\mu_n<0$ it becomes a ``modified Bessel
equation'', see Section 4.5 and Section 4.12 in \cite{AAR}.

By \eqref{eq:bc-ode}, (4.12.2), (4.12.4), (4.6.1) and (4.6.2) in \cite{AAR} (and the analogue for modified Bessel functions), and
some tedious computations, one can check that the only possible
way to find nontrivial solutions of
\eqref{eq:ode-An}-\eqref{eq:bc-ode} is to assume that
$\lambda-\mu_n>0$ and to choose
\begin{equation*}
y_n(t)=c_n J_{-\alpha}(t) \, .
\end{equation*}
This implies that any nontrivial $A_n$ admits necessarily the
representation
\begin{equation} \label{eq:An}
A_n(t)=c_n t^\alpha J_{-\alpha}(\sqrt{\lambda-\mu_n}t)
\end{equation}
with $\lambda$ satisfying $J_{-\alpha}(2\sqrt{\lambda-\mu_n}\,
R)=0$ whenever $c_n\neq 0$.

From this we deduce that $\lambda$ necessarily satisfies
\begin{equation} \label{eq:(n,m)}
\lambda=\mu_n+\tfrac{j_{-\alpha,m}^2}{4R^2} \, , \qquad \text{for
some } n,m\in \N,\ n,m\geq 1 .
\end{equation}
This proves that the eigenvalues of $-\Delta_b$ are the numbers
which admits the representation \eqref{eq:(n,m)}.

\noindent For any number $\lambda>0$ we denote by $S(\lambda)$ the
possibly empty set defined by
$$
S(\lambda):=\{(n,m)\in
(\N\setminus\{0\})^2
:\eqref{eq:(n,m)} \ \text{holds true}\}
\, .
$$
For any $\lambda>0$, the set $S(\lambda)$ is finite since
$$
\lim_{n\to +\infty} \mu_n=+\infty  \qquad \text{and} \qquad
\lim_{m\to +\infty} j_{-\alpha,m}=+\infty \, .
$$
Hence if $\lambda$ is an eigenvalue then the corresponding
eigenfunctions $U$ are of the form
\begin{equation*}
U(x,t)=\sum_{(n,m)\in S(\lambda)} c_{n,m} \, t^\alpha
J_{-\alpha}\left(\tfrac{j_{-\alpha,m}}{2R}\, t\right)e_n(x) \, .
\end{equation*}
For any $(n,m)\in
(\N\setminus\{0\})^2$, it may be useful to define
\begin{equation} \label{eq:e-nm}
e_{n,m}(x,t):=\gamma_m \, t^\alpha
J_{-\alpha}\left(\tfrac{j_{-\alpha,m}}{2R}\, t\right)e_n(x)
\end{equation}
where $\gamma_m:=\Big\{\int_0^{2R} t
\big[J_{-\alpha}\big(\frac{j_{-\alpha,m}}{2R}\,
t\big)\big]^2 dt\Big\}^{\!-1/2}$.

With this choice we have that
$\|e_{n,m}\|_{L^2(Q_{2R}^+(x_0);t^b)}=1$. Moreover we also have
orthogonality in $L^2(Q_{2R}^+(x_0);t^b)$ of two distinct
eigenfunctions $e_{n_1,m_1},e_{n_2,m_2}$. If $n_1\neq n_2$, this
follows from Fubini-Tonelli Theorem and the orthogonality of
$e_{n_1}$ and $e_{n_2}$ in $L^2(B_{2R}'(x_0))$. If $n_1=n_2$ and
$m_1\neq m_2$ orthogonality between $e_{n_1,m_1}$ and
$e_{n_2,m_2}$ follows from Fubini-Tonelli Theorem and  the fact
that
\begin{align*}
 \int_0^{2R} t& J_{-\alpha}\left(\frac{j_{-\alpha,m_1}}{2R}\,
t\right) J_{-\alpha}\left(\frac{j_{-\alpha,m_2}}{2R}\, t\right) \,
dt =4R^2
\int_0^{1} t J_{-\alpha}(j_{-\alpha,m_1}t) J_{-\alpha}(j_{-\alpha,m_2}t) \, dt\\
& =\frac{4R^2}{j_{-\alpha,m_1}^2-j_{-\alpha,m_2}^2}
[J_{-\alpha}(j_{-\alpha,m_1})J_{-\alpha}'(j_{-\alpha,m_2})-J_{-\alpha}(j_{-\alpha,m_2})J_{-\alpha}'(j_{-\alpha,m_1})]=0
\end{align*}
where the last identity follows from \cite[Equation
(4.14.2)]{AAR}.

Therefore the set $\{e_{n,m}:n,m\in \N\setminus\{0\}\}$ is a
complete orthonormal system for $L^2(Q_{2R}^+(x_0);t^b)$ of
eigenfunctions of $-\Delta_b$ with eigenvalues
\begin{equation} \label{eq:eigenvalue}
\lambda_{n,m}:=\mu_n+\frac{j_{-\alpha,m}^2}{4R^2} \, .
\end{equation}
We observe that $\{e_{n,m}:n,m\in\N\setminus\{0\}\}$ is a complete
system thanks to compactness of the embedding
$H^1_0(\Gamma_{2R}^+(x_0);t^b)\subset L^2(Q_{2R}^+(x_0);t^b)$ (see
Proposition \ref{p:compactness-2}) and to the theory of compact
self-adjoint operators.
\end{proof}

In the next proposition we prove some estimates on the
eigenfunctions of \eqref{eq:eigen-prob}.

\begin{Proposition} Suppose that all the assumptions of
Proposition \ref{p:eigenfunctions} hold true. Let $e_{n,m}$ and
$\lambda_{n,m}$ be as in Proposition \ref{p:eigenfunctions}. Then
for any $n,m\in \N\setminus\{0\}$ and $k\ge 0$,
$e_{n,m}\in C^k\left(\overline{Q_{2R}^+(x_0)}\right)$ and, letting $\delta=[N/4]+[(k+1)/2]+1$,
with $[\cdot]$ denoting the integer part of a number, we have
\begin{equation*}
\|e_{n,m}\|_{C^k\left(\overline{Q_{2R}^+(x_0)}\right)}=
\begin{cases}
O\left(\lambda_{n,m}^{\frac k2+\delta+\frac{1}4}\right) & \text{if
} \alpha\in\left[\frac 12,1\right) \, , \\[5pt]
O\left(\lambda_{n,m}^{\frac{k-\alpha+1}2+\delta}\right) & \text{if
} \alpha\in\left(0,\frac 12\right) \, ,
\end{cases}
 \qquad \text{as
} |(n,m)|=\sqrt{n^2+m^2} \to +\infty \, .
\end{equation*}
Moreover we also have that
\begin{equation} \label{eq:enm-t}
\lim_{t\to 0^+} \partial_t e_{n,m}(\cdot,t)=0
\end{equation}
uniformly in $B_{2R}'(x_0)$.
\end{Proposition}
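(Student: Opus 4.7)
The plan is to exploit the tensor product structure $e_{n,m}(x,t)=\gamma_m\,e_n(x)\,h_m(t)$ with $h_m(t):=t^\alpha J_{-\alpha}(\lambda_m t)$ and $\lambda_m:=j_{-\alpha,m}/(2R)$, so that Leibniz's rule gives
$$\|e_{n,m}\|_{C^k(\overline{Q_{2R}^+(x_0)})}\le \gamma_m\,\|e_n\|_{C^k(\overline{B_{2R}'(x_0)})}\,\|h_m\|_{C^k([0,2R])}.$$
The task then splits into three independent pieces: a bound for the Dirichlet Laplace eigenfunction $e_n$ on the ball, a bound for the Bessel factor $h_m$ on the interval, and an asymptotic for the normalization constant $\gamma_m$. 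The limit \eqref{eq:enm-t} will follow for free from the series representation of $h_m$.

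For $e_n$ I would use classical elliptic bootstrap. Smoothness of $e_n$ up to $\partial B_{2R}'(x_0)$ is classical for the Dirichlet Laplacian on a smooth ball. Iterating $L^2$-regularity for $-\Delta e_n=\mu_n e_n$ together with the fact that $\Delta^j e_n=(-1)^j\mu_n^j e_n$ again vanishes on $\partial B_{2R}'(x_0)$ gives $\|e_n\|_{H^{2j}(B_{2R}'(x_0))}\le C\mu_n^j$. Composing with the Sobolev embedding $H^{2j}\hookrightarrow C^k$, valid for $2j>k+N/2$, and choosing $j=\delta=[N/4]+[(k+1)/2]+1$, yields $\|e_n\|_{C^k(\overline{B_{2R}'(x_0)})}=O(\mu_n^\delta)=O(\lambda_{n,m}^\delta)$.

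For $h_m$ the key observation is that $g(s):=s^\alpha J_{-\alpha}(s)$ is an \emph{even analytic} function of $s$: the series expansion of $J_{-\alpha}$ at the origin absorbs the $s^{-\alpha}$ singularity and leaves
$$g(s)=2^\alpha\sum_{j\ge 0}\frac{(-1)^j}{j!\,\Gamma(j+1-\alpha)}\Big(\frac{s}{2}\Big)^{2j}.$$
Rescaling gives $h_m^{(j)}(t)=\lambda_m^{j-\alpha}g^{(j)}(\lambda_m t)$, so everything reduces to a uniform bound on $g^{(j)}$ over $[0,2R\lambda_m]$. For $s$ bounded, analyticity gives $|g^{(j)}(s)|\le C_j$; for $s$ large, the Bessel derivative identities $\frac{d}{ds}[s^{\pm\nu}J_\nu(s)]=\pm s^{\pm\nu}J_{\nu\mp 1}(s)$ from \cite{AAR}, combined with $|J_\nu(s)|\lesssim s^{-1/2}$ and Leibniz, give $|g^{(j)}(s)|\le C_j(1+s^{\alpha-1/2})$. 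Taking suprema produces the two distinct branches of the claimed bound, since $s^{\alpha-1/2}$ grows with $s$ precisely when $\alpha\ge 1/2$ and is decreasing otherwise.

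The constant $\gamma_m$ is controlled via the Lommel identity $\int_0^{j_{\nu,m}}u\,[J_\nu(u)]^2\,du=\tfrac{1}{2} j_{\nu,m}^2 [J_{\nu+1}(j_{\nu,m})]^2$, already implicit in the proof of Proposition \ref{p:eigenfunctions}: one finds $\gamma_m^{-1}=\sqrt{2}\,R\,|J_{1-\alpha}(j_{-\alpha,m})|$ and, via the large-argument asymptotics, $\gamma_m=O(\sqrt{j_{-\alpha,m}})=O(\lambda_{n,m}^{1/4})$. Multiplying the three factors and using $\lambda_m\le \lambda_{n,m}^{1/2}$ yields the stated $C^k$ estimate. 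Finally, analyticity of $g$ gives $h_m'(t)=\lambda_m^{1-\alpha}g'(\lambda_m t)$, where $g'$ is odd (since $g$ is even) and vanishes at $0$; hence $h_m'(t)\to 0$ as $t\to 0^+$, and the uniform boundedness of $e_n$ on $\overline{B_{2R}'(x_0)}$ upgrades this to uniform convergence in $x$, proving \eqref{eq:enm-t}. The main technical hurdle will be producing the bound on $|g^{(j)}|$ that works uniformly on $[0,2R\lambda_m]$ as $m\to\infty$, cleanly bridging the small-$s$ analytic regime (where the naive Leibniz decomposition has internal singularities that must cancel) and the large-$s$ oscillatory regime.
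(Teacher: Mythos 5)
Your plan is essentially the paper's: write $e_{n,m}$ as a tensor product $\gamma_m\,e_n(x)\,h_m(t)$, use Leibniz, and estimate the three factors — the Dirichlet eigenfunction $e_n$, the Bessel profile $h_m$, and the normalization $\gamma_m$ — separately. The elliptic bootstrap plus Sobolev embedding for $e_n$, and the split of the $g^{(j)}$ estimate into an analytic small-$s$ regime (from the even power series of $g(s)=s^\alpha J_{-\alpha}(s)$) and an oscillatory large-$s$ regime (from the recurrence identities and $|J_\nu(s)|\lesssim s^{-1/2}$) are exactly the paper's \eqref{eq:ell-reg} and \eqref{eq:hk}--\eqref{eq:hk2}. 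The argument for \eqref{eq:enm-t} via oddness of $g'$ at the origin is likewise the same idea the paper extracts from the series expansion. The one place you genuinely deviate is $\gamma_m$: the paper settles for the crude bound $\gamma_m=O(j_{-\alpha,m})$, obtained in \eqref{eq:gamma-m} by simply shrinking the domain of integration to $(0,j_{-\alpha,1})$, whereas you invoke the Lommel integral $\int_0^{j_{\nu,m}}u\,J_\nu^2(u)\,du=\tfrac12 j_{\nu,m}^2 J_{\nu+1}(j_{\nu,m})^2$ and large-argument Bessel asymptotics to get the sharp $\gamma_m\sim j_{-\alpha,m}^{1/2}$. Since $j_{-\alpha,m}=O(\lambda_{n,m}^{1/2})$, your version accordingly produces an estimate stronger by a factor $\lambda_{n,m}^{1/4}$ — for instance $O(\lambda_{n,m}^{k/2+\delta})$ rather than $O(\lambda_{n,m}^{k/2+\delta+1/4})$ when $\alpha\in[\tfrac12,1)$ — which certainly implies the stated bound, but you should note the mismatch explicitly so a reader comparing exponents is not confused. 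The trade-off is one extra special-function identity for a cleaner and sharper asymptotic; the paper's cruder route suffices for the statement and avoids it.
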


\begin{proof}
Combining elliptic estimates (see \cite[Chapter V]{ADN}) and
Sobolev embeddings with the fact that
$\|e_n\|_{L^2(B_{2R}'(x_0)))}=1$, we have that, for any $k\in
\N$, there exists a constant $C(N,R,k)$ depending only on
$N,R$ and $k$ such that
\begin{equation} \label{eq:ell-reg}
\|e_n\|_{C^k(B_{2R}'(x_0))}\le C(N,R,k) \mu_n^{\delta}
\end{equation}
with $\delta$ as in the statement of the lemma.

In order to obtain a similar estimate for the function $\gamma_m
\, t^\alpha J_{-\alpha}\left(\tfrac{j_{-\alpha,m}}{2R}\, t\right)$
we first observe that
\begin{align} \label{eq:gamma-m}
\gamma_m & =\left[\left(\frac{2R}{j_{-\alpha,m}}\right)^{2}
\int_0^{j_{-\alpha,m}} t (J_{-\alpha}(t))^2 dt \right]^{-1/2} \le
\left[4R^2 \int_0^{j_{-\alpha,1}} t (J_{-\alpha}(t))^2 dt
\right]^{-1/2} j_{-\alpha,m} \, .
\end{align}
By \eqref{eq:Bessel-infty} and \eqref{eq:iterative-Bessel-2} in
Subsection \ref{ss:7.3} and direct computation one may check that
the function $h(t):=t^\alpha J_{-\alpha}(t)\in
C^\infty([0,\infty))$ (see the series expansion of first kind
Bessel functions in \cite[Section 4.5]{AAR}) satisfies
\begin{equation} \label{eq:hk}
|h^{(k)}(t)|\le
\begin{cases}
C(\alpha,k) (1+t^{\alpha-1/2}) \qquad \text{for any } t\ge 0\, , &
\quad \text{if } \alpha\in \left(\frac 12,1\right) \, , \\[5pt]
C(\alpha,k) \qquad \text{for any } t\ge 0\, , & \quad \text{if }
\alpha\in \left(0,\frac 12\right] \, ,
\end{cases}
\end{equation}
for any $k\in \N$, where $C(\alpha,k)$ is a positive
constant depending only on $\alpha$ and $k$. Here and in the
sequel by derivative of order zero of a function we mean the
function itself.

By \eqref{eq:hk} we obtain
\begin{align} \label{eq:hk2}
& \left\|\frac{d^k}{dt^k} \left(t^\alpha
J_{-\alpha}\left(\frac{j_{-\alpha,m}}{2R}\, t\right)\right)
\right\|_{L^\infty(0,2R)}
=\left(\frac{j_{-\alpha,m}}{2R}\right)^{-\alpha}
\left\|\frac{d^k}{dt^k} \left(h\left(\frac{j_{-\alpha,m}}{2R}\,
t\right)
\right)\right\|_{L^\infty(0,2R)}\\[8pt]
\notag & =\left(\frac{j_{-\alpha,m}}{2R}\right)^{-\alpha+k}
\left\|h^{(k)}\left(\frac{j_{-\alpha,m}}{2R}\, t\right)
\right\|_{L^\infty(0,2R)} \le
\left(\frac{j_{-\alpha,m}}{2R}\right)^{-\alpha+k}
C(\alpha,k)[1+(j_{-\alpha,m})^{\alpha-1/2}]
\end{align}
in both the cases $\alpha\in\left(\frac 12,1\right)$ and
$\alpha\in \left(0,\frac 12\right]$.

By \eqref{eq:gamma-m} and \eqref{eq:hk2} we deduce that for any
$k\in \N$
\begin{equation*}
\left\|\frac{d^k}{dt^k}\left(\gamma_m \, t^\alpha
J_{-\alpha}\left(\tfrac{j_{-\alpha,m}}{2R}\,
t\right)\right)\right\|_{L^\infty(0,2R)}=
\begin{cases}
O((j_{-\alpha,m})^{k+1/2}) & \quad \text{if }
\alpha\in\left(\frac 12,1\right) \, , \\[5pt]
O((j_{-\alpha,m})^{k-\alpha+1}) & \quad \text{if } \alpha\in
\left(0,\frac 12\right] \, ,
\end{cases}
\qquad \text{as } m\to +\infty \, .
\end{equation*}
Combining this estimate with \eqref{eq:e-nm},
\eqref{eq:eigenvalue} and \eqref{eq:ell-reg}, we obtain
\begin{equation} \label{eq:stima-C^k}
\|e_{n,m}\|_{C^k\left(\overline{Q_{2R}^+(x_0)}\right)}=
\begin{cases}
O\left(\lambda_{n,m}^{\frac k2+\delta+\frac14}\right) & \quad \text{if
} \alpha\in\left(\frac 12,1\right) \, , \\[5pt]
O\left(\lambda_{n,m}^{\frac{k-\alpha+1}2+\delta}\right) & \quad \text{if
} \alpha\in\left(0,\frac 12\right] \, ,
\end{cases}
 \qquad \text{as
} |(n,m)|\to +\infty \, .
\end{equation}
This completes the first part of the proof of the proposition.

Finally, from the series expansion of first kind Bessel functions,
see \cite[Section 4.5]{AAR}, we infer that $\lim_{t\to 0^+}(
t^\alpha J_{-\alpha}(t))'=0$ which, together with \eqref{eq:e-nm},
implies $\lim_{t\to 0^+}
\partial_t e_{n,m}(\cdot,t)=0$ uniformly in $B_{2R}'(x_0)$. This
completes the proof of the proposition.
\end{proof}

Given a function $\psi\in C^\infty_c\left(Q_{2R}^+(x_0)\right)$,
consider the following Poisson equation
\begin{equation} \label{eq:Poisson}
\begin{cases}
-\Delta_b \varphi=\psi  & \qquad \text{in } Q_{2R}^+(x_0) \, ,\\[5pt]
\varphi=0 & \qquad \text{on } [\partial B_{2R}'(x_0)\times
(0,2R)]\cup
[B_{2R}'(x_0) \times \{2R\}] \, , \\[5pt]
{\ds \lim_{t\to 0^+} t^b \varphi_t(\cdot,t)=0 } & \qquad \text{on
} B_{2R}'(x_0) \times \{0\} \, .
\end{cases}
\end{equation}
We prove below the existence of a smooth solution to \eqref{eq:Poisson}.

\begin{Proposition} Let $b\in (-1,1)$, $x_0\in \Omega$ and let $R>0$ be such that $B_{2R}'(x_0)\subset
\Omega$. Then for any $\psi\in
C^\infty_c\left(Q_{2R}^+(x_0)\right)$, \eqref{eq:Poisson} admits a
unique solution $\varphi\in
C^\infty\left(\overline{Q_{2R}^+(x_0)}\right)$. Moreover $\varphi$
satisfies
\begin{equation*}
\lim_{t\to 0^+} \varphi_t(\cdot,t)=0
\end{equation*}
uniformly in $B_{2R}'(x_0)$.
\end{Proposition}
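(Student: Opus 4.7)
The plan is to solve \eqref{eq:Poisson} by expanding both the datum $\psi$ and the unknown $\varphi$ in the orthonormal basis $\{e_{n,m}\}$ of $L^2(Q_{2R}^+(x_0);t^b)$ produced in Proposition \ref{p:eigenfunctions}. Since $\{e_{n,m}\}$ diagonalizes $-\Delta_b$ on the space of functions satisfying the boundary conditions of \eqref{eq:Poisson}, the natural candidate is
\begin{equation*}
\varphi := \sum_{n,m\ge 1} \frac{c_{n,m}}{\lambda_{n,m}}\, e_{n,m}, \qquad c_{n,m}:=\int_{Q_{2R}^+(x_0)} t^{b}\,\psi\, e_{n,m}\, dx\,dt .
\end{equation*}
Uniqueness then follows from a standard energy argument in $H^1_0(\Gamma_{2R}^+(x_0);t^b)$, testing the equation against $\varphi$ itself and using the weighted Poincar\'e inequality \eqref{eq:PA2-2}.

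The key step is to promote $L^2$-convergence of this series to $C^\infty$ convergence, and for this I would exploit the smoothness of $\psi$ to obtain super-polynomial decay of the Fourier coefficients $c_{n,m}$. Since $\psi\in C^\infty_c(Q_{2R}^+(x_0))$, in particular $(-\Delta_b)^j\psi\in C^\infty_c(Q_{2R}^+(x_0))$ for every $j\in\N$; integrating by parts $j$ times (the boundary terms vanish because $\psi$ has compact support and because $e_{n,m}$ satisfies both the Dirichlet condition on the lateral/top face and the weighted Neumann condition \eqref{eq:enm-t} on $\{t=0\}$), one obtains
\begin{equation*}
c_{n,m}=\lambda_{n,m}^{-j}\int_{Q_{2R}^+(x_0)} t^{b}\,(-\Delta_b)^j\psi\, e_{n,m}\, dx\,dt,
\end{equation*}
hence by Cauchy--Schwarz $|c_{n,m}|\le C_j\,\lambda_{n,m}^{-j}$ for every $j$. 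Combining this with the polynomial bound $\|e_{n,m}\|_{C^k(\overline{Q_{2R}^+(x_0)})}=O(\lambda_{n,m}^{k/2+\delta+1/4})$ proved above and with the Weyl-type growth $\lambda_{n,m}\ge c(n^{2/N}+m^2)$, for every fixed $k$ the tail $\sum \lambda_{n,m}^{-1}|c_{n,m}|\,\|e_{n,m}\|_{C^k}$ is dominated by a convergent series. Therefore the series defining $\varphi$ converges in $C^k(\overline{Q_{2R}^+(x_0)})$ for every $k$, so $\varphi\in C^\infty(\overline{Q_{2R}^+(x_0)})$ and termwise differentiation is legal.

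Once $C^\infty$ convergence is established, the remaining verifications are immediate. Each $e_{n,m}$ vanishes on $[\partial B_{2R}'(x_0)\times(0,2R)]\cup[B_{2R}'(x_0)\times\{2R\}]$, so $\varphi$ vanishes there too. Applying $-\Delta_b$ termwise and using $-\Delta_b e_{n,m}=\lambda_{n,m}e_{n,m}$ gives $-\Delta_b\varphi=\sum c_{n,m}e_{n,m}=\psi$ in $Q_{2R}^+(x_0)$. The weighted Neumann condition $\lim_{t\to 0^+}t^{b}\varphi_t(\cdot,t)=0$ passes to the limit in the uniformly convergent series, each term satisfying it; the stronger statement $\lim_{t\to 0^+}\varphi_t(\cdot,t)=0$ uniformly on $B_{2R}'(x_0)$ follows from \eqref{eq:enm-t} together with the uniform convergence of $\partial_t\varphi=\sum \lambda_{n,m}^{-1}c_{n,m}\,\partial_t e_{n,m}$ on $\overline{Q_{2R}^+(x_0)}$.

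The main obstacle is bookkeeping the two competing factors in the series: the derivative loss $O(\lambda_{n,m}^{k/2+\delta+1/4})$ from the eigenfunction estimates grows polynomially in $\lambda_{n,m}$, while the gain from smoothness of $\psi$ is super-polynomial. A careful choice of $j$ depending on $k$ (and on the dimension, through $\delta$) is what makes the scheme close; no genuine analytic difficulty arises once the eigenfunction bounds and the integration-by-parts identity are in hand.
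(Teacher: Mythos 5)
Your proposal is correct and follows essentially the same route as the paper: you expand $\psi$ in the eigenbasis $\{e_{n,m}\}$, define $\varphi$ by dividing the Fourier coefficients by $\lambda_{n,m}$, use iterated integration by parts (the coefficients $c_{n,m}$ being $O(\lambda_{n,m}^{-j})$ for every $j$) against the polynomial $C^k$-bounds on $e_{n,m}$ and the Weyl-type lower bound on $\lambda_{n,m}$ to get $C^k$-convergence for all $k$, and then pass the boundary condition $\lim_{t\to 0^+}\partial_t e_{n,m}=0$ through the uniformly convergent series. The only cosmetic additions are your explicit energy argument for uniqueness (which the paper leaves implicit) and a remark about boundary terms in the integration by parts that, since $\psi$ has compact support in the open cylinder, is already automatic.
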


\begin{proof}
The datum $\psi$ can be written in the form
\begin{equation*}
\psi(x,t)=\sum_{n,m=1}^{+\infty} c_{n,m} \, e_{n,m}(x,t) \, .
\end{equation*}
Then the solution $\varphi$ of \eqref{eq:Poisson} is formally
given by
\begin{equation*}
\varphi(x,t)=\sum_{n,m=1}^{+\infty} \frac{c_{n,m}}{\lambda_{n,m}}
\, e_{n,m}(x,t)  \, .
\end{equation*}
We observe that by integration by parts and the fact that
$e_{n,m}$ is an eigenfunction of $-\Delta_b$ corresponding to the
eigenvalue $\lambda_{n,m}$, we have
\begin{align*}
c_{n,m}& =\int_{Q_{2R}^+(x_0)} t^b \psi e_{n,m} \, dx\,dt=\frac
1{\lambda_{n,m}}
\int_{Q_{2R}^+(x_0)} -t^b \psi \Delta_b e_{n,m}\, dx\,dt \\[8pt]
& =\frac 1{\lambda_{n,m}} \int_{Q_{2R}^+(x_0)} -t^b \Delta_b \psi
\, e_{n,m}\, dx\,dt \, .
\end{align*}
Iterating this procedure, we deduce that, for any $\ell \in \N$,
\begin{align} \label{eq:c-nm}
c_{n,m}=\frac 1{\lambda_{n,m}^\ell} \int_{Q_{2R}^+(x_0)} t^b
(-\Delta_b)^\ell \psi \, e_{n,m}\, dx\,dt=:\frac
1{\lambda_{n,m}^\ell} \, d_{n,m,\ell} \, .
\end{align}
Since $\psi\in C^\infty_c(Q_{2R}^+(x_0))$ then $(-\Delta_b)^\ell
\psi\in C^\infty_c(Q_{2R}^+(x_0))$ and hence $(-\Delta_b)^\ell
\psi\in L^2(Q_{2R}^+(x_0);t^b)$. This yields
$\sum_{n,m=1}^{+\infty} d_{n,m,\ell}^2<+\infty$ and, in turn,
$\lim_{|(n,m)|\to +\infty} d_{n,m,\ell}=0$. This, combined with
\eqref{eq:c-nm}, shows that for any $\ell\in \N$
\begin{equation} \label{eq:ell}
c_{n,m}=o(\lambda_{n,m}^{-\ell}) \qquad \text{as } |(n,m)|\to
+\infty \, .
\end{equation}
By \eqref{eq:stima-C^k} and \eqref{eq:ell}, we obtain as $|(n,m)|\to +\infty$
\begin{equation} \label{eq:stima-Ck-bis}
 \left\|\frac{c_{n,m}}{\lambda_{n,m}} \, e_{n,m}\right\|_{C^k\left(\overline{Q_{2R}^+(x_0)}\right)}
=\begin{cases} O(c_{n,m} \, \lambda_{n,m}^{\frac k2+\delta-\frac 34})=o(\lambda_{n,m}^{\frac k2+\delta-\frac 34-\ell}) &
 \text{if } \alpha\in \left(\frac 12,1\right) \, , \\[5pt]
O(c_{n,m} \,
\lambda_{n,m}^{\frac{k-\alpha-1}2+\delta})=o(\lambda_{n,m}^{\frac{k-\alpha-1}2+\delta-\ell})
&  \text{if } \alpha\in \left(0,\frac 12\right] \, .
\end{cases}
\end{equation}
We put $L:=\ell-\frac k2-\delta+\frac 34$ if $\alpha\in\left(\frac
12,1\right)$ and $L:=\ell-\frac{k-\alpha-1}2-\delta$ if $\alpha\in
\left(0,\frac 12\right]$. We may fix $\ell$ large enough such that
$L>N$ in both cases.

By \eqref{eq:eigenvalue}, \eqref{eq:zero-bessel} and Weyl Law
for the asymptotic behavior of eigenvalues of $-\Delta$ with
Dirichlet boundary conditions, we infer that there exists a
constant $C>0$ such that
\begin{equation*}
\lambda_{n,m}\ge C (n^{\frac 2N}+m^2)\ge C(n^{\frac 2N}+m^{\frac 2N})\ge C(n^2+m^2)^{\frac 1N} \qquad
\text{for any } n,m\ge 1 \, .
\end{equation*}
Combining this with \eqref{eq:stima-Ck-bis} we obtain
\begin{equation*}
 \left\|\frac{c_{n,m}}{\lambda_{n,m}} \, e_{n,m}\right\|_{C^k\left(\overline{Q_{2R}^+(x_0)}\right)}=o\Big((n^2+m^2)^{-\frac LN}\Big)
 \qquad \text{as } |(n,m)|\to +\infty \, .
\end{equation*}
Since $L>N$, this proves that
\begin{equation*}
\sum_{n,m=1}^{+\infty} \left\|\frac{c_{n,m}}{\lambda_{n,m}} \,
e_{n,m}\right\|_{C^k\left(\overline{Q_{2R}^+(x_0)}\right)}<+\infty
\end{equation*}
for any $k\in \N$ thus showing that $\varphi\in
C^\infty\left(\overline{Q_{2R}^+(x_0)}\right)$.

Finally, by \eqref{eq:enm-t} we also have
\begin{equation} \label{eq:phi-t}
\lim_{t\to 0^+} \varphi_t(\cdot,t)=0 \qquad \text{uniformly in }
B_{2R}'(x_0) \, .
\end{equation}
This completes the proof of the proposition.
\end{proof}

We are ready to prove the main result of this section.

\begin{Proposition} \label{p:A2}
Let $\Omega\subset \R^N$ be open. Let $s\in
(1,2)$ and $b=3-2s$. Let $g\in (\mathcal D^{s-1,2}(\R^N))^\star$, $f\in L^2_{{\rm loc}}(\R^{N+1}_+;t^b)$
and let $V\in L^2(\R^{N+1}_+;t^b)$ be a distributional solution of
the problem
\begin{equation} \label{eq:reg-H1}
\begin{cases}
\dive (t^b \nabla V)=t^bf & \qquad \text{in } \R^{N+1}_+, \\[5pt]
{\ds \lim_{t\to 0^+} t^b V_t(\cdot,t)=g} & \qquad \text{in }
\Omega,
\end{cases}
\end{equation}
namely
\begin{align*}
\int_{\R^{N+1}_+} V \dive(t^b \nabla \varphi) \, dx\,dt=\int_{\R^{N+1}_+} t^b f\varphi\, dx\,dt \qquad
\text{for any } \varphi\in C^\infty_c(\R^{N+1}_+)
\end{align*}
and
\begin{align} \label{eq:ultradebole}
\int_{\R^{N+1}_+} V \dive(t^b \nabla \varphi) \,
dx\,dt=\int_{\R^{N+1}_+} t^b f\varphi\, dx\,dt+\!\!\!\! \phantom{A}_{ (\mathcal D^{s-1,2}(\R^N))^\star}\Big\langle
g,\varphi(x,0)\Big\rangle_{\mathcal D^{s-1,2}(\R^N)}
\end{align}
for any $\varphi\in C^\infty_c\big(\overline{\R^{N+1}_+}\big)$ such that
${\rm supp}(\varphi(\cdot,0))\subset \Omega$ and ${\ds \lim_{t\to
0^+} \varphi_t(\cdot,t)\equiv 0}$ in $\R^N$.

Then $V\in H^1(Q_R^+(x_0);t^b)$ for any $x_0\in \Omega$ and $R>0$
satisfying $B_{2R}'(x_0)\subset \Omega$ and moreover there exists a positive constant $C$ depending
only on $N,b,x_0,R$ such that
\begin{equation}  \label{eq:cont-dep}
\|V\|_{H^1(Q_R^+(x_0);t^b)}\le C\Big(\|f\|_{L^2(Q_{2R}^+(x_0);t^b)}+\|g\|_{(\mathcal D^{s-1,2}(\R^N))^\star}+\|V\|_{L^2(\R^{N+1}_+;t^b)}\Big) \, .
\end{equation}
\end{Proposition}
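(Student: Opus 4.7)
The plan is to exploit the solvability of the Poisson problem \eqref{eq:Poisson} as a duality tool. Given a test function $\psi \in C^\infty_c(Q_{2R}^+(x_0))$, let $\varphi \in C^\infty(\overline{Q_{2R}^+(x_0)})$ be the smooth solution to \eqref{eq:Poisson} with datum $\psi$ produced by the preceding proposition; so $-\Delta_b\varphi=\psi$, $\varphi$ vanishes on the lateral and top portions of $\partial Q_{2R}^+(x_0)$, and $\lim_{t\to 0^+}\varphi_t=0$ uniformly. The goal is to plug (essentially) $\varphi$ into the ultra-weak formulation \eqref{eq:ultradebole} to produce an explicit formula for $\int V\psi\,t^b$ in terms of $f$ and $g$. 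Since the zero-extension of $\varphi$ is in general not $C^\infty$ across the lateral/top boundary, I would approximate it by $\eta_\epsilon\varphi$, where $\eta_\epsilon(x,t) = \eta_\epsilon^{(1)}(x)\,\eta_\epsilon^{(2)}(t)$ is a smooth cutoff equal to $1$ on $Q_{2R-\epsilon}^+(x_0)$, supported in $\overline{Q_{2R-\epsilon/2}^+(x_0)}$, and independent of $t$ in a neighborhood of $\{t=0\}$. Such $\eta_\epsilon\varphi$ lies in $C^\infty_c(\overline{\R^{N+1}_+})$, has trace supported in $\Omega$, and satisfies $\lim_{t\to 0^+}\partial_t(\eta_\epsilon\varphi)=0$ uniformly, hence is admissible in \eqref{eq:ultradebole}. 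Expanding $\dive(t^b\nabla(\eta_\epsilon\varphi))$ via the product rule, using $\dive(t^b\nabla\varphi)=-t^b\psi$, and passing to the limit $\epsilon\to 0$ should yield
\begin{equation*}
-\int_{Q_{2R}^+(x_0)} V\psi\,t^b\,dx\,dt = \int_{Q_{2R}^+(x_0)} t^b f\varphi\,dx\,dt + \phantom{A}_{(\mathcal D^{s-1,2}(\R^N))^\star}\!\big\langle g, \varphi(\cdot,0)\big\rangle_{\mathcal D^{s-1,2}(\R^N)}.
\end{equation*}

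To estimate the right-hand side, Parseval's identity for the expansion $\varphi=\sum_{n,m}\lambda_{n,m}^{-1}c_{n,m}e_{n,m}$ supplied by Proposition \ref{p:eigenfunctions}, combined with the strict positivity of $\lambda_{1,1}$, gives $\|\varphi\|_{H^1(Q_{2R}^+;t^b)}\leq C\|\psi\|_{L^2(Q_{2R}^+;t^b)}$. A trace theorem for the weighted Sobolev space $H^1(Q_{2R}^+;t^b)$ into $\mathcal D^{s-1,2}(\R^N)$ (the natural trace space since $b = 1-2(s-1)$) then yields $\|\varphi(\cdot,0)\|_{\mathcal D^{s-1,2}}\leq C\|\psi\|_{L^2(Q_{2R}^+;t^b)}$. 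Taking supremum in the duality identity over such $\psi$ bounds $\|V\|_{L^2(Q_{2R}^+;t^b)}$ by $\|f\|_{L^2(Q_{2R}^+;t^b)} + \|g\|_{(\mathcal D^{s-1,2})^\star}$. To upgrade this $L^2$ control into the full $H^1$ estimate \eqref{eq:cont-dep} on $Q_R^+(x_0)$, I would apply a weighted Caccioppoli-type inequality for the singular/degenerate operator $\dive(t^b\nabla\cdot)$ — available since $|t|^b$ is an $A_2$-Muckenhoupt weight for $b\in(-1,1)$ — controlling $\|\nabla V\|_{L^2(Q_R^+;t^b)}$ by $\|V\|_{L^2(Q_{2R}^+;t^b)}$, $\|f\|_{L^2(Q_{2R}^+;t^b)}$ and $\|g\|_{(\mathcal D^{s-1,2})^\star}$, with constants depending on $N,b,x_0,R$ via cutoff and Poincaré inequalities.

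The technical core I anticipate is the justification of the duality identity, specifically showing that the shell error terms
\[
E_\epsilon := \int V\bigl[2t^b\nabla\eta_\epsilon\cdot\nabla\varphi + \varphi\,\dive(t^b\nabla\eta_\epsilon)\bigr]\,dx\,dt
\]
vanish as $\epsilon\to 0$. A crude estimate produces only $E_\epsilon=O(1)$, since derivatives of $\eta_\epsilon$ scale like $\epsilon^{-1}$ and $\epsilon^{-2}$ over a shell of $t^b$-volume $O(\epsilon)$, balanced against an $L^2(t^b)$ function $V$. To extract the required decay one must exploit the improved vanishing $|\varphi|=O(\mathrm{dist}(\cdot,\partial Q_{2R}^+(x_0)\setminus(B_{2R}'(x_0)\times\{0\})))$ coming from $\varphi\in C^\infty$ with $\varphi=0$ there, integrate by parts to move some $\eta_\epsilon$-derivatives onto $\varphi$ where they can be absorbed into $\|\varphi\|_{C^2}$, and invoke absolute continuity of the weighted $L^2$-norm of $V$ to ensure $\|V\|_{L^2(\mathrm{shell};t^b)}\to 0$.
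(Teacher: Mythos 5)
Your proposal contains two genuine gaps, both of which the paper circumvents by a different architecture: it places the cutoff on $V$ rather than on the test function, and it constructs the $H^1$ solution by Lax--Milgram rather than trying to upgrade an $L^2$ function via Caccioppoli.

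The shell-error argument cannot be closed. You correctly flag that the crude bound gives $E_\epsilon=O(1)$ and propose to gain decay from $|\varphi|=O(\mathrm{dist})$, integration by parts, and absolute continuity of $\|V\|_{L^2(\mathrm{shell};t^b)}$. But the integration by parts you envisage pushes derivatives onto $V$, which is exactly the regularity being proved, so that route is closed. Quantitatively, the normal derivative of $\varphi$ does not vanish on the Dirichlet portion of $\partial Q_{2R}^+(x_0)$, so on a shell of width $\sim\e$ one only has $\nabla\varphi=O(1)$ and $\varphi=O(\e)$, while the shell has $t^b$-measure $O(\e)$; Cauchy--Schwarz then gives $|E_\e|\le C\,\e^{-1/2}\,\|V\|_{L^2(\mathrm{shell};t^b)}$, and absolute continuity of the weighted $L^2$-norm furnishes no rate to defeat the $\e^{-1/2}$. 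In fact $E_\e$ \emph{should not} vanish: if it did, your duality identity would yield $\|V\|_{L^2(Q_{2R}^+;t^b)}\le C\big(\|f\|_{L^2(Q_{2R}^+;t^b)}+\|g\|_{(\mathcal D^{s-1,2}(\R^N))^\star}\big)$ with no $\|V\|_{L^2(\R^{N+1}_+;t^b)}$ on the right, which is false (take $f=0$, $g=0$: $V$ need not be small near the lateral boundary of $Q_{2R}^+(x_0)$, since the Neumann datum is prescribed only on $\Omega$). The paper avoids the shell entirely by applying a \emph{fixed} cutoff $\eta$ to $V$: then $W=\eta V$ solves the localized ultra-weak problem \eqref{eq:problema-ultradebole} on $Q_{2R}^+(x_0)$, the cutoff commutators appear as $L^2$-controlled source terms on the right-hand side (this is exactly where the $\|V\|_{L^2}$ term in \eqref{eq:cont-dep} comes from), and the Poisson solution $\varphi\in C^\infty(\overline{Q_{2R}^+(x_0)})$ is directly an admissible test function on the bounded cylinder --- no $\e$, no shell.

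The Caccioppoli upgrade is circular. A weighted Caccioppoli inequality for $\dive(t^b\nabla V)=t^bf$ with a Neumann datum is obtained by testing the weak formulation against $\eta^2V$, which presupposes $V\in H^1_{\rm loc}$ --- precisely what one must prove. (Your preliminary $L^2$ duality bound is also redundant, since $V\in L^2(\R^{N+1}_+;t^b)$ is a hypothesis.) What is needed, and what the paper supplies, is an a priori construction: Lax--Milgram produces a genuine $Z\in H^1_0(\Gamma_{2R}^+(x_0);t^b)$ solving the localized variational problem \eqref{eq:uniqueness}; integration by parts, now legitimate since $Z\in H^1_0$, shows $Z$ also solves \eqref{eq:problema-ultradebole} in the ultra-weak sense; and uniqueness for \eqref{eq:problema-ultradebole} --- proved by testing with the Poisson solution, which is essentially your duality idea --- forces $Z=W=\eta V$, hence $V\in H^1(Q_R^+(x_0);t^b)$, with \eqref{eq:cont-dep} following from the continuous dependence in Lax--Milgram. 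Your Poisson duality idea is the right key for the uniqueness step, but on its own it cannot manufacture regularity; the Lax--Milgram construction is the missing half.
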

 \begin{proof} Let $x_0\in \Omega$ and let $R>0$ be such that $B_{2R}'(x_0)\subset \Omega$.
Let $\eta_0\in C^\infty([0,\infty))$ be such that $0\le \eta_0\le
1$ in $[0,\infty)$, $\eta_0\equiv 1$ in $[0,R]$ and $\eta_0\equiv
0$ in $[2R,\infty)$. We now define $\eta:\R^{N+1}\to \R$ as
$\eta(x,t):=\eta_0(|x-x_0|)\eta_0(t)$ for any $(x,t)\in
\R^{N+1}_+$ and $W(x,t):=\eta(x,t)V(x,t)$ for any
$(x,t)\in \R^{N+1}_+$.

By \eqref{eq:ultradebole} and the fact that ${\ds \lim_{t\to
0^+} (\eta\varphi)_t(\cdot,t)\equiv 0}$ in $\R^N$ for any function
$\varphi\in C^\infty_c\big(\overline{\R^{N+1}_+}\big)$ satisfying ${\rm
supp}(\varphi(\cdot,0))\subset \Omega$ and ${\ds \lim_{t\to 0^+}
\varphi_t(\cdot,t)\equiv 0}$ in $\Omega$, it turns out that
\begin{multline}  \label{eq:ultradebole-2-0}
\int_{\R^{N+1}_+} W \dive(t^b \nabla \varphi) \, dx\,dt=\int_{\R^{N+1}_+} t^b f\eta\varphi\, dx\,dt+\!\!
\phantom{A}_{ (\mathcal D^{s-1,2}(\R^N))^\star}\Big\langle
g,\eta(x,0)\varphi(x,0)\Big\rangle_{\mathcal
D^{s-1,2}(\R^N)}\\
\qquad -\int_{\R^{N+1}_+} V\left[\dive(t^b \nabla
\eta)\varphi+2t^b \nabla\eta\nabla\varphi\right]\, dx\,dt,
\end{multline}
 where we exploited the identity
$\eta\dive(t^b\nabla\varphi)=\dive (t^b
\nabla(\eta\varphi))-2t^b\nabla\eta\nabla\varphi-\dive
(t^b\nabla\eta)\varphi$.

From this we can deduce that $W$ is a solution of the problem
\begin{align} \label{eq:problema-ultradebole}
\left\{\hskip-10pt
\begin{array}{ll}
&W\in L^2(Q_{2R}^+(x_0);t^b) , \\[8pt]
&{\ds \int_{Q_{2R}^+(x_0)} W \dive(t^b \nabla \varphi) \, dx\,dt=\int_{Q_{2R}^+(x_0)} t^b f\eta\varphi\, dx\,dt } \\[12pt]
&{\ds\,  +\!\!\!\phantom{A}_{ (\mathcal D^{s-1,2}(\R^N))^\star}\Big\langle
g,\eta(x,0)\varphi(x,0)\Big\rangle_{\mathcal
D^{s-1,2}(\R^N)}\!\!
-\!\int_{Q_{2R}^+(x_0)} V\left[\dive(t^b \nabla \eta)\varphi+2t^b \nabla\eta\nabla\varphi\right]\, dx\,dt } \\[15pt]
&\text{for any }\varphi\in C^\infty(\overline{Q_{2R}^+(x_0)})\text{
  such that }
\varphi\equiv 0  \text{ on } \partial Q_{2R}(x_0)\cap \R^{N+1}_+\\[3pt]
&\text{and }
 {\ds \lim_{t\to 0^+} \varphi_t(\cdot,0)\equiv 0}  \text{ in
} B_{2R}'(x_0),
\end{array}\right.
\end{align}
where the duality product has to be interpreted as applied to a
trivial extension of $\eta \varphi$.

We divide the remaining part of the proof into three steps.

{\bf Step 1.} We prove that given $V,g$ as in the statement and
$\eta$ as above, there exists a unique solution of
\eqref{eq:problema-ultradebole}.

 Suppose that $W_1,W_2$ are two of these functions and denote by $W$ their difference. Then we have that
$W\in L^2(Q_{2R}^+(x_0);t^b)$ and it satisfies
\begin{equation} \label{eq:ultradebole-2-omogeneo}
\int_{Q_{2R}^+(x_0)} W \dive(t^b \nabla \varphi) \, dx\,dt=0
\end{equation}
for any $\varphi\in C^\infty(\overline{Q_{2R}^+(x_0)})$ with
$\varphi\equiv 0$ on $\partial Q_{2R}(x_0)\cap \R^{N+1}_+$ and
${\ds \lim_{t\to 0^+} \varphi_t(\cdot,t)\equiv 0}$ in
$B_{2R}'(x_0)$.

Let $\psi\in C^\infty_c(Q_{2R}^+(x_0))$ and let $\varphi$ be the
unique solution of \eqref{eq:Poisson}. We have shown that such a
function $\varphi$ belongs to
$C^\infty(\overline{Q_{2R}^+(x_0)})$. This together with
\eqref{eq:phi-t} implies that $\varphi$ is an admissible test
function in \eqref{eq:ultradebole-2-omogeneo}. This yields
\begin{equation*}
\int_{Q_{2R}^+(x_0)} t^b W \psi \, dx\,dt=-\int_{Q_{2R}^+(x_0)} W
\dive(t^b \nabla \varphi) \, dx\,dt=0
\end{equation*}
for any $\psi\in C^\infty_c(Q_{2R}^+(x_0))$. This shows that
$W\equiv 0$ in $Q_{2R}^+(x_0)$ and completes the proof of Step 1.

{\bf Step 2.} In this step we prove that, for $V,g$ as in the
statement of the proposition and $\eta$ as above, there exists a
unique function $Z\in H^1_0(\Gamma_{2R}^+(x_0);t^b)$ such that
\begin{multline} \label{eq:uniqueness}
\int_{Q_{2R}^+(x_0)} t^b \nabla Z \nabla \varphi \, dx\,dt=-\int_{Q_{2R}^+(x_0)} t^b f\eta\varphi\, dx\,dt\\
-\!\! \phantom{A}_{ (\mathcal D^{s-1,2}(\R^N))^\star}\Big\langle
g,\eta(x,0)\varphi(x,0)\Big\rangle_{\mathcal D^{s-1,2}(\R^N)}
+\int_{Q_{2R}^+(x_0)} V\left[\dive(t^b \nabla \eta)\varphi+2t^b
\nabla\eta\nabla\varphi\right]\, dx\,dt
\end{multline}
for any $\varphi\in H^1_0(\Gamma_{2R}^+(x_0);t^b)$. We recall that
there exists a well-defined continuous trace embedding from
$\mathcal D^{1,2}(\R^{N+1}_+;t^b)$ into $\mathcal
D^{s-1,2}(\R^N)$, see \eqref{eq:C-b}. We observe that for any
$\varphi\in H^1_0(\Gamma_{2R}^+(x_0);t^b)$ the function
$\eta\varphi$, once it is trivially extended outside
$Q_{2R}^+(x_0)$, belongs to $\mathcal D^{1,2}(\R^{N+1}_+;t^b)$. We
denote the trace of $\eta\varphi$ simply by
$\eta(\cdot,0)\varphi(\cdot,0)\in \mathcal D^{s-1,2}(\R^N)$. We
have
\begin{align} \label{eq:Lax1}
\left|\phantom{A}_{ (\mathcal D^{s-1,2}(\R^N))^\star}\Big\langle
g,\eta(x,0)\varphi(x,0)\Big\rangle_{\mathcal D^{s-1,2}(\R^N)}
\right| &\leq \|g\|_{ (\mathcal
D^{s-1,2}(\R^N))^\star}\|\eta(\cdot,0)\varphi(\cdot,0)\|_{\mathcal
D^{s-1,2}(\R^N)}\\
\notag&\leq {\rm const\,}\|g\|_{ (\mathcal
D^{s-1,2}(\R^N))^\star}\|\eta\varphi\|_{\mathcal D^{1,2}(\R^{N+1}_+;t^b)}\\
\notag&\leq {\rm const\,}\|g\|_{ (\mathcal
D^{s-1,2}(\R^N))^\star}\|\varphi\|_{H^1_0(\Gamma_{2R}^+(x_0);t^b)}
\end{align}
for some ${\rm const\,}>0$ depending only on $N,R,b$ and $\eta$.

On the other hand, from the fact that $\eta_t(\cdot,0)\equiv 0$ in
$\Omega$ and by \eqref{eq:PA2-2}, we deduce that
\begin{align} \label{eq:Lax2}
& \left|\int_{Q_{2R}^+(x_0)} V \dive(t^b \nabla \eta)\varphi \,
dx\,dt\right| \le
\left(b\|\eta_t/t\|_{L^\infty(\R^{N+1}_+)}+\|\Delta
\eta\|_{L^\infty(\R^{N+1}_+)}\right) \int_{Q_{2R}^+(x_0)} t^b
|V|\,
|\varphi| \, dx\,dt \\
\notag & \le \left(b\|\eta_t/t\|_{L^\infty(\R^{N+1}_+)}+\|\Delta
\eta\|_{L^\infty(\R^{N+1}_+)}\right)
\|V\|_{L^2(Q_{2R}^+(x_0);t^b)} \tfrac{4\sqrt 2 R}{N+b-1}\,
\|\varphi\|_{H^1_0(\Gamma_{2R}^+(x_0);t^b)}
\end{align}
and
\begin{align} \label{eq:Lax2-bis}
& \left|\int_{Q_{2R}^+(x_0)}t^b f\eta\varphi\, dx\,dt \right|\le \tfrac{4\sqrt 2 R}{N+b-1} \, \|f\|_{L^2(Q_{2R}^+(x_0);t^b)}\, \|\varphi\|_{H^1_0(\Gamma_{2R}^+(x_0);t^b)}
\end{align}
for any $\varphi\in H^1_0(\Gamma_{2R}^+(x_0);t^b)$.

Finally we have
\begin{align} \label{eq:Lax3}
\left|\int_{Q_{2R}^+(x_0)} V t^b\nabla\eta \nabla\varphi \,
dx\,dt\right|\le \|\nabla \eta\|_{L^\infty(\R^{N+1}_+)}
\|V\|_{L^2(Q_{2R}^+(x_0);t^b)}\,
\|\varphi\|_{H^1_0(\Gamma_{2R}^+(x_0);t^b)} \, .
\end{align}
From \eqref{eq:Lax1}-\eqref{eq:Lax3} and the Lax-Milgram Theorem we
deduce that \eqref{eq:uniqueness} admits a unique solution $Z\in
H^1_0(\Gamma_{2R}^+(x_0);t^b)$. An integration by parts yields
\begin{equation*}
\int_{B_{2R}'(x_0)\times (\e,2R)} t^b \nabla Z \nabla \varphi \,
dx\,dt=-\int_{B_{2R}'(x_0)} \e^b Z(x,\e) \varphi_t(x,\e)\,
dx-\int_{B_{2R}'(x_0)\times (\e,2R)} Z \dive(t^b \nabla\varphi) \,
dx\,dt
\end{equation*}
for any $\varphi\in C^\infty(\overline{Q_{2R}^+(x_0)})\cap
H^1_0(\Gamma_{2R}^+(x_0);t^b)$ satisfying $\lim_{t\to 0^+}
\varphi_t(\cdot,t)=0$ uniformly in $B_{2R}'(x_0)$. Passing to the
limit as $\e\to 0^+$ we obtain
\begin{equation} \label{eq:int-parts}
\int_{Q_{2R}^+(x_0)} t^b \nabla Z \nabla \varphi \,
dx\,dt=-\int_{Q_{2R}^+(x_0)} Z \dive(t^b \nabla\varphi) \, dx\,dt \, .
\end{equation}
Actually, one has to prove first \eqref{eq:int-parts} for smooth
functions $Z$ and then, by a density argument, for all functions in
$H^1_0(\Gamma_{2R}^+(x_0);t^b)$. Combining
\eqref{eq:uniqueness} and \eqref{eq:int-parts} we obtain
\begin{align} \label{eq:uniqueness-2}
& \int_{Q_{2R}^+(x_0)} Z \dive(t^b \nabla\varphi) \, dx\,dt=\int_{Q_{2R}^+(x_0)} t^b f\eta\varphi \, dx\,dt \\[8pt]
\notag & \  +\!\!\phantom{A}_{ (\mathcal
D^{s-1,2}(\R^N))^\star}\Big\langle
g,\eta(x,0)\varphi(x,0)\Big\rangle_{\mathcal
D^{s-1,2}(\R^N)}-\int_{Q_{2R}^+(x_0)} V\left[\dive(t^b \nabla
\eta)\varphi+2t^b \nabla\eta\nabla\varphi\right]\, dx\,dt
\end{align}
for any $\varphi\in C^\infty(\overline{Q_{2R}^+(x_0)})\cap
H^1_0(\Gamma_{2R}^+(x_0);t^b)$ with ${\ds \lim_{t\to 0^+}
\varphi_t(\cdot,t)\equiv 0}$ in $B_{2R}'(x_0)$. From this we
deduce that $Z$ is a solution of \eqref{eq:problema-ultradebole}.

{\bf Step 3.} We conclude the proof of the proposition. We have shown
that \eqref{eq:problema-ultradebole} admits a unique solution, hence
$Z$ coincides in $Q_{2R}^+(x_0)$ with the function $W=\eta V$ defined
at the beginning of the proof. In particular
$\eta V\in H^1(Q_{2R}^+(x_0);t^b)$ and, in turn,
$V\in H^1(Q_{R}^+(x_0);t^b)$ being $\eta\equiv 1$ in $Q_{R}^+(x_0)$.
The proof of \eqref{eq:cont-dep} follows from the estimates of Step 2
and standard application of the continuous dependence from the data in
Lax-Milgram Theorem.
\end{proof}

\section{An alternative formulation of problem \eqref{eq:problema}} \label{s:alternative-formulation}

 Inspired by \cite{CS} and \cite{Y}, we introduce an alternative
formulation for problem \eqref{eq:problema}. For any $1<s<2$ as in
\eqref{eq:problema} we define $b:=3-2s\in (-1,1)$. Next we define
$\mathcal D_b$ as the completion of
\begin{equation} \label{eq:space}
\mathcal T=\left\{U\in C^\infty_c(\overline{\R^{N+1}_+}):U_t\equiv 0 \ \text{on} \
\R^N\times\{0\}
\right\}
\end{equation}
with respect to the norm
\[
\|U\|_{\mathcal D_b}=\bigg(\int_{\R^{N+1}_+}t^b |\Delta_b
U(x,t)|^2\,dx\,dt\bigg)^{\!\!1/2}.
\]
Let now $u\in \mathcal D^{s,2}(\R^N)$ be a solution of
\eqref{eq:problema} in the sense given in
\eqref{eq:problema-variazionale} and let $U\in \mathcal D_b$ be a solution of \eqref{eq:aux-prob}.

The existence of a solution for problem \eqref{eq:aux-prob} is
essentially contained in \cite{Y}. For completeness, we provide
here a rigorous formulation for \eqref{eq:aux-prob} and we prove
the existence of a solution for it.

In order to do that, we need to show that it is well defined and continuous the trace
map ${\rm Tr}:\mathcal D_b\to \mathcal D^{s,2}(\R^N)$ so that the
first boundary condition in \eqref{eq:aux-prob} can be interpreted
in the sense of traces. The construction of this trace operator is one of the main goals
of this section.

The second boundary condition in \eqref{eq:aux-prob} is a forced condition coming from the
functional space $\mathcal D_b$ and has the
following meaning: any function $U\in \mathcal D_b$ is the limit
with respect to the norm $\|\cdot\|_{\mathcal D_b}$ of a sequence
$\{U_n\}$ of smooth functions satisfying $\lim_{t\to 0^+} t^b
(U_n)_t(\cdot,t)\equiv 0$ in $\R^N$. In other words, the
boundary condition $\lim_{t \to 0^+ } t^b U_t(\cdot,0)\equiv 0$ on
$\R^N$ is equivalent to the validity of the following integration by
parts formula
\begin{equation}\label{eq:9}
\int_{\R^{N+1}_+} t^b \psi\Delta_b U \,dx\,dt= -\int_{\R^{N+1}_+}
t^b\nabla U \nabla\psi\,dx\,dt,\quad\text{for any }\psi\in
C^\infty_c(\overline{\R^{N+1}_+}).
\end{equation}
The previous arguments show that the minimization problem
\begin{equation} \label{eq:min-prob}
\min\left\{\int_{\R^{N+1}_+} t^b |\Delta_b V|^2 dx\,dt:V\in \mathcal
D_b \ \text{and} \ {\rm Tr}(V)=u \right\}
\end{equation}
is meaningful being the set $\{V\in \mathcal D_b:{\rm
Tr}(V)=u\}\neq \emptyset$ as one can deduce from
Lemma \ref{l:lemmone} and the density of $C^\infty_c(\R^N)$ in $\mathcal D^{s,2}(\R^N)$.
With
a standard procedure it is possible to verify that
\eqref{eq:min-prob} admits a minimizer $U\in \mathcal D_b$ which
is a weak solution of \eqref{eq:aux-prob}.

As mentioned above our main purpose now is to construct the trace map ${\rm Tr}:\mathcal D_b\to \mathcal D^{s,2}(\R^N)$.
We define the weighted Sobolev space $V(0,\infty;t^b)$ as the completion of
\begin{equation}\label{eq:1}
\left\{\varphi\in C^\infty_c([0,\infty)):\varphi'(0)=0 \right\}
\end{equation}
 with respect to the norm
\begin{equation}\label{eq:2}
\|\varphi\|_{V(0,\infty;t^b)}=\bigg(\int_0^{\infty}t^b\left[|\Delta_{b,t}\varphi|^2+
|\varphi'|^2+|\varphi|^2\right]dt\bigg)^{\!\!1/2}
\end{equation}
where $\Delta_{b,t}\varphi=\varphi''+\frac{b}{t}\varphi'$.

\begin{Lemma}\label{l:V0inft}
  Let $V(0,\infty;t^b)$ be the space defined in
  \eqref{eq:1}--\eqref{eq:2}. Then the following facts hold true:
  \begin{itemize}

  \item[(i)] $V(0,\infty;t^b)\subset C^1([0,\infty))$;

  \item[(ii)] $\varphi'',\frac{\varphi'}t\in L^2(0,\infty;t^b)$ and
    $\varphi'(0)=0$ for any $\varphi\in V(0,\infty;t^b)$;

  \item[(iii)] for any $\varphi\in V(0,\infty;t^b)$ there exists a constant $C>0$ independent of $t$ but possibly dependent on $\varphi$ such that
  \begin{equation} \label{eq:est-varphi}
  |\varphi(t)|\le C(1+t^{\frac{3-b}2})      \qquad \text{for any } t\ge 0 \, .
  \end{equation}

  \end{itemize}
\end{Lemma}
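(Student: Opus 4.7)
The plan is to derive a key integration-by-parts identity and combine it with a weighted Hardy inequality in order to bootstrap control of $\varphi''$ and $\varphi'/t$ separately from the single quantity $\Delta_{b,t}\varphi=\varphi''+\tfrac{b}{t}\varphi'$ appearing in the norm. For $\varphi$ in the dense class \eqref{eq:1}, expanding $|\Delta_{b,t}\varphi|^2$ and integrating the cross term $\int_0^\infty 2b\,t^{b-1}\varphi''\varphi'\,dt$ by parts yields
\begin{equation*}
\int_0^\infty t^b|\Delta_{b,t}\varphi|^2\,dt=\int_0^\infty t^b|\varphi''|^2\,dt+b\int_0^\infty t^{b-2}|\varphi'|^2\,dt.
\end{equation*}
The boundary contributions vanish since $\varphi'(0)=0$ forces $\varphi'(t)=O(t)$ near $0$ and $b>-1$ (so $t^{b-1}|\varphi'|^2=O(t^{b+1})\to0$), while compact support handles infinity. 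On the other hand, the weighted Hardy inequality applied to $f=\varphi'$ (which vanishes at $0$) gives $\int_0^\infty t^{b-2}|\varphi'|^2\,dt\le\frac{4}{(1-b)^2}\int_0^\infty t^b|\varphi''|^2\,dt$, valid since $b<1$.

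When $b\in[0,1)$, the identity directly yields $\int_0^\infty t^b|\varphi''|^2\,dt+\int_0^\infty t^{b-2}|\varphi'|^2\,dt\le C_b\,\|\varphi\|^2_{V(0,\infty;t^b)}$. When $b\in(-1,0)$, combining the identity with Hardy gives $\bigl(1-\tfrac{4|b|}{(1-b)^2}\bigr)\int_0^\infty t^b|\varphi''|^2\,dt\le\int_0^\infty t^b|\Delta_{b,t}\varphi|^2\,dt$, and the coefficient equals $\frac{(1+b)^2}{(1-b)^2}>0$ since $b>-1$, so the same estimate holds. By density this extends to every $\varphi\in V(0,\infty;t^b)$, proving the $L^2$ statements of (ii). For the pointwise analysis, Cauchy--Schwarz for $\varphi$ in the dense class gives
\begin{equation*}
|\varphi'(t)|=\biggl|\int_0^t\varphi''(s)\,ds\biggr|\le\biggl(\frac{t^{1-b}}{1-b}\biggr)^{\!1/2}\|\varphi''\|_{L^2(0,\infty;t^b)}.
\end{equation*}

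Applied to differences $\varphi_n-\varphi_m$ of Cauchy approximants, this estimate shows that $\{\varphi_n'\}$ is Cauchy in $C^0([0,\infty))$, hence converges uniformly to a continuous function which necessarily vanishes at $t=0$. Combined with $\varphi_n\to\varphi$ in $L^2_{\rm loc}(0,\infty;t^b)$, writing $\varphi_n(t)=\varphi_n(t_0)+\int_{t_0}^t\varphi_n'(s)\,ds$ for a fixed $t_0>0$ and passing to the limit shows that the representative of $\varphi$ extends continuously to $t=0$, so $\varphi\in C^1([0,\infty))$ with $\varphi'(0)=0$, completing (i) and (ii). Finally, (iii) follows by integrating the pointwise bound: $|\varphi(t)|\le|\varphi(0)|+\int_0^t|\varphi'(s)|\,ds\le|\varphi(0)|+C\,t^{(3-b)/2}$, which gives the claim after absorbing $|\varphi(0)|$ into the constant. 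The main obstacle is transferring the pointwise boundary condition $\varphi'(0)=0$ from the smooth approximants to the limit; the uniform-on-$[0,\infty)$ convergence of $\varphi_n'$ furnished by the Cauchy--Schwarz estimate is exactly what resolves this.
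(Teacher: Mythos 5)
Your proposal is correct and follows essentially the same route as the paper: the same integration-by-parts identity $\int t^b|\Delta_{b,t}\varphi|^2 = \int t^b|\varphi''|^2 + b\int t^{b-2}|\varphi'|^2$, the same weighted Hardy inequality with constant $(1-b)^2/4$, the same case distinction on the sign of $b$ yielding the coefficient $(1+b)^2/(1-b)^2$, and the same Cauchy--Schwarz-plus-density argument to get the H\"older continuity of $\varphi'$, the vanishing $\varphi'(0)=0$, and the growth bound $|\varphi(t)|\le C(1+t^{(3-b)/2})$. The only cosmetic difference is that you obtain the pointwise bound on $\varphi$ by integrating the estimate on $\varphi'$ twice, whereas the paper estimates $|\varphi(t)-\varphi(s)|$ in one shot from $\varphi'/t\in L^2(t^b)$; also, ``Cauchy in $C^0([0,\infty))$'' should read ``Cauchy locally uniformly,'' since your bound $|\varphi_n'(t)-\varphi_m'(t)|\le Ct^{(1-b)/2}\|\varphi_n''-\varphi_m''\|_{L^2(t^b)}$ grows with $t$, but that is all that is needed for $C^1([0,\infty))$.
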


\begin{pf} We divide the proof of the lemma in several steps.

{\bf Step 1.} We prove that for any $\varphi$ as in \eqref{eq:1} we have
\begin{equation} \label{eq:id-1}
\int_0^{\infty}t^b\left[(\Delta_{b,t}\varphi)^2+
(\varphi')^2+\varphi^2\right]dt=\int_0^\infty t^b \left[|\varphi''(t)|^2+bt^{-2}|\varphi'(t)|^2+|\varphi'(t)|^2+|\varphi(t)|^2\right]dt.
\end{equation}
By direct computation we see that
\begin{equation}  \label{eq:id-2}
\int_0^\infty t^b \left(\varphi''(t)+bt^{-1}\varphi'(t)\right)^2 dt=
\int_0^\infty t^b\left[|\varphi''(t)|^2+b^2t^{-2} |\varphi'(t)|^2+2bt^{-1}\varphi'(t)\varphi''(t)\right]dt.
\end{equation}
By integration by parts and taking into account that $\lim_{t\to 0^+} t^{b-1} |\varphi'(t)|^2=0$ since $b>-1$, we obtain
\begin{equation} \label{eq:id-3}
\int_0^\infty t^{b-1} \varphi'(t)\varphi''(t)\, dt=-\frac{b-1}2\int_0^\infty t^{b-2} |\varphi'(t)|^2 dt \, .
\end{equation}
Combining \eqref{eq:id-2} and \eqref{eq:id-3} we obtain
\begin{equation*}
\int_0^\infty t^b \left(\varphi''(t)+bt^{-1}\varphi'(t)\right)^2 dt=\int_0^\infty t^b |\varphi''(t)|^2 dt+b\int_0^\infty t^{b-2} |\varphi'(t)|^2 dt
\end{equation*}
and the proof of Step 1 follows.

{\bf Step 2.} We prove that for any $\varphi$ as in \eqref{eq:1} we have
\begin{equation} \label{eq:id-5}
\frac{(b-1)^2}4 \int_0^\infty t^{b-2} |\varphi'(t)|^2 dt\le \int_0^\infty t^b |\varphi''(t)|^2 dt \, .
\end{equation}
Indeed, using \eqref{eq:id-3} we have
\begin{align*}
0 & \le \int_0^\infty \left(t^{\frac b2}\varphi''(t)+\frac{b-1}2 t^{\frac b2-1}\varphi'(t)\right)^2 dt \\
& =\int_0^\infty t^b |\varphi''(t)|^2 dt+\frac{(b-1)^2}4 \int_0^\infty t^{b-2}|\varphi'(t)|^2 dt+(b-1)\int_0^\infty t^{b-1}\varphi'(t)\varphi''(t)\, dt \\
& =\int_0^\infty t^b |\varphi''(t)|^2 dt-\frac{(b-1)^2}4 \int_0^\infty t^{b-2}|\varphi'(t)|^2 dt \, .
\end{align*}

{\bf Step 3.} We prove that the norm in \eqref{eq:2} and the norm
\begin{equation*}
\varphi\mapsto \left(\int_0^\infty t^b \left(|\varphi''(t)|^2+|\varphi'(t)|^2+|\varphi(t)|^2\right)dt\right)^{1/2}
\end{equation*}
are equivalent on the space defined in \eqref{eq:1}.

If $b\in [0,1)$ the equivalence of the two norms follows by \eqref{eq:id-1} and \eqref{eq:id-5}.

If $b\in (-1,0)$ one of the two estimate is trivial and for the other we proceed in this way:
\begin{align*}
\int_0^\infty t^b \left(|\varphi''(t)|^2+bt^{-2}|\varphi'(t)|^2\right)dt\ge \left(1+\frac{4b}{(b-1)^2}\right)\int_0^\infty t^b |\varphi''(t)|^2 dt
=\left(\frac{b+1}{b-1}\right)^2 \int_0^\infty t^b |\varphi''(t)|^2 dt
\end{align*}
where the above inequality follows from \eqref{eq:id-5} and the fact that $b<0$.

{\bf Step 4.} In this step we complete the proof of the lemma. From Step 2 and Step 3 and a density argument we deduce that
\begin{equation*}
\int_0^\infty t^b |\varphi''(t)|^2 dt\le C\|\varphi\|_{V(0,\infty;t^b)}^2 \quad \text{and} \quad \int_0^\infty t^{b-2} |\varphi'(t)|^2 dt\le C\|\varphi\|_{V(0,\infty;t^b)}^2
\end{equation*}
for any $\varphi\in V(0,\infty;t^b)$, where $C$ is a positive constant independent of $\varphi$. This proves the first two assertions in (ii).

For any $\varphi$ as in \eqref{eq:1} and $t>s>0$ we have, for some positive constant $C$ independent of $s$,$t$ and $\varphi$,
\begin{align} \label{eq:holder}
|\varphi(t)-\varphi(s)|&=\left|\int_s^t \tau^{\frac b2-1}\varphi'(\tau) \tau^{1-\frac b2}d\tau\right|\le
\left(\int_s^t \tau^{b-2}|\varphi'(\tau)|^2d\tau\right)^{1/2} \left(\int_s^t \tau^{2-b}d\tau\right)^{1/2} \\
\notag &\le C \|\varphi\|_{V(0,\infty;t^b)}\left|t^{3-b}-s^{3-b}\right|^{1/2}
\end{align}
where the last inequality follows from Step 2 and Step 3. By density we have that estimate \eqref{eq:holder} actually holds for any $\varphi\in V(0,\infty;t^b)$.
This proves that any $\varphi\in V(0,\infty;t^b)$ is continuous in $[0,+\infty)$ being $3-b>0$. Moreover if we put $s=0$ in \eqref{eq:holder} we obtain
\begin{equation} \label{eq:holder-2}
\left|\frac{\varphi(t)-\varphi(0)}t\right|\le C
\|\varphi\|_{V(0,\infty;t^b)} t^{\frac{1-b}2}
\qquad \text{and} \qquad |\varphi(t)|\le |\varphi(0)|+C \|\varphi\|_{V(0,\infty;t^b)} t^{\frac{3-b}2}.
 \end{equation}
Since $b<1$, from the first estimate in \eqref{eq:holder-2} we deduce that $\varphi$ is differentiable at $0$ and $\varphi'(0)=0$ so that the proof of (ii) is complete. The second estimate in \eqref{eq:holder-2} gives \eqref{eq:est-varphi} and proves (iii).

It remains to complete the proof of (i).
For any $\varphi$ as in \eqref{eq:1} and $t>s>0$ we have,
 for some positive constant $C$ independent of $s$,$t$ and $\varphi$,
\begin{align} \label{eq:holder-3}
|\varphi'(t)-\varphi'(s)|&=\left|\int_s^t \tau^{\frac b2}\varphi''(\tau) \tau^{-\frac b2}d\tau\right|\le
\left(\int_s^t \tau^{b}|\varphi''(\tau)|^2d\tau\right)^{1/2} \left(\int_s^t \tau^{-b}d\tau\right)^{1/2} \\
\notag &\le C \|\varphi\|_{V(0,\infty;t^b)}\left|t^{1-b}-s^{1-b}\right|^{1/2}
\end{align}
where the last inequality follows from Step 2 and Step 3. By density we have that estimate \eqref{eq:holder-3} actually holds for any $\varphi\in V(0,\infty;t^b)$. Since $b<1$, we deduce that $\varphi'$ is continuous in $[0,\infty)$ and this completes the proof of (i).
\end{pf}

Thanks to Lemma \ref{l:V0inft} we can now prove the existence of a classical solution of \eqref{eq:aux-prob} when the datum $u$ is sufficiently smooth.

\begin{Lemma} \label{l:lemmone} Let $u\in C^\infty_c(\R^N)$. Then
\eqref{eq:aux-prob} admits a classical solution $U\in C^2(\overline{\R^{N+1}_+})$. Moreover
$U\in \mathcal D_b$ and the following assertions hold true:
\begin{itemize}
\item[(i)] there exists a constant $C_b>0$ depending only on $b$
such that
\begin{equation} \label{eq:main-isometry}
\|U\|_{\mathcal D_b}=C_b \|u\|_{\mathcal D^{s,2}(\R^N)} \, ;
\end{equation}
\item[(ii)] for any $V\in C^\infty_c(\overline{\R^{N+1}_+})$ such that $V(\cdot,0)\equiv u$ and
$V_t(\cdot,0)\equiv 0$ in $\R^N$, we have
\begin{equation} \label{eq:minimale}
\|U\|_{\mathcal D_b}\le \|V\|_{\mathcal D_b}  \, .
\end{equation}
\end{itemize}
\end{Lemma}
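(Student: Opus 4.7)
My plan is to construct $U$ explicitly by partial Fourier transform in the horizontal variable $x$, and then to establish (i) and (ii) by Plancherel's identity and integration by parts, respectively. Passing to the Fourier transform in $x$ (denoted by a tilde), the equation $\Delta_b^2 U=0$ reduces, for each $\xi\in\R^N$, to the fourth order ODE $L_\xi^2\tilde U(\xi,\cdot)=0$ in $t$, where $L_\xi\varphi=\varphi''+\tfrac b t\varphi'-|\xi|^2\varphi$, supplemented by $\tilde U(\xi,0)=\hat u(\xi)$, $\lim_{t\to 0^+}t^b\partial_t\tilde U(\xi,t)=0$ and decay as $t\to+\infty$. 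For $|\xi|>0$, the substitution $\tilde U(\xi,t)=t^a\Phi(|\xi|t)$ with $a=\tfrac{1-b}{2}=s-1\in(0,1)$ turns $L_\xi\psi=0$ into the modified Bessel equation of order $a$, whose unique (up to scalar) solution bounded at infinity is $t^aK_a(|\xi|t)$; variation of parameters then produces a second bounded-at-infinity solution of the squared equation lying outside $\ker L_\xi$. Imposing the two conditions at $t=0$ via the small-$t$ series of $K_a$ pins down the two free coefficients uniquely as multiples of $\hat u(\xi)$; in particular, setting $\tilde V:=L_\xi\tilde U$, the relation $L_\xi\tilde V=0$ together with boundedness at infinity forces $\tilde V(\xi,t)=\gamma(\xi)\,t^aK_a(|\xi|t)$, and homogeneity in $\xi$ fixes $\gamma(\xi)=c\,|\xi|^{s+1}\hat u(\xi)$ for an explicit constant $c$ depending only on $b$. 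The inverse Fourier transform defines $U$, and the $C^2$-regularity up to $\{t=0\}$ together with the pointwise validity of both boundary conditions follow from the series of $K_a$ near $0$, its exponential decay at infinity, and the Schwartz character of $\hat u$, which legitimise differentiation under the integral.

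To prove (i), Plancherel's identity in $x$ yields
\[
\|U\|_{\mathcal D_b}^2=\int_{\R^N}|\gamma(\xi)|^2\int_0^\infty t^{b+2a}K_a(|\xi|t)^2\,dt\,d\xi;
\]
the rescaling $\tau=|\xi|t$ produces the factor $|\xi|^{-(b+2a+1)}=|\xi|^{-2}$ (since $b+2a=1$), so that $\|U\|_{\mathcal D_b}^2=C_b^2\int_{\R^N}|\xi|^{2s}|\hat u(\xi)|^2\,d\xi$ with $C_b^2=c^2\int_0^\infty\tau K_a(\tau)^2\,d\tau<\infty$. Membership $U\in\mathcal D_b$ then follows from the finiteness just obtained by approximating $U$ in the norm $\|\cdot\|_{\mathcal D_b}$ through suitable cut-offs in both $x$ and $t$ (chosen so as to preserve the vanishing of the $t$-derivative at $\{t=0\}$) followed by mollification.

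For (ii), pick $V\in C^\infty_c(\overline{\R^{N+1}_+})$ with $V(\cdot,0)=u$ and $V_t(\cdot,0)\equiv 0$, set $W:=V-U$ so that $W(\cdot,0)\equiv 0$ and $W_t(\cdot,0)\equiv 0$, and use the identity $t^b\Delta_b\psi=\mathrm{div}(t^b\nabla\psi)$ to integrate by parts twice:
\[
\int_{\R^{N+1}_+} t^b\Delta_b U\,\Delta_b W\,dx\,dt=-\int_{\R^{N+1}_+} t^b\nabla(\Delta_b U)\cdot\nabla W\,dx\,dt=\int_{\R^{N+1}_+} t^b\Delta_b^2U\cdot W\,dx\,dt=0.
\]
All boundary contributions vanish: at $t=0$, in the first step thanks to $\lim_{t\to 0^+}t^bU_t\equiv 0$ together with the local boundedness of $\Delta_bW$ near $\{t=0\}$ (a consequence of $W_t=O(t)$), and in the second step thanks to $W(\cdot,0)\equiv 0$; at infinity, thanks to the compact support of $V$ and the decay of $U$. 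Expanding $\|V\|_{\mathcal D_b}^2=\|U\|_{\mathcal D_b}^2+2\int t^b\Delta_bU\,\Delta_bW\,dx\,dt+\|W\|_{\mathcal D_b}^2$ then yields (ii). The main obstacle I anticipate is the first paragraph: explicitly solving the fourth order ODE and making the $\xi$-dependence of its coefficients transparent enough to identify the Bessel integral responsible for the identity in (i) requires some careful bookkeeping with modified Bessel function identities both near $t=0$ and at $t=+\infty$.
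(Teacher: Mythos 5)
Your Fourier-in-$x$ reduction is the same as the paper's, but you realize the one-variable profile explicitly via modified Bessel functions $K_a$ (after the substitution $t^a\Phi(|\xi|t)$, $a=s-1$), whereas the paper obtains it variationally: it writes $\widehat U(\xi,t)=\widehat u(\xi)\phi(|\xi|t)$ and constructs $\phi$ as the minimizer of $J(\varphi)=\int_0^\infty t^b(\Delta_{b,t}\varphi-\varphi)^2\,dt$ over $\{\varphi\in V(0,\infty;t^b):\varphi(0)=1\}$. Your Bessel route is more explicit and directly produces $C_b^2=c^2\int_0^\infty\tau K_a(\tau)^2\,d\tau$; the variational route avoids the bookkeeping with $K_a$-series you anticipate and, crucially, hands over for free the Euler--Lagrange identity $\int_0^\infty t^b[\Delta_{b,t}\phi-\phi][\Delta_{b,t}\psi-\psi]\,dt=0$ for all $\psi$ with $\psi(0)=\psi'(0)=0$, which is exactly the orthogonality behind (ii). Your Plancherel computation for (i) is correct as far as it goes.

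The genuine gap is in (ii). You integrate by parts twice in $\R^{N+1}_+$, and the justification does not hold up as written. First, the boundary analysis is misattributed: moving derivatives off $W$ (as your displayed chain does) produces the boundary term $\int_{\R^N}\Delta_bU(x,0)\,\lim_{t\to 0^+}t^bW_t(x,t)\,dx$, which vanishes because $W_t(\cdot,0)\equiv 0$ forces $t^bW_t=O(t^{b+1})\to 0$; the facts you cite instead ($\lim_{t\to0^+}t^bU_t\equiv 0$ and boundedness of $\Delta_bW$) would be relevant if you moved derivatives off $U$, but that route gives the intermediate integrand $t^b\nabla U\cdot\nabla(\Delta_bW)$, not $t^b\nabla(\Delta_bU)\cdot\nabla W$. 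Second, and more seriously, you never establish that the intermediate integral $\int t^b\nabla(\Delta_bU)\cdot\nabla W\,dz$ makes sense: this requires $V:=\Delta_bU\in H^1(t^b)$ up to $\{t=0\}$ (a nontrivial regularity statement that the paper only obtains through Proposition~\ref{p:A2} and the eigenfunction machinery of Section~\ref{s:preliminary}), and it requires the non-compactly-supported piece $t^b\nabla V\cdot\nabla U$ of the integrand to be integrable on all of $\R^{N+1}_+$, for which you need quantitative decay of $U,V$ and their first derivatives that you assert but never prove. The paper sidesteps all of this by proving the orthogonality $\int t^b\Delta_bU\,\Delta_b\Phi\,dz=0$ entirely on the Fourier side, inserting the one-dimensional Euler--Lagrange identity for $\phi$ into the Plancherel/Fubini computation~\eqref{eq:U-VAR} for $\Phi\in C^\infty_c(\overline{\R^{N+1}_+})$ with $\Phi(\cdot,0)=\Phi_t(\cdot,0)\equiv 0$, and only afterwards passing to the limit $\Phi=V-U$ by density; no boundary term at $\{t=0\}$ and no third derivative of $U$ ever appears. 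If you wish to keep the real-space argument, you must first extract from your Bessel formula (e.g.\ using $\frac{d}{d\tau}[\tau^aK_a(\tau)]=-\tau^aK_{1-a}(\tau)$ to control $t^b\partial_t V$ near $t=0$, and exponential decay of $K_a$ plus the Schwartz character of $\widehat u$ for decay at infinity) exactly the regularity and integrability facts that make both integrations by parts licit. The same remark applies to your one-line claim that $U\in\mathcal D_b$ ``follows by cut-offs and mollification'': the cut-off error terms involve $U/|z|^2$ and $|\nabla U|/|z|$ weighted by $t^b$, whose $L^2$-summability near $z=0$ uses $N>2s$ and must be checked, as in the paper's Step 5 via~\eqref{eq:summability}.
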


\begin{proof}
Given a function $u\in C^\infty_c(\R^N)$ we aim to solve problem
\eqref{eq:aux-prob} by using the Fourier transform. Writing the
equation $\Delta_b^2 U=0$ as $\Delta_x^2 U+2\Delta_{b,t}\Delta_x
U+\Delta_{b,t}^2 U=0$ and applying the Fourier transform with
respect to the $x$ variable to both sides of the equation, we
formally obtain
\begin{equation} \label{eq:transformed-eq}
|\xi|^4\widehat{U}-2|\xi|^2\Delta_{b,t}\widehat{U}+\Delta_{b,t}^2\widehat{U}=0 \, .
\end{equation}
Following \cite{Y}, we look for a solution of \eqref{eq:transformed-eq} in the form $\widehat U(\xi,t)=\widehat u(\xi)\phi(|\xi|t)$ with $\phi(0)=1$ and $\phi'(0)=0$. From \eqref{eq:transformed-eq}, $\phi$ has to be a solution of the equation
\begin{equation} \label{eq:phi-equation}
\Delta_{b,t}^2\phi-2\Delta_{b,t}\phi+\phi=0 \, .
\end{equation}
We now divide the rest of the proof in several steps.

{\bf Step 1.} In this step we prove the existence of a solution to equation \eqref{eq:phi-equation} in $V(0,\infty;t^b)$.  We introduce the functional
$J:V(0,\infty;t^b)\to \R$ defined as
\[
J(\varphi)=\int_{0}^{\infty}t^b\left[\left(\Delta_{b,t} \varphi\right)^2+2(\varphi')^2+\varphi^2\right]\,dt=
\int_0^{\infty} t^b (\Delta_{b,t} \varphi-\varphi)^2 dt \, .
\]
Thanks to Lemma \ref{l:V0inft}, it is possible to consider the minimization problem
\begin{equation*}
\min\{J(\varphi):\varphi\in V(0,\infty;t^b),\ \varphi(0)=1\} \, .
\end{equation*}
Since the functional $J$ is clearly coercive with respect to the
norm of $V(0,\infty;t^b)$, the minimization problem admits a weak
solution $\phi$ which solves equation \eqref{eq:phi-equation} and
satisfies the initial conditions $\phi(0)=1$ and $\phi'(0)=0$.
In particular we have
\begin{equation} \label{eq:var-phi}
\int_0^{\infty} t^b [\Delta_{b,t}\phi(t)-\phi(t)] [\Delta_{b,t}
\psi(t)-\psi(t)] \, dt=0
\end{equation}
for any $\psi\in V(0,\infty;t^b)$ such that $\psi(0)=\psi'(0)=0$.

{\bf Step 2.} We prove that $\phi\in C^2([0,\infty))$. If we put $\zeta(t):=\Delta_{b,t}\phi(t)-\phi(t)\in L^2(0,\infty;t^b)$, by \eqref{eq:var-phi}, we see that $\zeta$ is a distributional solution of the equation
\begin{equation} \label{eq:zeta}
\Delta_{b,t} \zeta-\zeta=0 \qquad \text{in } (0,\infty) \, .
\end{equation}
We claim that $\zeta\in C^\infty(0,\infty)$ and it solves \eqref{eq:zeta} in a classical sense.

Indeed, if we put $F(t):=\int_1^t s^b \zeta(s)\, ds$ then $F\in
H^1_{{\rm loc}}(0,\infty)$ being $\zeta\in L^2(0,\infty;t^b)$ and
moreover $F'(t)=t^b\zeta(t)$ in the sense of distributions.

Hence, by \eqref{eq:zeta}, $(t^b\zeta'(t)-F(t))'=0$ in the sense of
distributions so that $t^b \zeta'(t)=F(t)+c$ in $(0,\infty)$. This
implies $\zeta'\in H^1_{{\rm loc}}(0,\infty)$ and in particular
$\zeta\in H^2_{{\rm loc}}(0,\infty)$. Now, with a bootstrap
procedure which makes use of \eqref{eq:zeta}, we conclude that
$\zeta\in C^\infty(0,\infty)$.

Now we claim that $\zeta\in C^0([0,\infty))$. For any $t>s>0$, by \eqref{eq:zeta}, we have
\begin{align} \label{eq:zeta-C0}
 & \left|t^b\zeta'(t)-s^b\zeta'(s)\right|=\left|\int_s^t (\tau^b \zeta(\tau))'d\tau\right|=\left|\int_s^t \tau^b \Delta_{b,\tau}\zeta(\tau)\, d\tau\right| \\
\notag & \quad \le \left(\int_s^t \tau^b |\Delta_{\tau,b} \zeta(\tau)|^2 d\tau\right)^{1/2}\left(\int_s^t \tau^b d\tau\right)^{1/2}
=\tfrac 1{\sqrt{b+1}}\left(\int_s^t \tau^b |\zeta(\tau)|^2 d\tau\right)^{1/2}\left|t^{b+1}-s^{b+1}\right|^{1/2} \\
\notag & \quad \le \tfrac 1{\sqrt{b+1}} \|\zeta\|_{L^2(0,\infty;t^b)} \left|t^{b+1}-s^{b+1}\right|^{1/2} \, .
\end{align}
Since $b>-1$, choosing $t=1$ in \eqref{eq:zeta-C0} and letting $s\to 0^+$, we infer that $s^b \zeta'(s)=O(1)$ as $s\to 0^+$ and, in turn, $\zeta'(s)=O(s^{-b})$ as $s\to 0^+$. This proves that $\zeta'$ is integrable in a right neighborhood of $0$ and hence $\zeta$ is continuous at $0$, thus proving the claim.

Next, we can proceed by completing the proof of Step 2. By
\begin{equation} \label{eq:equazione-phi}
(t^b\phi'(t))'=t^b[\phi(t)+\zeta(t)]
\end{equation}
we deduce that $\phi\in C^\infty(0,\infty)$. Moreover, integrating \eqref{eq:equazione-phi},
for any $0<s<t$, we obtain
\begin{equation} \label{eq:equazione-phi-2}
t^b \phi'(t)-s^b\phi'(s)=\int_s^t \tau^b[\phi(\tau)+\zeta(\tau)]\, d\tau \, .
\end{equation}
By Lemma \ref{l:V0inft} (i), the continuity of $\zeta$ and the fact that $b>-1$, it follows
\begin{equation*}
\lim_{s\to 0^+} s^b \phi'(s)=t^b \phi'(t)-\int_0^t \tau^b[\phi(\tau)+\zeta(\tau)]\, d\tau\in \R \, .
\end{equation*}
This means that there exists $L\in \R$ such that
$\lim_{t\to 0^+} t^b \phi'(t)=L$. We observe that $L=0$ since
otherwise we would have
\begin{equation*}
t^b \frac{(\phi'(t))^2}{t^2}\sim L^2 t^{-b-2} \qquad \text{as } t\to 0^+
\end{equation*}
and hence $t^b \frac{(\phi'(t))^2}{t^2}\not\in L^1(0,R)$ for any $R>0$, in contradiction with Lemma \ref{l:V0inft} (ii).

Therefore, letting $s\to 0^+$ in \eqref{eq:equazione-phi-2}, we infer that
\begin{equation} \label{eq:phi'}
\phi'(t)=t^{-b} \int_0^t \tau^b [\phi(\tau)+\zeta(\tau)]\, d\tau
\end{equation}
and, in turn, by de L'H\^opital rule, we obtain
\begin{equation*}
\lim_{t\to 0^+} \frac{\phi'(t)}t=\lim_{t\to 0^+} \frac{\int_0^t \tau^b [\phi(\tau)+\zeta(\tau)]\, d\tau}{t^{b+1}}=\frac{\phi(0)+\zeta(0)}{b+1} \, .
\end{equation*}
Finally, by \eqref{eq:equazione-phi}, we have that
\begin{equation*}
\lim_{t\to 0^+} \phi''(t)=\lim_{t\to 0^+} \left(-b\frac{\phi'(t)}t+\phi(t)+\zeta(t)\right)=\frac{1}{b+1}\, [\phi(0)+\zeta(0)] \, .
\end{equation*}
This completes the proof of Step 2.

{\bf Step 3.} We show that the function $U$, defined in such a way
that $\widehat U(\xi,t)=\widehat u(\xi)\phi(|\xi|t)$ with $\phi$
as in Step 1, satisfies $U\in C^2(\overline{\R^{N+1}_+})$,
$U_t(\cdot,0)\equiv 0$ in $\R^N$ and it solves \eqref{eq:aux-prob}
in a classical sense.

First, we observe that, by Lemma \ref{l:V0inft} (iii) and
\eqref{eq:zeta-C0}, $\phi,\zeta'$ and, in turn also $\zeta$, have at
most a polynomial growth at $+\infty$. Hence, by \eqref{eq:phi'} also
$\phi'$ has at most a polynomial growth at $+\infty$. Finally, from
the equation $\Delta_{b,t}\phi=\phi+\zeta$, we also deduce that
$\phi''$ has at most a polynomial growth at $+\infty$.

Therefore, since $\phi\in C^2([0,\infty))$ and $\widehat u\in \mathcal S(\R^N)$, with $\mathcal S(\R^N)$ the space of rapidly decreasing $C^\infty(\R^N)$ functions, by the Dominated Convergence Theorem, one can deduce that the map $t\mapsto \widehat u(\xi)\phi(|\xi|t)$ belongs to the space of vector valued functions $C^2([0,\infty);L^2_\C(\R^N;(1+|\xi|^2)^\gamma))$ for any $\gamma\ge 0$. Here $L^2_\C(\R^N;(1+|\xi|^2)^\gamma)$ denotes the weighted complex $L^2$-space. This proves that the map $t\mapsto U(x,t)$ belongs to the space
$C^2([0,\infty);H^\gamma(\R^N))$ for any $\gamma\ge 0$.
From this we deduce that $U\in C^2(\overline{\R^{N+1}_+})$. Since
$$
U_t(x,t)=\frac 1{(2\pi)^{N/2}} \int_{\R^N} e^{i\xi\cdot x} \, \widehat u(\xi) \, |\xi| \, \phi'(|\xi|t)\, d\xi
$$
and $\phi'(0)=0$ it follows that $U_t(x,0)=0$ for any $x\in \R^N$. By construction, we also have that $U$ is a classical solution of \eqref{eq:aux-prob}.

{\bf Step 4.} We prove that $\Delta_b U\in L^2(\R^{N+1}_+;t^b)$ and
\begin{equation} \label{eq:isometry}
\int_{\R^{N+1}_+} t^b |\Delta_b U(x,t)|^2 dx\,dt=J(\phi) \int_{\R^N} |\xi|^{2s} |\widehat u(\xi)|^2 d\xi  \, .
\end{equation}
By direct computation we see that
\begin{equation*}
|\Delta_{b,t} \widehat U(\xi,t)-|\xi|^2 \widehat U(\xi,t)|^2=|\xi|^4 |\widehat u(\xi)|^2 [\Delta_{b,t} \phi(|\xi|t)-\phi(|\xi|t)]^2 \, .
\end{equation*}
After integration, a change of variable with respect to $t$ and Fubini-Tonelli Theorem, we obtain
\begin{align} \label{eq:J-phi}
 \int_{\R^{N+1}_+} t^b |\Delta_{b,t} \widehat U(\xi,t)-|\xi|^2 \widehat U(\xi,t)|^2 d\xi dt
&=\int_{\R^{N+1}_+} |\xi|^{3-b} |\widehat u(\xi)|^2 t^b [\Delta_{b,t} \phi(t)-\phi(t)]^2 d\xi dt \\
\notag & =J(\phi) \int_{\R^N} |\xi|^{2s} |\widehat u(\xi)|^2 d\xi \, .
\end{align}
Since $\widehat u\in \mathcal S(\R^N)$, the last integral is finite
and hence, by Fubini-Tonelli Theorem, for almost every $t\in
(0,\infty)$ the map $\xi\mapsto  \Delta_{b,t} \widehat
U(\xi,t)-|\xi|^2 \widehat U(\xi,t)=\widehat{\Delta_b U}(\xi,t)$
belongs to the complex space $L^2_{\C}(\R^N)$. Hence by Plancherel Theorem also the
map $x\mapsto \Delta_b U(x,t)$ belongs to $L^2(\R^N)$ for almost
every $t\in (0,\infty)$. Moreover
\begin{equation*}
\int_{\R^N} |\Delta_b U(x,t)|^2 dx=\int_{\R^N} |\Delta_{b,t} \widehat U(\xi,t)-|\xi|^2 \widehat U(\xi,t)|^2 d\xi \qquad \text{for almost every } t\in (0,\infty) \, .
\end{equation*}
Multiplying this identity by $t^b$, integrating in $(0,\infty)$ with
respect to the variable $t$ and applying Fubini-Tonelli Theorem we
deduce that $\Delta_b U\in L^2(\R^{N+1}_+;t^b)$. Moreover
\eqref{eq:isometry} follows by exploiting \eqref{eq:J-phi}.

{\bf Step 5.} We prove that $U\in \mathcal D_b$.

We have to prove that $U$ can be approximated with functions in
$\mathcal T$ with respect to the norm $\|\cdot\|_{\mathcal D_b}$.
Here $\mathcal T$ is the space defined in \eqref{eq:space}.

Combining Plancherel Theorem with the fact that $\widehat u\in \mathcal S(\R^N)$ and $\phi\in V(0,\infty;t^b)$ one can verify that
$U\in L^2(\R^{N+1}_+;t^b)$ and $\nabla U\in L^2(\R^{N+1}_+;t^b)$. Therefore since $U\in C^2(\overline{\R^{N+1}_+})$ we also have that
\begin{equation} \label{eq:summability}
\frac{U}{|x|^2+t^2}\in L^2(\R^{N+1}_+;t^b) \qquad \text{and}
\qquad \frac{|\nabla U|}{\sqrt{|x|^2+t^2}}\in L^2(\R^{N+1}_+;t^b)
\, .
\end{equation}
Define $U_n(x,t)=\eta\left(\frac{|x|}n\right)\eta\left(\frac t n\right)U(x,t)$ where $\eta\in C^\infty([0,\infty))$, $\eta\equiv 1$ in $[0,1]$ and
$\eta\equiv 0$ in $[2,\infty)$.
We prove that
\begin{equation} \label{eq:pass-lim}
\int_{\R^{N+1}_+} t^b |\Delta_b(U_n-U)|^2 dx\,dt\to 0 \qquad \text{as } n\to +\infty \, .
\end{equation}
By direct computation one sees that
\begin{align} \label{eq:truncation}
\Delta_b U_n(x,t)&=\eta\left(\tfrac t n\right)\Theta\left(\tfrac xn\right)\Delta_b U(x,t)
+\eta\left(\tfrac tn\right)\left[\tfrac 1{n^2}\Delta_x\Theta\left(\tfrac xn\right)U(x,t)+\tfrac 2n\nabla_x\Theta\left(\tfrac xn\right)\nabla_x U(x,t)\right]\\
\notag & +\Theta\left(\tfrac xn\right)\left[\tfrac 1{n^2}\eta''\left(\tfrac tn\right)U(x,t)+\tfrac 2n \eta'\left(\tfrac tn\right)U_t(x,t)
+\tfrac bt\, \tfrac 1n \eta'\left(\tfrac tn\right)U(x,t)\right]
\end{align}
where we put $\Theta(x)=\eta(|x|)$. Then, we observe that there exists a positive constant $C$ independent of $x$, $t$ and $n$, such that
\begin{align} \label{eq:truncation-2}
& t^b \left|\eta\left(\tfrac t n\right)\Theta\left(\tfrac xn\right)\Delta_b U(x,t)\right|^2\le t^b \left|\Delta_b U(x,t)\right|^2 \, , \quad
 \tfrac{t^b}{n^4}\left|\eta\left(\tfrac tn\right)\Delta_x\Theta\left(\tfrac xn\right)U(x,t)\right|^2\le Ct^b \tfrac{U^2(z)}{|z|^4}\, , \\
\notag & \tfrac{4t^b}{n^2}\left|\eta\left(\tfrac t n\right)\nabla_x\Theta\left(\tfrac xn\right)\nabla_x U(x,t)\right|^2\le Ct^b \tfrac{|\nabla U(z)|^2}{|z|^2} \, , \quad
 \tfrac{t^b}{n^4}\left|\Theta\left(\tfrac xn\right)\eta''\left(\tfrac tn\right)U(x,t)\right|^2\le Ct^b \tfrac{U^2(z)}{|z|^4} \, , \\
\notag & \tfrac{4t^b}{n^2} \left|\Theta\left(\tfrac xn\right)\eta'\left(\tfrac tn\right)U_t(x,t)\right|^2 \le C t^b \tfrac{|\nabla U(z)|^2}{|z|^2} \, , \quad
 \tfrac{b^2 t^b}{n^4}\left|\Theta\left(\tfrac xn\right)\tfrac{\eta'(t/n)}{t/n}U(x,t)\right|^2\le Ct^b \tfrac{U^2(z)}{|z|^4} \, ,
\end{align}
since $|z|\le \sqrt 8 n$ for any $z\in {\rm supp}\left(\eta\left(\tfrac tn\right)\Theta\left(\tfrac xn\right)\right)$ where we put $z=(x,t)\in \R^{N+1}$.

By \eqref{eq:summability}, \eqref{eq:truncation}, \eqref{eq:truncation-2} and the Dominated Convergence Theorem, \eqref{eq:pass-lim} follows.

This shows that for any $\e>0$ there exists a function
$V\in C^2_c(\overline{\R^{N+1}_+})$ such that
$$
\int_{\R^{N+1}_+} t^b |\Delta_b(U-V)|^2 dx\,dt<\e \, .
$$
By Step 3 and the truncation argument introduced above, we deduce that we can choose $V$ in such a way that $V_t(\cdot,0)\equiv 0$ in $\R^N$.

A mollification argument allows us to approximate, with respect to
the norm $\|\cdot\|_{\mathcal D_b}$, the function $V$ found above,
with a $C^\infty$ compactly supported function $W$ satisfying
$W_t(\cdot,0)\equiv 0$ in $\R^N$. Indeed, one can introduce a
sequence of mollifiers $\{\rho_n\}$ and still denote by $V$ the
even extension with respect to the variable $t$ to the whole
$\R^{N+1}$. This extension satisfies
  $V\in C^2_c(\R^{N+1})$ since $V_t(x,0)=0$ for any $x\in \R^N$. We choose the functions $\rho_n$ even with respect to the $t$ variable. Then one can verify that
the functions $W_n:=\rho_n*V\in C^\infty_c(\R^{N+1})$ are even
with respect to $t$ and the functions $\partial_t W_n$ are odd
with respect to $t$; in particular $\partial_t W_n(\cdot,0)\equiv
0$ in $\R^N$. Exploiting the fact that for any $n\in \N$,
$\partial_t W_n$ is odd with respect to $t$, one can show that
$|\partial_t W_n(x,t)|\le C|t|$ for any $(x,t)\in \R^{N+1}$ and
$n\in \N$ where $C$ is a constant independent of $(x,t)\in
\R^{N+1}$ and $n\in \N$.

Combining this estimate with the fact that $V\in C^2_c(\R^{N+1})$,
by Dominated Convergence Theorem we obtain $\int_{\R^{N+1}} |t|^b
\, |\Delta_b(W_n-V)|^2 dx\,dt\to 0$ as $n\to +\infty$. We have just
shown that $U\in \mathcal D_b$.

{\bf Step 6.} In this step we complete the proof of the lemma. The proof of (i) follows from \eqref{eq:isometry} once we put $C_b:=J(\phi)$.

It remains to prove (ii). Let $\Phi\in C^\infty_c(\overline{\R^{N+1}_+})$
such that $\Phi(x,0)=\Phi_t(x,0)=0$ for any
$x\in\R^N$.
Recalling that $\widehat{\Delta_b U}(\xi,t)=|\xi|^2 \widehat u(\xi)[\Delta_{b,t}\phi(|\xi|t)-\phi(|\xi|t)]$, by Plancherel Theorem, Fubini-Tonelli Theorem and a change of variable, we have
\begin{align} \label{eq:U-VAR}
& \int_{\R^{N+1}_+} t^b \Delta_b U(x,t) \Delta_b \Phi(x,t) \, dx\,dt\\
\notag & \qquad =\int_{\R^N} \left(\int_0^{\infty} t^b |\xi|^2 \, \widehat
u(\xi) \, [\Delta_{b,t}
\phi(|\xi|t)-\phi(|\xi|t)]\left[\overline{\Delta_{b,t}\widehat
\Phi(\xi,t)-|\xi|^2\widehat \Phi(\xi,t)}\right] \, dt\right)d\xi\\
\notag & \qquad =\int_{\R^N} \left(\int_0^{\infty} t^b |\xi|^{1-b} \, \widehat
u(\xi) \, [\Delta_{b,t}
\phi(t)-\phi(t)]\left[\overline{\Delta_{b,t}\widehat \Phi\left(\xi,\tfrac
t{|\xi|}\right)-|\xi|^2\widehat \Phi\left(\xi,\tfrac t{|\xi|}\right)}\right] \,
dt\right)d\xi \\
\notag & \qquad =\int_{\R^N} |\xi|^{3-b} \, \widehat u(\xi) \, \left(\int_0^{\infty} t^b
[\Delta_{b,t} \phi(t)-\phi(t)]\left[\overline{\Delta_{b,t}
\left(\widehat\Phi\left(\xi,\tfrac t{|\xi|}\right)\right)-\widehat \Phi\left(\xi,\tfrac
t{|\xi|}\right)}\right] \, dt\right)d\xi=0
\end{align}
where the last identity follows from the fact that, for any
$\xi\neq 0$, the real part and the imaginary part of the map
$t\mapsto \widehat\Phi\left(\xi,\tfrac t{|\xi|}\right)$ are
admissible test functions in \eqref{eq:var-phi} since they belong
to $C^\infty_c([0,\infty))$ and they vanish at $t=0$ together with
their first derivatives. By a density argument combined with the
regularization procedure shown in Step 5, one can show that
\eqref{eq:U-VAR} actually holds for any $\Phi\in
C^2(\overline{\R^{N+1}_+})$ such that
\begin{align} \label{eq:condizioni}
& \Delta_b \Phi\in L^2(\R^{N+1}_+;t^b)\, , \qquad \frac{|\nabla \Phi|}{\sqrt{|x|^2+t^2}} \in L^2(\R^{N+1}_+;t^b)\, , \\
\notag & \frac{\Phi}{|x|^2+t^2} \in L^2(\R^{N+1}_+;t^{b}) \, ,
\qquad \Phi(\cdot,0)\equiv \Phi_t(\cdot,0)\equiv 0 \ \text{in }
\R^N \, .
\end{align}
Let $V$ be as in the statement of the lemma and put $\Phi:=V-U$ is such a way that $\Phi\in C^2(\overline{\R^{N+1}_+})$ and it satisfies \eqref{eq:condizioni}. By \eqref{eq:U-VAR} we then have
\begin{align*}
\|V\|_{\mathcal D_b}^2
=\|\Phi\|_{\mathcal D_b}^2+2\int_{\R^{N+1}_+} t^b \Delta_b U\Delta_b \Phi \, dx\,dt+\|U\|_{\mathcal D_b}^2
=\|\Phi\|_{\mathcal D_b}^2+\|U\|_{\mathcal D_b}^2\ge \|U\|_{\mathcal D_b}^2 \, .
\end{align*}
This completes the proof of the lemma.
\end{proof}

Thanks to Lemma \ref{l:lemmone}, in the next
proposition we construct a trace map ${\rm Tr}:\mathcal D_b\to \mathcal D^{s,2}(\R^N)$.

\begin{Proposition} \label{p:3.3} Let $s\in (1,2)$ and let $b=3-2s\in (-1,1)$. Then there exists a linear continuous map ${\rm Tr}:\mathcal D_b\to \mathcal D^{s,2}(\R^N)$
such that ${\rm Tr}(V)=V_{|\R^N\times \{0\}}$ for any $V\in
C^\infty_c(\overline{\R^{N+1}_+})$.
\end{Proposition}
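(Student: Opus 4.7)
The plan is to define the trace map on the dense subspace $\mathcal{T}\subset\mathcal D_b$, establish its continuity using the extremal characterization provided by Lemma~\ref{l:lemmone}, and extend by density. For $V\in \mathcal T$ set $\mathrm{Tr}(V):=V(\cdot,0)\in C^\infty_c(\R^N)\subset \mathcal D^{s,2}(\R^N)$; this gives a well-defined linear map on $\mathcal T$ for which the identity $\mathrm{Tr}(V)=V|_{\R^N\times\{0\}}$ claimed in the proposition is tautological. Since $\mathcal T$ is dense in $\mathcal D_b$ by construction (it is the space whose completion defines $\mathcal D_b$), the whole issue reduces to producing a uniform bound of $\|V(\cdot,0)\|_{\mathcal D^{s,2}(\R^N)}$ in terms of $\|V\|_{\mathcal D_b}$.

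The key continuity estimate is
\[
\|V(\cdot,0)\|_{\mathcal D^{s,2}(\R^N)}\le C_b^{-1}\,\|V\|_{\mathcal D_b}\qquad \text{for every }V\in\mathcal T,
\]
with the constant $C_b$ appearing in Lemma~\ref{l:lemmone}. To prove it, I would fix $V\in\mathcal T$, set $u:=V(\cdot,0)\in C^\infty_c(\R^N)$, and apply Lemma~\ref{l:lemmone} to $u$. This produces an extension $U_u\in \mathcal D_b$ satisfying both the isometry $\|U_u\|_{\mathcal D_b}=C_b\|u\|_{\mathcal D^{s,2}(\R^N)}$ (part (i)) and the minimality $\|U_u\|_{\mathcal D_b}\le \|W\|_{\mathcal D_b}$ for every $W\in C^\infty_c(\overline{\R^{N+1}_+})$ with $W(\cdot,0)\equiv u$ and $W_t(\cdot,0)\equiv 0$ (part (ii)). Since $V$ itself lies in $\mathcal T$ and therefore is an admissible competitor $W$, combining (i) and (ii) yields the desired inequality.

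With the bound in hand, the standard BLT (bounded linear transformation) theorem extends $\mathrm{Tr}:\mathcal T\to \mathcal D^{s,2}(\R^N)$ uniquely to a continuous linear map on $\mathcal D_b$, using that $\mathcal D^{s,2}(\R^N)$ is complete. The consistency statement $\mathrm{Tr}(V)=V|_{\R^N\times\{0\}}$ for $V\in C^\infty_c(\overline{\R^{N+1}_+})$ then holds by the very definition of $\mathrm{Tr}$ on $\mathcal T$; one should note that a function $V\in C^\infty_c(\overline{\R^{N+1}_+})$ with $V_t(\cdot,0)\not\equiv 0$ cannot have finite $\mathcal D_b$-norm, since the singular term $\tfrac{b}{t}V_t$ in $\Delta_b V$ would destroy $L^2(t^b)$-integrability near $t=0$ (the one-dimensional analogue of this obstruction is exactly the content of Lemma~\ref{l:V0inft}(ii)), so the asserted identification automatically concerns only smooth compactly supported functions already in $\mathcal T$. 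The main conceptual obstacle is therefore entirely absorbed into Lemma~\ref{l:lemmone}: the nontrivial input is the minimality in part (ii) of that lemma, which is precisely what turns the trace inequality into a trivial comparison.
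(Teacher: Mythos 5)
Your proof follows the paper's argument exactly: define $\mathrm{Tr}$ on the dense subspace $\mathcal T$ by pointwise restriction, use Lemma~\ref{l:lemmone}(i)--(ii) to get $\|V(\cdot,0)\|_{\mathcal D^{s,2}(\R^N)}\le C_b^{-1}\|V\|_{\mathcal D_b}$ for $V\in\mathcal T$, and extend by density. One small caveat on your closing remark: the claim that a $V\in C^\infty_c(\overline{\R^{N+1}_+})$ with $V_t(\cdot,0)\not\equiv 0$ necessarily has infinite $\mathcal D_b$-norm is correct only when $b\neq 0$; when $b=0$ (that is, $s=3/2$) the term $\frac{b}{t}V_t$ vanishes and $\|V\|_{\mathcal D_0}$ is finite for every such $V$, so the justification does not go through in that case — but this is a side remark and does not affect the validity of the main argument, which (like the paper's) only ever uses functions in $\mathcal T$.
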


\begin{proof} Let $V\in C^\infty_c(\overline{\R^{N+1}_+})$ be such that $V_t(\cdot,0)\equiv 0$ in $\R^N$ and put $u=V_{|\R^N\times \{0\}}\in C^\infty_c(\R^N)$. By Lemma \ref{l:lemmone}, we deduce that there exists $U\in C^2(\overline{\R^{N+1}_+})\cap \mathcal D_b$ such that
\begin{equation} \label{eq:id-imp}
U_{|\R^N\times\{0\}}=u \, , \quad \|U\|_{\mathcal D_b}=C_b \|u\|_{\mathcal D^{s,2}(\R^N)} \, , \quad  \|U\|_{\mathcal D_b}\le \|V\|_{\mathcal D_b} \, .
\end{equation}
Therefore, if we put ${\rm Tr}(V):=u$ we have $\|{\rm Tr}(V)\|_{\mathcal D^{s,2}(\R^N)}\le C_b^{-1} \|V\|_{\mathcal D_b}$.
The conclusion follows by completion.
\end{proof}

Let $u$ be a solution of \eqref{eq:problema} and let $U\in
\mathcal D_b$ be the corresponding solution to
\eqref{eq:aux-prob}. From Lemma \ref{l:lemmone} it follows that
$C_b^2\|u\|^2_{\mathcal D^{s,2}(\R^N)}=\|U\|^2_{\mathcal D_b}$.
Moreover by the proof of Proposition \ref{p:3.3}, for all
$\varphi\in \mathcal D_b$ satisfying ${\rm Tr\,}(\varphi)=u$, we
have that
\begin{equation} \label{eq:TRACE}
C_b^{2}\|{\rm Tr\,}(\varphi)\|^2_{\mathcal D^{s,2}(\R^N)}
=\|U\|^2_{\mathcal
  D_b}\leq \|\varphi\|^2_{\mathcal D_b},
\end{equation}
which is equivalent to say that $U\in \mathcal D_b$ is
a solution to the minimum problem
\begin{equation*}
\min_{\varphi\in\mathcal D_b, {\rm Tr\,}(\varphi)=u}\left\{\|\varphi\|^2_{\mathcal D_b}-C_b^{2}\|{\rm Tr\,}
(\varphi)\|^2_{\mathcal D^{s,2}(\R^N)}\right\} \, .
\end{equation*}
Therefore we have
\begin{equation} \label{eq:test-psi}
(U,\psi)_{\mathcal D_b}=0 \qquad \text{for any } \psi\in \mathcal
D_b \ \text{such that } {\rm Tr\,}(\psi)=0 \, .
\end{equation}
Now, for any $\varphi\in\mathcal D_b$ we denote by $\Phi\in \mathcal D_b$ the solution of \eqref{eq:aux-prob} corresponding to ${\rm Tr\,}(\varphi)$.
By \eqref{eq:id-imp} we have that
\begin{equation*}
\|U+\Phi\|_{\mathcal D_b}^2=C_b^2\|u+{\rm Tr\,}(\varphi)\|_{\mathcal D^{s,2}(\R^N)}^2 \quad
\text{and} \quad \|U-\Phi\|_{\mathcal D_b}^2=C_b^2\|u-{\rm Tr\,}(\varphi)\|_{\mathcal D^{s,2}(\R^N)}^2
\end{equation*}
and taking the difference we obtain
\begin{equation} \label{eq:test-Phi}
(U,\Phi)_{\mathcal D_b}=C_b^2(u,{\rm Tr\,}(\varphi))_{\mathcal
D^{s,2}(\R^N)}  \, .
\end{equation}
Since ${\rm Tr\,}(\varphi-\Phi)=0$, combining \eqref{eq:test-psi} and \eqref{eq:test-Phi} we obtain
\begin{equation} \label{eq:nuova}
(U,\varphi)_{\mathcal D_b}=(U,\Phi)_{\mathcal D_b}=C_b^2(u,{\rm Tr\,}(\varphi))_{\mathcal
  D^{s,2}(\R^N)}  \qquad \text{for any } \varphi\in\mathcal D_b.
\end{equation}
Hence $u\in \mathcal D^{s,2}(\R^N)$ solves
\eqref{eq:problema-variazionale} if and only if the corresponding
function $U\in \mathcal D_b$ solving
\eqref{eq:aux-prob}-\eqref{eq:min-prob} is a solution to
\begin{equation}\label{eq:6}
(U,\varphi)_{\mathcal D_b}=0 \qquad \text{for all }
\varphi\in \mathcal D_b\text{ s.t. }{\rm supp\,}({\rm
  Tr\,}(\varphi))\subset\Omega \, .
\end{equation}

\section{An Almgren type monotonicity formula} \label{s:Almgren}
Let us assume that $U\in \mathcal D_b$ is a solution to
\eqref{eq:6}. Let us set
\begin{equation}\label{eq:defV}
  V:=\Delta_b U\in L^2(\R^{N+1}_+;t^b),
\end{equation}
i.e., in view of \eqref{eq:9} and Proposition \ref{t:rellich},
\begin{equation}\label{eq:11}
\int_{\R^{N+1}_+} t^b V\varphi \, dz= -\int_{\R^{N+1}_+} t^b\nabla
U \nabla\varphi\, dz \, ,\qquad\text{for any }\varphi\in
C^\infty_c(\overline{\R^{N+1}_+}).
\end{equation}
Furthermore \eqref{eq:6} yields
\begin{equation}\label{eq:10}
\int_{\R^{N+1}_+} V \dive(t^b \nabla \varphi) \, dz=0
\end{equation}
for any $\varphi\in C^\infty_c\big(\overline{\R^{N+1}_+}\big)$ such
that
${\rm supp}(\varphi(\cdot,0))\subset \Omega$ and ${\ds \lim_{t\to
0^+} \varphi_t(\cdot,t)\equiv 0}$ in $\R^N$. Proposition \ref{p:A2}
then ensures that
\begin{equation} \label{eq:H^1}
V\in H^1(Q_R^+(x_0);t^b) \text{ for any $x_0\in \Omega$ and $R>0$
satisfying $B_{2R}'(x_0)\subset \Omega$}.
\end{equation}
Up to translation it is not restrictive to suppose that $x_0=0\in\Omega$.
Then we fix a radius $R>0$ satisfying \eqref{eq:H^1}. For
simplicity, the center $x_0$ of the sets introduced in
\eqref{eq:sets} will be omitted whenever $x_0=0$.

By \eqref{eq:10}-\eqref{eq:H^1} we obtain
\begin{equation}\label{eq:10-bis}
\int_{B_R^+} t^b \nabla V \nabla \varphi \, dz=0
\end{equation}
for any $\varphi\in C^\infty_c(\Sigma_R^+(0))$ such that
$\varphi_t(\cdot,0)\equiv 0$ in $B_R'$.

Actually \eqref{eq:10-bis} still holds true for any $\varphi\in
C^\infty_c(\Sigma_R^+(0))$ not necessarily satisfying
$\varphi_t(\cdot,0)\equiv 0$ in $B_R'$. Indeed, for any
$\varphi\in C^\infty_c(\Sigma_R^+(0))$, one can test
\eqref{eq:10-bis} with
$\varphi_k(x,t)=\varphi(x,t)-\varphi_t(x,0)\, t\, \eta(kt)$, $k\in
\N$, where $\eta\in C^\infty_c(\R)$, $0\le \eta\le 1$, $\eta(t)=1$
for any $t\in [-1,1]$ and $\eta(t)=0$ for any $t\in
(-\infty,-2]\cup [2,+\infty)$, and pass to the limit as $k\to
+\infty$.

By density we may conclude that
\begin{equation*} 
\int_{B_R^+} t^b \nabla V \nabla \varphi \, dz=0 \qquad \text{for
any } \varphi\in H^1_0(\Sigma_R^+;t^b) \, .
\end{equation*}
Hence, the couple $(U,V)\in \mathcal D_b\times
L^2(\R^{N+1}_+;t^b)$ is a weak solution to the system \eqref{eq:system}
in the sense that \eqref{eq:11} and \eqref{eq:10} hold together
with the forced boundary condition \eqref{eq:9}. Thanks to
Proposition \ref{t:rellich} and \eqref{eq:H^1}, we may define the
functions
\begin{equation} \label{eq:D(r)}
D(r)=r^{-N-b+1} \left[\int_{B_r^+} t^b\left(|\nabla U|^2+|\nabla
V|^2+UV\right)dz\right]
\end{equation}
and
\begin{equation} \label{eq:H(r)}
H(r)=r^{-N-b}\int_{S_r^+} t^b(U^2+V^2)\, dS \, .
\end{equation}
We observe that the function $H=H(r)$ is well defined for every
$r>0$ such that $B_{2r}'\subset \Omega$ since the trace operator
$$
{\rm Tr}_{S_r}:H^1(B_r^+;t^b)\to L^2(S_r^+;t^b)
$$
is well-defined and continuous being $b\in (-1,1)$, see
\cite[Subsection 2.2]{FF}.

We now prove a Pohozaev-type identity for system
\eqref{eq:system}.

\begin{Lemma} Let $U$ and $V$ be as in \eqref{eq:6} and
\eqref{eq:defV}. Then for a.e. $r>0$ such that $B_{2r}'\subset
\Omega$ we have
\begin{align} \label{Pohozaev-1}
\int_{B_r^+} t^b\left(|\nabla U|^2+|\nabla
V|^2+UV\right)dz&=\int_{S_r^+}t^b \left(\frac{\partial U}{\partial
\nu} U+\frac{\partial V}{\partial \nu}V\right)dS
\end{align}
and
\begin{align} \label{Pohozaev-2}
-\frac{N+b-1}2 & \int_{B_r^+} t^b \left(|\nabla U|^2+|\nabla
V|^2\right)dz+\int_{B_r^+} t^b V(z\cdot \nabla U)\, dz \\
\notag &+\frac r2
\int_{S_r^+} t^b \left(|\nabla U|^2+|\nabla V|^2\right)dS =r\int_{S_r^+} t^b \left(\left|\frac{\partial
U}{\partial\nu}\right|^2+\left|\frac{\partial
V}{\partial\nu}\right|^2\right)dS \, .
\end{align}
\end{Lemma}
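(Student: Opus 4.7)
\begin{pf}[Proof proposal]
The plan is to treat the two identities separately. Identity \eqref{Pohozaev-1} is an energy identity: I will test the first equation of \eqref{eq:system} against $U$ and the second against $V$, exploiting \eqref{eq:H^1} to justify the integrations by parts, and then add the resulting equalities. Identity \eqref{Pohozaev-2} is a Pohozaev identity: I will multiply $\mathrm{div}(t^b\nabla U)=t^b V$ by $z\cdot\nabla U$ and $\mathrm{div}(t^b\nabla V)=0$ by $z\cdot\nabla V$, integrate by parts twice on $B_r^+$, and sum. In both cases the delicate point is to control the trace of the gradient on $B_r'\times\{0\}$, where the weight $t^b$ degenerates or blows up; the forced boundary conditions $\lim_{t\to0^+}t^b U_t=\lim_{t\to0^+}t^b V_t=0$ will be the mechanism that kills the horizontal pieces of the boundary integrals.

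For \eqref{Pohozaev-1}, I multiply $\mathrm{div}(t^b\nabla U)=t^b V$ by $U$ and $\mathrm{div}(t^b\nabla V)=0$ by $V$. Formally, integrating by parts on $B_r^+$ yields
\begin{equation*}
\int_{B_r^+} t^b |\nabla U|^2\,dz+\int_{B_r^+} t^b UV\,dz=\int_{S_r^+} t^b\tfrac{\partial U}{\partial\nu}U\,dS-\int_{B_r'}\lim_{t\to0^+}t^b U_t\,U\,dx,
\end{equation*}
and similarly for $V$. The horizontal boundary integrals vanish by \eqref{eq:system}$_3$--\eqref{eq:system}$_4$, and summing gives \eqref{Pohozaev-1}. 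To make this rigorous I would approximate $U,V$ by functions in $C^\infty_c(\overline{\Sigma_R^+(0)})$ via the density defining $\mathcal D_b$ and the analogous density in $H^1(B_R^+;t^b)$, passing to the limit using \eqref{eq:H^1} and the continuity of the trace on $S_r^+$ recalled after \eqref{eq:H(r)}. Fubini shows that the identity holds for a.e.\ $r\in(0,R)$, which is what is claimed.

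For \eqref{Pohozaev-2}, the core computation is the classical one. Using $\nabla V\cdot\nabla(z\cdot\nabla V)=|\nabla V|^2+\tfrac12 z\cdot\nabla(|\nabla V|^2)$ and $\mathrm{div}_z(t^b z)=(N+b+1)t^b$, one integration by parts on the second summand produces the volume term $-\tfrac{N+b-1}{2}\int_{B_r^+} t^b|\nabla V|^2\,dz$ and the boundary term $\tfrac r2\int_{S_r^+} t^b|\nabla V|^2\,dS$; the piece of the boundary lying on $B_r'\times\{0\}$ drops out because there $z\cdot\nu=-t\equiv0$. Testing $\mathrm{div}(t^b\nabla V)=0$ against $z\cdot\nabla V$ and using the same relation $z\cdot\nabla V=r\,\partial_\nu V$ on $S_r^+$ then gives
\begin{equation*}
-\tfrac{N+b-1}{2}\int_{B_r^+} t^b |\nabla V|^2\,dz+\tfrac{r}{2}\int_{S_r^+} t^b |\nabla V|^2\,dS=r\int_{S_r^+} t^b\bigl|\tfrac{\partial V}{\partial\nu}\bigr|^2\,dS,
\end{equation*}
and the identical procedure for $\mathrm{div}(t^b\nabla U)=t^b V$ produces the same structure with an extra bulk term $\int_{B_r^+} t^b V(z\cdot\nabla U)\,dz$ on the left. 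Summing the two yields \eqref{Pohozaev-2}.

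The main obstacle is the rigorous justification of the Pohozaev computation, since $z\cdot\nabla U$ and $z\cdot\nabla V$ are \emph{not} admissible test functions in the weak formulations and both identities formally require one extra derivative beyond $H^1(B_R^+;t^b)$. The strategy I would follow is an approximation: fix a cutoff $\chi_\varepsilon(t)$ vanishing for $t\le\varepsilon$ and convolve in the $x$-variable with a standard mollifier, so that the shifted-mollified functions $U_\varepsilon,V_\varepsilon$ satisfy smooth versions of \eqref{eq:system} in $B_{r}^+\cap\{t>\varepsilon\}$ with a controlled remainder. The Pohozaev identity holds pointwise for $U_\varepsilon,V_\varepsilon$, and passing to the limit $\varepsilon\to0^+$ is legitimate on the bulk integrals thanks to \eqref{eq:H^1} and on the lateral spherical integrals for a.e.\ $r$ by the coarea formula, while the horizontal boundary contributions at $t=\varepsilon$ are estimated by the condition $t^bV_t\to0$ combined with the regularity of $U$ inherited from Lemma~\ref{l:lemmone} and Proposition~\ref{p:A2} on interior strips.
\end{pf}
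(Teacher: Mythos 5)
Your overall scaffold matches the paper's: test with $U$ and $V$ for \eqref{Pohozaev-1}, and for \eqref{Pohozaev-2} run the classical Rellich--Pohozaev computation with multiplier $z\cdot\nabla U$ (resp.\ $z\cdot\nabla V$) on the truncated domain $B_r^+\cap\{t>\delta\}$, using $\operatorname{div}(t^b z)=(N+b+1)t^b$ and then sending $\delta\to 0^+$. The paper does exactly this, relying on \cite[(51)]{FF} for the identity on $O_\delta=B_r^+\cap\{t>\delta\}$.

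However, there is a genuine gap in how you pass to the limit. When you write the Pohozaev identity on $O_\delta$, the horizontal boundary at $t=\delta$ contributes three types of terms: two carrying the factor $\delta^{b+1}$ (namely $\delta^{b+1}\int|\nabla U(x,\delta)|^2dx$ and $\delta^{b+1}\int|U_t(x,\delta)|^2dx$, likewise for $V$), and one carrying only $\delta^{b}$, namely
\begin{equation*}
\delta^{b}\int_{B'_{\sqrt{r^2-\delta^2}}}\bigl(x\cdot\nabla_x U(x,\delta)\bigr)\,U_t(x,\delta)\,dx
\end{equation*}
and its analogue for $V$. The first two families vanish along a sequence $\delta_n\downarrow 0$ thanks to the mere $L^2(t^b)$ integrability from \eqref{eq:H^1} and a Fatou/coarea argument, which is what your ``\eqref{eq:H^1} plus coarea'' step covers. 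But the $\delta^b$-weighted term cannot be handled this way: one needs \emph{pointwise} control of $\nabla_x U$ and of $\delta^{b}U_t(\cdot,\delta)$ uniformly in $\delta$, together with the fact that $t^b U_t\to 0$ pointwise as $t\to 0^+$, so that the Dominated Convergence Theorem can be invoked. In the paper this is exactly Propositions~\ref{p:6.5} and~\ref{p:6.4}, which deliver $U,V,\nabla_x U,\nabla_x V,t^bU_t,t^bV_t\in C^{0,\alpha}(\overline{B_r^+})$ up to the boundary $t=0$. You instead appeal to ``the regularity of $U$ inherited from Lemma~\ref{l:lemmone} and Proposition~\ref{p:A2} on interior strips.'' Neither of these suffices: Lemma~\ref{l:lemmone} assumes $u\in C^\infty_c(\R^N)$ and so does not apply to a general solution of \eqref{eq:problema}, and Proposition~\ref{p:A2} only gives $H^1(Q_R^+;t^b)$ regularity for $V$, not the $C^{0,\alpha}$ control up to $t=0$ that the $\delta^b$-term demands. ``Interior'' estimates on strips $\{t>\varepsilon\}$ blow up as $\varepsilon\to 0^+$ and do not close the argument.

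The additional $x$-mollification you propose does not repair this: the obstruction is regularity in $t$ up to the degenerate boundary $t=0$, not smoothness in $x$, and convolving in $x$ leaves the $t\to 0^+$ behavior of $\nabla_x U$ and $t^b U_t$ unchanged. To make the proof complete you need to explicitly invoke the boundary H\"older estimates of Propositions~\ref{p:6.5} and~\ref{p:6.4} (which in turn rely on the Brezis--Kato scheme of Proposition~\ref{p:Brezis-Kato}); with those in hand the $\delta^b$-term tends to $0$ by Dominated Convergence because $\delta^b U_t(\cdot,\delta)$ is uniformly bounded and converges pointwise to $\lim_{t\to0^+}t^bU_t=0$, and similarly for $V$.
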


\begin{proof} The proof of this lemma can be obtained proceeding exactly as in the proof of Theorem 3.7 in \cite{FF}. Hence here we omit the details and
we show only the main steps. Let us consider first
identity \eqref{Pohozaev-2}. Let $r$ be as in the statement of the
lemma. Similarly to \cite{FF}, for any $\delta>0$
we define the set
$$
O_\delta:=B_r^+\cap
\{(x,t):t>\delta\} \, .
$$
By \eqref{eq:system} and exploiting \cite[(51)]{FF} by replacing
their $1-2s$ with our $b=3-2s$, we obtain
\begin{align} \label{eq:PI-V}
\frac{N+b-1}2\int_{O_\delta} t^b |\nabla V|^2 dz= &-\frac 12 \, \delta^{b+1} \int_{B'_{\sqrt{r^2-\delta^2}}}
|\nabla V(x,\delta)|^2 dx+\delta^{b+1} \int_{B'_{\sqrt{r^2-\delta^2}}} |V_t(x,\delta)|^2 dx \\
\notag & +\delta^b \int_{B'_{\sqrt{r^2-\delta^2}}} (x\cdot \nabla_x V(x,\delta)) V_t(x,\delta) \, dx \\
\notag & +\frac r2 \int_{S_r^+\cap \{t>\delta\}} t^b |\nabla V|^2 dS-r \int_{S_r^+\cap \{t>\delta\}} t^b \left|\frac{\partial V}{\partial \nu}\right|^2 dS
\end{align}
and
\begin{align} \label{eq:PI-U}
\frac{N+b-1}2\int_{O_\delta} t^b |\nabla U|^2 dz& -\int_{O_\delta} t^b
                                                  V(z\cdot \nabla U)
                                                  \, dz=-\frac 12 \,
 \delta^{b+1} \int_{B'_{\sqrt{r^2-\delta^2}}}
|\nabla U(x,\delta)|^2 dx \\
\notag & +\delta^{b+1} \int_{B'_{\sqrt{r^2-\delta^2}}} |U_t(x,\delta)|^2 dx \\
\notag & +\delta^b \int_{B'_{\sqrt{r^2-\delta^2}}} (x\cdot \nabla_x U(x,\delta)) U_t(x,\delta) \, dx \\
\notag & +\frac r2 \int_{S_r^+\cap \{t>\delta\}} t^b |\nabla U|^2 dS-r \int_{S_r^+\cap \{t>\delta\}} t^b \left|\frac{\partial U}{\partial \nu}\right|^2 dS.
\end{align}
Now, arguing as in \cite{FF}, on can show that there exists a
sequence $\delta_n \downarrow 0$ such that
\begin{align} \label{eq:PI-INT}
&  \delta_n^{b+1} \int_{B'_{\sqrt{r^2-\delta_n^2}}} |\nabla V(x,\delta_n)|^2 dx\to 0 \,
, \qquad
\delta_n^{b+1} \int_{B'_{\sqrt{r^2-\delta_n^2}}} |V_t(x,\delta_n)|^2 dx\to 0\, , \\
\notag &  \delta_n^{b+1} \int_{B'_{\sqrt{r^2-\delta_n^2}}}
|\nabla U(x,\delta_n)|^2 dx\to 0 \, ,\qquad
\delta_n^{b+1} \int_{B'_{\sqrt{r^2-\delta_n^2}}} |U_t(x,\delta_n)|^2 dx\to 0 \, ,
\end{align}
as $n\to +\infty$.

By the local regularity estimates of Propositions \ref{p:6.5} and
\ref{p:6.4} we infer that
$U,V\in C^{0,\alpha}(\overline B_r^+)$,
$\nabla_x U,\nabla_x V\in C^{0,\alpha}(\overline B_r^+)$ and $t^b U_t, t^b V_t \in C^{0,\alpha}(\overline
B_r^+)$ for some $\alpha\in (0,1)$.

These regularity estimates on $U,V$ and their derivatives combined
with the Dominated Convergence Theorem imply that
\begin{align} \label{eq:PI-DC}
& \lim_{\delta\to 0^+} \delta^b
\int_{B'_{\sqrt{r^2-\delta^2}}} (x\cdot \nabla_x V(x,\delta))
V_t(x,\delta) \, dx=0 \, , \\
\notag & \lim_{\delta\to 0^+} \delta^b \int_{B'_{\sqrt{r^2-\delta^2}}} (x\cdot \nabla_x U(x,\delta)) U_t(x,\delta) \, dx
=0 \, .
\end{align}
Next, by \eqref{eq:PI-INT} and \eqref{eq:PI-DC}, one can pass to the
limit in \eqref{eq:PI-V}
 and \eqref{eq:PI-U} with $\delta=\delta_n$ as $n\to +\infty$, thus
 obtaining \eqref{Pohozaev-2}.

In order to prove \eqref{Pohozaev-1} it is sufficient to test the
equations in \eqref{eq:system} with $U$ and $V$ respectively.
\end{proof}

\begin{Lemma} \label{l:positivity} Let $U$ and $V$ be as in \eqref{eq:6} and
\eqref{eq:defV} and let $D=D(r)$ and $H=H(r)$ be the functions
defined in \eqref{eq:D(r)} and \eqref{eq:H(r)}. Suppose that
$(U,V)\not\equiv (0,0)$. Then there exists $r_0>0$ such that
$H(r)>0$ for any $r\in (0,r_0)$.
\end{Lemma}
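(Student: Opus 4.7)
The plan is to argue by contradiction: suppose there exists $\rho>0$ (arbitrarily small, with $B_{2\rho}'\subset\Omega$) such that $H(\rho)=0$. Since $t^b>0$ on $S_\rho^+$, the vanishing of the integral forces the traces of $U$ and $V$ on $S_\rho^+$ to be zero in $L^2(S_\rho^+;t^b)$.

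First, I would view $V$ as the unique weak solution of the mixed boundary value problem $\Delta_b V=0$ in $B_\rho^+$ with $V=0$ on $S_\rho^+$ and $\lim_{t\to 0^+}t^b V_t=0$ on $B_\rho'$. Uniqueness follows from Lax-Milgram applied to the bilinear form $(W,\varphi)\mapsto\int_{B_\rho^+} t^b\nabla W\nabla\varphi\,dz$ on the closed subspace of $H^1(B_\rho^+;t^b)$ consisting of functions with zero trace on $S_\rho^+$, coercivity being ensured by the weighted Poincar\'e-type inequality \eqref{eq:PA2-2}. Hence $V\equiv 0$ in $B_\rho^+$. Substituting this into the first equation of \eqref{eq:system} gives $\Delta_b U=0$ in $B_\rho^+$, and applying the same uniqueness argument with the vanishing Dirichlet trace and weighted Neumann condition for $U$ yields $U\equiv 0$ in $B_\rho^+$.

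Finally, I would propagate this vanishing via unique continuation for elliptic equations with Muckenhoupt weights. Since $b\in(-1,1)$, the weight $|t|^b$ lies in $A_2(\R^{N+1})$; the vanishing of the weighted Neumann trace at $\{t=0\}$ permits extending $V$ and $U$ by even reflection in $t$ to functions $\tilde V,\tilde U\in H^1_{\rm loc}(B_R;|t|^b)$ solving $\dive(|t|^b\nabla\tilde V)=0$ and $\dive(|t|^b\nabla\tilde U)=|t|^b\tilde V$ weakly in $B_R$. Since $\tilde V$ vanishes on the open set $B_\rho^+$ (and its reflection), the UCP of \cite{tao-zhang} (cited in Remark \ref{rem:nuovo}) yields $\tilde V\equiv 0$ in $B_R$, hence $V\equiv 0$ in $B_R^+$; the same UCP applied to $\tilde U$, which then solves the homogeneous equation, gives $U\equiv 0$ in $B_R^+$, contradicting $(U,V)\not\equiv(0,0)$. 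The main obstacle I anticipate is this last step: carefully justifying that the weighted Neumann condition is strong enough to permit the even reflection across $\{t=0\}$, and that the Tao-Zhang UCP indeed applies across a neighborhood of the hyperplane $\{t=0\}$ where the weight $|t|^b$ is singular or degenerate.
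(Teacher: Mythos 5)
Your proposal is correct and arrives at the same destination as the paper, but by a genuinely different route in both of its two steps, and one small remark is in order at the very end.

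\textbf{Local vanishing.} The paper establishes $U\equiv V\equiv 0$ in $B_r^+$ by combining the integration-by-parts (Pohozaev-type) identity \eqref{Pohozaev-1} with the weighted Poincar\'e inequality \eqref{eq:PA2-2} to control the coupling term $\int t^b UV$, which is why it needs $r_0$ small so that $1-\frac{2r^2}{(N+b-1)^2}>0$. You instead \emph{decouple}: first test (via Lax--Milgram uniqueness, equivalently directly with $\varphi=V$) the homogeneous equation for $V$ to get $V\equiv 0$ in $B_\rho^+$, and then observe that $U$ now solves the homogeneous equation, so the same argument gives $U\equiv 0$. This is cleaner, avoids the coupling term entirely, and in particular does not need $\rho$ small — the coercivity on the zero-$S_\rho^+$-trace subspace holds for every $\rho$. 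The only implicit fact you need is that zero $L^2(S_\rho^+;t^b)$-trace identifies a function as an element of $H^1_0(\Sigma_\rho^+;t^b)$, the test space used in the paper's weak formulation; this holds for the $A_2$ weight $t^b$ and is also used tacitly by the paper, so this is a shared technical point rather than a gap in your argument.

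\textbf{Propagation and the final contradiction.} Here the two proofs diverge more significantly. You perform an even reflection across $\{t=0\}$ (justified by the weighted Neumann condition) and invoke the Tao--Zhang unique continuation principle for $A_2$-degenerate equations; the paper instead applies classical unique continuation for elliptic operators with \emph{smooth coefficients} on the open half-space $\R^{N+1}_+$, where $\Delta_b$ is uniformly elliptic and smooth. Your route is more delicate at the boundary and correct, but it only propagates the vanishing over the set $B_R$ where the Neumann condition holds — that is, essentially over $\Omega$. The lemma's hypothesis $(U,V)\not\equiv(0,0)$ refers to $U\in\mathcal D_b$ and $V=\Delta_b U$ as functions on all of $\R^{N+1}_+$, so to obtain the contradiction one must push the vanishing to the whole half-space. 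The paper's interior UCP (or simply real analyticity of solutions in $\R^{N+1}_+$, since $\Delta_b V=0$ and then $\Delta_b U=0$ have analytic coefficients for $t>0$) does exactly this, using only that $\R^{N+1}_+$ is connected. The fix in your write-up is immediate: once $V\equiv 0$ on $B_\rho^+$, invoke interior UCP for $\dive(t^b\nabla V)=0$ on $\R^{N+1}_+$ to conclude $V\equiv 0$ there, and likewise for $U$; the reflection/Tao--Zhang step then becomes unnecessary.
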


\begin{proof} Suppose by contradiction that for any $r_0>0$ there exists $r\in
(0,r_0)$ such that $H(r)=0$. This means that $U$ and $V$ vanish on
$S_r^+$. In particular, by \eqref{eq:PA2-2} and
\eqref{Pohozaev-1}, we have
\begin{align} \label{eq:LE}
& 0=\int_{B_r^+} t^b(|\nabla U|^2+|\nabla V|^2+UV)\, dz \ge
\left(1-\frac{2r^2}{(N+b-1)^2} \right) \int_{B_r^+} t^b(|\nabla
U|^2+|\nabla V|^2)\, dz \, .
\end{align}
If $r_0$ is sufficiently small and $r\in (0,r_0)$, the parenthesis
appearing in the right hand side of \eqref{eq:LE} becomes
positive. This, in turn, implies $\int_{B_r^+} t^b(|\nabla
U|^2+|\nabla V|^2)\, dz=0$ which, combined with \eqref{eq:PA2-2},
implies $U\equiv 0$ and $V\equiv 0$ in $B_r^+$. Since $U$ and $V$
are weak solutions of the equations $\Delta_b U=V$ and $\Delta_b
V=0$ in $\R^{N+1}_+$, by classical unique continuation principles
for elliptic operators with smooth coefficients, we deduce that
$U$ and $V$ vanish in $\R^{N+1}$ thus contradicting the assumption
$(U,V)\not\equiv 0$.~\end{proof}

The statement of Lemma \ref{l:positivity} allows us to define the
Almgren type function $\mathcal N:(0,r_0)\to \R$ as
\begin{equation} \label{eq:def-N}
\mathcal N(r)=\frac{D(r)}{H(r)} \qquad \text{for any } r\in
(0,r_0) \, .
\end{equation}

\begin{Lemma} \label{l:D(r)ge}
Let $U$ and $V$ be as in \eqref{eq:6} and
\eqref{eq:defV} and let $R$ be as in \eqref{eq:H^1}. Let $D$, $H$,
$\mathcal N$ be the functions defined in \eqref{eq:D(r)},
\eqref{eq:H(r)} and \eqref{eq:def-N} respectively. Then there
exists $\tilde r\in (0,r_0)$ such that
\begin{align} \label{eq:prima}
D(r)\ge \frac{r^{-N-b+1}}2 \int_{B_r^+} t^b(|\nabla U|^2+|\nabla V|^2)\, dz-\frac{r^2}{N+b-1} H(r)
\end{align}
for any $r\in (0,\tilde r)$. In particular we have that
\begin{equation} \label{eq:seconda}
\mathcal N(r)\ge -\frac{r^2}{N+b-1}
\, .
\end{equation}
Moreover, there exist two positive constants $C_1,C_2$ independent of $r$ such that $D(r)+C_2 H(r)\ge 0$ for any $r\in (0,\tilde r)$ and
\begin{equation} \label{eq:terza}
\int_{B_r^+} t^b (U^2+V^2) \, dz\le C_1 r^{N+b+1}[D(r)+C_2 H(r)]
\qquad \text{for any } r\in (0,\tilde r) \, .
\end{equation}
\end{Lemma}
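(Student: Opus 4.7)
The three assertions all reduce to combining the definition of $D(r)$ with the weighted Hardy--Poincaré inequality \eqref{eq:PA2-2}, which on a half-ball takes the form (as one can derive by integrating $\dive(z\,t^b W^2)$ over $B_r^+$ and applying Young's inequality)
\[
\int_{B_r^+} t^b W^2\, dz \le \frac{C_1^* r^2}{(N+b-1)^2}\int_{B_r^+} t^b |\nabla W|^2\, dz + \frac{2r}{N+b-1}\int_{S_r^+} t^b W^2\, dS
\]
for some absolute constant $C_1^*>0$.

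To prove \eqref{eq:prima} I would start from the definition of $D(r)$ and estimate the cross term by $\int_{B_r^+} t^b UV \,dz \ge -\tfrac12 \int_{B_r^+} t^b(U^2+V^2)\,dz$ via Young's inequality. Applying the Hardy--Poincaré inequality displayed above to $W=U$ and $W=V$ and summing, one bounds $\int_{B_r^+} t^b(U^2+V^2)\,dz$ by $c_1 r^2 \int_{B_r^+} t^b(|\nabla U|^2+|\nabla V|^2)\,dz$ plus $\tfrac{2r}{N+b-1}\int_{S_r^+}t^b(U^2+V^2)\,dS$. Substituting back into $D(r)$ and multiplying by $r^{-N-b+1}$, the coefficient of $\int t^b(|\nabla U|^2+|\nabla V|^2)\,dz$ becomes $1 - c_1 r^2/2$, which is at least $1/2$ for $r$ sufficiently small, say $r\in(0,\tilde r)$; the surface contribution reorganizes exactly to $\tfrac{r^2}{N+b-1} H(r)$ after the same rescaling. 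This yields \eqref{eq:prima}.

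The bound \eqref{eq:seconda} is an immediate consequence: since the gradient integral in \eqref{eq:prima} is nonnegative and $H(r)>0$ on $(0,r_0)$ by Lemma \ref{l:positivity}, dividing by $H(r)$ gives $\mathcal N(r)\ge -r^2/(N+b-1)$. Likewise, choosing $C_2:=\tilde r^{\,2}/(N+b-1)$ makes $D(r)+C_2 H(r)\ge \tfrac{r^{-N-b+1}}{2}\int t^b(|\nabla U|^2+|\nabla V|^2)\,dz \ge 0$ on $(0,\tilde r)$.

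Finally, for \eqref{eq:terza} I would apply the Hardy--Poincaré inequality once more to obtain
\[
\int_{B_r^+} t^b(U^2+V^2)\,dz \le c_1 r^2 \int_{B_r^+} t^b(|\nabla U|^2+|\nabla V|^2)\,dz + c_2 r^{N+b+1} H(r),
\]
and then use \eqref{eq:prima} in the reverse direction to replace the gradient integral by $2 r^{N+b-1}[D(r)+\tfrac{r^2}{N+b-1}H(r)]$. Collecting terms gives a bound of the required form $C_1 r^{N+b+1}[D(r)+C_2 H(r)]$ (after possibly enlarging $C_2$ to dominate both $\tilde r^2/(N+b-1)$ and the $c_2$ coming from the boundary term). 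The only mildly delicate point is keeping track of the precise constants so that the coefficient $\tfrac{1}{N+b-1}$ in \eqref{eq:prima} comes out exactly as stated; this is controlled by the sharp form of \eqref{eq:PA2-2} recalled above.
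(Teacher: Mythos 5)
Your argument is correct and follows essentially the same route as the paper: apply Young's inequality to the cross term $\int t^b UV\,dz$, control $\int t^b(U^2+V^2)\,dz$ by the weighted Hardy--Poincaré inequality \eqref{eq:PA2-2}, substitute into the definition of $D(r)$ and rescale to obtain \eqref{eq:prima}, from which \eqref{eq:seconda} and \eqref{eq:terza} follow directly. The only cosmetic difference is that you keep the constant in the Hardy--Poincaré bound generic before noting that the sharp form of \eqref{eq:PA2-2} is what yields the exact coefficient $\tfrac{1}{N+b-1}$.
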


\begin{proof} By Young inequality and \eqref{eq:PA2-2}, we have
\begin{align} \label{eq:preliminare}
& \left|\int_{B_r^+} t^b UV\, dz\right| \le \frac 12 \int_{B_r^+} t^b (U^2+V^2) \, dz\\
\notag & \qquad \le \frac{2r^2}{(N+b-1)^2}  \left[\int_{B_r^+} t^b(|\nabla U|^2+|\nabla V|^2)\, dz+\frac{N+b-1}{2r} \int_{S_r^+} t^b(U^2+V^2)\, dS\right]
\end{align}
from which we obtain
\begin{align*}
& \int_{B_r^+} t^b(|\nabla U|^2+|\nabla V|^2+UV)\, dz \\[7pt]
& \qquad \ge \left(1-\frac{2r^2}{(N+b-1)^2}\right)\int_{B_r^+} t^b(|\nabla U|^2+|\nabla V|^2)\, dz-\frac{r}{N+b-1} \int_{S_r^+} t^b(U^2+V^2)\, dS
\end{align*}
for any $r\in (0,r_0)$.  The proof of \eqref{eq:prima} and
\eqref{eq:seconda} then follows from the definitions of $D$, $H$ and
$\mathcal N$, choosing $\tilde r\in (0,r_0)$ sufficiently
small. Combining \eqref{eq:preliminare} and \eqref{eq:prima} we also
obtain \eqref{eq:terza}.
\end{proof}

In order to prove the validity of an Almgren type monotonicity
formula we need to compute the derivative of $\mathcal N$. In
order to do that we first compute the derivatives of the functions
$D$ and $H$.

\begin{Lemma} \label{l:H'(r)}
Let $U$ and $V$ be as in \eqref{eq:6} and \eqref{eq:defV} and let
$R$ be as in \eqref{eq:H^1}. Let $H=H(r)$ be the function defined
in \eqref{eq:H(r)}. Then $H\in W^{1,1}_{{\rm loc}}(0,R)$ and
moreover we have
\begin{equation} \label{H'}
H'(r)=2r^{-N-b} \int_{S_r^+} t^b \left(U\frac{\partial U}{\partial
\nu}+V\frac{\partial V}{\partial \nu}\right)dS \quad \text{in a
distributional sense and a.e. } r\in (0,R) \, ,
\end{equation}
and
\begin{equation} \label{H'2}
H'(r)=\frac 2r \, D(r) \quad \text{in a distributional sense and
a.e. } r\in (0,R) \, .
\end{equation}
\end{Lemma}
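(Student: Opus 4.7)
The natural strategy is to pull the integration onto a fixed domain by a rescaling, differentiate under the integral, and then identify the resulting boundary integral with $D(r)$ via the Pohozaev-type identity \eqref{Pohozaev-1}. Concretely, I would parametrize $S_r^+$ by $z=r\theta$, $\theta\in \SF$, so that $dS=r^N\,d\sigma$ on $S_r^+$ and $t^b=r^b\theta_{N+1}^b$. This gives the clean form
\[
H(r)=\int_{\SF}\theta_{N+1}^b\bigl(U^2(r\theta)+V^2(r\theta)\bigr)\,d\sigma,
\]
on which the prefactor $r^{-N-b}$ has been absorbed so only the functions $U$ and $V$ carry the $r$-dependence.

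The plan is to deduce $H\in W^{1,1}_{\rm loc}(0,R)$ and formula \eqref{H'} in a distributional sense. For $\phi\in C^\infty_c(0,R)$ I will compute $\int_0^R H(r)\phi'(r)\,dr$ by Fubini-Tonelli and, for each fixed $\theta\in\SF$, integrate by parts the one-dimensional integral in $r$ of $(U^2+V^2)(r\theta)\phi'(r)$. The admissibility of this integration by parts rests on the Hölder regularity already used in the proof of the Pohozaev identity: by Propositions \ref{p:6.5}--\ref{p:6.4}, $U,V\in C^{0,\alpha}(\overline{B_r^+})$ and $\nabla_x U,\nabla_x V$ together with $t^b U_t, t^b V_t$ are Hölder continuous up to $\{t=0\}$, so for each $\theta\in\SF$ the map $r\mapsto (U^2+V^2)(r\theta)$ is $C^1$ away from $0$ with derivative $2(U\nabla U\cdot\theta+V\nabla V\cdot\theta)(r\theta)$, and the bounds are uniform enough in $\theta\in\SF$ (with the $\theta_{N+1}^b$ weight absorbing the behaviour near the boundary $t=0$) to apply the Dominated Convergence Theorem. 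Changing variables back then yields
\[
H'(r)=2\int_{\SF}\theta_{N+1}^b\bigl(U\nabla U\cdot\theta+V\nabla V\cdot\theta\bigr)(r\theta)\,d\sigma
=2r^{-N-b}\int_{S_r^+}t^b\Bigl(U\tfrac{\partial U}{\partial\nu}+V\tfrac{\partial V}{\partial\nu}\Bigr)dS,
\]
since $\nabla U(r\theta)\cdot\theta=\tfrac{\partial U}{\partial\nu}$ and analogously for $V$. This gives \eqref{H'} first in the distributional sense and, since the right-hand side is locally $L^1$ on $(0,R)$ (by the trace regularity on $S_r^+$ following from Proposition \ref{t:rellich} and \eqref{eq:H^1}), also at a.e.\ $r$, thereby proving $H\in W^{1,1}_{\rm loc}(0,R)$.

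Once \eqref{H'} is established, \eqref{H'2} is an immediate consequence of the Pohozaev identity \eqref{Pohozaev-1}: multiplying \eqref{Pohozaev-1} by $r^{-N-b+1}$ gives
\[
D(r)=r^{-N-b+1}\int_{S_r^+}t^b\Bigl(\tfrac{\partial U}{\partial\nu}U+\tfrac{\partial V}{\partial\nu}V\Bigr)dS,
\]
and substituting this identity into \eqref{H'} yields $H'(r)=\frac{2}{r}D(r)$. I expect the main technical obstacle to be the rigorous justification of the differentiation under the integral sign near $t=0$: the weight $t^b$ is singular or degenerate and the radial vector field $\theta$ touches $\{t=0\}$, so uniform control of $\partial_r(U^2+V^2)(r\theta)$ in $\theta$ must be carefully obtained from the Hölder bounds on $\nabla_x U,\nabla_x V$ and on $t^b U_t, t^b V_t$ together with the $\theta_{N+1}^b$ factor in the measure on $\SF$; once this is handled, everything else is a routine change of variables and an appeal to the Pohozaev identity.
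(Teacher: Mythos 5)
Your proposal is correct and takes essentially the same approach as the paper, which simply refers to the proof of \cite[Lemma 3.8]{FF}: rescale $H$ onto $\SF$, differentiate under the integral using the interior $C^1$ regularity of $U,V$ together with the Hölder bounds on $\nabla_x U,\nabla_x V$ and $t^b U_t, t^b V_t$ (Propositions \ref{p:6.5}--\ref{p:6.4}) so that the $\theta_{N+1}^b$ weight absorbs the degeneracy at $\theta_{N+1}=0$, and then substitute the Pohozaev identity \eqref{Pohozaev-1} to pass from \eqref{H'} to \eqref{H'2}. The only minor slip is attributing the local $L^1$ bound on the right-hand side of \eqref{H'} to Proposition \ref{t:rellich}; it follows more directly from the coarea formula combined with $U,V\in H^1(B_R^+;t^b)\cap L^\infty_{\rm loc}$, or simply from \eqref{H'2} once established.
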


\begin{proof} See the proof of \cite[Lemma 3.8]{FF}.
\end{proof}

\begin{Lemma} \label{l:D'(r)}
Let $U$ and $V$ be as in \eqref{eq:6} and \eqref{eq:defV} and let
$R$ be as in \eqref{eq:H^1}. Let $D=D(r)$ be the function defined
in \eqref{eq:D(r)}. Then $D\in W^{1,1}_{{\rm loc}}(0,R)$ and
moreover we have
\begin{align} \label{eq:D'}
D'(r)=&\frac{2}{r^{N+b-1}} \int_{S_r^+} t^b
          \left(\left|\frac{\partial U}{\partial \nu}\right|^2+
          \left|\frac{\partial V}{\partial \nu}\right|^2\right) dS
+\frac{1}{r^{N+b-1}} \int_{S_r^+} t^b UV\,dS\\
\notag &-\frac{2}{r^{N+b}} \int_{B_r^+}t^b V(z\cdot \nabla U) \,dz-\frac{N+b-1}{r^{N+b}}\int_{B_r^+} t^b UV\,dz
\end{align}
in a distributional sense and  a.e. $r\in (0,R)$.
\end{Lemma}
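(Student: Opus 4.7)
\smallskip

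The plan is to write $D(r)=r^{-N-b+1}I(r)$ where
\[
I(r):=\int_{B_r^+} t^b\bigl(|\nabla U|^2+|\nabla V|^2+UV\bigr)\,dz,
\]
and reduce the computation of $D'(r)$ to a differentiation of $I(r)$ combined with the Pohozaev-type identity \eqref{Pohozaev-2}. First I would observe that, by \eqref{eq:H^1} together with the fact that $U\in\mathcal D_b$ (so $\nabla U\in L^2(\R^{N+1}_+;t^b)$) and $V\in L^2(\R^{N+1}_+;t^b)\cap H^1_{\rm loc}$, the integrand $t^b(|\nabla U|^2+|\nabla V|^2+UV)$ belongs to $L^1(B_R^+)$. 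Hence by the coarea formula the function $r\mapsto I(r)$ lies in $W^{1,1}_{\rm loc}(0,R)$ with
\[
I'(r)=\int_{S_r^+} t^b\bigl(|\nabla U|^2+|\nabla V|^2+UV\bigr)\,dS
\quad\text{for a.e. }r\in(0,R).
\]

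Differentiating the product $D(r)=r^{-N-b+1}I(r)$ in the distributional sense gives
\begin{equation*}
D'(r)=-(N+b-1)\,r^{-N-b}I(r)+r^{-N-b+1}I'(r),
\end{equation*}
so that $D\in W^{1,1}_{\rm loc}(0,R)$. Next I would isolate the surface term $\int_{S_r^+} t^b(|\nabla U|^2+|\nabla V|^2)\,dS$ appearing in $I'(r)$ by rearranging the Pohozaev identity \eqref{Pohozaev-2}, obtaining for a.e.\ $r\in(0,R)$
\begin{align*}
\int_{S_r^+} t^b\bigl(|\nabla U|^2+|\nabla V|^2\bigr)\,dS
&=2\int_{S_r^+} t^b\!\left(\left|\tfrac{\partial U}{\partial\nu}\right|^2+\left|\tfrac{\partial V}{\partial\nu}\right|^2\right)dS\\
&\quad+\frac{N+b-1}{r}\int_{B_r^+} t^b\bigl(|\nabla U|^2+|\nabla V|^2\bigr)\,dz
-\frac{2}{r}\int_{B_r^+} t^b V(z\cdot\nabla U)\,dz.
\end{align*}

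Substituting this into the expression for $I'(r)$ and then into the formula for $D'(r)$, the two terms proportional to $(N+b-1)r^{-N-b}\int_{B_r^+} t^b(|\nabla U|^2+|\nabla V|^2)\,dz$ cancel, and what survives is precisely the claimed identity \eqref{eq:D'}: the two boundary contributions $\frac{2}{r^{N+b-1}}\int_{S_r^+}t^b(|\partial_\nu U|^2+|\partial_\nu V|^2)\,dS$ and $\frac{1}{r^{N+b-1}}\int_{S_r^+}t^b UV\,dS$, together with the two bulk contributions $-\frac{2}{r^{N+b}}\int_{B_r^+}t^b V(z\cdot\nabla U)\,dz$ and $-\frac{N+b-1}{r^{N+b}}\int_{B_r^+}t^b UV\,dz$.

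The main obstacle is justifying the two basic analytic facts underlying the computation: first, that $I$ is absolutely continuous with $I'(r)$ given by a surface integral (for which one invokes the coarea formula applied to the $L^1$ weighted density $t^b(|\nabla U|^2+|\nabla V|^2+UV)$, using \eqref{eq:H^1} and membership in $\mathcal D_b$), and second, that the Pohozaev identity \eqref{Pohozaev-2} holds at the chosen $r$; both are already in hand from the results of Section~\ref{s:preliminary} and from the lemma providing \eqref{Pohozaev-2} up to a negligible set of $r$'s. The remainder is bookkeeping of the cancellations.
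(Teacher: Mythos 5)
Your proof is correct and follows essentially the same route as the paper's: write $D(r)=r^{-N-b+1}I(r)$, differentiate via the coarea formula to get $D'(r)=r^{-N-b}[(1-N-b)I(r)+rI'(r)]$, and substitute the Pohozaev identity \eqref{Pohozaev-2} so that the bulk gradient terms cancel. The paper states this in one line; you simply fill in the same bookkeeping, including the correct cancellation of the two copies of $(N+b-1)r^{-N-b}\int_{B_r^+}t^b(|\nabla U|^2+|\nabla V|^2)\,dz$.
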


\begin{proof} The proof can be easily obtained by replacing \eqref{Pohozaev-2} into
$$
D'(r)=r^{-N-b}[(1-N-b)I(r)+rI'(r)]\, ,
$$
where $I(r)= \int_{B_r^+} t^b \left(|\nabla U|^2+|\nabla V|^2+UV\right)dz$.
\end{proof}

\begin{Lemma} \label{l:N'(r)}
Let $U$ and $V$ be as in \eqref{eq:6} and \eqref{eq:defV} and let
$R$ be as in \eqref{eq:H^1}. Let $\mathcal N=\mathcal N(r)$ and
$r_0$ be as in \eqref{eq:def-N}. Then $\mathcal N \in
W^{1,1}_{{\rm loc}}(0,r_0)$ and moreover we have
\begin{align} \label{eq:N'}
\mathcal N'(r)=\nu_1(r)+\nu_2(r)
\end{align}
in a distributional sense and for a.e. $r\in (0,r_0)$, where
\begin{align*}
  \nu_1(r)=\frac{2r\Big[
  \left(\int_{S_r^+} t^b
  \left(\left|\frac{\partial U}{\partial \nu}\right|^2+
  \left|\frac{\partial V}{\partial \nu}\right|^2\right) dS\right)
  \left(\int_{S_r^+} t^b
  (U^2+V^2)\,dS\right)-\left(
  \int_{S_r^+} t^b
  \left(U\frac{\partial U}{\partial \nu}
  +V\frac{\partial V}{\partial \nu}\right)\, dS\right)^{\!2} \Big]}
  {\left( \int_{S_r^+} t^b
  (U^2+V^2)\,dS\right)^2}
\end{align*}
and
\begin{align}\label{eq:16}
  \nu_2(r)= &\, \frac{r\int_{S_r^+}  t^b UV\,dS-2\int_{B_r^+}t^b V (z\cdot \nabla
              U) \,dz-(N+b-1)\int_{B_r^+} t^b UV\,dz}{\int_{S_r^+}
              t^b
              (U^2+V^2)\,dS} \, .
\end{align}
\end{Lemma}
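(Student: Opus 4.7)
The plan is a direct computation using the quotient rule together with the expressions for $D'$ and $H'$ already derived. Since $D,H\in W^{1,1}_{\mathrm{loc}}(0,R)$ by Lemmas \ref{l:H'(r)}--\ref{l:D'(r)}, and since $H$ is continuous and strictly positive on $(0,r_0)$ by Lemma \ref{l:positivity}, the quotient $\mathcal N=D/H$ also belongs to $W^{1,1}_{\mathrm{loc}}(0,r_0)$, with weak derivative
\[
\mathcal N'(r)=\frac{D'(r)H(r)-D(r)H'(r)}{H(r)^2}\quad\text{a.e. }r\in(0,r_0).
\]

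The main step is to split this expression into two parts. For the first part I would use \eqref{H'2}, namely $H'(r)=\tfrac{2}{r}D(r)$, to rewrite
\[
\frac{D(r)H'(r)}{H(r)^2}=\frac{2\,D(r)^2}{r\,H(r)^2},
\]
and then substitute the boundary representation of $D(r)$ obtained by combining \eqref{H'} with \eqref{H'2}, namely
\[
D(r)=\tfrac{r}{2}H'(r)=r^{-(N+b-1)}\int_{S_r^+}t^{b}\Big(U\tfrac{\partial U}{\partial\nu}+V\tfrac{\partial V}{\partial\nu}\Big)\,dS,
\]
which also coincides with the right-hand side of \eqref{Pohozaev-1}. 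Pairing this with the first term $\tfrac{2}{r^{N+b-1}}\int_{S_r^+}t^{b}(|\partial_\nu U|^2+|\partial_\nu V|^2)\,dS$ coming from \eqref{eq:D'} and dividing by $H(r)=r^{-N-b}\int_{S_r^+}t^{b}(U^2+V^2)\,dS$, the two contributions combine into $\nu_1(r)$, after factoring the common power of $r$ and collecting. The resulting expression is manifestly non-negative by the Cauchy--Schwarz inequality on the weighted boundary product $\langle f,g\rangle=\int_{S_r^+}t^b fg\,dS$ applied to the vector fields $(U,V)$ and $(\partial_\nu U,\partial_\nu V)$; I would record this observation, since it will be used in the subsequent monotonicity argument.

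For the second part I would take the remaining three summands in \eqref{eq:D'}, namely
\[
\frac{1}{r^{N+b-1}}\int_{S_r^+}t^{b}UV\,dS-\frac{2}{r^{N+b}}\int_{B_r^+}t^{b}V(z\cdot\nabla U)\,dz-\frac{N+b-1}{r^{N+b}}\int_{B_r^+}t^{b}UV\,dz,
\]
divide each by $H(r)=r^{-N-b}\int_{S_r^+}t^{b}(U^2+V^2)\,dS$ and simplify the powers of $r$ to obtain exactly $\nu_2(r)$ as in \eqref{eq:16}. Adding $\nu_1(r)+\nu_2(r)$ then yields \eqref{eq:N'}.

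There is no genuine obstacle here beyond bookkeeping: the derivative formulas for $D$ and $H$ are already in hand, the positivity of $H$ on $(0,r_0)$ (needed both for the quotient rule and for the Sobolev regularity) has been established in Lemma \ref{l:positivity}, and the algebraic manipulation is just grouping one boundary-derivative term from $D'$ with the $D\,H'$ term (to form $\nu_1$) and carrying over the remaining three terms of $D'$ divided by $H$ (to form $\nu_2$). The only delicate point is justifying that $\mathcal N\in W^{1,1}_{\mathrm{loc}}(0,r_0)$, which I would do by noting that on any compact subinterval of $(0,r_0)$ the denominator $H$ is bounded away from zero and continuous, so $1/H$ belongs to $W^{1,1}_{\mathrm{loc}}$ as well, and the product/quotient of two $W^{1,1}_{\mathrm{loc}}$ functions (one with bounded inverse) remains in $W^{1,1}_{\mathrm{loc}}$.
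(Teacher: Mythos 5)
Your proposal is correct and follows essentially the same route as the paper's (which simply states the result ``follows immediately from \eqref{H'}, \eqref{H'2} and \eqref{eq:D'}''): quotient rule for $\mathcal N=D/H$, substitution of $H'(r)=\tfrac{2}{r}D(r)$ and the boundary representation of $D(r)$, and grouping the boundary-derivative term of $D'$ with $DH'/H^2$ to form $\nu_1$ while the remaining three terms of $D'/H$ give $\nu_2$. Your additional remark justifying $\mathcal N\in W^{1,1}_{\mathrm{loc}}(0,r_0)$ via positivity and local boundedness away from zero of $H$ on compacts is exactly the point the paper leaves implicit, so the proof is complete.
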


\begin{proof} The proof follows immediately from \eqref{H'}, \eqref{H'2} and \eqref{eq:D'}.
\end{proof}

In the next result we obtain an estimate on the $\nu_2$ component
of the function $\mathcal N'$.

\begin{Lemma} \label{l:limitN} Under the same assumptions of Lemma \ref{l:N'(r)} we have that
\begin{equation} \label{eq:N-ex-lim}
\gamma:=\lim_{r\to 0^+} \mathcal N(r)
\end{equation}
exists, it is finite and moreover $\gamma\ge 0$.
\end{Lemma}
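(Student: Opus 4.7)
The plan is to derive a Gronwall-type differential inequality for $\mathcal{N}$ on $(0,\tilde r)$ and then integrate it. First I would show that $\nu_1(r)\geq 0$ by applying the Cauchy--Schwarz inequality in $L^2(S_r^+;t^b)$ to the two-component vectors $(\partial_\nu U,\partial_\nu V)$ and $(U,V)$, which yields
\[
\left(\int_{S_r^+}t^b\big(U\,\partial_\nu U+V\,\partial_\nu V\big)\,dS\right)^{\!2}\!\leq\!\left(\int_{S_r^+}t^b\big((\partial_\nu U)^2+(\partial_\nu V)^2\big)dS\right)\!\left(\int_{S_r^+}t^b(U^2+V^2)\,dS\right)\!,
\]
so that $\mathcal{N}'(r)\geq \nu_2(r)$ a.e.~on $(0,r_0)$.

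Next I would estimate $|\nu_2(r)|$ in \eqref{eq:16} term by term. For the boundary term, $|r\int_{S_r^+}t^b UV\,dS|\leq \tfrac{r}{2}\int_{S_r^+}t^b(U^2+V^2)\,dS$, which contributes $\leq r/2$ once divided by $r^{N+b}H(r)$. For the two bulk terms, I combine Cauchy--Schwarz with the bound \eqref{eq:terza}, which controls $\int_{B_r^+}t^b(U^2+V^2)\,dz$ by $C_1 r^{N+b+1}H(r)[\mathcal{N}(r)+C_2]$, and with inequality \eqref{eq:prima}, which controls $\int_{B_r^+}t^b(|\nabla U|^2+|\nabla V|^2)\,dz$ by $2r^{N+b-1}H(r)[\mathcal{N}(r)+\tfrac{r^2}{N+b-1}]$. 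Using $|z\cdot\nabla U|\leq r|\nabla U|$ inside the second bulk term and the elementary bound $|UV|\leq\tfrac12(U^2+V^2)$ inside the third, one obtains constants $C,C'>0$ such that
\[
|\nu_2(r)|\,\leq\, Cr\bigl[\mathcal{N}(r)+C'\bigr],\qquad r\in(0,\tilde r).
\]

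Combining these, $\mathcal{N}'(r)\geq -Cr[\mathcal{N}(r)+C']$ in the distributional sense. Choosing $C'$ large enough that, in view of \eqref{eq:seconda}, the function $f(r):=\mathcal{N}(r)+C'$ is strictly positive on $(0,\tilde r)$, the differential inequality rewrites as $(e^{Cr^2/2}f(r))'\geq 0$. Thus $r\mapsto e^{Cr^2/2}f(r)$ is monotone non-decreasing on $(0,\tilde r)$, so it admits a finite non-negative limit as $r\to 0^+$; dividing by $e^{Cr^2/2}\to 1$ shows that $\gamma:=\lim_{r\to 0^+}\mathcal{N}(r)$ exists in $\R$. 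Passing to the limit in \eqref{eq:seconda} finally gives $\gamma\geq 0$.

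The main obstacle will be the quantitative estimate on $\nu_2(r)$: one has to check that, after applying Cauchy--Schwarz and the weighted Hardy-type inequalities behind \eqref{eq:prima} and \eqref{eq:terza}, the powers of $r$ assemble so as to produce precisely a factor $r$ in front of $\mathcal{N}(r)+C'$. It is this extra $r$ (rather than an $O(1)$ coefficient) that makes the resulting inequality of Gronwall type integrable near $0$ and ensures that $\mathcal{N}$ has a limit rather than merely being bounded.
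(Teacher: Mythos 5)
Your proposal is essentially correct, and it in fact follows a genuinely different (and slicker) route than the paper. The point you flagged as the main obstacle is precisely where the two approaches diverge. The paper bounds the cross term $2\int_{B_r^+}t^b V(z\cdot\nabla U)\,dz$ via Young's inequality, $2|V||z\cdot\nabla U|\leq r(V^2+|\nabla U|^2)$, and then applies \eqref{eq:terza} and \eqref{eq:prima} to the two resulting integrals. Because $\int_{B_r^+}t^b|\nabla U|^2\,dz$ scales like $r^{N+b-1}$, the factor of $r$ is exactly eaten, and one only gets $|\nu_2(r)|\leq\widetilde C_1\mathcal{N}(r)+\widetilde C_3 r$ with no $r$ in front of $\mathcal{N}$. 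The paper therefore cannot close the argument by a single Gronwall inequality: it first integrates $\mathcal{N}'(r)\geq-\widetilde C_1\mathcal{N}(r)-\widetilde C_3 r$ to get boundedness of $\mathcal{N}$, deduces from this that $\nu_2$ is bounded, and then argues that $\mathcal{N}'=\nu_1+\nu_2$ is a sum of a nonnegative function and a bounded one, so the limit exists, and finally invokes \eqref{eq:seconda} and the boundedness to get finiteness and nonnegativity.

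Your version keeps the full factor of $r$, and it does so as follows (so the scaling really works out): applying Cauchy--Schwarz in product form,
\[
\left|\int_{B_r^+}t^b V(z\cdot\nabla U)\,dz\right|
\leq r\left(\int_{B_r^+}t^b V^2\,dz\right)^{1/2}\left(\int_{B_r^+}t^b|\nabla U|^2\,dz\right)^{1/2},
\]
and inserting \eqref{eq:terza} and \eqref{eq:prima} gives
\[
\leq r\,\sqrt{2C_1}\,r^{N+b}H(r)\sqrt{[\mathcal{N}(r)+C_2]\big[\mathcal{N}(r)+\tfrac{r^2}{N+b-1}\big]}
\leq r\,\sqrt{2C_1}\,r^{N+b}H(r)\big[\mathcal{N}(r)+\tfrac12(C_2+\tfrac{r^2}{N+b-1})\big],
\]
where the last step uses $\sqrt{AB}\leq\tfrac12(A+B)$, legitimate because $A=\mathcal{N}(r)+C_2\geq0$ (Lemma \ref{l:D(r)ge}) and $B=\mathcal{N}(r)+\tfrac{r^2}{N+b-1}\geq0$ (inequality \eqref{eq:seconda}). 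Dividing by $r^{N+b}H(r)$ gives $O\big(r[\mathcal{N}(r)+C']\big)$. The remaining terms in $\nu_2$ are handled as you indicate and are smaller. This yields the Gronwall inequality $(e^{Cr^2/2}[\mathcal{N}(r)+C'])'\geq0$, from which the existence, finiteness, and nonnegativity of $\gamma$ all follow at once, avoiding the paper's intermediate step of establishing boundedness of $\mathcal{N}$ first. The only caveat: you must really use Cauchy--Schwarz as a product (not the AM--GM split $2ab\leq a^2+b^2$), otherwise you fall back to the paper's weaker estimate and the one-shot monotonicity argument fails.
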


\begin{proof} Let $\nu_1$ and $\nu_2$ be the functions introduced in
  Lemma \ref{l:N'(r)}. By \eqref{eq:prima},
  \eqref{eq:terza} and \eqref{eq:preliminare}, for any
  $r\in (0,\tilde r)$, with $\tilde r$ as in Lemma \ref{l:D(r)ge}, we
  have
\begin{align} \label{eq:bound-nu2}
|\nu_2(r)| & \le \frac r2+\frac{r\int_{B_r^+} t^b V^2 dz+r\int_{B_r^+} t^b |\nabla U|^2 dz+(N+b-1)\left|\int_{B_r^+} t^b UV\, dz \right| }{\int_{S_r^+} t^b (U^2+V^2)\, dS} \\[7pt]
\notag & \le \frac r2+\frac{\widetilde C_1 r^{N+b} D(r)+\widetilde C_2 r^{N+b+1}H(r)}{r^{N+b} H(r)}=\widetilde C_1 \mathcal N(r)+\widetilde C_3 r
\end{align}
for some suitable constants $\widetilde C_1, \widetilde C_2, \widetilde C_2>0$ independent of $r$.

Therefore, since by Cauchy-Schwarz inequality we have that
$\nu_1\ge 0$, we obtain that
\begin{equation} \label{eq:st-basso-N'}
\mathcal N'(r)\ge -\widetilde C_1 \mathcal N(r)-\widetilde C_3r
\end{equation}
which yields
\begin{equation} \label{eq:bound-N}
\mathcal N(r)\le e^{-\widetilde C_1 r} \left[e^{\widetilde C_1 \tilde r} \mathcal N(\tilde r)+\widetilde C_3 \int_r^{\tilde r} \rho e^{\widetilde C_1 \rho} d\rho
\right]\le \widetilde C_4 \qquad \text{for any } r\in (0,\tilde r) \, .
\end{equation}
This, combined with \eqref{eq:bound-nu2}, yields boundedness of $\nu_2$ in $(0,\tilde r)$.

This means that $\mathcal N'(r)=\nu_1(r)+\nu_2(r)$ is the sum of a
nonnegative function and of a bounded function so that
\begin{equation*}
\gamma:=\lim_{r\to 0^+} \mathcal N(r)=\mathcal N(\tilde r)-\int_0^{\tilde r} \nu_2(\rho)\, d\rho-\lim_{r \to 0^+} \int_r^{\tilde r} \nu_1(\rho) \, d\rho
\end{equation*}
exists. Finally, by \eqref{eq:seconda} and \eqref{eq:bound-N} we
conclude that $\gamma$ is finite and nonnegative.
\end{proof}

A first consequence of the previous monotonicity argument is the
following estimate of the function $H$.

\begin{Lemma}\label{l:uppb}
Letting $\gamma$ be as in Lemma \ref{l:limitN}, we have that
\begin{equation}\label{eq:24}
H(r)=O(r^{2\gamma})\quad \text{as $r\to0^+$}.
\end{equation}
Furthermore, for any $\sigma>0$ there exist
$K(\sigma)>0$ and $r_\sigma\in (0,r_0)$ depending on $\sigma$ such that
\begin{equation} \label{2ndest}
H(r)\geq K(\sigma)\,
  r^{2\gamma+\sigma} \quad \text{for all } r\in (0, r_\sigma) \, .
\end{equation}
\end{Lemma}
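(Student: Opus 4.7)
The plan is to exploit the logarithmic-derivative identity $H'(r)/H(r)=2\mathcal N(r)/r$ from \eqref{H'2} and to integrate it using the information on $\mathcal N(r)$ near zero provided by Lemma \ref{l:limitN}. Since $\mathcal N\in W^{1,1}_{\rm loc}(0,r_0)$, I would work with its absolutely continuous representative, so that ODE-type integration is completely standard.

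For the upper bound \eqref{eq:24}, I would first establish that there exists $C_0>0$ such that
\[
\mathcal N(r)\ge \gamma-C_0\, r\qquad\text{for all sufficiently small }r>0.
\]
To see this, I would rewrite the differential inequality $\mathcal N'(r)\ge -\widetilde C_1 \mathcal N(r)-\widetilde C_3\, r$ obtained in \eqref{eq:st-basso-N'} in the form $\bigl(e^{\widetilde C_1 r}\mathcal N(r)\bigr)'\ge -\widetilde C_3\, r\, e^{\widetilde C_1 r}$, which shows that the auxiliary function $r\mapsto e^{\widetilde C_1 r}\mathcal N(r)+\widetilde C_3\int_0^r \rho\, e^{\widetilde C_1 \rho}\,d\rho$ is nondecreasing. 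Its right limit at $0$ equals $\gamma$ by \eqref{eq:N-ex-lim}, yielding the pointwise estimate above after Taylor-expanding the integrating factor and using the nonnegativity of $\gamma$. Plugging this into \eqref{H'2} gives $H'(r)/H(r)\ge 2\gamma/r-2C_0$, and integration from $r$ up to a fixed $r_1\in(0,\tilde r)$ yields $H(r)\le C\, r^{2\gamma}$.

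For the lower bound \eqref{2ndest}, the argument is more elementary and only uses the mere existence of the limit $\gamma$. Given $\sigma>0$, Lemma \ref{l:limitN} provides $r_\sigma\in(0,r_0)$ such that $\mathcal N(r)\le \gamma+\sigma/2$ for all $r\in(0,r_\sigma)$. Substituting into $H'(r)/H(r)=2\mathcal N(r)/r$ and integrating between $r$ and $r_\sigma$ leads to
\[
\log H(r_\sigma)-\log H(r)\le (2\gamma+\sigma)\log(r_\sigma/r),
\]
which rearranges to $H(r)\ge K(\sigma)\, r^{2\gamma+\sigma}$ with $K(\sigma):=H(r_\sigma)\,r_\sigma^{-(2\gamma+\sigma)}$.

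The only nontrivial step is the lower control $\mathcal N(r)\ge \gamma-C_0\, r$ used in the upper-bound argument; this is where the error term $\widetilde C_3 r$ appearing in the bound for $\nu_2$ has to be absorbed via the integrating-factor trick described above. Everything else reduces to integrating a first-order linear ODE inequality for $H$, and the sign of $\gamma$ creates no additional complication thanks to the nonnegativity of $\gamma$ established in Lemma \ref{l:limitN}.
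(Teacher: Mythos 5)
Your proof is correct and follows the standard route that the paper itself delegates to \cite[Lemma 3.16]{FF}: integrate the logarithmic-derivative identity $H'(r)/H(r)=2\mathcal N(r)/r$ from \eqref{H'2} using the two-sided control on $\mathcal N$ near the origin coming from Lemma \ref{l:limitN}. One small simplification worth noting: the integrating-factor step you use to derive $\mathcal N(r)\ge \gamma-C_0 r$ is not actually needed, since the proof of Lemma \ref{l:limitN} already shows that $\mathcal N$ is bounded and $\nu_2$ is bounded on $(0,\tilde r)$; together with $\nu_1\ge 0$ this gives the cruder but sufficient bound $\mathcal N'(r)\ge -C$ directly, and then $r\mapsto \mathcal N(r)+Cr$ is nondecreasing with right limit $\gamma$, yielding $\mathcal N(r)\ge \gamma - Cr$ at once. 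Either way, both \eqref{eq:24} and \eqref{2ndest} follow by the integrations you describe, so the argument is complete.
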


\begin{proof} The proof is quite standard once we have proved \eqref{eq:N-ex-lim}, see the proof of \cite[Lemma 3.16]{FF} for the details.
\end{proof}

\section{A blow-up procedure} \label{s:blow-up}

In order to exploit the monotonicity formula obtained in Section
\ref{s:Almgren} and to obtain asymptotic estimates on solutions to
\eqref{eq:system}, we proceed with a blow-up argument.

\begin{Lemma} \label{l:blow-up}
Let $(U,V)\in
H^1(B_R^+;t^b)\times H^1(B_R^+;t^b)$ be a nontrivial solution to
\eqref{eq:system} in the sense of \eqref{eq:11}--\eqref{eq:10} and
\eqref{eq:9}.
Let $\mathcal N$ be the function defined in
\eqref{eq:def-N} and let $\gamma$ be as in Lemma \ref{l:limitN}.
Then the following statements hold true:

\begin{itemize}
\item[$(i)$] there exists $\ell\in \N$ such that
\begin{equation*}
\gamma=-\frac{N+b-1}2+\sqrt{\left(\frac{N+b-1}2\right)^2+\mu_\ell}
\end{equation*}
with $\mu_\ell$ as in Section \ref{intro};

\item[$(ii)$] for any sequence $\lambda_n \downarrow 0$ there
exists a subsequence $\{\lambda_{n_k}\}_{k\in \N}$ and $2M_\ell$
real constants $\beta_{\ell,m}, \beta'_{\ell,m}$,
$m=1,\dots,M_\ell$, such that $\sum_{m=1}^{M_\ell}
\Big[(\beta_{\ell,m})^2+(\beta'_{\ell,m})^2\Big]=1$ and
\begin{equation*}
\frac{U(\lambda_{n_k}z)}{\sqrt{H(\lambda_{n_k})}} \to |z|^\gamma
\sum_{m=1}^{M_\ell} \beta_{\ell,m}
Y_{\ell,m}\left(\frac{z}{|z|}\right) \, , \qquad
\frac{V(\lambda_{n_k}z)}{\sqrt{H(\lambda_{n_k})}} \to |z|^\gamma
\sum_{m=1}^{M_\ell} \beta_{\ell,m}'
Y_{\ell,m}\left(\frac{z}{|z|}\right)
\end{equation*}
weakly in $H^1(B_1^+;t^b)$ and strongly in $H^1(B_r^+;t^b)$ for
any $r\in (0,1)$, with $Y_{\ell,m}$ as in Section~\ref{intro}.
\end{itemize}
\end{Lemma}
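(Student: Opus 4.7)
The plan is to carry out a standard Almgren-type blow-up analysis, adapted to the system setting with the extra coupling term which, crucially, carries a factor $\lambda^2$ under rescaling and so disappears in the limit.

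I would define the rescaled couple
\[
U_\lambda(z):=\frac{U(\lambda z)}{\sqrt{H(\lambda)}},\qquad V_\lambda(z):=\frac{V(\lambda z)}{\sqrt{H(\lambda)}},
\]
so that, by the very definition of $H$, one has $\int_{S_1^+}t^b(U_\lambda^2+V_\lambda^2)\,dS=1$. A direct change of variable in $D(\lambda)$ gives
\[
\mathcal N(\lambda)=\int_{B_1^+}t^b\bigl(|\nabla U_\lambda|^2+|\nabla V_\lambda|^2\bigr)\,dz+\lambda^2\int_{B_1^+}t^b\,U_\lambda V_\lambda\,dz,
\]
and since $\mathcal N(\lambda)\to\gamma$ by Lemma \ref{l:limitN}, the Hardy-trace inequality \eqref{eq:PA2-2} yields uniform bounds for $(U_\lambda,V_\lambda)$ in $H^1(B_1^+;t^b)\times H^1(B_1^+;t^b)$. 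Along an arbitrary sequence $\lambda_n\downarrow 0$ I would then extract a weakly convergent subsequence $(U_{\lambda_{n_k}},V_{\lambda_{n_k}})\weakly(\widetilde U,\widetilde V)$.

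Next I would identify the limiting problem. Since $(U_\lambda,V_\lambda)$ solves $\Delta_b U_\lambda=\lambda^2 V_\lambda$, $\Delta_b V_\lambda=0$ with weighted Neumann condition $\lim_{t\to 0^+}t^b\partial_t U_\lambda=\lim_{t\to 0^+}t^b\partial_t V_\lambda=0$ on $B_{R/\lambda}'$, the $\lambda^2$ factor forces, in the limit, $\Delta_b\widetilde U=\Delta_b\widetilde V=0$ in $B_1^+$ with the same boundary condition on $B_1'$. Testing the equation for $U_\lambda-\widetilde U$ (respectively $V_\lambda-\widetilde V$) against itself and exploiting compactness of the trace embedding $H^1(B_1^+;t^b)\hookrightarrow L^2(S_r^+;t^b)$ (Proposition \ref{t:rellich}), strong convergence in $H^1(B_r^+;t^b)$ for every $r\in(0,1)$ follows, and the compactness of traces on $S_1^+$ gives $\int_{S_1^+}t^b(\widetilde U^2+\widetilde V^2)\,dS=1$, proving that $(\widetilde U,\widetilde V)\not\equiv(0,0)$.

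The third step is to show that the limit is homogeneous of degree $\gamma$. Repeating the monotonicity construction of Section \ref{s:Almgren} for the limit couple $(\widetilde U,\widetilde V)$ (which now satisfies the decoupled system), and using $\mathcal N_\lambda(r):=\mathcal N(\lambda r)\to\gamma$ for every $r\in(0,1)$ together with the strong convergence just established, I would deduce that the Almgren quotient of $(\widetilde U,\widetilde V)$ is identically equal to $\gamma$ on $(0,1)$. The standard argument based on the equality case in the Cauchy-Schwarz inequality appearing in $\nu_1(r)$, combined with the vanishing of $\nu_2(r)$ (which now has no coupling contribution), forces $\widetilde U$ and $\widetilde V$ to be positively homogeneous of degree $\gamma$.

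Finally, writing $\widetilde U(z)=|z|^\gamma\Psi_1(z/|z|)$ and $\widetilde V(z)=|z|^\gamma\Psi_2(z/|z|)$ and substituting into $\Delta_b\widetilde U=\Delta_b\widetilde V=0$ with the weighted Neumann condition, separation of variables shows that $\Psi_1,\Psi_2$ satisfy \eqref{eq:Eigen-Weight} with eigenvalue $\mu=\gamma(\gamma+N+b-1)$. Hence $\mu$ must coincide with some $\mu_\ell$, which yields the explicit formula $\gamma=-\frac{N+b-1}{2}+\sqrt{\bigl(\frac{N+b-1}{2}\bigr)^{\!2}+\mu_\ell}$ in (i); expanding $\Psi_1,\Psi_2$ on the $L^2(\SF;\theta_{N+1}^b)$-orthonormal basis $\{Y_{\ell,m}\}_{m=1,\dots,M_\ell}$ produces the coefficients $\beta_{\ell,m},\beta_{\ell,m}'$, and the normalization $\int_{S_1^+}t^b(\widetilde U^2+\widetilde V^2)\,dS=1$ gives $\sum_m\bigl(\beta_{\ell,m}^2+(\beta_{\ell,m}')^2\bigr)=1$, completing (ii). The main obstacle I expect is the upgrade from weak to strong $H^1(B_r^+;t^b)$-convergence and the rigorous passage to the limit in the Almgren quotient of the rescaled family, since both rely on delicate interaction between the degenerate/singular weight $t^b$, the weighted Neumann condition at $\{t=0\}$, and the vanishing coupling $\lambda^2 U_\lambda V_\lambda$.
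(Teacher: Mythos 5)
Your blow-up scheme follows the paper's line almost step for step: same rescaling $U_\lambda,V_\lambda$, the same use of \eqref{eq:prima} and Lemma \ref{l:limitN} to get $H^1$-bounds, the same normalization on $S_1^+$ via trace compactness, the passage to the decoupled limit system, constancy of $\widetilde{\mathcal N}\equiv\gamma$ from the scaling identity $\mathcal N(\lambda r)=\mathcal N_\lambda(r)$, the Cauchy--Schwarz equality case to get separability, and the eigenvalue computation yielding $\gamma=\sigma_\ell^+$. The one place where you genuinely deviate is the upgrade from weak to strong $H^1(B_r^+;t^b)$-convergence. The paper obtains this from the uniform $C^{0,\alpha}$-estimates on $U_\lambda,V_\lambda,\nabla_x U_\lambda,\nabla_x V_\lambda,t^b\partial_t U_\lambda,t^b\partial_t V_\lambda$ in Propositions \ref{p:6.5}--\ref{p:6.4} (which rest on the Brezis--Kato iteration and Schauder-type weighted estimates developed in the appendix), followed by Ascoli--Arzel\`a. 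You instead propose an energy argument, testing the equation for $U_\lambda-\widetilde U$ against itself.

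That energy step, as written, has a gap. First, Proposition \ref{t:rellich} is the Hardy--Rellich inequality for $\mathcal D_b$, not a trace compactness statement; the trace compactness $H^1(B_1^+;t^b)\hookrightarrow L^2(S_r^+;t^b)$ used for the normalization comes from \cite[Section 2.2]{FF}, not from that proposition. Second, and more to the point, testing the equation for $W_\lambda:=U_\lambda-\widetilde U$ against itself on $B_r^+$ produces a boundary term $\int_{S_r^+}t^b W_\lambda\,\partial_\nu W_\lambda\,dS$ that you cannot close just from $H^1$-bounds and trace compactness of $W_\lambda$: you also need control of $\partial_\nu W_\lambda$ on $S_r^+$ (e.g.\ via a Fatou/averaging in $r$ selection of good radii), or, if you instead localize with a cutoff $\eta$, you generate a cross term $\int t^b\eta W_\lambda\nabla\eta\cdot\nabla W_\lambda$ whose vanishing requires compactness of the embedding $H^1(B_1^+;t^b)\hookrightarrow L^2(B_1^+;t^b)$ of the domain, which is not among the compactness results cited in the paper (Proposition \ref{p:compactness-2} is for the cylinder with $H^1_0$ boundary data on $\Gamma_R^+$). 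Either variant can be made to work, but as sketched the argument does not yet close, whereas the paper's regularity route is precisely engineered to make this step routine. Your compressed presentation of the homogeneity step (constancy of $\widetilde{\mathcal N}$ plus separability directly gives $\varphi(r)=r^\gamma$, after which the eigenvalue $\mu=\gamma(\gamma+N+b-1)$ is read off) is a valid rearrangement of the paper's argument, though the paper additionally rules out the $r^{\sigma_\ell^-}$-component via $H^1$-membership before identifying $\gamma=\sigma_\ell^+$.
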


\begin{proof} Let us define the following scaled functions
\begin{equation}\label{eq:4}
U_\lambda(z):=\frac{U(\lambda z)}{\sqrt{H(\lambda)}}\, , \qquad
V_\lambda(z):=\frac{V(\lambda z)}{\sqrt{H(\lambda)}},
\end{equation}
which satisfy
\begin{equation*}
\Delta_b U_\lambda=\lambda^2 V_\lambda \quad \text{and} \quad
\int_{S_1^+} t^b(U_\lambda^2+V_\lambda^2)\, dS=1 \, .
\end{equation*}
Using a change of variable, \eqref{eq:prima} and Lemma
\ref{l:limitN}, one sees that
\begin{equation*}
\int_{B_1^+} t^b(|\nabla U_\lambda|^2+|\nabla V_\lambda|^2) \,
dz\le 2\mathcal N(\lambda)+\frac{2\lambda^2}{N+b-1}=O(1) \qquad
\text{as } \lambda\to 0^+,
\end{equation*}
which combined with \eqref{eq:PA2-2} yields that
\begin{equation*}
\{U_\lambda\}_{\lambda\in (0,\tilde\lambda)} \quad \text{and}
\quad \{V_\lambda\}_{\lambda\in (0,\tilde\lambda)} \qquad
\text{are bounded in } H^1(B_1^+;t^b)
\end{equation*}
for some $\tilde\lambda$ small enough.
Hence, for any sequence $\lambda_n \downarrow 0$, there exists a
subsequence $\lambda_{n_k}\downarrow 0$ and two functions
$\widetilde U, \widetilde V\in H^1(B_1^+;t^b)$ such that
$U_{\lambda_{n_k}}\rightharpoonup \widetilde U$,
$V_{\lambda_{n_k}}\rightharpoonup \widetilde V$ weakly in
$H^1(B_1^+;t^b)$.

By compactness of the trace map  $H^1(B_1^+;t^b) \hookrightarrow L^2(S_1^+;t^b)$, see \cite[Section 2.2]{FF}, we obtain
\begin{equation} \label{eq:normalization}
\int_{S_1^+} t^b (\widetilde U^2+\widetilde V^2) \, dS=1 \, ,
\end{equation}
which implies that $(\widetilde U,\widetilde V)\not
\equiv (0,0)$. We observe that the couple $(U_\lambda,V_\lambda)$
weakly solves
\begin{equation*}
\begin{cases}
\Delta_b U_\lambda=\lambda^2 V_\lambda & \qquad \text{in } B_1^+ \, , \\
\Delta_b V_\lambda=0 & \qquad \text{in } B_1^+ \, , \\
\lim_{t\to 0^+} t^b \partial_t U_\lambda=\lim_{t\to 0^+} t^b \partial_t V_\lambda=0 & \qquad \text{on } B_1' \, .
\end{cases}
\end{equation*}
This means that
\begin{equation*}
\int_{B_1^+} t^b \nabla U_\lambda \nabla \varphi \, dz=-\lambda^2
\int_{B_1^+} t^b V_\lambda \varphi \, dz  \quad \text{and} \quad
\int_{B_1^+} t^b \nabla V_\lambda \nabla \varphi \, dz=0 \, ,
\end{equation*}
for any $\varphi \in H^1_0(\Sigma_1^+;t^b)$ with
$H^1_0(\Sigma_1^+;t^b)=H^1_0(\Sigma_1^+(0);t^b)$ as in Section
\ref{s:preliminary}.

From the weak convergences $U_{\lambda_{n_k}}\rightharpoonup \widetilde U$,  $V_{\lambda_{n_k}}\rightharpoonup \widetilde V$ in $H^1(B_1^+;t^b)$, we deduce that
\begin{equation*}
\int_{B_1^+} t^b \nabla \widetilde U \nabla \varphi \, dz=0 \, ,
\quad \text{and} \quad \int_{B_1^+} t^b \nabla \widetilde V \nabla
\varphi \, dz=0 \, , \quad \text{for any } \varphi \in
H^1_0(\Sigma_1^+;t^b) \, ,
\end{equation*}
which means that the couple $(\widetilde U,\widetilde V)$ weakly solves
\begin{equation} \label{eq:tilde-U-V}
\begin{cases}
\Delta_b \widetilde U=0 & \qquad \text{in } B_1^+ \, , \\
\Delta_b \widetilde V=0 & \qquad \text{in } B_1^+ \, , \\
\lim_{t\to 0^+} t^b \partial_t \widetilde U=\lim_{t\to 0^+} t^b \partial_t \widetilde V=0 & \qquad \text{on } B_1' \, .
\end{cases}
\end{equation}
By Propositions \ref{p:6.5}-\ref{p:6.4} we have that, for any
$r\in (0,1)$,
$$
\{\nabla_x U_\lambda\}_{\lambda\in (0,\tilde \lambda)}\, , \quad
\{\nabla_x V_\lambda\}_{\lambda\in (0,\tilde \lambda)}\, , \quad
\{t^b \partial_t U_\lambda\}_{\lambda\in (0,\tilde \lambda)}\, ,
\quad \{t^b \partial_t V_\lambda\}_{\lambda\in (0,\tilde \lambda)}
$$
are bounded in $C^{0,\beta}(\overline B_r^+)$ for some $\beta \in
(0,1)$; hence by the Ascoli-Arzel\`a Theorem we deduce that these
families of functions are uniformly convergent in $\overline
B_r^+$ up to subsequences. In particular, we have that
$U_{\lambda_{n_k}}\to \widetilde U$ and $V_{\lambda_{n_k}}\to
\widetilde V$ strongly in $H^1(B_r^+;t^b)$ for any $r\in (0,1)$.

Now, for any $k\in \N$ and $r\in (0,1)$ we define the functions
\begin{align*}
& D_k(r):=r^{-N-b+1} \int_{B_r^+} t^b \Big(|\nabla U_{\lambda_{n_k}}|^2+|\nabla V_{\lambda_{n_k}}|^2+\lambda_{n_k}^2 U_{\lambda_{n_k}} V_{\lambda_{n_k}}\Big) \, dz  \, , \\[7pt]
& H_k(r):=r^{-N-b} \int_{S_r^+} t^b \Big(U_{\lambda_{n_k}}^2+V_{\lambda_{n_k}}^2\Big) \, dS \, .
\end{align*}
We observe that
\begin{equation} \label{eq:scaling}
\mathcal N_k(r):=\frac{D_k(r)}{H_k(r)}=\frac{D(\lambda_{n_k}r)}{H(\lambda_{n_k}r)}=\mathcal N(\lambda_{n_k}r) \qquad \text{for any } r\in (0,1) \, .
\end{equation}
Next, if we define
\begin{align*}
& \widetilde D(r):=r^{-N-b+1} \int_{B_r^+} t^b \Big(|\nabla \widetilde U|^2+|\nabla \widetilde V|^2\Big) \, dz \, ,  \\[7pt]
& \widetilde H(r):=r^{-N-b} \int_{S_r^+} t^b \Big(\widetilde
U^2+\widetilde V^2\Big) \, dS \, ,
\end{align*}
the strong convergences $U_{\lambda_{n_k}}\to \widetilde U$ and
$V_{\lambda_{n_k}}\to \widetilde V$ in $H^1(B_r^+;t^b)$ yield
\begin{equation} \label{eq:strong-D-H}
D_k(r)\to \widetilde D(r) \quad \text{and} \quad H_k(r)\to \widetilde H(r) \qquad \text{for any } r\in (0,1) \, .
\end{equation}

We claim that $\widetilde H(r)>0$ for any $r\in (0,1)$. Indeed, if there exists $\bar r \in (0,1)$ such that $\widetilde H(\bar r)=0$ then by
\eqref{eq:tilde-U-V} and integration by parts we would have
\begin{align} \label{eq:nabla-const}
0=\int_{B_{\bar r}^+} {\rm div}(t^b\nabla \widetilde U)\widetilde U \, dz=-\int_{B_{\bar r}^+} t^b |\nabla \widetilde U|^2 dz  \, .
\end{align}
Since $\widetilde U\in H^1_0(\Sigma_{\bar r}^+;t^b)$, combining
\eqref{eq:nabla-const} with \eqref{eq:PA2-2}, we conclude that
$\widetilde U\equiv 0$ in $B_{\bar r}^+$ and, by the classical
unique continuation principle for uniformly elliptic operators
with regular coefficients, we conclude that $\widetilde U\equiv 0$
in $B_1^+$. With the same argument we also deduce that $\widetilde
V\equiv 0$ in $B_1^+$. We have shown that $(\widetilde
U,\widetilde V)\equiv (0,0)$ in $B_1^+$ thus contradicting
\eqref{eq:normalization}.

The validity of the preceding claim allows to define the function
$\widetilde{\mathcal N}(r):=\frac{\widetilde D(r)}{\widetilde
H(r)}$ for any $r\in (0,1)$.

By \eqref{eq:scaling}, \eqref{eq:strong-D-H} and Lemma \ref{l:limitN}, we infer
\begin{equation} \label{eq:tilde-N}
\widetilde{\mathcal N}(r)=\lim_{k\to \infty} \mathcal N_k(r)=\lim_{k\to +\infty} \mathcal N(\lambda_{n_k} r)=\gamma \, .
\end{equation}
This shows that $\widetilde{\mathcal N}$ is constant in $(0,1)$ so
that $\widetilde{\mathcal N}'(r)=0$ for any $r\in (0,1)$.
Therefore, adapting Lemma \ref{l:N'(r)} to the couple $(\widetilde
U,\widetilde V)$, we infer that
\begin{equation*}
  \int_{S_r^+} t^b
  \left(\left|\frac{\partial \widetilde U}{\partial \nu}\right|^2+\left|\frac{\partial \widetilde V}{\partial \nu}\right|^2\right) dS \cdot
    \int_{S_r^+} t^b
  (\widetilde U^2+\widetilde V^2)\,dS-\left[
\int_{S_r^+} t^b
  \left(\widetilde U\frac{\partial \widetilde U}{\partial \nu}+\widetilde V\frac{\partial \widetilde V}{\partial \nu}\right)\, dS\right]^{\!2}
=0
\end{equation*}
for any $r\in (0,1)$. This represents an equality in the Cauchy-Schwarz inequality in the Hilbert space $L^2(S_r^+;t^b)\times L^2(S_r^+;t^b)$ thus showing that
$(\widetilde U,\widetilde V)$ and $\left(\frac{\partial \widetilde U}{\partial \nu},\frac{\partial \widetilde V}{\partial \nu}\right)$ are parallel vectors
in $L^2(S_r^+;t^b)\times L^2(S_r^+;t^b)$. Hence, there exists a function $\eta=\eta(r)$ defined for any $r\in (0,1)$ such that
 $\left(\frac{\partial \widetilde U}{\partial \nu}(r\theta),\frac{\partial \widetilde V}{\partial \nu}(r\theta)\right)=\eta(r)(\widetilde U(r\theta),\widetilde U(r\theta))$ for any $r\in (0,1)$ and $\theta\in \SF$.
By integration we obtain
\begin{align}
\label{separate}
&\widetilde U(r\theta)=e^{\int_1^r \eta(s)ds} \widetilde U(\theta)
=\varphi(r) \Psi_1(\theta), \quad  r\in(0,1), \ \theta\in \SF,\\
\label{separate2}
&\widetilde V(r\theta)=e^{\int_1^r \eta(s)ds} \widetilde V(\theta)
=\varphi(r) \Psi_2(\theta), \quad  r\in(0,1), \ \theta\in \SF,
\end{align}
where $\varphi(r)=e^{\int_1^r \eta(s)ds}$ and
$\Psi_1=\widetilde U\big|_{\SF}$, $\Psi_2(\theta)=\widetilde V\big|_{\SF}$.
From \eqref{eq:tilde-U-V}, \eqref{separate} and \eqref{separate2}, it follows that
\begin{equation}\label{eq:28}
\begin{cases}
r^{-N}\big(r^{N+b}\varphi'(r)\big)'\theta_{N+1}^b \Psi_1(\theta)+
r^{b-2}\varphi(r){\rm div}_{\SF}(\theta_{N+1}^b \nabla_{\SF} \Psi_1(\theta))
=0 & \qquad \text{in } \SF, \\[6pt]
{\ds \lim_{\theta_{N+1}\to 0^+} \theta_{N+1}^b \nabla_{\SF} \Psi_1(\theta)\cdot {\bf e}_{N+1}=0} \, ,
\end{cases}
\end{equation}
and
\begin{equation}\label{eq:29}
\begin{cases}
r^{-N}\big(r^{N+b}\varphi'(r)\big)'\theta_{N+1}^b \Psi_2(\theta)+
r^{b-2}\varphi(r){\rm div}_{\SF}(\theta_{N+1}^b \nabla_{\SF} \Psi_2(\theta))
=0 & \qquad \text{in } \SF, \\[6pt]
{\ds \lim_{\theta_{N+1}\to 0^+} \theta_{N+1}^b \nabla_{\SF} \Psi_2(\theta)\cdot {\bf e}_{N+1}=0} \, .
\end{cases}
\end{equation}
Taking $r$ fixed, we deduce that $\Psi_1,\Psi_2$ are either zero or
eigenfunctions of \eqref{eq:Eigen-Weight} associated to the same eigenvalue.
Therefore there exist $\ell\in\N$, $\{\beta_{\ell,m},\beta'_{\ell,m}\}_{m=1}^{M_\ell}\subset \R$ such that
$$
\begin{cases}
-{\rm div}_{\SF}(\theta_{N+1}^b \nabla_{\SF} \Psi_1)=\mu_\ell \theta_{N+1}^b \Psi_1 & \qquad  \text{in } \SF, \\[6pt]
{\ds \lim_{\theta_{N+1}\to 0^+} \theta_{N+1}^b \nabla_{\SF} \Psi_1(\theta)\cdot {\bf e}_{N+1}=0}  \, ,
\end{cases}
$$

$$
\begin{cases}
-{\rm div}_{\SF}(\theta_{N+1}^b \nabla_{\SF} \Psi_2)=\mu_\ell \theta_{N+1}^b \Psi_2  &  \qquad \text{in } \SF, \\[6pt]
{\ds \lim_{\theta_{N+1}\to 0^+} \theta_{N+1}^b \nabla_{\SF} \Psi_2(\theta)\cdot {\bf e}_{N+1}=0} \, ,
\end{cases}
$$
and
\[
\Psi_1=\sum_{m=1}^{M_\ell}\beta_{\ell,m}Y_{\ell,m},\quad
\Psi_2=\sum_{m=1}^{M_\ell}\beta'_{\ell,m}Y_{\ell,m}.
\]
In view of \eqref{eq:normalization} we have that $\int_{{\mathbb S}^{N}_+} \theta_{N+1}^b (\Psi_1^2+\Psi_2^2)\,dS=1$ and hence
\[
\sum_{m=1}^{M_\ell}[(\beta_{\ell,m})^2+(\beta'_{\ell,m})^2]=1.
\]
Since $\Psi_1$ and $\Psi_2$ are not both identically zero, from
\eqref{eq:28} and \eqref{eq:29} it follows that $\varphi(r)$ solves the equation
\[
\varphi''(r)+\frac{N+b}r\, \varphi'(r)-\frac{\mu_\ell}{r^2}\, \varphi(r)=0
\]
and hence $\varphi(r)=c_1 r^{\sigma_\ell^+}+c_2 r^{\sigma_\ell^-}$
for some $c_1,c_2\in\R$ where
\begin{align} \label{eq:sigma+-sigma-}
& \sigma_\ell^+=-\tfrac{N+b-1}2+\sqrt{\left(\tfrac{N+b-1}2\right)^2+\mu_\ell} \, , \qquad  \sigma_\ell^-=-\tfrac{N+b-1}2-\sqrt{\left(\tfrac{N+b-1}2\right)^2+\mu_\ell} \, .
\end{align}
Since either $|z|^{\sigma_\ell^-}\Psi_1(\frac{z}{|z|}) \notin
H^1(B_1^+;t^b)$ or $|z|^{\sigma_\ell^-} \Psi_2(\frac{z}{|z|})
\notin H^1(B_1^+;t^b)$ as one can deduce by
\eqref{eq:Sobolev-traccia}, (we recall that
$(\Psi_1,\Psi_2)\not\equiv(0,0)$), we have that $c_2=0$ and
$\varphi(r)=c_1 r^{\sigma_\ell^+}$. Moreover, from $\varphi(1)=1$
we deduce  that $c_1=1$. Therefore
\begin{equation} \label{expw}
\widetilde U(r\theta)=r^{\sigma_\ell^+} \Psi_1(\theta),\quad
\widetilde V(r\theta)=r^{\sigma_\ell^+} \Psi_2(\theta),  \quad
\text{for all }r\in (0,1)\text{ and }\theta\in \SF.
\end{equation}
From \eqref{expw} and the fact that
\[
\int_{\SF} \theta_{N+1}^b
(\Psi_1^2+\Psi_2^2)\,dS=1\quad\text{and}\quad \int_{\SF}
\theta_{N+1}^b (|\nabla_{\SF} \Psi_1|^2+|\nabla_{\SF}
\Psi_2|^2)\,dS=\mu_\ell
\]
we infer
\begin{align*}
 \widetilde D (r)=\sigma_\ell^+  \, r^{2\sigma_\ell^+} \quad \text{and} \quad
  \widetilde  H(r)=r^{2 \sigma_\ell^+} \, .
\end{align*}
By \eqref{eq:tilde-N} we then have
$\gamma=\widetilde{\mathcal N}(r)=\frac{\widetilde
D(r)}{\widetilde H(r)}=\sigma_\ell^+$. The proof of the lemma is
thereby
complete.
\end{proof}

\begin{Lemma} \label{l:5.2} Suppose that all the assumptions of Lemma
\ref{l:blow-up} hold true and let $\gamma$ be as in Lemma
\ref{l:limitN}. Then the limit
\begin{equation*}
\lim_{r\to 0^+} r^{-2\gamma} H(r)
\end{equation*}
exists and it is finite.
\end{Lemma}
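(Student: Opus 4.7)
The plan is to show that $\log(r^{-2\gamma}H(r))$ is monotone up to an $O(r)$ perturbation, which automatically forces existence of a finite limit at $0^+$. The starting point is Lemma \ref{l:H'(r)}, from which $H'(r)=\tfrac{2}{r}D(r)=\tfrac{2\mathcal{N}(r)}{r}H(r)$, and consequently, distributionally and a.e.\ in $(0,r_0)$,
\[
\frac{d}{dr}\log\bigl(r^{-2\gamma}H(r)\bigr)=\frac{2}{r}\bigl(\mathcal{N}(r)-\gamma\bigr).
\]
So the claim will reduce to proving a one-sided estimate $\mathcal{N}(r)-\gamma\ge-Cr$ near $r=0$.

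To obtain this one-sided estimate, I would revisit the differential inequality \eqref{eq:st-basso-N'}, namely $\mathcal{N}'(r)\ge-\widetilde C_1\mathcal{N}(r)-\widetilde C_3 r$. Rewriting it as $\bigl(e^{\widetilde C_1 r}\mathcal{N}(r)\bigr)'\ge-\widetilde C_3 r\,e^{\widetilde C_1 r}$, integrating on $(0,r)$ and using $\lim_{r\to 0^+}\mathcal{N}(r)=\gamma$ from Lemma \ref{l:limitN}, one obtains
\[
e^{\widetilde C_1 r}\mathcal{N}(r)-\gamma\ge-\widetilde C_3\int_0^r s\,e^{\widetilde C_1 s}\,ds=O(r^2)\quad\text{as }r\to 0^+.
\]
A first order expansion of $e^{-\widetilde C_1 r}$ then yields $\mathcal{N}(r)\ge\gamma-Cr$ on some interval $(0,\bar r)$, for an explicit constant $C>0$ depending on $\widetilde C_1,\widetilde C_3$ and $\gamma$.

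Combining the two previous steps produces
\[
\frac{d}{dr}\Bigl(\log\bigl(r^{-2\gamma}H(r)\bigr)+2Cr\Bigr)\ge 0\quad\text{in }(0,\bar r),
\]
so the function $r\mapsto\log(r^{-2\gamma}H(r))+2Cr$ is nondecreasing on $(0,\bar r)$. Any such monotone function admits a limit at the left endpoint, and this limit is bounded above by its value at $\bar r$; passing to the limit as $r\to 0^+$ the $O(r)$ correction vanishes, hence $\lim_{r\to 0^+}\log(r^{-2\gamma}H(r))$ exists in $[-\infty,+\infty)$ and consequently $\lim_{r\to 0^+}r^{-2\gamma}H(r)$ exists and is a finite nonnegative number, proving the lemma.

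The main obstacle, and the real heart of the argument, is the asymmetric bound $\mathcal{N}(r)-\gamma\ge-Cr$: the blow-up results of Section \ref{s:blow-up} give no a priori quantitative rate for $\mathcal{N}(r)\to\gamma$, so we cannot directly control $(\mathcal{N}(r)-\gamma)/r$ symmetrically. Fortunately, to deduce the mere existence of the limit only a one-sided estimate is required, and this is exactly what the Gronwall-type reformulation of the inequality \eqref{eq:st-basso-N'} produces, once combined with the already established fact that $\mathcal{N}$ admits a finite limit at $0^+$.
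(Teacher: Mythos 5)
Your argument is correct. It is close in spirit to the paper's proof but executes a genuinely different (and slightly more streamlined) route. Both proofs start from $H'(r)=\tfrac{2}{r}D(r)$ and exploit the one-sided lower bound coming from \eqref{eq:st-basso-N'}, but from there they diverge. The paper differentiates $r^{-2\gamma}H(r)$ directly to get $\frac{d}{dr}\bigl(r^{-2\gamma}H(r)\bigr)=2r^{-2\gamma-1}H(r)\bigl(\mathcal N(r)-\gamma\bigr)$, then writes $\mathcal N(r)-\gamma=\int_0^r\mathcal N'(\rho)\,d\rho$ and decomposes $\mathcal N'=-C+\omega$ with $\omega\ge0$ in $L^1_{\rm loc}$; it integrates the resulting identity over $(r,r_0)$ and deduces convergence of the two pieces separately, invoking the upper bound $H(r)=O(r^{2\gamma})$ from \eqref{eq:24} to ensure $\rho^{-2\gamma}H(\rho)\in L^1(0,r_0)$ for the $-C$ part. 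You instead pass to $\log\bigl(r^{-2\gamma}H(r)\bigr)$, derive the quantitative rate $\mathcal N(r)-\gamma\ge-Cr$ via a Gronwall reformulation of \eqref{eq:st-basso-N'}, and conclude that $\log\bigl(r^{-2\gamma}H(r)\bigr)+2Cr$ is nondecreasing; existence of a finite (possibly zero) limit then follows from pure monotonicity, without appealing to the $L^1$ estimate on $\rho^{-2\gamma}H(\rho)$ or even to \eqref{eq:24}. Your version is thus a bit more self-contained, at the small price of the explicit Gronwall computation; the paper's version keeps $H(r)/r^{2\gamma}$ in place of its logarithm but must split the derivative into signed parts and use the a priori upper bound on $H$. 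One minor remark on rigor: the integration ``on $(0,r)$'' should formally be carried out on $(s,r)$ for $0<s<r$ using the fact that $\mathcal N\in W^{1,1}_{\rm loc}(0,r_0)$, followed by the limit $s\to0^+$ via Lemma \ref{l:limitN}; this is implicit in your write-up and is easily made precise.
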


\begin{proof} Thanks to Lemma \ref{l:uppb}, it is sufficient to
show that the limit exists.

By \eqref{H'2} and Lemma \ref{l:limitN} we have
\begin{align} \label{eq:D-H-r2gamma}
 \frac{d}{dr} \frac{H(r)}{r^{2\gamma}}
=2r^{-2\gamma-1} H(r)[\mathcal N(r)-\gamma]
 =2r^{-2\gamma-1} H(r) \int_0^r \mathcal N'(\rho)\, d\rho
\, .
\end{align}
Since $\mathcal N$ is bounded in a right neighborhood of $0$, by
\eqref{eq:st-basso-N'} we deduce that $\mathcal N'$ is bounded
from below in a right neighborhood of $0$. Hence there exist a
constant $C>0$ and a nonnegative function $\omega \in
L^1_{{\rm loc}}(0,r_0)$ such that $\mathcal N'(r)=-C+\omega(r)$
for any $r\in (0,r_0)$.

Therefore, integrating \eqref{eq:D-H-r2gamma} in $(r,r_0)$, we
obtain
\begin{align*}
\frac{H(r_0)}{r_0^{2\gamma}}-\frac{H(r)}{r^{2\gamma}}
&=\int_r^{r_0} 2\rho^{-2\gamma-1} H(\rho) \left(\int_0^\rho
\omega(\tau)\, d\tau\right)d\rho-2C\int_r^{r_0} \rho^{-2\gamma}
H(\rho) \, d\rho \, .
\end{align*}
Since $\omega\ge 0$ then $\lim_{r\to 0^+} \int_r^{r_0}
2\rho^{-2\gamma-1} H(\rho) \left(\int_0^\rho \omega(\tau)\,
d\tau\right)d\rho$ exists. Moreover we also have that $\lim_{r\to
0^+} \int_r^{r_0} \rho^{-2\gamma} H(\rho) \, d\rho=\int_0^{r_0}
\rho^{-2\gamma} H(\rho) \, d\rho$ exists and it is finite being $\rho^{-2\gamma}
H(\rho)\in L^1(0,r_0)$ thanks to \eqref{eq:24}. This completes the
proof of the lemma.
\end{proof}

Let us expand $U$ and $V$  as
\begin{equation}\label{eq:8}
U(z)=U(\lambda
\theta)=\sum_{k=0}^\infty\sum_{m=1}^{M_k}\varphi_{k,m}(\lambda)Y_{k,m}(\theta),\quad
V(z)=V(\lambda \theta)=\sum_{k=0}^\infty\sum_{m=1}^{M_k}\widetilde\varphi_{k,m}(\lambda)Y_{k,m}(\theta)
\end{equation}
where $\lambda=|z|\in(0,r_0)$, $\theta=z/|z|\in{{\mathbb S}^{N}_+}$, and
\begin{equation}\label{eq:37}
  \varphi_{k,m}(\lambda)=\int_{{\mathbb S}^{N}_+} \theta_{N+1}^b U(\lambda\,\theta) Y_{k,m}(\theta)\,dS(\theta),\quad
\widetilde\varphi_{k,m}(\lambda)=\int_{{\mathbb S}^{N}_+} \theta_{N+1}^b  V(\lambda\,\theta)
    Y_{k,m}(\theta)\,dS(\theta).
\end{equation}

\begin{Lemma} \label{l:5.3} Suppose that all the assumptions of Lemma
\ref{l:blow-up} hold true. Let $\ell$ be as in Lemma \ref{l:blow-up} and let $\varphi_{\ell,m}$ and $\widetilde \varphi_{\ell,m}$, $m=1,\dots,M_\ell$, be as in
\eqref{eq:37}. Then for any $1\le m\le M_\ell$ we have
\begin{align}\label{eq:7}
& \varphi_{\ell,m}(\lambda)=c_1^{\ell,m}
  \lambda^{\sigma_\ell^+}+\tfrac{d_1^{\ell,m}}{K(N,b,\ell)} \,
  \lambda^{\sigma_\ell^+ +2} \quad \text{and}\quad \widetilde \varphi_{\ell,m}(\lambda)=d_1^{\ell,m} \lambda^{\sigma_\ell^+},
\end{align}
where $K(N,b,\ell):=(\sigma_\ell^++2)(\sigma_\ell^++1)+(N+b)(\sigma_\ell^++2)-\mu_\ell$,
\begin{align*}
& d_1^{\ell,m}\!=\!R^{-\sigma_\ell^+}\!\!\! \int_{\SF}\!\! \theta_{N+1}^b V(R\theta)Y_{\ell,m}(\theta) \, dS(\theta)\, ,
\ c_1^{\ell,m}\!=\!R^{-\sigma_\ell^+} \!\!\!\int_{\SF} \theta_{N+1}^b U(R\theta)Y_{\ell,m}(\theta) \, dS(\theta)-\tfrac{d_1^{\ell,m}}{K(N,b,\ell)} \, R^2
\end{align*}
with $\sigma_\ell^+$ as in \eqref{eq:sigma+-sigma-}. Furthermore $\varphi_{k,m}\equiv
\widetilde\varphi_{k,m}\equiv 0$ for any $1\le k<\ell$ and $1\le m\le M_k$.
\end{Lemma}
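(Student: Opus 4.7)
The plan is to project the system \eqref{eq:system} onto the orthonormal basis $\{Y_{k,m}\}$ of the eigenvalue problem \eqref{eq:Eigen-Weight} and integrate the resulting scalar Euler-type ODEs explicitly. In polar coordinates $z=\lambda\theta$ the weighted Laplacian decomposes as
\[
\Delta_b W = W_{\lambda\lambda}+\frac{N+b}{\lambda}\,W_\lambda+\frac{1}{\lambda^2}\,\theta_{N+1}^{-b}\,{\rm div}_{\SF}\!\big(\theta_{N+1}^b\nabla_{\SF}W\big).
\]
Testing the weak formulation of $\Delta_b V=0$ against functions of the form $\eta(\lambda)Y_{k,m}(\theta)$ with $\eta\in C^\infty_c(0,R)$, and exploiting the fact that $Y_{k,m}$ satisfies the same weighted Neumann condition at $\theta_{N+1}=0$ as $V$ (so that the boundary contribution on $B_r'$ vanishes), I would obtain the distributional identity
\[
\widetilde\varphi_{k,m}''(\lambda)+\frac{N+b}{\lambda}\widetilde\varphi_{k,m}'(\lambda)-\frac{\mu_k}{\lambda^2}\widetilde\varphi_{k,m}(\lambda)=0\quad\text{on }(0,R),
\]
which is a second order Euler equation with characteristic exponents precisely $\sigma_k^{\pm}$ from \eqref{eq:sigma+-sigma-}; elliptic regularity on $(0,R)$ promotes this to a classical ODE. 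The analogous projection of $\Delta_b U=V$ yields the same equation for $\varphi_{k,m}$ with right-hand side $\widetilde\varphi_{k,m}(\lambda)$.

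The $H^1(B_R^+;t^b)$-membership of $U$ and $V$ translates, via polar coordinates, into $\int_0^R\lambda^{N+b}\big(|\varphi_{k,m}'|^2+\tfrac{\mu_k}{\lambda^2}\varphi_{k,m}^2\big)\,d\lambda<\infty$ (and the analogous estimate for $\widetilde\varphi_{k,m}$). A direct check shows that the branch $\lambda^{\sigma_k^-}$ fails to be square-integrable against the weight $\lambda^{N+b-2}$, so it must be discarded; hence $\widetilde\varphi_{k,m}(\lambda)=A_{k,m}\lambda^{\sigma_k^+}$. Inserting this source term into the inhomogeneous equation for $\varphi_{k,m}$ and searching for a particular solution of the form $C\lambda^{\sigma_k^+ +2}$, the identity $(\sigma_k^+)^2+(N+b-1)\sigma_k^+=\mu_k$ yields
\[
C\cdot K(N,b,k)=A_{k,m},\qquad K(N,b,k)=4\sigma_k^+ +2(N+b+1)>0,
\]
which matches the factor in the statement (note that $\sigma_k^+ +2\neq\sigma_k^\pm$, so resonance does not occur). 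Again discarding the $\lambda^{\sigma_k^-}$ branch gives $\varphi_{k,m}(\lambda)=c_1^{k,m}\lambda^{\sigma_k^+}+\tfrac{A_{k,m}}{K(N,b,k)}\lambda^{\sigma_k^+ +2}$. Evaluating these two formulas at $\lambda=R$ and using the $L^2(\SF;\theta_{N+1}^b)$-orthonormality of $\{Y_{k,m}\}$ pins down $A_{k,m}=d_1^{k,m}$ and $c_1^{k,m}$ exactly in the form prescribed by the lemma.

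For the vanishing claim, I would apply Parseval's identity in $L^2(\SF;\theta_{N+1}^b)$ to write
\[
H(\lambda)=\int_{\SF}\theta_{N+1}^b[U^2+V^2](\lambda\theta)\,dS(\theta)=\sum_{k,m}\!\big[\varphi_{k,m}^2(\lambda)+\widetilde\varphi_{k,m}^2(\lambda)\big].
\]
Combining this with the upper bound $H(\lambda)=O(\lambda^{2\sigma_\ell^+})$ provided by \eqref{eq:24} and the strict monotonicity of the distinct eigenvalues $\mu_n$ (which gives $\sigma_k^+<\sigma_\ell^+$ for $k<\ell$), the leading term $A_{k,m}^2\lambda^{2\sigma_k^+}$ in $\widetilde\varphi_{k,m}^2$ blows up faster than $\lambda^{2\sigma_\ell^+}$ unless $A_{k,m}=0$; plugging $A_{k,m}=0$ back into the formula for $\varphi_{k,m}$ and repeating the argument forces $c_1^{k,m}=0$, whence $\varphi_{k,m}\equiv\widetilde\varphi_{k,m}\equiv 0$ for $1\le k<\ell$.

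The main technical obstacle will be justifying the projection step rigorously, since the Neumann condition at $\{t=0\}$ is only encoded weakly through the chosen function space; this is handled by keeping the test function separated as $\eta(\lambda)Y_{k,m}(\theta)$ and integrating by parts on shells $B_{r_2}^+\setminus\overline{B_{r_1}^+}$, where the weighted Green formula combined with the matching Neumann condition of $Y_{k,m}$ guarantees that the trace integrals on $B_r'\times\{0\}$ disappear and the surviving radial identity is exactly the claimed ODE on $(0,R)$.
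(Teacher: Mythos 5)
Your proposal reproduces the paper's argument in essentially the same form: project the system onto the spherical eigenbasis $\{Y_{k,m}\}$ to obtain the coupled Euler ODE system \eqref{eq:syst-ode}, solve the homogeneous equation for $\widetilde\varphi_{\ell,m}$ and the resulting inhomogeneous one for $\varphi_{\ell,m}$ (your simplification $K(N,b,\ell)=4\sigma_\ell^++2(N+b+1)>0$ is correct and confirms non-resonance), evaluate at $\lambda=R$ to identify the constants, and use Parseval together with $H(\lambda)=O(\lambda^{2\sigma_\ell^+})$ and strict monotonicity of the eigenvalues to kill the coefficients for $k<\ell$.

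The one small variation is how you discard the $\lambda^{\sigma_k^-}$ branch: you invoke $H^1(B_R^+;t^b)$-membership of $U,V$, whereas the paper's proof of Lemma \ref{l:5.3} for the critical index $k=\ell$ discards $d_2^{\ell,m},c_2^{\ell,m}$ via the sharper boundary trace bound $H(\lambda)=O(\lambda^{2\gamma})$ from \eqref{eq:24}; the $H^1$-integrability route is exactly the alternative the paper records in Remark \ref{rem:tutti_k} for general $k$, and it does work here too since $N+b-2+2\sigma_k^-\le -(N+b)<-1$. Two minor wording points: as $\lambda\to0^+$ the term $\lambda^{2\sigma_k^+}$ does not "blow up"; it merely decays more slowly than the allowed rate $\lambda^{2\sigma_\ell^+}$ — that is what forces the coefficient to vanish. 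Also, the projection step is stated in the paper without the shell integration-by-parts detail you sketch; that detail is a correct justification but is not a different idea.
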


\begin{proof} From the Parseval identity it follows that
\begin{equation}\label{eq:17bis}
H(\lambda)=\sum_{k=0}^\infty\sum_{m=1}^{M_k}\big(\varphi_{k,m}^2(\lambda)+\widetilde\varphi_{k,m}^2(\lambda)\big),
\qquad \text{for any }0<\lambda\leq R.
\end{equation}By \eqref{eq:system} we have that for any $m=1,\dots, M_\ell$
\begin{equation} \label{eq:syst-ode}
\begin{cases}
\varphi_{\ell,m}''(\lambda)+\frac{N+b}\lambda \, \varphi_{\ell,m}'(\lambda)-\frac{\mu_\ell}{\lambda^2} \, \varphi_{\ell,m}(\lambda)=\widetilde \varphi_{\ell,m}(\lambda) \, ,\\[6pt]
\widetilde \varphi_{\ell,m}''(\lambda)+\frac{N+b}\lambda \, \widetilde \varphi_{\ell,m}'(\lambda)-\frac{\mu_\ell}{\lambda^2} \, \widetilde \varphi_{\ell,m}(\lambda)=0 \, .
\end{cases}
\end{equation}
By direct calculation we obtain
\begin{equation*}
\widetilde \varphi_{\ell,m}(\lambda)=d_1^{\ell,m} \lambda^{\sigma_\ell^+}+d_2^{\ell,m} \lambda^{\sigma_\ell^-}
\end{equation*}
for some constants $d_1^{\ell,m}, d_2^{\ell,m}$ where $\sigma_\ell^+$ and $\sigma_\ell^-$ are defined in \eqref{eq:sigma+-sigma-}.

Now, by \eqref{eq:17bis}, \eqref{eq:24} and the fact that
$\gamma=\sigma_\ell^+$, we infer $d_2^{\ell,m}=0$ so that
$\widetilde \varphi_{\ell,m}(\lambda)=d_1^{\ell,m}
\lambda^{\sigma_\ell^+}$.

In particular, \eqref{eq:syst-ode} and direct calculation yield
\begin{equation*}
\varphi_{\ell,m}(\lambda)=c_1^{\ell,m}
\lambda^{\sigma_\ell^+}+c_2^{\ell,m} \lambda^{\sigma_\ell^-}+
\tfrac{d_1^{\ell,m}}{(\sigma_\ell^++2)(\sigma_\ell^++1)+(N+b)(\sigma_\ell^++2)-\mu_\ell} \, \lambda^{\sigma_\ell^+ +2}
\end{equation*}
for some constants $c_1^{\ell,m}, c_2^{\ell,m}$. Exploiting again
\eqref{eq:17bis}, \eqref{eq:24} and the fact that
$\gamma=\sigma_\ell^+$ we deduce that $c_2^{\ell,m}=0$. The proof
of the first part of the lemma now easily follows. In order to
prove the second part of the lemma one can proceed exactly as
above replacing $\ell$ with $k$ in \eqref{eq:syst-ode} and solving
the corresponding equation. The conclusion now follows from
\eqref{eq:17bis} and \eqref{eq:24}.
\end{proof}

\begin{remark}\label{rem:tutti_k}
  We observe that the representation formula \eqref{eq:7} actually
  holds for  $\varphi_{k,m}$ and $\widetilde \varphi_{k,m}$ also for
  $k\neq\ell$; in this case to prove that $d_2^{k,m}=c_2^{k,m}=0$ we
  can use the fact that $U,V\in H^1(B_R^+;t^b)$.
\end{remark}

\begin{Lemma} Suppose that all the assumptions of Lemma \ref{l:blow-up} hold true. Then we have
\begin{equation} \label{eq:lim-pos}
\lim_{r\to 0^+} r^{-2\gamma} H(r)>0 \, .
\end{equation}
\end{Lemma}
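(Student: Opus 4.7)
The plan is to argue by contradiction: suppose $\lim_{r\to 0^+} r^{-2\gamma} H(r) = 0$. The contradiction will come from matching two different descriptions of the leading behavior of $(U,V)$ at the origin, namely the ODE-based Fourier representation of Lemma \ref{l:5.3} (extended to all modes by Remark \ref{rem:tutti_k}) and the spherical profile $(\Psi_1,\Psi_2)$ produced by the blow-up procedure in Lemma \ref{l:blow-up}.

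The first step is to exploit the Parseval identity \eqref{eq:17bis}. By Lemma \ref{l:5.3}, $\varphi_{k,m}\equiv\widetilde\varphi_{k,m}\equiv0$ for $k<\ell$, while by Remark \ref{rem:tutti_k} for every $k\geq \ell$ we have
\[
\widetilde\varphi_{k,m}(\lambda)=d_1^{k,m}\lambda^{\sigma_k^+},\qquad \varphi_{k,m}(\lambda)=c_1^{k,m}\lambda^{\sigma_k^+}+\tfrac{d_1^{k,m}}{K(N,b,k)}\lambda^{\sigma_k^++2}.
\]
Hence $\lambda^{-2\gamma}(\varphi_{k,m}^2(\lambda)+\widetilde\varphi_{k,m}^2(\lambda))\to(c_1^{\ell,m})^2+(d_1^{\ell,m})^2$ when $k=\ell$ (since $\sigma_\ell^+=\gamma$) and $\to 0$ when $k>\ell$ (since the eigenvalues $\mu_k$ are strictly increasing in $k$, so $\sigma_k^+>\sigma_\ell^+$). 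Every term of the series \eqref{eq:17bis} is non-negative, so Fatou's lemma applied to the counting measure over pairs $(k,m)$ gives
\[
\liminf_{\lambda\to 0^+}\lambda^{-2\gamma}H(\lambda)\geq \sum_{m=1}^{M_\ell}\bigl[(c_1^{\ell,m})^2+(d_1^{\ell,m})^2\bigr].
\]
The contradiction hypothesis then forces $c_1^{\ell,m}=d_1^{\ell,m}=0$ for every $m\in\{1,\dots,M_\ell\}$, whence $\varphi_{\ell,m}\equiv\widetilde\varphi_{\ell,m}\equiv 0$ on $(0,R)$.

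In the second step I would close the argument using the blow-up. By Lemma \ref{l:blow-up}(ii) there exists a subsequence $\lambda_{n_k}\downarrow 0$ such that $U(\lambda_{n_k}\cdot)/\sqrt{H(\lambda_{n_k})}$ and $V(\lambda_{n_k}\cdot)/\sqrt{H(\lambda_{n_k})}$ converge strongly in $H^1(B_r^+;t^b)$ for any $r\in(0,1)$ to $|z|^\gamma\Psi_1(z/|z|)$ and $|z|^\gamma\Psi_2(z/|z|)$ respectively, with $\Psi_i$ the spherical sums appearing in the statement. By continuity of the trace operator $H^1(B_1^+;t^b)\to L^2(S_1^+;t^b)$ and the formula \eqref{eq:37} evaluated at $\lambda=\lambda_{n_k}$, this gives
\[
\beta_{\ell,m}=\lim_{k\to\infty}\frac{\varphi_{\ell,m}(\lambda_{n_k})}{\sqrt{H(\lambda_{n_k})}},\qquad \beta'_{\ell,m}=\lim_{k\to\infty}\frac{\widetilde\varphi_{\ell,m}(\lambda_{n_k})}{\sqrt{H(\lambda_{n_k})}}.
\]
Since $\varphi_{\ell,m}$ and $\widetilde\varphi_{\ell,m}$ vanish identically, all $\beta_{\ell,m}$ and $\beta'_{\ell,m}$ are zero, contradicting the normalization $\sum_{m=1}^{M_\ell}\bigl[(\beta_{\ell,m})^2+(\beta'_{\ell,m})^2\bigr]=1$ from Lemma \ref{l:blow-up}(ii).

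The main delicate point is the termwise passage to the limit in the infinite Parseval series of Step 1; here the use of Fatou's lemma is legitimate precisely because the summands are squares (hence non-negative), and the fact that the surviving ones are exactly the $k=\ell$ modes relies on the strict monotonicity of the distinct eigenvalues $\mu_k$. The rest of the argument is a direct bookkeeping on the explicit formulas of Lemma \ref{l:5.3} and on the strong $H^1(B_r^+;t^b)$ convergence from Lemma \ref{l:blow-up}.
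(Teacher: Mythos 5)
Your proof is correct and follows essentially the same two-stage strategy as the paper: first use the Parseval expansion \eqref{eq:17bis} together with the explicit formulas of Lemma \ref{l:5.3} to conclude that the contradiction hypothesis forces $c_1^{\ell,m}=d_1^{\ell,m}=0$, hence $\varphi_{\ell,m}\equiv\widetilde\varphi_{\ell,m}\equiv0$; then feed this into the blow-up from Lemma \ref{l:blow-up}(ii) and the trace compactness to get $\beta_{\ell,m}=\beta'_{\ell,m}=0$, contradicting the normalization. The only cosmetic difference is in Step 1: the paper simply notes that each nonnegative summand of \eqref{eq:17bis} is dominated by the total sum, so $\lambda^{-2\gamma}(\varphi_{\ell,m}^2(\lambda)+\widetilde\varphi_{\ell,m}^2(\lambda))\to 0$ directly, whereas you take a detour through Fatou's lemma over the whole index set $(k,m)$. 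Your Fatou argument is valid (and needs the strict monotonicity $\sigma_k^+>\sigma_\ell^+$ for $k>\ell$ as you note), but it proves more than is needed; the direct domination argument avoids having to invoke Remark \ref{rem:tutti_k} or to discuss the $k\neq\ell$ modes at all.
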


\begin{proof} By Lemma \ref{l:5.2} we know that the limit in \eqref{eq:lim-pos} exists and it is nonnegative and finite. Suppose by contradiction that
$\lim_{\lambda\to 0^+} \lambda^{-2 \gamma} H(\lambda)=0$. Then by \eqref{eq:17bis} we deduce that for any $1\le m\le M_\ell$, with $\ell$ as in Lemma \ref{l:blow-up},
\begin{equation*}
\lim_{\lambda\to 0^+} \lambda^{-\gamma} \varphi_{\ell,m}(\lambda)=0 \qquad \text{and} \qquad
\lim_{\lambda\to 0^+} \lambda^{-\gamma} \widetilde \varphi_{\ell,m}(\lambda)=0 \, .
\end{equation*}
We recall that by Lemma \ref{l:blow-up} we have $\gamma=\sigma_\ell^+$ and hence by Lemma \ref{l:5.3} we infer $c_1^{\ell,m}=d_1^{\ell,m}=0$ so that
\begin{equation} \label{eq:annullamento}
\varphi_{\ell,m}(\lambda)=\widetilde \varphi_{\ell,m}(\lambda)=0 \qquad \text{for any } \lambda\in (0,R) \text{ and } 1\le m\le M_\ell \, .
\end{equation}
From Lemma \ref{l:blow-up}, for every sequence $\lambda_n\to0^+$, there exist a subsequence
$\{\lambda_{n_k}\}_{k\in\N}$ and $2M_\ell$ real constants
$\beta_{\ell,m},\beta'_{\ell,m}$, $m=1,2,\dots,M_\ell$,  such that
\begin{equation}\label{eq:35}
\sum_{m=1}^{M_\ell}((\beta_{\ell,m})^2+(\beta'_{\ell,m})^2)=1
\end{equation}
and
\[
U_{\lambda_{n_k}}\to |z|^{\sigma_\ell^+}
\sum_{m=1}^{M_\ell}\beta_{\ell,m} Y_{\ell,m}\Big(\frac
z{|z|}\Big) \, , \qquad
V_{\lambda_{n_k}}\to
|z|^{\sigma_\ell^+}
\sum_{m=1}^{M_\ell}\beta'_{\ell,m}Y_{\ell,m}\Big(\frac
z{|z|}\Big), \quad\text{as }k\to+\infty,
\]
 weakly in $H^1(B_1^+;t^b)$ and hence strongly in $L^2(S_1^+;t^b)$,
 where $U_\lambda,V_\lambda$ have been defined in \eqref{eq:4}. Combining this with \eqref{eq:annullamento}, it follows that, for any $m=1,2,\dots,M_\ell$,
\begin{align*}
& \beta_{\ell,m}=\lim_{k\to+\infty}(U_{\lambda_{n_k}},Y_{\ell,m})_{L^2(\SF;\theta_{N+1}^b)}
=\lim_{k\to+\infty} \frac{\varphi_{\ell,m}(\lambda_{n_k})}{\sqrt{H(\lambda_{n_k})}}=0  \, , \\[7pt]
& \beta'_{\ell,m}=\lim_{k\to+\infty}(V_{\lambda_{n_k}},Y_{\ell,m})_{L^2(\SF;\theta_{N+1}^b)}
=\lim_{k\to+\infty} \frac{\widetilde \varphi_{\ell,m}(\lambda_{n_k})}{\sqrt{H(\lambda_{n_k})}}=0  \, ,
\end{align*}
thus contradicting \eqref{eq:35}.
\end{proof}
Then we prove the following lemma. 
\begin{Lemma} \label{t:5.5} Let $(U,V)\in H^1(B_R^+;t^b)\times
  H^1(B_R^+;t^b)$ be  a weak solution to system
\eqref{eq:system} such that $(U,V)\not=(0,0)$. For any $k\in \N$ define
\begin{equation} \label{eq:sigma-k}
\sigma_k^+:=-\frac{N+b-1}2+\sqrt{\left(\frac{N+b-1}2\right)^2+\mu_k} \, .
\end{equation}
with $\mu_k$ as in Section \ref{intro}. Then there exists
$\ell\in\N$ such that
\[
\lambda^{-\sigma_\ell^+}U(\lambda z)\to
|z|^{\sigma_\ell^+}
\sum_{m=1}^{M_\ell}\alpha_{\ell,m}Y_{\ell,m}\Big(\frac
z{|z|}\Big),\quad
\lambda^{-\sigma_\ell^+}V(\lambda z)\to
|z|^{\sigma_\ell^+}
\sum_{m=1}^{M_\ell}\alpha'_{\ell,m}Y_{\ell,m}\Big(\frac
z{|z|}\Big),
\]
strongly in $H^1(B_1^+;t^b)$ as $\lambda\to 0^+$, where
\begin{align} \label{eq:form-alpha}
  \alpha_{\ell,m}&=R^{-\sigma_\ell^+} \int_{\SF} \theta_{N+1}^b U(R\,\theta)
  Y_{\ell,m}(\theta)\,dS(\theta)-\tfrac{R^{2-\sigma_\ell^+}}{K(N,b,\ell)}
\int_{\SF} \theta_{N+1}^b V(R\theta)
    Y_{\ell,m}(\theta)\,dS(\theta) \, , \\
\notag  \alpha_{\ell,m}'& =R^{-\sigma_\ell^+}\int_{\SF}
\theta_{N+1}^b V(R\,\theta)
  Y_{\ell,m}(\theta)\,dS(\theta)
\end{align}
with $K(N,b,\ell)$ as in Lemma \ref{l:5.3} and
\begin{equation} \label{eq:neq0}
\sum_{m=1}^{M_\ell}((\alpha_{\ell,m})^2+(\alpha'_{\ell,m})^2)\neq 0 \, .
\end{equation}
Moreover for any $1\le m\le M_\ell$ we have
\begin{equation} \label{eq:id-phi}
\varphi_{\ell,m}(\lambda)=\alpha_{\ell,m} \lambda^{\sigma_\ell^+}+\frac{\alpha_{\ell,m}'}{K(N,b,\ell)} \lambda^{\sigma_\ell^++2} \, ,
\qquad  \widetilde\varphi_{\ell,m}(\lambda)=\alpha_{\ell,m}' \lambda^{\sigma_\ell^+} \, .
\end{equation}
\end{Lemma}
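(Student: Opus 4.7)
\textbf{Proof plan for Lemma \ref{t:5.5}.} The strategy is to promote the subsequential blow-up convergence obtained in Lemma \ref{l:blow-up} to convergence along the full sequence $\lambda\to 0^+$, by identifying the blow-up coefficients $\beta_{\ell,m},\beta'_{\ell,m}$ in closed form via the exact ODE representation of the spherical Fourier coefficients (Lemma \ref{l:5.3}), and then to upgrade the rescaling from $U_\lambda=U(\lambda z)/\sqrt{H(\lambda)}$ to $\lambda^{-\sigma_\ell^+}U(\lambda z)$ using the fact that $\lim_{r\to 0^+}r^{-2\gamma}H(r)$ exists, is finite, and, crucially, is strictly positive (which is exactly the content of the immediately preceding lemma).

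First I would fix $\ell\in \N$ so that $\gamma=\sigma_\ell^+$ (as granted by Lemma \ref{l:blow-up}) and set $\mathcal L:=\lim_{r\to0^+}r^{-2\gamma}H(r)\in(0,+\infty)$. Given an arbitrary sequence $\lambda_n\downarrow 0$, Lemma \ref{l:blow-up} provides a subsequence $\lambda_{n_k}$ and constants $\beta_{\ell,m},\beta'_{\ell,m}$ along which $U_{\lambda_{n_k}}$ and $V_{\lambda_{n_k}}$ converge to the corresponding combination of the $Y_{\ell,m}$'s. Compactness of the trace map $H^1(B_1^+;t^b)\hookrightarrow L^2(S_1^+;t^b)$ together with \eqref{eq:37} yields
\[
\beta_{\ell,m}=\lim_{k\to+\infty}\frac{\varphi_{\ell,m}(\lambda_{n_k})}{\sqrt{H(\lambda_{n_k})}},\qquad
\beta'_{\ell,m}=\lim_{k\to+\infty}\frac{\widetilde\varphi_{\ell,m}(\lambda_{n_k})}{\sqrt{H(\lambda_{n_k})}}.
\]
Substituting the explicit expressions from Lemma \ref{l:5.3} and using $\sqrt{H(\lambda)}\sim\sqrt{\mathcal L}\,\lambda^{\sigma_\ell^+}$ then gives
\[
\beta_{\ell,m}=\frac{c_1^{\ell,m}}{\sqrt{\mathcal L}},\qquad \beta'_{\ell,m}=\frac{d_1^{\ell,m}}{\sqrt{\mathcal L}},
\]
which are independent of the chosen subsequence. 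An Urysohn-type argument then promotes the subsequential convergence of $U_\lambda,V_\lambda$ to convergence along the full family $\lambda\to 0^+$.

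Next I would compute, using $\sqrt{H(\lambda)}/\lambda^{\sigma_\ell^+}\to\sqrt{\mathcal L}$,
\[
\lambda^{-\sigma_\ell^+}U(\lambda z)=\frac{\sqrt{H(\lambda)}}{\lambda^{\sigma_\ell^+}}\,U_\lambda(z)\to \sqrt{\mathcal L}\,|z|^{\sigma_\ell^+}\sum_{m=1}^{M_\ell}\beta_{\ell,m}Y_{\ell,m}(z/|z|),
\]
and similarly for $V$, so that $\alpha_{\ell,m}=c_1^{\ell,m}$ and $\alpha'_{\ell,m}=d_1^{\ell,m}$; the explicit representation \eqref{eq:form-alpha} is then read off directly from the definitions of $c_1^{\ell,m}$ and $d_1^{\ell,m}$ given in Lemma \ref{l:5.3}. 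Formula \eqref{eq:id-phi} is just the translation of Lemma \ref{l:5.3} through this identification. Finally, \eqref{eq:neq0} follows because $\sum_{m}((\beta_{\ell,m})^2+(\beta'_{\ell,m})^2)=1$ by Lemma \ref{l:blow-up}, so $\sum_{m}((\alpha_{\ell,m})^2+(\alpha'_{\ell,m})^2)=\mathcal L>0$.

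The delicate point will be upgrading the mode of convergence from the weak $H^1(B_1^+;t^b)$/strong $H^1(B_r^+;t^b)$ for $r<1$ provided by Lemma \ref{l:blow-up} to strong $H^1(B_1^+;t^b)$ convergence as required in the statement. I would handle this by combining weak convergence with convergence of the Dirichlet energies: from $\mathcal N(\lambda)\to\sigma_\ell^+$ and $H(\lambda)/\lambda^{2\sigma_\ell^+}\to\mathcal L$ one obtains $D_\lambda(1)\to\widetilde D(1)$ where $D_\lambda$ and $\widetilde D$ are the rescaled analogues of $D$ used in the proof of Lemma \ref{l:blow-up}; since the coupling term $\lambda^2\int_{B_1^+}t^b U_\lambda V_\lambda\,dz$ is harmless, this forces $\|\nabla U_\lambda\|_{L^2(B_1^+;t^b)}^2+\|\nabla V_\lambda\|_{L^2(B_1^+;t^b)}^2$ to converge to the corresponding norms of the limit, and together with weak convergence yields norm convergence and hence strong $H^1$ convergence up to the boundary of $B_1^+$. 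Alternatively, using Remark \ref{rem:tutti_k} to expand $U,V$ as Fourier series in the $Y_{k,m}$'s, one could apply Parseval together with dominated convergence on the coefficient level to check the $H^1(B_1^+;t^b)$ norm convergence directly.
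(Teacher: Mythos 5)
Your proposal is correct and follows essentially the same route as the paper: use Lemma \ref{l:blow-up} together with the positivity of $\lim_{r\to 0^+}r^{-2\gamma}H(r)$ to get subsequential convergence of $\lambda^{-\sigma_\ell^+}U(\lambda z)$ and $\lambda^{-\sigma_\ell^+}V(\lambda z)$, then identify the limit coefficients with $c_1^{\ell,m},d_1^{\ell,m}$ from the explicit ODE representation in Lemma \ref{l:5.3}, observe these are independent of the subsequence, and conclude full convergence. The only cosmetic difference is in the upgrade from strong $H^1(B_r^+;t^b)$, $r<1$, to strong $H^1(B_1^+;t^b)$: the paper dispatches this with a one-line ``by homogeneity'' scaling argument, whereas you propose (equivalent) convergence-of-energies or Parseval arguments.
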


\nero
\begin{proof} From Lemma \ref{l:blow-up} and \eqref{eq:lim-pos} there exist
  $\ell\in \N$ such that, for every sequence $\lambda_n\to0^+$, there exist a subsequence
$\{\lambda_{n_k}\}_{k\in\N}$ and $2M_\ell$ real constants
$\alpha_{\ell,m},\alpha '_{\ell,m}$, $m=1,2,\dots,M_\ell$,  such that
$\sum_{m=1}^{M_\ell}((\alpha_{\ell,m})^2+(\alpha'_{\ell,m})^2)\neq0$
and
\begin{equation}\label{eq:40}
\lambda_{n_k}^{-\sigma_\ell^+}U(\lambda_{n_k}z)\to
|z|^{\sigma_\ell^+}
\sum_{m=1}^{M_\ell}\alpha_{\ell,m}Y_{\ell,m}\Big(\frac
z{|z|}\Big),\quad
\lambda_{n_k}^{-\sigma_\ell^+ }V(\lambda_{n_k}z)\to
|z|^{\sigma_\ell^+}
\sum_{m=1}^{M_\ell}\alpha'_{\ell,m}Y_{\ell,m}\Big(\frac
z{|z|}\Big),
\end{equation}
strongly in $H^1(B_r^+;t^b)$ for all $r\in(0,1)$, and then, by
homogeneity, strongly in $H^1(B_1^+;t^b)$.

By \eqref{eq:37}, \eqref{eq:40} and Lemma \ref{l:5.3} we deduce that
\begin{align*}
\alpha_{\ell,m}&=\lim_{k\to\infty}\lambda_{n_k}^{-\sigma_\ell^+}
\int_{\SF} \theta_{N+1}^b U(\lambda_{n_k}\,\theta) Y_{\ell,m}(\theta)\,dS(\theta) \\
&=\lim_{k\to\infty}\lambda_{n_k}^{-\sigma_\ell^+}\varphi_{\ell,m}(\lambda_{n_k})=
c_1^{\ell,m} \\
&=R^{-\sigma_\ell^+} \int_{\SF} \theta_{N+1}^b U(R\,\theta)
  Y_{\ell,m}(\theta)\,dS(\theta)-\tfrac{R^{2-\sigma_\ell^+}}{K(N,b,\ell)} \int_{\SF} \theta_{N+1}^b V(R\theta)
    Y_{\ell,m}(\theta)\,dS(\theta)
\end{align*}
and
\begin{align*}
\alpha'_{\ell,m}&=\lim_{k\to\infty}\lambda_{n_k}^{-\sigma_\ell^+}
\int_{\SF} \theta_{N+1}^b V(\lambda_{n_k}\,\theta) Y_{\ell,m}(\theta)\,dS(\theta)\\
&=\lim_{k\to\infty}\lambda_{n_k}^{-\sigma_\ell^+} \widetilde\varphi_{\ell,m}(\lambda_{n_k})=
d_1^{\ell,m}=R^{-\sigma_\ell^+}\int_{\SF} \theta_{N+1}^b V(R\,\theta)
  Y_{\ell,m}(\theta)\,dS(\theta)  \, .
\end{align*}
We observe that the coefficients $\alpha_{\ell,m},\alpha'_{\ell,m}$ depend neither on the sequence
$\{\lambda_n\}_{n\in\N}$ nor on its subsequence
$\{\lambda_{n_k}\}_{k\in\N}$. Hence the convergences in \eqref{eq:40}  hold as $\lambda\to 0^+$
and the lemma   is proved.~\end{proof}

We now state and prove the following theorem.

\begin{Theorem} \label{p:5.6} Let $(U,V)\in H^1(B_R^+;t^b)\times
  H^1(B_R^+;t^b)$ be  a weak solution to system
\eqref{eq:system} such that $(U,V)\not=(0,0)$.
Then there exists $\delta_1>0$ and a linear combination $\Psi_1\not\equiv 0$ of eigenfunctions of \eqref{eq:Eigen-Weight}, possibly corresponding to different eigenvalues, such that
\begin{equation} \label{eq:est-aux}
\lambda^{-\delta_1} \, U(\lambda z)\to |z|^{\delta_1} \, \Psi_1\left(\tfrac z{|z|}\right)
\end{equation}
strongly in $H^1(B_1^+;t^b)$ as $\lambda\to 0^+$.
Furthermore, if $V\not\equiv0$, there exists $\delta_2>0$ and a
linear combination $\Psi_2\not\equiv 0$ of eigenfunctions of
\eqref{eq:Eigen-Weight}, possibly corresponding to different
eigenvalues, such that
\begin{equation} \label{eq:est-aux-bis}
\lambda^{-\delta_2} \, V(\lambda z)\to |z|^{\delta_2} \, \Psi_2\left(\tfrac z{|z|}\right)
\end{equation}
strongly in $H^1(B_1^+;t^b)$ as $\lambda\to 0^+$.
\end{Theorem}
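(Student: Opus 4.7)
The strategy is to identify $\delta_1,\delta_2,\Psi_1,\Psi_2$ from a complete spherical Fourier expansion of $U$ and $V$ extending Lemma \ref{l:5.3} to every index $(k,m)$, and then to upgrade the termwise asymptotics to strong $H^1(B_1^+;t^b)$ convergence by a dominated convergence argument on the residual series.

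As observed in Remark \ref{rem:tutti_k}, the ODE analysis of Lemma \ref{l:5.3} applies to every $(k,m)$: the regularity $U,V\in H^1(B_R^+;t^b)$ together with the trace embedding \eqref{eq:Sobolev-traccia} excludes the singular modes $\lambda^{\sigma_k^-}$ from both $\widetilde\varphi_{k,m}$ and $\varphi_{k,m}$, giving
\begin{align*}
V(\lambda\theta)&=\sum_{k,m}d_1^{k,m}\lambda^{\sigma_k^+}Y_{k,m}(\theta),\\
U(\lambda\theta)&=\sum_{k,m}\bigl[c_1^{k,m}\lambda^{\sigma_k^+}+\tfrac{d_1^{k,m}}{K(N,b,k)}\lambda^{\sigma_k^++2}\bigr]Y_{k,m}(\theta),
\end{align*}
the series converging in $H^1(B_\rho^+;t^b)$ for every $\rho\leq R$, with coefficients computed (as in Lemma \ref{l:5.3}) from the Fourier traces of $U$ and $V$ on $S_R^+$. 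Since $V=\Delta_b U$, the assumption $(U,V)\neq(0,0)$ forces $U\not\equiv 0$. Using $\sigma_k^+\to+\infty$, the numbers
\[
\delta_1:=\min\bigl(\{\sigma_k^+:c_1^{k,m}\neq 0\}\cup\{\sigma_k^++2:d_1^{k,m}\neq 0\}\bigr),\quad \delta_2:=\min\{\sigma_k^+:d_1^{k,m}\neq 0\}
\]
(the latter defined provided $V\not\equiv 0$) are finite; I set $\Psi_2:=\sum_{(k,m):\,\sigma_k^+=\delta_2}d_1^{k,m}Y_{k,m}$ and $\Psi_1:=\sum_{(k,m):\,\sigma_k^+=\delta_1}c_1^{k,m}Y_{k,m}+\sum_{(k,m):\,\sigma_k^++2=\delta_1}\tfrac{d_1^{k,m}}{K(N,b,k)}Y_{k,m}$, both nontrivial by minimality and each a linear combination of eigenfunctions of \eqref{eq:Eigen-Weight}, possibly from distinct eigenvalues in the case of $\Psi_1$.

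Strong convergence in $H^1(B_1^+;t^b)$ is then obtained by a direct residual estimate. Writing $z=\rho\theta$ and subtracting $|z|^{\delta_1}\Psi_1(z/|z|)$ from $\lambda^{-\delta_1}U(\lambda z)$ cancels exactly the leading-order contributions, leaving a Fourier series in $\{Y_{k,m}\}$ whose scalar coefficients carry a factor $\lambda^{\tau_{k,m}-\delta_1}$ with $\tau_{k,m}\in\{\sigma_k^+,\sigma_k^++2\}$ and $\tau_{k,m}>\delta_1$. Parseval on each sphere $S_\rho^+$, integration over $\rho\in(0,1)$, and the identity $\int_{\SF}\theta_{N+1}^b|\nabla_{\SF}Y_{k,m}|^2\,dS=\mu_k$ bound the $H^1(B_1^+;t^b)$-norm of the residual by
\[
C\!\sum_{(k,m)\text{ non-leading}}\!\bigl(|c_1^{k,m}|^2+|d_1^{k,m}|^2\bigr)(1+\mu_k)\lambda^{2(\tau_{k,m}-\delta_1)}.
\]
Each summand vanishes as $\lambda\to 0^+$, and for $\lambda\leq 1$ the series is dominated by the corresponding sum without the $\lambda$-factor, which is finite in view of $U,V\in H^1(B_R^+;t^b)$. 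Dominated convergence on the counting measure then yields the strong $H^1$-convergence of the blow-up of $U$; the analogous computation handles $V$.

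The main technical obstacle is identifying $\delta_1$ in the ``resonance'' case of Lemma \ref{t:5.5}, i.e., when all coefficients $\alpha_{\ell,m}$ produced there vanish: in this situation $\delta_1$ need not equal $\sigma_\ell^+$, but may coincide with $\sigma_\ell^++2$ (with $\Psi_1$ coming from the $V$-driven particular solution $d_1^{\ell,m}/K(N,b,\ell)$), or with some larger $\sigma_k^+$ for $k>\ell$, and $\Psi_1$ can combine eigenfunctions of distinct eigenvalues. Reading $\delta_1$ directly from the Fourier series above handles both cases uniformly.
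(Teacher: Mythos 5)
Your proposal is a genuinely different route from the paper's. The paper proves Theorem~\ref{p:5.6} by an iterative application of the blow-up machinery of Lemma~\ref{t:5.5}: having found the Almgren frequency $\gamma=\sigma_\ell^+$ and the coefficients $\alpha_{\ell,m},\alpha'_{\ell,m}$, when all $\alpha_{\ell,m}$ vanish it subtracts off finitely many explicit Fourier modes to form $\omega$, reapplies Lemma~\ref{t:5.5} to $(\omega,\Delta_b\omega)$, and case-analyzes on whether $\Sigma$ is empty, whether $\sigma_J^+<\sigma_\ell^++2$, etc. You replace all this bookkeeping by a full spherical expansion of $(U,V)$ (Lemma~\ref{l:5.3} plus Remark~\ref{rem:tutti_k}), read off $\delta_1,\delta_2$ as the minimal exponents carrying a nonzero coefficient, and obtain the $H^1$ convergence directly from Parseval diagonalization and dominated convergence. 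This is conceptually cleaner; it also has the interesting feature that, since the elimination of the singular modes $\rho^{\sigma_k^-}$ for \emph{every} $k$ (including $k=\ell$) follows from $H^1(B_R^+;t^b)$ membership alone, your route does not actually need the Almgren monotonicity or Lemma~\ref{t:5.5} at all once the full expansion is in hand.

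There is one genuine gap, though a small and fixable one, in the dominated convergence step. You claim that for $\lambda\leq 1$ the residual series is dominated by ``the corresponding sum without the $\lambda$-factor, which is finite in view of $U,V\in H^1(B_R^+;t^b)$.'' That is not true when $R\leq 1$. The $H^1(B_R^+;t^b)$-norm controls, termwise,
\begin{equation*}
\bigl(|c_1^{k,m}|^2+|d_1^{k,m}|^2\bigr)\,\frac{(\sigma_k^+)^2+\mu_k}{N+b+2\sigma_k^+-1}\,R^{N+b+2\sigma_k^+-1},
\end{equation*}
and the factor $R^{2\sigma_k^+}$ decays exponentially in $\sigma_k^+\sim\sqrt{\mu_k}$ when $R<1$, so it does not dominate $\sum(|c_1^{k,m}|^2+|d_1^{k,m}|^2)(1+\mu_k)$. (The factor $(1+\mu_k)$ in your bound is also an overestimate of the spectral weight on $B_1^+$, which is $\asymp\frac{\tau^2+\mu_k}{N+b+2\tau-1}\sim\sqrt{\mu_k}$ for $\tau\in\{\sigma_k^+,\sigma_k^++2\}$; this makes the summability you assert even less immediate.) The fix is to fix once and for all $\lambda_0\in(0,\min(R,1))$, restrict to $\lambda\leq\lambda_0$, and take the dominating series at $\lambda=\lambda_0$: since $\lambda_0<R$ and the exponents $\tau_{k,m}$ are nonnegative, $\lambda_0^{2\tau_{k,m}}\leq R^{2\tau_{k,m}}$, hence
\begin{equation*}
\sum_{\text{non-leading}}\bigl(|c_1^{k,m}|^2+|d_1^{k,m}|^2\bigr)\,\tfrac{(\tau_{k,m})^2+\mu_k}{N+b+2\tau_{k,m}-1}\,\lambda_0^{2(\tau_{k,m}-\delta_1)}
\leq \lambda_0^{-2\delta_1}R^{-(N+b-1)}\bigl(\|U\|_{H^1(B_R^+;t^b)}^2+\|V\|_{H^1(B_R^+;t^b)}^2\bigr)<\infty,
\end{equation*}
and dominated convergence then applies as you intended. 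With this adjustment the argument is correct.
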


\begin{proof} We treat separately the proofs of \eqref{eq:est-aux} and \eqref{eq:est-aux-bis}.

\medskip
{\it Proof of \eqref{eq:est-aux}.} Let $\ell$ be as in Lemma \ref{t:5.5}. If at least one of the numbers $\alpha_{\ell,1},\dots,\alpha_{\ell,M_\ell}$
introduced in Lemma \ref{t:5.5} is different from zero then the proof of \eqref{eq:est-aux} follows immediately with $\delta_1=\sigma_\ell^+$ and
\begin{equation*}
\Psi_1(\theta)=\sum_{m=1}^{M_\ell} \alpha_{\ell,m} Y_{\ell,m}(\theta) \, .
\end{equation*}
Suppose now that $\alpha_{\ell,1}=\, \dots \, =\alpha_{\ell,M_\ell}=0$.
Let $k>\ell$ be such that
\begin{equation*}
\sigma_k^+>\sigma_\ell^++2 \quad \text{and} \quad \sigma_{k-1}^+\le \sigma_\ell^++2
\end{equation*}
with $\sigma_\ell^+,\dots,\sigma_k^+$ as in \eqref{eq:sigma-k}, and let
\begin{equation*}
\Sigma:=\{j\in \{\ell+1,\dots,k-1\}:\alpha_{j,m}\neq 0 \text{ for at least one } m\in \{1,\dots,M_j\}\}
\end{equation*}
with $\Sigma$ being possibly empty. Here $\alpha_{j,m}$
is defined as in \eqref{eq:form-alpha} replacing $\ell$  with $j$. When $\Sigma\neq \emptyset$ we put $J=\min \Sigma$.

We distinguish the two cases $\Sigma\neq \emptyset$ and $\Sigma=\emptyset$.

{\bf The case $\Sigma\neq \emptyset$.} We put
\begin{align*}
\omega(z)& :=U(z)-\sum_{j=1}^{k-1}
\sum_{m=1}^{M_j} \varphi_{j,m}(|z|) Y_{j,m}\left(\tfrac
z{|z|}\right) \\
& =U(z)-\sum_{j\in \Sigma}
\sum_{m=1}^{M_j} \alpha_{j,m}|z|^{\sigma_j^+} \, Y_{j,m}\left(\tfrac
z{|z|}\right)-\sum_{j=\ell}^{k-1}
\sum_{m=1}^{M_j} \frac{\alpha_{j,m}'}{K(N,b,j)}\, |z|^{\sigma_j^++2}\, Y_{j,m}\left(\tfrac
z{|z|}\right)
\end{align*}
for any $z\in B_R^+$, with $K(N,b,j):=(\sigma_j^++2)(\sigma_j^++1)+(N+b)(\sigma_j^++2)-\mu_j$. The last identity follows from the second
part of Lemma \ref{l:5.3} and Remark \ref{rem:tutti_k}.

It is not restrictive to assume that $\omega\not\equiv 0$, otherwise the conclusion is trivial.
We observe that $\omega$ is in the same position as the function $U$ in
Lemma \ref{t:5.5} so that applying that result to $\omega$ we deduce that
there exists $\widetilde \ell\ge 0$ such that
\begin{align} \label{eq:conv-H1}
& \lambda^{-\sigma_{\widetilde \ell}^+}\omega(\lambda z)\to
|z|^{\sigma_{\widetilde \ell}^+} \sum_{m=1}^{M_{\widetilde\ell}}
\widetilde \alpha_m \, Y_{\widetilde
\ell,m}\left(\frac{z}{|z|}\right)\, , \qquad
\lambda^{-\sigma_{\widetilde \ell}^+}\Delta_b \omega(\lambda z)\to
|z|^{\sigma_{\widetilde \ell}^+} \sum_{m=1}^{M_{\widetilde\ell}}
\widetilde \alpha'_m \, Y_{\widetilde
\ell,m}\left(\frac{z}{|z|}\right)
\end{align}
in $H^1(B_1^+;t^b)$ as $\lambda\to 0^+$, where $\widetilde
\alpha_m$ and $\widetilde \alpha'_m$ satisfy
\eqref{eq:form-alpha} and \eqref{eq:neq0} in which the roles of $U$
and $V$ in Lemma \ref{t:5.5} are replaced by $\omega$ and
$\Delta_b\omega$ respectively.

We claim that $\widetilde \ell\ge k$. We first observe that the Fourier coefficients $\varphi_{j,m},\widetilde\varphi_{j,m}$ corresponding to $\omega$ are all zero for any $1\le j\le k-1$ and $1\le m\le M_j$. On the other hand, by \eqref{eq:id-phi} we deduce that at least one of the functions
$\varphi_{\widetilde\ell,m}, \widetilde\varphi_{\widetilde\ell,m}$, $1\le m\le M_{\widetilde\ell}$, corresponding to $\omega$ is not the null function. This proves the validity of the claim.

Note that since $\widetilde\ell\ge k$, by \eqref{eq:form-alpha} and the orthogonality of $\{Y_{j,m}\}_{j\ge 0, 1 \le m\le M_j}$ in $L^2(\SF;\theta_{N+1}^b)$, we also deduce that
$\widetilde\alpha_m=\alpha_{\widetilde\ell,m}$ and
$\widetilde\alpha_m'=\alpha_{\widetilde\ell,m}'$ for any $1\le
m\le M_{\widetilde\ell}$.

By \eqref{eq:conv-H1} and the fact that $\widetilde\ell\ge k$,
$\lambda^{-\sigma_k^+}\omega(\lambda z)$ and
$\lambda^{-\sigma_k^+}\Delta_b \omega(\lambda z)$ remain uniformly
bounded in $H^1(B_1^+;t^b)$ as $\lambda\to 0^+$.

We observe that from the definitions of $\omega$, $\Sigma$ and $J$ we have
$\sigma_j^++2\ge \sigma_\ell^++2\ge \sigma_{k-1}^+\ge \sigma_i^+$ for any $\ell+1\le i,j\le k-1$.

Therefore, if $\sigma_J^+<\sigma_\ell^++2$ the proof of \eqref{eq:est-aux} then follows with $\delta_1=\sigma_J^+$ and
$$
\Psi_1(\theta)=\sum_{m=1}^{M_J} \alpha_{J,m} \, Y_{J,m}(\theta) \, , \qquad \theta\in \SF \, .
$$
Suppose now that $\sigma_J^+=\sigma_\ell^++2$. In this case we necessarily have $J=k-1$ so that \eqref{eq:est-aux} follows with $\delta_1=\sigma_{k-1}^+=\sigma_\ell^++2$ and
$$
\Psi_1(\theta)= \sum_{m=1}^{M_{k-1}} \alpha_{k-1,m} \, Y_{k-1,m}(\theta)+\sum_{m=1}^{M_\ell} \frac{\alpha_{\ell,m}'}{K(N,b,\ell)} \, Y_{\ell,m}(\theta) \, ,
\qquad \theta\in \SF \, .
$$

{\bf The case $\Sigma=\emptyset$.} As in the previous case we define
\begin{align*}
\omega(z)& :=U(z)-\sum_{j=1}^{k-1}
\sum_{m=1}^{M_j} \varphi_{j,m}(|z|) Y_{j,m}\left(\tfrac
z{|z|}\right)=U(z)-\sum_{j=\ell}^{k-1}\sum_{m=1}^{M_j} \frac{\alpha_{j,m}'}{K(N,b,j)}\, |z|^{\sigma_j^++2} Y_{j,m}\left(\tfrac
z{|z|}\right) ,
\end{align*}
for any $z\in B_R^+$, where the last identity follows from the second
part of Lemma \ref{l:5.3}, Remark 
\ref{rem:tutti_k}, and the fact that $\Sigma=\emptyset$. Proceeding as in this case $\Sigma\neq \emptyset$ we find $\widetilde\ell\ge k$ such that
\eqref{eq:conv-H1} holds with $\widetilde\alpha_m=\alpha_{\widetilde\ell,m}$ and $\widetilde\alpha_m'=\alpha_{\widetilde\ell,m}'$ for any $1\le m\le M_{\widetilde \ell}$. Again we have that $\lambda^{-\sigma_k^+}\omega(\lambda z)$ and
$\lambda^{-\sigma_k^+}\Delta_b \omega(\lambda z)$ remain uniformly
bounded in $H^1(B_1^+;t^b)$ as $\lambda\to 0^+$. Since $\sigma_k^+>\sigma_\ell^++2$ and since $\alpha_{\ell,m}'\neq 0$ for at least one $1\le m\le M_\ell$, the \eqref{eq:est-aux} follows as well with $\delta_1=\sigma_\ell^++2$ and
$$
\Psi_1(\theta)=\sum_{m=1}^{M_\ell} \frac{\alpha_{\ell,m}'}{K(N,b,\ell)} \, Y_{\ell,m}(\theta) \, , \qquad \theta\in \SF\, .
$$

\medskip

{\it Proof of \eqref{eq:est-aux-bis}.} If at least one of the numbers $\alpha_{\ell,1}',\dots,\alpha_{\ell,M_\ell}'$
introduced in Lemma \ref{t:5.5} is different from zero then the proof of \eqref{eq:est-aux-bis} follows immediately with $\delta_2=\sigma_\ell^+$ and
\begin{equation*}
\Psi_2(\theta)=\sum_{m=1}^{M_\ell} \alpha_{\ell,m}' Y_{\ell,m}(\theta) \, .
\end{equation*}
Suppose now that $\alpha_{\ell,1}'=\, \dots \,
=\alpha_{\ell,M_\ell}'=0$. Let $k>\ell$ be the first integer for which at least one of the numbers
$\alpha'_{k,1},\dots,\alpha'_{k,M_k}$ is different from
zero (such $k$ exists if $V\not\equiv 0$ in view of
  \eqref{eq:8}, Lemma \ref{l:5.3}, and Remark \ref{rem:tutti_k}) and put
\begin{align*}
\omega(z)& :=U(z)-\sum_{j=1}^{k}
\sum_{m=1}^{M_j} \varphi_{j,m}(|z|) Y_{j,m}\left(\tfrac
z{|z|}\right) \\
& =U(z)-\sum_{j=\ell}^k
\sum_{m=1}^{M_j} \alpha_{j,m}|z|^{\sigma_j^+} \, Y_{j,m}\left(\tfrac
z{|z|}\right)-\sum_{m=1}^{M_k} \frac{\alpha_{k,m}'}{K(N,b,k)}\, |z|^{\sigma_k^++2}\, Y_{k,m}\left(\tfrac
z{|z|}\right)
\end{align*}
for any $z\in B_R^+$. The last identity follows from the second
part of Lemma \ref{l:5.3} and Remark \ref{rem:tutti_k}. Applying Lemma \ref{t:5.5} to $\omega$ and proceeding as in the proof of \eqref{eq:est-aux}, one can show that
$\lambda^{-\sigma_k^+} \omega(\lambda z)\to 0$ and $\lambda^{-\sigma_k^+} \Delta_b \omega(\lambda z)\to 0$ in $H^1(B_1^+;t^B)$ as $\lambda\to 0^+$. The proof of
\eqref{eq:est-aux-bis} now follows with $\delta_2=\sigma_k^+$ and
\begin{equation*}
\Psi_2(\theta)=\sum_{m=1}^{M_k}\alpha_{k,m}' \, Y_{k,m}(\theta)
\end{equation*}
being $\Delta_b \omega(z)=V(z)- |z|^{\sigma_k^+}\sum_{m=1}^{M_k}\alpha_{k,m}' \, Y_{k,m}\left(\tfrac
z{|z|}\right)$.
\end{proof}

\section{Proof of the main results} \label{s:main-results}

We start with the proof of Theorem \ref{t:asym-est} since the proofs of Theorems \ref{t:SUCP-1}--\ref{t:SUCP-2} are related to the asymptotic estimates stated in Theorem \ref{t:asym-est}.

\subsection{Proof of Theorem \ref{t:asym-est}} Up to translation it is not restrictive to assume that $x_0=0$. The proof now follows from Theorem \ref{p:5.6} and the regularity estimates of Proposition \ref{p:6.4}.

\smallskip
Once we have proved Theorem \ref{t:asym-est}, we can proceed with the proofs of Theorems \ref{t:SUCP-1}--\ref{t:SUCP-2}.

\subsection{Proof of Theorem \ref{t:SUCP-1}} Let $u$ be as in the statement of the theorem and let $U\in \mathcal D_b$ be the corresponding solution
of \eqref{eq:aux-prob}. According with Section \ref{s:Almgren} we also put $V=\Delta_b U$. Following the argument introduced at the beginning of Section \ref{s:Almgren}, by assuming up to translation that $x_0=0$, we see that the couple $(U,V)\in H^1(B_R^+;t^b)\times H^1(B_R^+;t^b)$ is a nontrivial solution
of \eqref{eq:system} with $R$ as in \eqref{eq:H^1}. Since $(-\Delta)^s u\in (\mathcal D^{s-1,2}(\R^N))^\star$, by \eqref{eq:C-b} we deduce that the map
\begin{equation*}
W\mapsto \phantom{a}_{(\mathcal D^{s-1,2}(\R^N))^\star}
\left\langle (-\Delta)^{s} u, \mathop{\rm Tr}(W)\right\rangle_{\mathcal D^{s-1,2}(\R^N)} \, , \qquad W\in \mathcal D^{1,2}(\R^{N+1}_+;t^b)
\end{equation*}
belongs to $(\mathcal D^{1,2}(\R^{N+1}_+;t^b))^\star$.

Then, by classical minimization methods, we have that the minimum
\begin{equation*}
\min_{W\in \mathcal D^{1,2}(\R^{N+1}_+;t^b)}
\left[\frac 12 \int_{\R^{N+1}_+} t^b|\nabla W|^2\, dz+C_b^2 \phantom{a}_{(\mathcal D^{s-1,2}(\R^N))^\star}
\left\langle (-\Delta)^{s} u, \mathop{\rm Tr}(W)\right\rangle_{\mathcal D^{s-1,2}(\R^N)}\right]
\end{equation*}
is attained by some $\widetilde V\in  \mathcal D^{1,2}(\R^{N+1}_+;t^b)$ weakly solving
\begin{align}\label{eq:50-duplicata}
-\int_{\R^{N+1}_+} t^b \nabla\widetilde V \nabla\Phi \, dz &=C_b^2
\phantom{a}_{(\mathcal D^{s-1,2}(\R^N))^\star}
\left\langle
  (-\Delta)^{s} u,
\mathop{\rm Tr}(\Phi)\right\rangle_{\mathcal D^{s-1,2}(\R^N)}
\end{align}
for any $\Phi\in \mathcal D^{1,2}(\R^{N+1};t^b)$. In particular we have
\begin{align}\label{eq:50-duplicata-bis}
-\int_{\R^{N+1}_+} t^b \nabla\widetilde V \nabla\Phi \, dz
=C_b^2 \int_{\R^N}|\xi|^{2s} \widehat u\,\overline{\widehat{\mathop{\rm Tr}(\Phi)}}\,d\xi
\qquad \text{for any } \Phi\in C^\infty_c(\overline{\R^{N+1}_+}) \, .
\end{align}
Combining \eqref{eq:50-duplicata-bis} and \eqref{eq:nuova} we obtain
\begin{align} \label{eq:non-reg}
-\int_{\R^{N+1}_+} & t^b \nabla\widetilde V \nabla\Phi \, dz=C_b^2 (u,\mathop{\rm Tr}(\Phi))_{\mathcal D^{s,2}(\R^N)}\\
\notag & =(U,\Phi)_{\mathcal D_b}=\int_{\R^{N+1}_+} t^b\Delta_b U\Delta_b \Phi \, dz=\int_{\R^{N+1}_+} t^b V\Delta_b \Phi \, dz
\quad \text{for any } \Phi\in \mathcal T
\end{align}
with $\mathcal T$ as in \eqref{eq:space}.

Since $u\in\mathcal D^{s,2}(\R^N)$ and $(-\Delta)^s u\in (\mathcal
D^{s-1,2}(\R^N))^\star$, with a mollification argument, it is possible
to construct an approximating sequence of functions $\{u_n\}\subset \mathcal D^{s,2}(\R^N)$ such that $u_n\to u$ in $\mathcal D^{s,2}(\R^N)$, $(-\Delta)^s u_n\in C^\infty(\R^N)$, $(-\Delta)^s u_n\to (-\Delta)^s u$ weakly in $(\mathcal D^{s-1,2}(\R^N))^\star$.

Then we can construct the corresponding functions $U_n, V_n$ and $\widetilde V_n$. First we observe that $U_n\to U$ in $\mathcal D_b$ and in particular $V_n\to V$ in $L^2(\R^{N+1}_+;t^b)$. Moreover $\widetilde V_n\rightharpoonup \widetilde V$ weakly in $\mathcal D^{1,2}(\R^{N+1};t^b)$.

Now we observe that for the functions $V_n$ we have
\begin{align} \label{eq:strong-conv-1}
\int_{\R^{N+1}_+} t^b V_n\, \Delta_b \Phi \, dz=C_b^2 (u_n,\mathop{\rm Tr}(\Phi))_{\mathcal D^{s,2}(\R^N)}
=C_b^2 \int_{\R^N}  (-\Delta)^s u_n \mathop{\rm Tr}(\Phi) \, dx \quad \text{for any } \Phi\in \mathcal T,
\end{align}
and hence, since
$(-\Delta)^s u_n\in (\mathcal D^{s-1,2}(\R^N))^\star$, by Proposition
\ref{p:A2} one can show that, for any $r>0$, $V_n\in H^1(Q_r^+;t^b)$.

Combining \eqref{eq:strong-conv-1} with \eqref{eq:6} we obtain
\begin{align*} 
\int_{\R^{N+1}_+} t^b (V_n-V)\, \Delta_b \Phi \, dz=C_b^2  \phantom{a}_{(\mathcal D^{s-1,2}(\R^N))^\star}\left\langle (-\Delta)^{s} u_n-(-\Delta)^s u, \mathop{\rm Tr}(\Phi)\right\rangle_{\mathcal D^{s-1,2}(\R^N)}
\end{align*}
for any $\Phi\in \mathcal T$ such that ${\rm supp}(\Phi(\cdot,0))\subset \Omega$. Hence, by \eqref{eq:cont-dep} we deduce that $V_n \rightharpoonup V$ weakly in $H^1(Q_R^+;t^b)$ and by Lemma \ref{l:sobolev} we also have
\begin{equation} \label{eq:Traccia}
\mathop{\rm Tr}(V_n) \rightharpoonup \mathop{\rm Tr}(V) \qquad \text{weakly in } L^{2^*(N,s-1)}(B_R') \, .
\end{equation}
The fact that $V_n\in H^1(Q_r^+;t^b)$ implies
\begin{align*}
\int_{\R^{N+1}_+} t^b V_n\, \Delta_b \Phi \, dz=-\int_{\R^{N+1}_+} t^b \nabla V_n \nabla \Phi \, dz \qquad \text{for any } \Phi\in \mathcal T
\end{align*}
and by \eqref{eq:non-reg} applied to $V_n$ and $\widetilde V_n$ we obtain
\begin{equation} \label{eq:harmonic}
\int_{\R^{N+1}_+} t^b \nabla(V_n-\widetilde V_n)\nabla \Phi\, dz=0 \qquad \text{for any } \Phi\in \mathcal T \, .
\end{equation}
Actually we can prove that \eqref{eq:harmonic} still holds true for any $\Phi\in
C^\infty_c(\overline{\R^{N+1}_+})$ not necessarily satisfying
$\Phi_t(\cdot,0)\equiv 0$ in $\R^N\times\{0\}$, arguing as we did for \eqref{eq:10-bis}.
If we define
\begin{equation*}
\widetilde W_n(x,t)=
\begin{cases}
V_n(x,t)-\widetilde V_n(x,t) & \qquad \text{if } t\ge 0\, , \\[7pt]
V_n(x,-t)-\widetilde V_n(x,-t) & \qquad \text{if } t<0 \, ,
\end{cases}
\end{equation*}
by \eqref{eq:harmonic} we obtain
\begin{equation} \label{eq:harmonic-bis}
\int_{\R^{N+1}} |t|^b \nabla\widetilde W_n \nabla \Phi\, dz=0
\end{equation}
for any $\Phi\in C^\infty_c(\R^{N+1})$. Choosing a suitable sequence of test functions in \eqref{eq:harmonic-bis} and passing to the limit, it is possible to prove that for any $x_0\in \R^N$ and $r>0$
\begin{equation*}
\int_{\partial B_r(x_0,0)} |t|^b \frac{\partial \widetilde W_n}{\partial \nu} \, dS=0 \, .
\end{equation*}
From this identity, proceeding similarly to the proof of the mean value theorem for harmonic functions, see \cite[Theorem 2.1]{GT}), and taking into account the H\"older regularity results stated in Proposition \ref{p:reg-1}, one can prove that
\begin{equation*}
\widetilde W_n(x_0,0)=\frac 1{\omega_{N,b} \, r^{N+b+1}} \int_{B_r(x_0,0)} |t|^b \, \widetilde W_n \, dz \qquad \text{for any } x_0\in \R^N \ \text{and} \ r>0
\end{equation*}
where $\omega_{N,b}=(N+b+1)^{-1} \int_{\partial B_1(0,0)} |t|^b \,
dS$, see also \cite[Lemma A.1]{wang_wei} and \cite[Lemma 2.6]{STT}.
Hence we have
\begin{align*}
& |\widetilde W_n(x_0,0)|\le \frac 2{\omega_{N,b} \, r^{N+b+1}} \left(\int_{B_r^+(x_0)} |t|^b |V_n| \, dz+\int_{B_r^+(x_0)} |t|^b |\widetilde V_n| \, dz\right)\\
& \le \!  \frac 2{\omega_{N,b} \, r^{N+b+1}} \!\!\left[r^{\frac{N+b+1}2} \! \!\left(\tfrac{|B_1'|}{b+1}\right)^{\!\!\frac 12} \! \|V_n\|_{L^2(\R^{N+1}_+;t^b)}
\! +\! r^{\frac{(N+b+1)(2^{**}(b)-1)}{2^{**}(b)}} \! \! \left(\tfrac{|B_1'|}{b+1}\right)^{\frac{2^{**}(b)-1}{2^{**}(b)}}\!\! \|\widetilde V_n\|_{L^{2^{**}(b)}(\R^{N+1}_+;t^b)}\right]\!\! .
\end{align*}
Letting $r\to +\infty$, we have that the right hand side of the previous inequality tends to zero, from which we deduce that $\widetilde W_n\equiv 0$
on $\R^N\times \{0\}$ and in particular that $V_n\equiv \widetilde V_n$ on $\R^N\times \{0\}$. But from the fact that $\widetilde V_n \rightharpoonup \widetilde V$ weakly in $\mathcal D^{1,2}(\R^{N+1}_+;t^b)$ and \eqref{eq:Sobolev-frazionario-bis} we have that $\mathop{\rm Tr}(\widetilde V_n) \rightharpoonup \mathop{\rm Tr}(\widetilde V)$ weakly in $L^{2^*(N,s-1)}(\R^N)$. Combining this with \eqref{eq:Traccia} we deduce that $\mathop{\rm Tr}(V)=\mathop{\rm Tr}(\widetilde V)$ on $B_R'$.

Letting $\widetilde v:=\mathop{\rm Tr}(\widetilde V)$, by \cite{BCdPS,CS} and \eqref{eq:50-duplicata-bis} we deduce that there exists a positive constant $\kappa_{N,b}$ depending only on $N$ and $b$ such that
\begin{equation*}
-(\widetilde v,\varphi)_{\mathcal D^{s-1,2}(\R^N)}=\kappa_{N,b} (u,\varphi)_{\mathcal D^{s,2}(\R^N)} \qquad \text{for any } \varphi\in C^\infty_c(\R^N)
\end{equation*}
which means that $\widehat{\widetilde v}(\xi)=-\kappa_{N,b}|\xi|^2 \, \widehat u(\xi)$ in $\R^N$ and hence $\widetilde v=\kappa_{N,b} \Delta u$ in $\R^N$.

Finally we have that
$\mathop{\rm Tr}(V)=\widetilde v=\kappa_{N,b} \Delta u$ in $B_R'$. In
the rest of the proof we denote by $v$ the trace of $V$ on $B_R'$.

Let us assume, by contradiction, that $u\not\equiv0$. Then the couple $(U,V)\neq(0,0)$
is a weak solution to \eqref{eq:system} in $H^1(B_R^+;t^b)\times H^1(B_R^+;t^b)$ for some $R>0$.

From Lemma \ref{t:5.5} and the fact that any eigenfunction of \eqref{eq:Eigen-Weight} cannot vanish on $\partial\SF$, as observed in Remark \ref{rem:nuovo}, it follows that either $u$ or $v$ (which are the traces of $U$ and $V$
respectively)  vanish of some order $\gamma\ge 0$ at $0$. Since by assumption, $u$ satisfies
\begin{equation} \label{eq:infinite-order}
u(x)=O(|x|^k) \qquad \text{as } x\to 0 \quad \text{for any } k\in \N\, ,
\end{equation}
we have that necessarily $V$ vanishes of order $\gamma$, i.e. 
there exists $\Psi:{\mathbb S}^{N}_+\to\R$, eigenfunction of
\eqref{eq:Eigen-Weight}, such that
\[
\lambda^{-\gamma}V(\lambda z)\to
|z|^{\gamma}\Psi\Big(\frac
z{|z|}\Big) \text{ as }\lambda\to 0\text{ strongly in $H^1(B_1^+;t^b)$} \, .
\]
In particular by \eqref{eq:Sobolev-traccia} we also have
\[
\lambda^{-\gamma}v(\lambda x)\to
|x|^{\gamma}\Psi\Big(\frac
x{|x|},0\Big) \text{ as }\lambda\to 0 \text{ strongly in $L^{2^*(N,s-1)}(B_1')$  }.
\]
Let us denote
\[
v_\lambda(x)=\lambda^{-\gamma}v(\lambda x)\quad\text{and}\quad
\widetilde u_\lambda(x)=\lambda^{-2-\gamma}u(\lambda x),
\]
so that
\begin{equation}\label{eq:55}
v_\lambda\to |x|^{\gamma}\Psi\Big(\frac
x{|x|},0\Big) \text{ as }\lambda\to 0\text{ strongly in
  $L^{2^*(N,s-1)}(B_1')$}
\end{equation}
and
\[
\kappa_{N,b} \Delta \widetilde u_\lambda=v_\lambda\quad\text{in
} B_{R/\lambda}'.
\]
For every $\varphi\in C^{\infty}_{\rm c}(B_1')$ we have that, for $\lambda$ small enough,
\begin{equation}\label{eq:54}
-\kappa_{N,b} \int_{\R^N} \widetilde u_\lambda(-\Delta\varphi)\,dx=-\kappa_{N,b}
\int_{\R^N} \varphi(-\Delta \widetilde u_\lambda)\,dx
=\int_{\R^N} \varphi v_\lambda \,dx.
\end{equation}
From one hand, assumption \eqref{eq:infinite-order} implies that
\[
\lim_{\lambda\to0^+}\int_{\R^N} \widetilde u_\lambda(-\Delta\varphi)\,dx
=0
\]
whereas convergence \eqref{eq:55} yields
\[
\lim_{\lambda\to0^+}\int_{\R^N} \varphi v_\lambda \,dx=
\int_{\R^N} |x|^{\gamma}\Psi\Big(\frac
x{|x|},0\Big) \varphi(x)\,dx.
\]
Hence passing to the limit in \eqref{eq:54} we obtain that
\[
\int_{\R^N} |x|^{\gamma}\Psi\Big(\frac
x{|x|},0\Big) \varphi(x)\,dx=0\quad\text{for every }\varphi\in
C^{\infty}_{\rm c}(B_1'),
\]
thus contradicting the fact that $|x|^{\gamma}\Psi\Big(\frac
x{|x|},0\Big) \not\equiv0$.

\subsection{Proof of Theorem \ref{t:SUCP-2}} Let us assume by contradiction, that $u\not\equiv0$ in
$\Omega$ and $u(x)=0$ a.e. in a measurable set $E\subset \Omega$ of positive measure.

Let $U$ and $V$ be defined as in the proof of Theorem
\ref{t:SUCP-1}. As we explained in the proof of Theorem
\ref{t:SUCP-1}, for any $x\in \Omega$ we have that
$(U,V)\in H^1(B_R^+(x);t^b)\times H^1(B_R^+(x);t^B)$ for any $R>0$ as
in \eqref{eq:H^1}.

Hence, by Lebesgue's density Theorem (i.e. almost every point of $E$ is a density point of $E$), there exists a point $y_0\in E$ and $R>0$ such that
$B_{2R}'(y_0)\subset \Omega$, $|B_R'(y_0)\cap E|_N>0$ and $(U,V)\in H^1(B_R^+(y_0);t^b)\times H^1(B_R^+(y_0);t^b)$ where $|\cdot|_N$ denotes the $N$-dimensional Lebesgue measure. With choice of $y_0$ and $R>0$, proceeding as in the proof of Theorem \ref{t:SUCP-1}, we deduce that $v=\kappa_{N,b} \Delta u$ in $B_R'(y_0)$ with $v=\mathop{\rm Tr}(V)$.

Since $\kappa_{N,b}\Delta u=v$ and by Lemma \ref{l:sobolev} $v\in L^{2^*(N,s-1)}(B_R'(y_0))$, by classical regularity theory we have that
$u\in H^2_{\rm loc}(B_R'(y_0))$. Since $u(x)=0$ for any $x\in E$, we
have that $\nabla u(x)=0$ for a.e. $x\in E\cap B_R'(y_0)$ and hence, since
$\frac{\partial u}{\partial x_i}\in  H^1_{\rm loc}(B_R'(y_0))$ for every
$i$, $\Delta u=0$ a.e. in $E\cap B_R'(y_0)$. In particular $u(x)=v(x)=0$ for a.e. $x\in E':=E\cap B_R'(y_0)$.

Let  $x_0$ be a density point of $E'$. Hence, for all $\e>0$ there exists $r_0=r_0(\e)\in(0,1)$
such that, for all $r\in(0,r_0)$,
\begin{equation}\label{eq:56}
\frac{|(\R^N\setminus E')\cap B'_r(x_0)|_N}{|B'_r(x_0)|_N}<\e \, .
\end{equation}
Lemma \ref{t:5.5} implies that there exist $\gamma\ge 0$,
$\Psi_1,\Psi_2:{\mathbb S}^{N}_+\to\R$ solving \eqref{eq:Eigen-Weight}
such that either $\Psi_1\not\equiv0$ or $\Psi_2\not\equiv0$ (and hence
$\Psi_1\not\equiv0$ or $\Psi_2\not\equiv0$ on $\partial\SF$
respectively as observed in Remark \ref{rem:nuovo}), and
\begin{equation}\label{eq:57}
\lambda^{-\gamma} u(x_0+\lambda (x-x_0) )\to
|x-x_0|^{\gamma}\Psi_1\Big(\frac{x-x_0}{|x-x_0|},0\Big)
\end{equation}
and
\begin{equation}\label{eq:58}
\lambda^{-\gamma}v(x_0+\lambda (x-x_0) )\to
|x-x_0|^{\gamma}\Psi_2\Big(\frac{x-x_0}{|x-x_0|},0\Big)
\end{equation}
as $\lambda\to 0$ strongly in $L^{2^*(N,s-1)}(B_1'(x_0))$.

Since $u\equiv v\equiv 0$ a.e. in $E'$,
 by \eqref{eq:56} we have
\begin{align*}
\int_{B'_r(x_0)}u^2(x)\,dx&=\int_{(\R^N\setminus E')\cap
  B'_r(x_0)}u^2(x)\,dx\\
&\leq \bigg(\int_{(\R^N\setminus E')\cap
  B'_r(x_0)}|u(x)|^{2^*(N,s-1)}dx\bigg)^{\frac 2{2^*(N,s-1)}}|(\R^N\setminus E') \cap
B'_r(x_0)|_N^{\frac{2^*(N,s-1)-2}{2^*(N,s-1)}}\\
&<\e^{\frac{2^*(N,s-1)-2}{2^*(N,s-1)}}|B'_r(x_0)|_N^{\frac{2^*(N,s-1)-2}{2^*(N,s-1)}}
\bigg(\int_{(\R^N\setminus E')\cap
  B'_r(x_0)}|u(x)|^{2^*(N,s-1)}dx\bigg)^{\frac 2{2^*(N,s-1)}}
\end{align*}
and similarly
\begin{align*}
\int_{B'_r(x_0)}v^2(x)\,dx<\e^{\frac{2^*(N,s-1)-2}{2^*(N,s-1)}}|B'_r(x_0)|_N^{\frac{2^*(N,s-1)-2}{2^*(N,s-1)}}
\bigg(\int_{(\R^N\setminus E')\cap
  B'_r(x_0)}|v(x)|^{2^*(N,s-1)}dx\bigg)^{\frac 2{2^*(N,s-1)}}
\end{align*}
for all $r\in(0,r_0)$. Then, letting
$u^r(x):=r^{-\gamma}u(x_0+r(x-x_0))$ and $v^r(x):=r^{-\gamma}v(x_0+r(x-x_0))$,
 \begin{align*}
& \int_{B'_1(x_0)}|u^r(x)|^2dx<\left(\tfrac{\omega_{N-1}}N\right)^{\frac{2^*(N,s-1)-2}{2^*(N,s-1)}}
\e^{\frac{2^*(N,s-1)-2}{2^*(N,s-1)}}\bigg(\int_{B'_1(x_0)}|u^r(x)|^{2^*(N,s-1)}dx\bigg)^{\frac 2{2^*(N,s-1)}} \, , \\
& \int_{B'_1(x_0)}|u^r(x)|^2dx<\left(\tfrac{\omega_{N-1}}N\right)^{\frac{2^*(N,s-1)-2}{2^*(N,s-1)}}
\e^{\frac{2^*(N,s-1)-2}{2^*(N,s-1)}}\bigg(\int_{B'_1(x_0)}|u^r(x)|^{2^*(N,s-1)}dx\bigg)^{\frac 2{2^*(N,s-1)}} \, ,
\end{align*}
for all $r\in(0,r_0)$, where $\omega_{N-1}=\int_{{\mathbb S}^{N-1}}1\,dS'$. Letting $r\to 0^+$, from \eqref{eq:57} and \eqref{eq:58}  we have that
 \begin{multline*}
\int_{B'_1(x_0)}
|x-x_0|^{2\gamma}\Psi_i^2\Big(\tfrac{x-x_0}{|x-x_0|},0\Big)\,dx\\
\leq
\left(\tfrac{\omega_{N-1}}N\right)^{\frac{2^*(N,s-1)-2}{2^*(N,s-1)}}
\e^{\frac{2^*(N,s-1)-2}{2^*(N,s-1)}} \bigg(\int_{B'_1(x_0)}|x-x_0|^{\gamma\cdot 2^*(N,s-1)}\left|
\Psi_i\Big(\tfrac{x-x_0}{|x-x_0|},0\Big)\right|^{2^*(N,s-1)}dx\bigg)^{\frac 2{2^*(N,s-1)}}
\end{multline*}
for $i=1,2$ which yields a contradiction as $\e\to 0^+$, since
either $\Psi_1\not\equiv0$ or $\Psi_2\not\equiv0$ on $\partial\SF$.

\section{Appendix} \label{s:Appendix}

\subsection{Inequalities involving weighted Sobolev spaces}
\label{ss:Sobolev}

Throughout this section, we will assume that $s\in (1,2)$, $N>2s$
and $b=3-2s\in (-1,1)$. For simplicity, the center $x_0$ of the
sets introduced in \eqref{eq:sets} will be omitted whenever
$x_0=0$.

Next we state the following Hardy-Sobolev inequality taken from
\cite[Lemma 2.4]{FF}. For any $R>0$ and $U\in H^1(B_R^+;t^b)$ we
have
\begin{align*} 
\left(\frac{N+b-1}2\right)^2 \int_{B_R^+} t^b \frac{U^2}{|z|^2}\,
dz\le \int_{B_R^+} t^b |\nabla U|^2 dz+\frac{N+b-1}{2R}
\int_{S_R^+} t^b U^2 dS.
\end{align*}
In particular, for any $x_0\in \R^N$ and $U\in
H^1(B_R^+(x_0);t^b)$, we have
\begin{align} \label{eq:PA2-2}
\left(\frac{N+b-1}{2R} \right)^2 \int_{B_R^+(x_0)} t^b U^2 \,
dz\le \int_{B_R^+(x_0)} t^b |\nabla U|^2 dz+\frac{N+b-1}{2R}
\int_{S_R^+(x_0)} t^b U^2 dS.
\end{align}
Now we state a Sobolev inequality involving a suitable critical
Sobolev exponent. Let
\begin{equation*}
2^{**}(b)=
\begin{cases}
\frac{2(N+b+1)}{N+b-1} & \qquad \text{if } 0<b<1 \, , \\[7pt]
\frac{2(N+1)}{N-1} & \qquad \text{if } -1<b\le 0 \, .
\end{cases}
\end{equation*}
By \cite[Theorem 19.10]{OpicKufner} we have
\begin{equation} \label{eq:Opic-Kufner}
\overline S(N,b) \left(\int_{B_1^+} t^b |U|^{2^{**}(b)} dz
\right)^{\!\!\frac2{2^{**}(b)}} \le \int_{B_1^+} t^b |\nabla U|^2
dz+\int_{B_1^+} t^b U^2 dz \quad \text{for any } U\in
H^1(B_1^+;t^b) ,
\end{equation}
for some constant $\overline S(N,b)$ depending only on $N$ and
$b$. The corresponding inequality in the half ball $B_R^+(x_0)$
can be obtained by \eqref{eq:Opic-Kufner} after scaling and
translation.

Next we show that the embedding $H^1_0(\Gamma_R^+(x_0);t^b)\subset
L^2(Q_R^+(x_0);t^b)$ is compact.

\begin{Proposition} \label{p:compactness-2}
Let $x_0\in\R^N$, $b\in (-1,1)$ and $R>0$. Then the embedding
\[
H^1_0(\Gamma_R^+(x_0);t^b)\subset L^2(Q_R^+(x_0);t^b)
\]
is
compact.
\end{Proposition}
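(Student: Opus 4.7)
Without loss of generality assume $x_0=0$. The plan is the standard splitting scheme: cut the cylinder into a part bounded away from $\{t=0\}$, where the weight is comparable to Lebesgue measure and classical Rellich-Kondrachov applies, and a thin strip near $\{t=0\}$, whose contribution is shown to be uniformly small by means of a weighted Sobolev embedding into $L^{2^{**}(b)}(Q_R^+;t^b)$ with $2^{**}(b)>2$.

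First I would take a bounded sequence $\{U_n\}\subset H^1_0(\Gamma_R^+;t^b)$ and extract a subsequence, still denoted $\{U_n\}$, such that $U_n\rightharpoonup U$ weakly in $H^1(Q_R^+;t^b)$ for some $U$. Setting $V_n:=U_n-U$ one has $V_n\rightharpoonup 0$ weakly in $H^1(Q_R^+;t^b)$ and $\|V_n\|_{H^1(Q_R^+;t^b)}\leq M$ for some $M>0$. The goal is to show $V_n\to 0$ strongly in $L^2(Q_R^+;t^b)$.

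Next, for any $\delta\in(0,R)$, on the Lipschitz set $Q_R^+\cap\{t>\delta\}$ the weight $t^b$ is bounded above and below by positive constants depending on $\delta$ and $R$, hence $\{V_n\}$ is bounded in the unweighted $H^1(Q_R^+\cap\{t>\delta\})$. The classical Rellich-Kondrachov theorem then yields $V_n\to 0$ strongly in $L^2(Q_R^+\cap\{t>\delta\})$, and therefore also in $L^2(Q_R^+\cap\{t>\delta\};t^b)$, as $n\to\infty$. To handle the strip $Q_R^+\cap\{t<\delta\}$, I would first establish the continuous embedding
\begin{equation*}
H^1_0(\Gamma_R^+;t^b)\hookrightarrow L^{2^{**}(b)}(Q_R^+;t^b).
\end{equation*}
The idea is to note that any $W\in H^1_0(\Gamma_R^+;t^b)$ extended by $0$ to $B_{R\sqrt 2}^+\setminus Q_R^+$ is well-defined in $H^1(B_{R\sqrt 2}^+;t^b)$ (because $Q_R^+\subset B_{R\sqrt 2}^+$ and $W$ has vanishing trace on the top and lateral parts of $\partial Q_R^+$, which are internal to $\overline{B_{R\sqrt 2}^+}$ except on $\{t=0\}$); then apply the Sobolev inequality \eqref{eq:Opic-Kufner}, rescaled from $B_1^+$ to $B_{R\sqrt 2}^+$, to the extended function and restrict back to $Q_R^+$. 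This gives $\|V_n\|_{L^{2^{**}(b)}(Q_R^+;t^b)}\leq CM$ for some constant $C$ independent of $n$.

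By H\"older's inequality, since $2^{**}(b)>2$, one then has
\begin{equation*}
\int_{Q_R^+\cap\{t<\delta\}} t^b V_n^2\,dz\leq \|V_n\|_{L^{2^{**}(b)}(Q_R^+;t^b)}^2\left(\int_{Q_R^+\cap\{t<\delta\}}t^b\,dz\right)^{\!1-2/2^{**}(b)}\!\!\leq (CM)^2\left(\tfrac{|B_R'|\,\delta^{b+1}}{b+1}\right)^{\!1-2/2^{**}(b)}\!\!,
\end{equation*}
which, since $b+1>0$, tends to $0$ as $\delta\to 0^+$ uniformly in $n$. To conclude, given $\varepsilon>0$ I would first choose $\delta$ small so that the strip contribution is smaller than $\varepsilon/2$ uniformly in $n$, and then $n$ large so that $\int_{Q_R^+\cap\{t>\delta\}}t^bV_n^2\,dz<\varepsilon/2$; this yields $V_n\to 0$ in $L^2(Q_R^+;t^b)$ and proves compactness. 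The main technical point is the justification of the extension-by-zero step and of the rescaled version of \eqref{eq:Opic-Kufner} on $B_{R\sqrt 2}^+$, which is the only place where the boundary condition encoded in $H^1_0(\Gamma_R^+;t^b)$ is used.
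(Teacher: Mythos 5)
Your proof is correct, but it takes a genuinely different route from the one in the paper. The paper extends $\{U_n\}$ by zero to the larger cylinder $Q_{3R}^+(x_0)$, observes that the distance-to-boundary function $d(z)$ of $Q_{3R}^+(x_0)$ coincides with $t$ on $Q_R^+(x_0)$, and then applies a single weighted Rellich-type compactness theorem (\cite[Theorem 19.7]{OpicKufner}) for the space $W^{1,2}(Q_{3R}^+(x_0);d^b,d^b)$ to conclude in one stroke. You instead split the cylinder into the region $\{t>\delta\}$, where the weight is comparable to Lebesgue measure so the classical Rellich--Kondrachov theorem applies, and the thin strip $\{t<\delta\}$, where you bound the $L^2$-mass uniformly in $n$ via the weighted Sobolev embedding $H^1_0(\Gamma_R^+;t^b)\hookrightarrow L^{2^{**}(b)}(Q_R^+;t^b)$ (obtained from \eqref{eq:Opic-Kufner} by extension-by-zero to a half-ball) and H\"older's inequality. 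Both arguments are sound: the paper's is shorter but leans on a compactness theorem that has to be looked up in \cite{OpicKufner}, while yours is more self-contained, relying only on the continuous Sobolev embedding \eqref{eq:Opic-Kufner} (already cited in the paper) plus the classical unweighted Rellich theorem. Your extension-by-zero step is indeed valid since $H^1_0(\Gamma_R^+;t^b)$ is defined via $C^\infty_c(\Gamma_R^+)$, whose elements extend trivially to functions in $C^\infty_c(\overline{\R^{N+1}_+})$; you should just note, when invoking the rescaled \eqref{eq:Opic-Kufner}, that the constant picks up an $R$-dependence from the inhomogeneous scaling of the two terms on the right-hand side (or, more cleanly, combine it with \eqref{eq:PA2-2} first to eliminate the zero-order term).
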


\begin{proof} Let us define the function
$d:Q_{3R}^+(x_0)\to [0,\infty)$ where
$$
d(z):={\rm dist}(z,\partial Q_{3R}^+(x_0)) \qquad \text{for
any } z\in Q_{3R}^+(x_0) \, .
$$
We immediately see that if $z=(x,t)\in Q_R^+(x_0)$ then $d(x,t)=t$.
Let $\{U_n\}\subset H^1_0(\Gamma_R^+(x_0);t^b)$ be a sequence
bounded in $H^1_0(\Gamma_R^+(x_0);t^b)$. For any $n$ let us still
denote by $U_n$ the trivial extension to $Q_{3R}^+(x_0)$ so that
$U_n\in H^1_0(\Gamma_{3R}^+(x_0);t^b)$. We observe that
\begin{align*}
& \int_{Q_{3R}^+(x_0)} (d(z))^b \, |\nabla U_n|^2 dz
=\int_{Q_{R}^+(x_0)} (d(z))^b \, |\nabla U_n|^2 dz
=\int_{Q_{R}^+(x_0)} t^b \, |\nabla U_n|^2 dz, \\
& \int_{Q_{3R}^+(x_0)} (d(z))^b \, U_n^2 \, dz
=\int_{Q_{R}^+(x_0)} (d(z))^b \, U_n^2\, dz
=\int_{Q_{R}^+(x_0)} t^b U_n^2\, dz,
\end{align*}
thus showing that $\{U_n\}$ is bounded in the weighted Sobolev
space $W^{1,2}(Q_{3R}^+(x_0);d^b,d^b)$ where we used the notation
of \cite[Theorem 19.7]{OpicKufner}. By the same theorem in
\cite{OpicKufner} we deduce that $\{U_n\}$ is, up to subsequences,
strongly convergent in $L^2(Q_{3R}^+(x_0);d^b)$. But the functions
$U_n$ are supported in $Q_R^+(x_0)$ so that $\{U_n\}$ is strongly
convergent in $L^2(Q_R^+(x_0);t^b)$. This completes the proof of
the proposition.
\end{proof}

\noindent Now we state a Hardy-Rellich type inequality for functions in
$\mathcal D_b$.

\begin{Proposition}\label{t:rellich}
For every $U\in \mathcal D_b$, we have that $\frac{U}{|z|^2}\in
L^2(\R^{N+1}_+;t^b)$ and $\frac{\nabla U}{|z|}\in
L^2(\R^{N+1}_+;t^b)$. Furthermore
\begin{equation}\label{eq:rellich}
(N-2s)^2\int_{\R^{N+1}_+}t^b\frac{U^2}{|z|^4}\, dz+
2(N-2s)\int_{\R^{N+1}_+}t^b\frac{|\nabla U|^2}{|z|^2}\, dz
\leq \int_{\R^{N+1}_+}t^b |\Delta_b U|^2\, dz
\end{equation}
  for every $U\in \mathcal D_b$.
\end{Proposition}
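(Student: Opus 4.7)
The plan is to prove \eqref{eq:rellich} first for smooth test functions $U\in\mathcal T$ (with $\mathcal T$ as in \eqref{eq:space}) and then to extend it to the whole of $\mathcal D_b$ by density, using the very inequality we are proving. The core idea is to exploit the nonnegativity of a perfect square: setting $\lambda:=N-2s>0$ (strictly positive thanks to $N>2s$), I would expand
\[
0\le\int_{\R^{N+1}_+} t^b\Big|\Delta_b U+\lambda\,\tfrac{U}{|z|^2}\Big|^2 dz=\int_{\R^{N+1}_+}t^b|\Delta_b U|^2\,dz+2\lambda I+\lambda^2\int_{\R^{N+1}_+}t^b\tfrac{U^2}{|z|^4}\,dz,
\]
where
\[
I:=\int_{\R^{N+1}_+} t^b(\Delta_b U)\,\tfrac{U}{|z|^2}\,dz.
\]
The key computation is then to show
\[
I=-\int_{\R^{N+1}_+}t^b\tfrac{|\nabla U|^2}{|z|^2}\,dz-(N-2s)\int_{\R^{N+1}_+}t^b\tfrac{U^2}{|z|^4}\,dz,
\]
because substituting this in the expansion, with $\lambda=N-2s$, turns the coefficient of $\int t^b U^2/|z|^4\,dz$ into $\lambda^2-2\lambda(N-2s)=-(N-2s)^2$ and the coefficient of $\int t^b|\nabla U|^2/|z|^2\,dz$ into $-2(N-2s)$, so that \eqref{eq:rellich} drops out.

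To establish the expression for $I$, I would integrate by parts twice, working on $\{|z|>\e\}\cap\R^{N+1}_+$ and letting $\e\to0^+$ at the end. Using $t^b\Delta_b U=\dive(t^b\nabla U)$, the first integration by parts gives $I=-\int_{\R^{N+1}_+}t^b\nabla U\cdot\nabla(U/|z|^2)\,dz$ plus boundary contributions that vanish: on $\{t=0\}$ the contribution equals $-\lim_{t\to 0^+}\int t^b(U/|z|^2)U_t\,dx$ and vanishes because $U\in\mathcal T$ gives $U_t(\cdot,0)\equiv 0$, so that by smoothness $t^bU_t=O(t^{b+1})$; on $S_\e^+$ the contribution is bounded by a constant times $\e^{N+b-2}$, which goes to zero since $N>2s>2s-1$. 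Expanding $\nabla(U/|z|^2)=\nabla U/|z|^2-2Uz/|z|^4$ and writing $2U(z\cdot\nabla U)=z\cdot\nabla(U^2)=|z|\partial_r(U^2)$ in spherical coordinates $z=r\theta$ with $r=|z|>0$ and $\theta\in\SF$ yields
\[
2\int_{\R^{N+1}_+}t^b\tfrac{U(z\cdot\nabla U)}{|z|^4}\,dz=\int_{\SF}\theta_{N+1}^b\,dS(\theta)\int_0^{\infty}r^{N+b-3}\partial_r(U^2(r\theta))\,dr.
\]
A final integration by parts in $r$, whose boundary term at $r=\e$ scales like $\e^{N+b-3}U^2(\e\theta)$ and is negligible precisely because $N>2s\iff N+b-3>0$, produces the factor $-(N+b-3)=-(N-2s)$ and completes the identity for $I$.

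For the extension to $\mathcal D_b$, I would pick $\{U_n\}\subset\mathcal T$ with $U_n\to U$ in $\mathcal D_b$; applied to $U_n-U_m\in\mathcal T$, the inequality just proved shows that $\{U_n/|z|^2\}$ is Cauchy in $L^2(\R^{N+1}_+;t^b)$ and $\{\nabla U_n/|z|\}$ is Cauchy in $L^2(\R^{N+1}_+;t^b)^{N+1}$. Their limits are identified with $U/|z|^2$ and $\nabla U/|z|$ by testing against functions in $C^\infty_c(\R^{N+1}_+\setminus\{0\})$, and passing to the limit in \eqref{eq:rellich} written for $U_n$ yields the inequality for $U$. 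The main obstacle I foresee is the careful bookkeeping of the two boundary integrals near the origin, where the scaling exponents $\e^{N+b-2}$ and $\e^{N+b-3}$ make the hypothesis $N>2s$ essentially sharp in the radial integration by parts, and the condition $b>-1$ sharp at $\{t=0\}$.
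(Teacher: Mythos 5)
Your proof is correct and follows essentially the same route as the paper: expand the nonnegative quadratic form $\int t^b|\Delta_b U+\lambda U/|z|^2|^2\,dz$ (the paper writes it as $\|t^{b/2}\tfrac{z}{|z|}\Delta_b U+\lambda t^{b/2}U\tfrac{z}{|z|^3}\|^2$, which is the same thing since $z/|z|$ is a unit vector), integrate by parts twice on $\R^{N+1}_+\setminus B_\e$, kill the boundary terms of order $\e^{N+b-2}$ and $\e^{N+b-3}$ using $N>2s$ and $b>-1$, take $\lambda=N-2s$, and extend to $\mathcal D_b$ by density. Your only cosmetic departure is carrying out the final radial integration by parts in spherical coordinates, whereas the paper computes $\dive(t^b z/|z|^4)$ directly; these are equivalent.
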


\begin{proof}
  By definition of $\mathcal D_b$, it is enough to prove inequality
  \eqref{eq:rellich} for every $U\in
  C^\infty_c(\overline{\R^{N+1}_+})$ such that $U_t\equiv 0$ on
$\R^N\times\{0\}$. Arguing as in \cite{PPL}, we have that, for every
$\e>0$ and $\lambda\in\R$,
\begin{align*}
  0&\leq \left\|t^{b/2}\frac{z}{|z|}\Delta_b U+\lambda t^{b/2}
     U\frac{z}{|z|^3}\right\|^2_{L^2(\R^{N+1}_+\setminus
B_\e,\R^{N+1})}\\
&=\int_{\R^{N+1}_+\setminus
B_\e}t^b |\Delta_b U|^2\,dz+\lambda^2\int_{\R^{N+1}_+\setminus
B_\e}t^b\frac{U^2(z)}{|z|^4}\,dz+2\lambda
\int_{\R^{N+1}_+\setminus
B_\e}t^b\frac{U \Delta_b U}{|z|^2}\,dz,
\end{align*}
where $z=(x,t)$ and $B_\e=\{z\in\R^{N+1}:|z|<\e\}$. Integration by
parts yields
\begin{align*}
  &\int_{\R^{N+1}_+\setminus
B_\e}t^b\frac{U \Delta_b
  U}{|z|^2}\,dz=\int_{\R^{N+1}_+\setminus
B_\e}\frac{U}{|z|^2}\dive (t^b\nabla
                 U)\,dz\\
&=-\int_{\{x\in\R^N:|x|>\e\}}\frac{U(x,0)}{|x|^2}\Big(\lim_{t\to 0^+}t^bU_t(x,t)\Big)\,dx
-\int_{\R^{N+1}_+\cap \partial B_\e}t^b\frac{U}{|z|^2}\nabla
  U(z)\cdot\frac{z}{|z|}\,dS\\
&\qquad-
\int_{\R^{N+1}_+\setminus
B_\e}t^b\nabla U\cdot\nabla\bigg(\frac{U}{|z|^2}\bigg)\,dz\\
&=0+O(\e^{b+N-2})-\int_{\R^{N+1}_+\setminus
B_\e}t^b\frac{|\nabla U|^2}{|z|^2}\,dz+\int_{\R^{N+1}_+\setminus
B_\e}t^b \frac{\nabla (U^2)\cdot z}{|z|^4}\,dz
\end{align*}
and
\begin{align*}
  \int_{\R^{N+1}_+\setminus
B_\e}t^b \frac{\nabla (U^2)\cdot z}{|z|^4}\,dz&=
-\int_{\R^{N+1}_+\cap \partial B_\e}t^b\frac{U^2}{|z|^3}\,dS-
\int_{\R^{N+1}_+\setminus
B_\e}U^2\dive\bigg(t^b\frac{z}{|z|^4}\bigg)\,dz\\
&=O(\e^{b+N-3})-(N+b-3)\int_{\R^{N+1}_+\setminus
B_\e}t^b \frac{U^2(z)}{|z|^4}\,dz.
\end{align*}
Combining the previous estimates we obtain that
\begin{align*}
  0&\leq \int_{\R^{N+1}_+\setminus
B_\e}t^b |\Delta_b U|^2\,dz+\lambda^2\int_{\R^{N+1}_+\setminus
B_\e}t^b\frac{U^2(z)}{|z|^4}\,dz\\
&\quad
-2\lambda \int_{\R^{N+1}_+\setminus
B_\e}t^b\frac{|\nabla U|^2}{|z|^2}\,dz-2\lambda (N-2s)\int_{\R^{N+1}_+\setminus
B_\e}t^b \frac{U^2(z)}{|z|^4}\,dz+O(\e^{N-2s}).
\end{align*}
Choosing $\lambda=N-2s$ and letting $\e\to0^+$ we obtain that
\begin{align*}
(N-2s)^2\int_{\R^{N+1}_+}t^b\frac{U^2(z)}{|z|^4}\,dz +
2(N-2s)\int_{\R^{N+1}_+}t^b\frac{|\nabla U|^2}{|z|^2}\,dz
 \leq \int_{\R^{N+1}_+}t^b |\Delta_b U|^2\,dz
\end{align*}
thus completing the proof.
\end{proof}

If $N>2\gamma$, the Sobolev embedding implies that there exists a
positive constant $S(N,\gamma)$ depending only on $N$ and
$\gamma$, such that
\begin{equation} \label{eq:Sobolev-frazionario}
S(N,\gamma) \|u\|_{L^{2^*(N,\gamma)}(\R^N)}^2\le \|u\|_{\mathcal
D^{\gamma,2}(\R^{N})}^2 \qquad \text{for any } u\in \mathcal
D^{\gamma,2}(\R^{N})
\end{equation}
where $2^*(N,\gamma)=2N/(N-2\gamma)$, see e.g. \cite{cotsiolis}.

According with \cite{BCdPS}, we define $\mathcal
D^{1,2}(\R^{N+1}_+;t^b)$ as the completion of the space
$C^\infty_c(\overline{\R^{N+1}_+})$ with respect to the norm
$$
\|U\|_{\mathcal D^{1,2}(\R^{N+1}_+;t^b)}:=\left(\int_{\R^{N+1}_+}
t^b|\nabla U|^2 dz\right)^{1/2} .
$$
Arguing as in \cite{BCdPS}, we have that there exists a constant
$K_b$ depending only on $b\in (-1,1)$ such that
\begin{equation} \label{eq:C-b}
K_b \|{\rm Tr\, }(U)\|_{\mathcal D^{s-1,2}(\R^N)}\le
\|U\|_{\mathcal D^{1,2}(\R^{N+1}_+;t^b)} \qquad \text{for any }
U\in \mathcal D^{1,2}(\R^{N+1}_+;t^b) \, .
\end{equation}
Combining this with \eqref{eq:Sobolev-frazionario}, we infer
\begin{equation} \label{eq:Sobolev-frazionario-bis}
S(N,s-1)K_b^2 \, \|{\rm Tr\, }(U)\|_{L^{2^*(N,s-1)}(\R^N)}^2\le
\|U\|_{\mathcal D^{1,2}(\R^{N+1}_+;t^b)}^2 \qquad \text{for any }
U\in \mathcal D^{1,2}(\R^{N+1}_+;t^b) \, .
\end{equation}

\begin{Lemma} \label{l:sobolev} For any $r>0$ and any $U\in H^1(B_r^+;t^b)$ we have
\begin{equation} \label{eq:Sobolev-traccia}
\widetilde S(N,b) \left(\int_{B_r'}
|u|^{2^*(N,s-1)}dx\right)^{\frac{2}{2^*(N,s-1)}} \le \int_{B_r^+}
t^b |\nabla U|^2 dz+\frac{N+b-1}{2r} \int_{S_r^+} t^b U^2 dS
\end{equation}
where $u={\rm Tr\, }(U)$ and $\widetilde S(N,b)$ is a positive
constant depending only on $N$ and $b$.
\end{Lemma}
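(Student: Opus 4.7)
The plan is to reduce to $r=1$ by scaling, then to construct a bounded extension of $U$ to the whole half-space $\R^{N+1}_+$ so as to apply the already established trace Sobolev embedding \eqref{eq:Sobolev-frazionario-bis}; the Hardy-type estimate \eqref{eq:PA2-2} will then be used to re-express the $L^2$ piece produced by the extension in terms of the right-hand side of \eqref{eq:Sobolev-traccia}.

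First I would verify that, under the dilation $\tilde U(z)=U(rz)$ and $\tilde u(x)=u(rx)$, the three quantities $\bigl(\int_{B_r'}|u|^{2^*(N,s-1)}\,dx\bigr)^{2/2^*(N,s-1)}$, $\int_{B_r^+}t^b|\nabla U|^2\,dz$ and $r^{-1}\int_{S_r^+}t^b U^2\,dS$ all carry a common factor $r^{N+b-1}$; here one uses that $2N/2^*(N,s-1)=N-2(s-1)=N+b-1$. Consequently it suffices to prove \eqref{eq:Sobolev-traccia} when $r=1$.

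Second, I would construct an extension $EU\in \mathcal{D}^{1,2}(\R^{N+1}_+;t^b)$ of $U$ by spherical reflection across $S_1^+$ composed with a radial cutoff. Fix $\chi\in C^\infty_c([0,3/2))$ with $\chi\equiv 1$ on $[0,1]$ and set $EU(z):=U(z)$ for $|z|\le1$, $EU(z):=\chi(|z|)\,U\bigl((2-|z|)\,z/|z|\bigr)$ for $1<|z|<3/2$, and $EU(z)=0$ elsewhere. Since the reflection $\rho\mapsto 2-\rho$ leaves the tangential coordinate $\theta_{N+1}=t/|z|$ fixed, and since the radial factor $\rho^b$ is uniformly bounded above and below on $[1,3/2]$, a computation in spherical coordinates together with the chain rule produces the estimate
\[
\int_{\R^{N+1}_+} t^b |\nabla (EU)|^2\,dz\le C\Bigl(\int_{B_1^+} t^b|\nabla U|^2\,dz+\int_{B_1^+} t^b U^2\,dz\Bigr),
\]
for some constant $C=C(N,b)>0$, the $L^2$ term coming from differentiating the cutoff $\chi$. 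Because $EU$ has compact support in $\overline{\R^{N+1}_+}$ and coincides with $U$ on a neighborhood of $B_1'\times\{0\}$, its trace satisfies $\mathop{\rm Tr}(EU)=u$ on $B_1'$. Applying \eqref{eq:Sobolev-frazionario-bis} to $EU$ and restricting the trace to $B_1'$ yields
\[
S(N,s-1)\,K_b^2\,\|u\|^2_{L^{2^*(N,s-1)}(B_1')}\le C\Bigl(\int_{B_1^+} t^b|\nabla U|^2\,dz+\int_{B_1^+} t^b U^2\,dz\Bigr).
\]
Finally, the Hardy inequality \eqref{eq:PA2-2} with $r=1$ and $x_0=0$ dominates $\int_{B_1^+} t^b U^2\,dz$ by a multiple of $\int_{B_1^+} t^b|\nabla U|^2\,dz+\int_{S_1^+} t^b U^2\,dS$, and substituting this into the previous display produces \eqref{eq:Sobolev-traccia} with a suitable constant $\widetilde S(N,b)$.

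The principal difficulty lies in the extension step: the weighted Sobolev space $H^1(B_1^+;t^b)$ has a weight that degenerates or blows up at the flat piece $B_1'\times\{0\}$ of $\partial B_1^+$, while the extension must cross the smooth piece $S_1^+$. The key point that makes the radial reflection work is precisely that it fixes the tangential coordinate $\theta_{N+1}$, so the sensitive part of the weight is untouched and only the uniformly comparable radial factor $\rho^b$ needs to be handled; any attempt, say, to use a Kelvin-type inversion $\rho\mapsto 1/\rho$ would introduce an uncontrolled behavior near $\rho=0$.
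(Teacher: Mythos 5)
Your proof is correct and, as far as one can tell (the paper's own proof of this lemma is a one-line reference to \cite{FF}, Lemma 2.6), it follows the standard and most likely the intended route: rescale to $r=1$ using the homogeneity identity $2N/2^*(N,s-1)=N+b-1$, extend $U$ across $S_1^+$ by the spherical reflection $\rho\mapsto 2-\rho$ composed with a radial cutoff (so the delicate angular factor $\theta_{N+1}^b$ of the weight $t^b=\rho^b\theta_{N+1}^b$ is unperturbed and only the uniformly comparable radial factor $\rho^b$ has to be controlled on the annulus $1\le\rho\le 3/2$), apply the half-space trace embedding \eqref{eq:Sobolev-frazionario-bis} to the extension, use locality of the trace to identify $\mathop{\rm Tr}(EU)=u$ on $B_1'$, and absorb the $L^2$ remainder produced by the cutoff via the Hardy inequality \eqref{eq:PA2-2}. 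All these steps check out, including the scaling count and the weighted $H^1$ bound for the reflected piece. The one point you leave implicit is that the compactly supported extension $EU$ actually lies in $\mathcal D^{1,2}(\R^{N+1}_+;t^b)$ (i.e.\ is approximable by $C^\infty_c(\overline{\R^{N+1}_+})$ in the weighted gradient seminorm), which is needed because \eqref{eq:Sobolev-frazionario-bis} is stated for that completion; this is a standard density fact for compactly supported functions in $H^1(\R^{N+1}_+;t^b)$ (mollification works, since $|t|^b$ is a Muckenhoupt $A_2$ weight), but it is worth a sentence.
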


\begin{proof} See the proof of \cite[Lemma 2.6]{FF}.
\end{proof}

\subsection{H\"older regularity of solutions}
This subsection is devoted to some results about H\"older
regularity of solutions to systems of weighted elliptic equations
in divergence form. Throughout this subsection, we will assume that
$s\in (1,2)$, $N>2s$ and $b=3-2s\in (-1,1)$. As in Subsection
\ref{ss:Sobolev} the center $x_0\in \R^N$ of the sets introduced
in \eqref{eq:sets} will be omitted whenever $x_0=0$.

We start with the following proposition which is a restatement, adapted
to our setting, of some regularity results contained in \cite{FF2},
see also \cite{JLX1}.

\begin{Proposition} \label{p:reg-1} (Propositions 3-4 in \cite{FF2}) Let $A,B\in L^{q_1}(B_1')$ for some $q_1>\frac N{1-b}$ and let $D\in L^{q_2}(B_1^+;t^b)$  for some $q_2>\frac{N+b+1}2$. Let $W\in H^1(B_1^+;t^b)$ be a weak solution of
\begin{equation} \label{eq:W}
\begin{cases}
-{\rm div}(t^b\nabla W)=t^b D(z)  & \qquad \text{in } B_1^+  \, , \\[4pt]
-\lim_{t\to 0^+} t^b W_t=A(x)W+B(x) & \qquad \text{on } B_1' \, .
\end{cases}
\end{equation}
Then the following statements hold true:

\begin{itemize}
\item[$(i)$] $W\in C^{0,\alpha}(\overline{B_{1/2}^+})$ and in addition
$$
\|W\|_{C^{0,\alpha}(\overline{B_{1/2}^+})}\le C\left(\|W\|_{L^2(B_1^+;t^b)}+\|B\|_{L^{q_1}(B_1')}+\|D\|_{L^{q_2}(B_1^+;t^b)}\right)
$$
for some $C>0$ and $\alpha\in (0,1)$ depending only on $N,b$ and $\|A\|_{L^{q_1}(B_1')}$;

\item[$(ii)$] if in addition to the previous assumptions we also suppose that $A,B\in W^{1,\infty}(B_1')$ and $D, \nabla_x D\in L^\infty(B_1^+)$ then we also have $\nabla_x W\in C^{0,\alpha}(\overline{B_{1/2}^+})$ and
\begin{align*}
& \|W\|_{C^{0,\alpha}(\overline{B_{1/2}^+})}+\|\nabla_x W\|_{C^{0,\alpha}(\overline{B_{1/2}^+})} \\
& \qquad \le C\left(\|W\|_{L^2(B_1^+;t^b)}+\|A\|_{W^{1,\infty}(B_1')}+\|B\|_{W^{1,\infty}(B_1')}+\|D\|_{L^\infty(B_1^+)}+\|\nabla_x D\|_{L^\infty(B_1^+)} \right)
\end{align*}
for some $C>0$ and $\alpha\in (0,1)$ depending only on $N,b$ and $\|A\|_{L^\infty(B_1')}$.
\end{itemize}
\end{Proposition}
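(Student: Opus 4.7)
The plan is to reduce this proposition to the analogous regularity results (Propositions 3 and 4) of \cite{FF2}. In \cite{FF2} the degenerate/singular weight is $t^{1-2s}$ with $s\in(0,1)$, so that the exponent belongs to $(-1,1)$; here our weight is $t^b$ with $b=3-2s$ and $s\in(1,2)$, so again $b\in(-1,1)$. Since the only feature of the weight used in the proofs of \cite{FF2} is that it lies in the Muckenhoupt class $A_2$ — a property equivalent to the exponent being in $(-1,1)$ — essentially the same arguments apply. I will sketch the adaptation.

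First, I would perform the standard even reflection across $\{t=0\}$: extending $W(x,t)$ to $W(x,-t)$ for $t<0$, the weighted Neumann condition $-\lim_{t\to 0^+} t^b W_t=A(x)W+B(x)$ together with the interior equation $-\mathrm{div}(t^b\nabla W)=t^b D$ translates into the single distributional equation
\begin{equation*}
-\mathrm{div}\bigl(|t|^b\nabla\widetilde W\bigr)=|t|^b \widetilde D - 2\bigl(A(x)\widetilde W+B(x)\bigr)\mathcal H^{N}\mres\{t=0\}
\qquad\text{in }B_1,
\end{equation*}
where $\widetilde W$ and $\widetilde D$ denote the even extensions. The coefficient $|t|^b$ is $A_2$, and the boundary measure is subcritical with respect to the corresponding weighted Sobolev trace embedding (this is where the hypotheses $q_1>N/(1-b)$ and $q_2>(N+b+1)/2$ enter, guaranteeing that the right-hand side defines a functional on which Stampacchia/De Giorgi iteration can be run).

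Next, for part (i), I would apply a Moser iteration on super-level sets of $\widetilde W$, combined with a standard boundary trace inequality of Sobolev type coming from \eqref{eq:Sobolev-traccia}, to obtain local boundedness:
\begin{equation*}
\|W\|_{L^\infty(B_{3/4}^+)}\leq C\bigl(\|W\|_{L^2(B_1^+;t^b)}+\|B\|_{L^{q_1}(B_1')}+\|D\|_{L^{q_2}(B_1^+;t^b)}\bigr).
\end{equation*}
Once $L^\infty$ bounds are available, the De Giorgi oscillation lemma for $A_2$-weighted divergence-form operators (as in Fabes-Kenig-Serapioni, used in \cite{FF2,JLX1}) applied to the reflected equation yields Hölder continuity with an exponent $\alpha$ depending only on $N,b$ and $\|A\|_{L^{q_1}(B_1')}$, together with the quantitative estimate claimed in (i).

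For part (ii), assuming now $A,B\in W^{1,\infty}(B_1')$ and $D,\nabla_x D\in L^\infty(B_1^+)$, I would differentiate the equation in a tangential direction $x_i$. Setting $W_i:=\partial_{x_i}W$, one checks that $W_i$ satisfies
\begin{equation*}
\begin{cases}
-\mathrm{div}(t^b\nabla W_i)=t^b \partial_{x_i} D & \text{in }B_{3/4}^+,\\
-\lim_{t\to 0^+} t^b(W_i)_t=A(x)W_i+\bigl(\partial_{x_i}A(x)\bigr)W+\partial_{x_i}B(x) & \text{on }B_{3/4}'.
\end{cases}
\end{equation*}
Because $W$ is already Hölder continuous by part (i) and $\partial_{x_i}A,\partial_{x_i}B\in L^\infty$, the new right-hand side of the boundary condition satisfies the hypotheses of part (i). Thus a second application of the same Moser/De Giorgi scheme to $W_i$ (together with Caccioppoli estimates giving the $L^2$-bound on $\nabla_x W$) produces the desired $C^{0,\alpha}$ bound on $\nabla_x W$.

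The main obstacle in this program is verifying that the boundary Moser iteration survives the presence of the non-homogeneous and non-symmetric term $A(x)W+B(x)$ in the Neumann data: this requires a careful choice of test functions of the form $\eta^2\,G(W)$ with $G$ a truncation, controlling the boundary integral via the weighted trace inequality \eqref{eq:Sobolev-traccia} and absorbing the lower-order contribution $\int_{B_1'}|A|\,W^2$ into the principal part by means of the assumption $q_1>N/(1-b)$. All other steps are direct transcriptions of \cite[Sections 3--4]{FF2} and \cite{JLX1}.
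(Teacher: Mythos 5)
The paper gives no proof of this proposition; it merely restates Propositions 3 and 4 of \cite{FF2}, relabelling the weight exponent from $1-2s$ (with $s\in(0,1)$) to $b=3-2s$ (with $s\in(1,2)$), which covers the same interval $(-1,1)$. Your sketch correctly identifies the machinery behind that citation (even reflection, $A_2$-weighted Moser/De Giorgi iteration, tangential difference quotients) and is essentially sound. Two small corrections are in order. First, the sign in your reflected equation is off: testing the original weak formulation with the symmetrization $\psi(x,t)=\varphi(x,t)+\varphi(x,-t)$ (which automatically satisfies $\psi_t(\cdot,0)=0$ and $\psi(x,0)=2\varphi(x,0)$) gives
\begin{equation*}
\int_{B_1}|t|^b\nabla\widetilde W\cdot\nabla\varphi\,dz
=\int_{B_1}|t|^b\widetilde D\,\varphi\,dz
+2\int_{B_1'}\bigl(A(x)W(x,0)+B(x)\bigr)\varphi(x,0)\,dx,
\end{equation*}
so the surface-measure term should carry a $+$ sign, not a $-$; this has no bearing on the iteration, which depends only on $|A|$ and $|B|$. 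Second, in part (ii) you should argue via difference quotients $W^\xi(x,t)=|\xi|^{-1}\bigl[W(x+\xi,t)-W(x,t)\bigr]$ rather than differentiating the equation directly, since $\nabla_x W$ is not a priori known to exist as a function solving anything: a Caccioppoli estimate applied to the reflected equation bounds $\|W^\xi\|_{H^1}$ uniformly in small $\xi$, an application of part (i) to $W^\xi$ then yields a $\xi$-uniform $C^{0,\beta}$-bound, and Ascoli--Arzel\`a passes to the limit $\xi\to0$. This is precisely the scheme the paper itself carries out for the analogous Proposition~\ref{p:6.4}.
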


In order to obtain a H\"older estimate for the $t$-derivative of a solution of \eqref{eq:W} we need to adapt to our context some results from \cite{CaSi, FF2, FF}.

\begin{Proposition} \label{p:reg-2} Let $t^b D_t\in L^\infty(B_1^+)$ and let $W\in H^1(B_1^+;t^b)$ be a weak solution of \eqref{eq:W} with $A\equiv 0$ and $B\equiv 0$.
Then $t^b W_t\in C^{0,\alpha}(\overline{B_{1/4}^+})$ and
\begin{equation*} 
\|t^b W_t\|_{C^{0,\alpha}(\overline{B_{1/4}^+})}\le C\left(\|W\|_{H^1(B_1^+;t^b)}+\|t^b D_t\|_{L^\infty(B_1^+)} \right)
\end{equation*}
for some $C>0$ and $\alpha\in (0,1)$ depending only on $N$ and $b$.
\end{Proposition}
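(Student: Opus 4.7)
The plan is to exploit the duality between the operators $\Delta_b$ and $\Delta_{-b}$ by introducing the auxiliary function $V:=t^b W_t$. Differentiating the equation $\Delta_b W=-D$ formally in $t$ one finds
\begin{equation*}
\dive(t^{-b}\nabla V)=\Delta_x W_t+W_{ttt}+\tfrac{b}{t}W_{tt}-\tfrac{b}{t^2}W_t=-D_t,
\end{equation*}
so that $V$ should weakly solve the conjugate Dirichlet problem
\begin{equation*}
\begin{cases}
-\dive(t^{-b}\nabla V)=t^{-b}\widetilde D &\text{in }B_1^+,\\
V=0 &\text{on }B_1',
\end{cases}
\end{equation*}
with $\widetilde D:=t^b D_t\in L^\infty(B_1^+)$; the Dirichlet condition is the hypothesis $\lim_{t\to0^+}t^b W_t=0$. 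The crucial point is that $-b\in(-1,1)$, so $t^{-b}$ is again an admissible Muckenhoupt weight. First I would verify rigorously, by testing \eqref{eq:W} against functions of the form $t^{-b}\partial_t\psi$ for $\psi\in C^\infty_c(\overline{B_1^+})$ vanishing on $B_1'$ and rearranging via integration by parts in $t$, that $V$ is indeed a distributional solution of the above conjugate problem.

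Next I would reduce to a non-degenerate (symmetric) setting by odd reflection across $\{t=0\}$: let $\widetilde V$ and $G$ denote the odd extensions to $B_1$ of $V$ and $\widetilde D$ respectively. Decomposing any $\varphi\in C^\infty_c(B_1)$ into its even and odd parts in the $t$-variable, the parity of $|t|^{-b}\nabla\widetilde V$, the fact that $\widetilde V|_{\{t=0\}}=0$, and the weak formulation on $B_1^+$ tested against $\varphi_{\mathrm{odd}}$ yield that $\widetilde V$ weakly solves
\begin{equation*}
-\dive(|t|^{-b}\nabla\widetilde V)=|t|^{-b}G\quad\text{in }B_1,
\end{equation*}
with $\|G\|_{L^\infty(B_1)}=\|t^b D_t\|_{L^\infty(B_1^+)}$. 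Since $|t|^{-b}$ belongs to the Muckenhoupt class $A_2$, the De Giorgi--Nash--Moser theory of Fabes--Kenig--Serapioni applies and gives $\widetilde V\in C^{0,\alpha}_{\mathrm{loc}}(B_1)$ for some $\alpha=\alpha(N,b)\in(0,1)$, with
\begin{equation*}
\|\widetilde V\|_{C^{0,\alpha}(\overline{B_{1/4}})}\le C\Big(\|\widetilde V\|_{L^2(B_1;|t|^{-b})}+\|G\|_{L^\infty(B_1)}\Big).
\end{equation*}
The identity
\begin{equation*}
\|\widetilde V\|_{L^2(B_1;|t|^{-b})}^2=2\int_{B_1^+}t^{-b}|t^b W_t|^2\,dz=2\int_{B_1^+}t^b W_t^2\,dz\le 2\|W\|_{H^1(B_1^+;t^b)}^2
\end{equation*}
and restriction to $\overline{B_{1/4}^+}$ yield the desired bound.

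The main obstacle I expect is the justification of Step 1: since we only know $W\in H^1(B_1^+;t^b)$, the function $V=t^b W_t$ is a priori just an $L^2(B_1^+;t^{-b})$ element (this $L^2$-bound is immediate), and it is not clear a priori that $\widetilde V$ belongs to the natural energy space $H^1_{\mathrm{loc}}(B_1;|t|^{-b})$ needed to invoke the Fabes--Kenig--Serapioni estimate. To supply this missing regularity one has to establish a Caccioppoli-type inequality for the conjugate equation: difference-quotient arguments in the $x$-directions yield bounds on $\nabla_x V$ in $L^2_{\mathrm{loc}}(t^{-b})$, after which the equation $\dive(t^{-b}\nabla V)=-D_t$ recovers the weighted control on $\partial_t V$. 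This bootstrap follows the same lines as in the lower-order analogue treated in \cite{FF}, but has to be carried out carefully near $\{t=0\}$ because of the singular/degenerate character of the weight $t^{-b}$.
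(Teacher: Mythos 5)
Your proposal captures the central mechanism of the paper's proof: after reflection, $t^bW_t$ satisfies a conjugate degenerate equation with weight $|t|^{-b}$ (again in $A_2$), so the De Giorgi--Nash--Moser theory for Muckenhoupt weights gives H\"older continuity, and the $L^2(|t|^{-b})$-norm of $|t|^bW_t$ is trivially controlled by $\|W\|_{H^1(B_1^+;t^b)}$. The formal identity $\dive(t^{-b}\nabla(t^bW_t))=D_t$ is correct, and your idea of testing \eqref{eq:W} against $t^{-b}\partial_t\psi$ to obtain a distributional formulation is essentially what the paper does (the paper reflects $W$ evenly first, then tests with $|t|^{-b}\psi_t$; your odd reflection of $t^bW_t$ is the same object in disguise, since the $t$-derivative of an even function, multiplied by the even weight $|t|^b$, is odd). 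Your final invocation of Fabes--Kenig--Serapioni plays the role of Zamboni's result used in the paper.

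The issue is precisely the one you flagged, and your proposed remedy does not close it. You suggest ``difference-quotient arguments in the $x$-directions yield bounds on $\nabla_x V$ in $L^2_{\rm loc}(t^{-b})$, after which the equation $\dive(t^{-b}\nabla V)=-D_t$ recovers the weighted control on $\partial_t V$.'' But Caccioppoli/difference-quotient arguments on the $V$-equation require $V$ to already lie in the weighted $H^1$ space so that $V$ (or its difference quotient times a cut-off) is an admissible test function; a priori you only know $V\in L^2(B_1^+;t^{-b})$ and you only have an ultraweak formulation where the test function, not $V$, carries the two derivatives. This is circular. The paper avoids the issue by invoking Proposition \ref{p:A2} (proved independently via eigenfunction/Bessel-function expansions), which is precisely an ``ultraweak implies $H^1$'' regularity statement for this class of weights: from $-\int\Psi\,\dive(|t|^{-b}\nabla\psi)\,dz=\int\widetilde D_t\psi\,dz$ it concludes $\Psi\in H^1(B_{1/2};|t|^{-b})$, and only then converts the ultraweak formulation into a genuine weak one.

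If you want to keep the elementary difference-quotient route, run it on the $W$-equation, not on the $V$-equation. Since $W\in H^1(B_1^+;t^b)$ is a weak solution with homogeneous Neumann condition, tangential difference quotients $\delta^i_hW$ are admissible (the shift is in $x$, so it preserves both the weight and the half-ball); the usual Caccioppoli estimate, with the RHS handled by discrete summation by parts and $D\in L^2_{\rm loc}(B_1^+;t^b)$ (implicit in $W$ being a weak solution), gives $\nabla\nabla_xW\in L^2_{\rm loc}(B_1^+;t^b)$ up to $\{t=0\}$. From this $\nabla_xV=t^b\nabla_xW_t\in L^2_{\rm loc}(t^{-b})$, and then the \emph{original} PDE $W_{tt}+\tfrac{b}{t}W_t=D-\Delta_xW$ gives $\partial_tV=\partial_t(t^bW_t)=t^b\bigl(W_{tt}+\tfrac{b}{t}W_t\bigr)\in L^2_{\rm loc}(t^{-b})$, so $V\in H^1_{\rm loc}(B_1^+;t^{-b})$. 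With that in hand your remaining steps go through. The paper's Proposition \ref{p:A2} route is an alternative, more abstract way of supplying the same missing $H^1$-regularity and is what is actually used in the text.
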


\begin{proof} Since $W$ is a weak solution of the problem
\begin{equation*}
\begin{cases}
-{\rm div}(t^b\nabla W)=t^b D(z)  & \qquad \text{in } B_1^+  \, , \\[4pt]
\lim_{t\to 0^+} t^b W_t=0 & \qquad \text{on } B_1' \, ,
\end{cases}
\end{equation*}
it is clear that the even reflection of $W$ with respect to $t$, which we denote by $\widetilde W$, belongs to $H^1(B_1;|t|^b)$ and it is a weak solution of
\begin{equation*} 
-{\rm div}(|t|^b\nabla \widetilde W)=|t|^b \widetilde D(z)  \qquad \text{in } B_1  \, ,
\end{equation*}
where we denote by $\widetilde D$ the even reflection of $D$. In other words
\begin{equation} \label{eq:W-ter}
\int_{B_1} |t|^b \nabla \widetilde W\nabla \varphi \, dz=\int_{B_1} |t|^b \widetilde D(z)\varphi\, dz \qquad \text{for any } \varphi\in H^1_0(B_1;|t|^b) \, .
\end{equation}

Let now $\psi$ be a function in $C^\infty_c(B_1)$ such that
$\psi_t(x,0)=0$ for any $x\in B_1'$. Then the function
$\varphi(x,t)=|t|^{-b}\psi_t(x,t)$ belongs to $H^1(B_1;|t|^b)$.
Since ${\rm supp}(\varphi)\subset B_1$ then $\varphi\in
H^1_0(B_1;|t|^b)$ as one can deduce from \cite[Theorem
2.5]{kilpelainen} and a standard truncation argument.

With this particular choice of $\varphi$ in \eqref{eq:W-ter} we obtain
\begin{align*}
& \int_{B_1} \widetilde D(z) \psi_t(z)\, dz=\int_{B_1} \nabla \widetilde W\nabla (\psi_t)\, dz-\int_{B_1} \frac bt \, \widetilde W_t\psi_t \, dz
=-\int_{B_1} \widetilde W (\Delta \psi)_t\, dz-\int_{B_1} \frac bt \, \widetilde W_t\psi_t \, dz \\
& \qquad  =\int_{B_1} \widetilde W_t \left(\Delta \psi-\frac bt\,  \psi_t\right)\, dz=\int_{B_1} |t|^b \widetilde W_t \, {\rm div}(|t|^{-b}\nabla \psi)\, dz  \, .
\end{align*}
This proves that the function $\Psi(x,t):=|t|^b \widetilde W_t(x,t)\in L^2(B_1;|t|^{-b})$ satisfies
\begin{equation} \label{eq:var-Psi}
-\int_{B_1} \Psi \, {\rm div}(|t|^{-b}\nabla \psi)\, dz=\int_{B_1} \widetilde D_t(z)\psi \, dz
\end{equation}
for any $\psi\in C^\infty_c(B_1)$ such that $\psi_t(x,0)=0$ for all $x\in B_1'$.

By Proposition \ref{p:A2} we deduce that $\Psi\in H^1(B_{1/2};|t|^{-b})$ being $|t|^b \widetilde D_t\in L^2(B_1;|t|^{-b})$.

In particular, by \eqref{eq:var-Psi} we have that
\begin{equation} \label{eq:var-Psi-bis}
\int_{B_{1/2}} |t|^{-b} \nabla\Psi \nabla \psi \, dz=\int_{B_{1/2}} \widetilde D_t(z)\psi \, dz
\end{equation}
for any $\psi\in C^\infty_c(B_{1/2})$ such that $\psi_t(x,0)=0$ for all $x\in B_{1/2}'$.

In order to remove the condition $\psi_t(\cdot,0)\equiv 0$ on $B_{1/2}'$, it is enough to test \eqref{eq:var-Psi-bis} with
$$
\psi_k(x,t)=\psi(x,t)-\psi_t(x,0)\, t\, \eta(kt)\, , \quad k\in \N \, , \quad \text{for any } \psi\in C^\infty_c(B_{1/2}) \, ,
$$
where $\eta\in C^\infty_c(\R)$, $0\le \eta\le 1$, $\eta(t)=0$ for any
$t\in (-\infty,-2]\cup [2,+\infty)$ and $\eta(t)=1$ for any
$t\in [-1,1]$, and to pass to the limit as $k\to +\infty$.

In other words, we have shown that $\Psi \in H^1(B_{1/2};|t|^{-b})$ is a weak solution in the usual sense of the equation
$$
-{\rm div}(|t|^{-b}\nabla \Psi)=\widetilde D_t(z) \qquad \text{in } B_{1/2} \, .
$$

Since by assumption $t^b D_t\in L^\infty(B_1^+)$ then $|t|^b
\widetilde D_t \in L^\infty(B_1)$ and hence $\widetilde D_t
/|t|^{-b}\in L^p(B_{1/2};|t|^{-b})$ for any $1\le p<\infty$. In
particular $\widetilde D_t/|t|^{-b}\in M_\sigma(B_{1/2},|t|^{-b})$
for some $\sigma>0$ (see Definition 2.4 and Remark 2.6 in
\cite{Zamboni}). Recalling that the weight $|t|^{-b}$ belongs to
the Muckenhoupt class $A_2$, by Theorem 5.2 in \cite{Zamboni} we
deduce that $\Psi\in C^{0,\alpha}(\overline{B_{1/4}})$ for some
$\alpha\in (0,1)$ and there exists a constant $C>0$ such that
\begin{equation*}
\|\Psi\|_{C^{0,\alpha}(\overline{B_{1/4}})}\le C\left(\|\Psi\|_{L^2(B_{1/2};|t|^{-b})}+\|\, |t|^b \widetilde D_t\|_{L^\infty(B_{1/2})}\right)
\le 2C\left(\|W\|_{H^1(B_1^+;t^b)}+\|t^b D_t\|_{L^\infty(B_1^+)}\right) \, .
\end{equation*}
The proof of the theorem now follows from the definition of $\Psi$.
\end{proof}

In order to apply the last two propositions to system
\eqref{eq:system}, we prove the following Brezis-Kato type result for a system of two equations with
a potential in the boundary conditions and forcing terms both in the equation and in the boundary conditions.

\begin{Proposition} \label{p:Brezis-Kato} Let $A,B\in L^{\frac N{2(s-1)}}(B_1')$.
Suppose that $U,V\in H^1(B_1^+;t^b)$ weakly solve the system
\begin{equation} \label{eq:system-BK}
\begin{cases}
{\rm div}(t^b\nabla U)=t^bV & \qquad \text{in } B_1^+ \, , \\
{\rm div}(t^b\nabla V)=0 & \qquad \text{in } B_1^+ \, , \\
\lim_{t\to 0^+} t^b U_t=0 & \qquad \text{on } B_1' \, ,\\
-\lim_{t\to 0^+} t^b V_t=A(x)U+B(x) & \qquad \text{on } B_1' \, .
\end{cases}
\end{equation}
Then $U,V\in L^q(B_{1/2}^+;t^b)$, $U(\cdot,0),V(\cdot,0)\in L^q(B_{1/2}')$ for any $1\le
q<\infty$ and moreover there exists a constant $K_1$
depending only on $N,b,q,\|A\|_{L^{\frac N{2(s-1)}}(B_1')}$ and $\|B\|_{L^{\frac N{2(s-1)}}(B_1')}$ such that
\begin{align*}
& \|U\|_{L^q(B_{1/2}^+;t^b)} \le K_1 \Big(1+\|U\|_{L^{2^{**}(b)}(B_1^+;t^b)}+\|V\|_{L^{2^{**}(b)}(B_1^+;t^b)}\Big) \, ,  \\
& \|U(\cdot,0)\|_{L^q(B_{1/2}')} \le  K_1 \Big(1+\|U\|_{L^{2^{**}(b)}(B_1^+;t^b)}+\|V\|_{L^{2^{**}(b)}(B_1^+;t^b)}\Big) \, ,\\
& \|V\|_{L^q(B_{1/2}^+;t^b)} \le  K_1 \Big(1+\|U\|_{L^{2^{**}(b)}(B_1^+;t^b)}+\|V\|_{L^{2^{**}(b)}(B_1^+;t^b)}\Big) \, ,
\\
& \|V(\cdot,0)\|_{L^q(B_{1/2}')} \le  K_1 \Big(1+\|U\|_{L^{2^{**}(b)}(B_1^+;t^b)}+\|V\|_{L^{2^{**}(b)}(B_1^+;t^b)}\Big)  \, .
\end{align*}
\end{Proposition}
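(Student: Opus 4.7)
The plan is a Brezis--Kato-type Moser iteration adapted to the coupled system and to the weighted trace setting. The starting integrability is provided by the weighted Sobolev inequalities of Subsection \ref{ss:Sobolev}: from $U,V\in H^1(B_1^+;t^b)$ one deduces $U,V\in L^{2^{**}(b)}(B_1^+;t^b)$ via \eqref{eq:Opic-Kufner}, and that the traces $u=\operatorname{Tr}(U)$, $v=\operatorname{Tr}(V)$ belong to $L^{2^*(N,s-1)}(B_1')$ via \eqref{eq:Sobolev-traccia}. The goal is then to iteratively boost the integrability of all four objects $U,V,u,v$ by simultaneously using the two equations of \eqref{eq:system-BK}.

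The building block is the following Caccioppoli-type construction. Fix nested cutoffs $\eta_R\in C^\infty_c(B_1^+\cup B_1')$ with $\eta_R\equiv 1$ on $B_R^+\cup B_R'$, and for $T>0$ and $\beta\ge 0$ consider the Lipschitz truncations $\Phi_T(\sigma):=\sigma\min(|\sigma|,T)^{2\beta}$ with associated $\Psi_T$ satisfying $(\Psi_T')^2=\Phi_T'$, so that $|\Psi_T(\sigma)|\sim|\sigma|^{\beta+1}$ as $T\to\infty$. Testing the weak form of the $V$-equation with $\eta_R^2\Phi_T(V)$ and of the $U$-equation with $\eta_R^2\Phi_T(U)$, expanding the gradients, exploiting that $\nabla V_T=\nabla V\,\mathbf 1_{\{|V|\le T\}}$, and applying Young's inequality to the $\nabla\eta_R$ cross terms, one obtains
\begin{align*}
\int_{B_1^+}t^b|\nabla(\eta_R\Psi_T(V))|^2\,dz&\le C_\beta\,(\mathrm{l.o.t.})+C_\beta\int_{B_1'}(Au+B)\,\eta_R^2\Phi_T(V)\,dx,\\
\int_{B_1^+}t^b|\nabla(\eta_R\Psi_T(U))|^2\,dz&\le C_\beta\,(\mathrm{l.o.t.})-C_\beta\int_{B_1^+}t^b\,V\,\eta_R^2\Phi_T(U)\,dz,
\end{align*}
where $\mathrm{l.o.t.}$ denotes lower-order terms containing $|\nabla\eta_R|^2$. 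The boundary integral in the first inequality is estimated by H\"older with exponents $\big(\tfrac{N}{2(s-1)},2^*(N,s-1),2^*(N,s-1)\big)$, viewing the integrand as the product $A\cdot (u\,\eta_R V_T^\beta)\cdot (V V_T^\beta\eta_R)$, followed by the trace Sobolev inequality \eqref{eq:Sobolev-traccia} applied to $\eta_R\Psi_T(V)$; the critical integrability of $A,B$ is then handled by the classical Brezis--Kato splitting $A=A_1^\varepsilon+A_2^\varepsilon$ with $A_1^\varepsilon\in L^\infty$ and $\|A_2^\varepsilon\|_{L^{N/(2(s-1))}}<\varepsilon$ (analogously for $B$), so that the $A_2^\varepsilon$-contribution is absorbed into the left-hand side. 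The bulk coupling in the second inequality is treated analogously via H\"older in the $t^b$-weighted Lebesgue spaces and the Sobolev inequality \eqref{eq:Opic-Kufner} applied to $\eta_R\Psi_T(U)$.

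Letting $T\to\infty$ by monotone convergence turns the two estimates into a reverse H\"older-type coupled system: whenever $U,V\in L^q(B_{R'}^+;t^b)$ and $u,v\in L^q(B_{R'}')$ for some $q\ge 2^*(N,s-1)$ and $R'\in(1/2,1)$, one deduces $U,V\in L^{q\cdot 2^{**}(b)/2}(B_R^+;t^b)$ and $u,v\in L^{q\cdot 2^*(N,s-1)/2}(B_R')$ for every $R<R'$, with a constant depending on $N,b,q,\|A\|_{L^{N/(2(s-1))}},\|B\|_{L^{N/(2(s-1))}}$ and on the quantities on the right-hand side of the stated inequalities. Since both $2^{**}(b)/2>1$ and $2^*(N,s-1)/2>1$, a standard Moser iteration along a sequence of shrinking radii $R_n\downarrow 1/2$ reaches any prescribed $q<\infty$ after finitely many steps and yields the claimed quantitative bounds. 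The main obstacle is the coupled nature of the iteration: the boost of $V$ requires improved integrability of $u$ and vice versa, so the two estimates must be iterated in parallel; moreover, the Brezis--Kato splitting of $A$ and $B$ must be performed with the same small parameter $\varepsilon$ at every step, so that the absorption constants remain uniform in $\beta$ along the iteration.
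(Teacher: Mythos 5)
Your overall strategy --- Moser iteration with Lipschitz truncations, using the two equations in parallel and boosting $U,V$ together with their traces --- is indeed the route the paper takes (the paper uses the simpler truncations $U^n=\min\{|U|,n\}$, $V^n=\min\{|V|,n\}$ with powers $\eta^2(U^n)^{\alpha-2}U$ rather than your $\Phi_T,\Psi_T$, but that is cosmetic). There are, however, two substantive gaps in the way you treat the boundary term and in the way you set up the iteration.

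First, and most importantly, your estimate of the boundary integral does not close. You propose to write the integrand as a triple product $A\cdot(u\,\eta_R V_T^\beta)\cdot(\eta_R V\,V_T^\beta)$ and apply H\"older with exponents $\bigl(\tfrac{N}{2(s-1)},2^*,2^*\bigr)$; the third factor is then controlled through the trace Sobolev inequality applied to $\eta_R\Psi_T(V)$. But the middle factor $u\,\eta_R V_T^\beta$ is a \emph{mixed product} of the two unknowns, and you give no mechanism for bounding $\|u\,\eta_R V_T^\beta\|_{L^{2^*}(B_1')}$ uniformly in $T$. A further H\"older split requires specific joint integrability constraints on $u$ and $v$ (one needs $u\in L^{p}$ and $V\in L^{\beta p'}$ with $\tfrac1p+\tfrac1{p'}=\tfrac1{2^*}$), and these constraints must be compatible with the inductive hypothesis at each step. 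This is precisely where the paper's argument differs: instead of H\"older on the product $u\cdot V V_T^{2\beta}$, the paper applies \emph{Young's inequality with exponents $\beta_0$ and $\tfrac{\beta_0}{\beta_0-1}$}, which decouples the variables into a term involving only $|u|^{\beta_0}$ (controlled by the integrability of $u$ produced in the previous $U$-step) and a term involving only $(V^n)^{\frac{\beta_0(\beta_0-2)}{\beta_0-1}}|V|^{\frac{\beta_0}{\beta_0-1}}\leq(V^n)^{\beta_0-2}|V|^2$, which is then absorbed into the left-hand side via the trace Sobolev inequality. The exponent $\beta_0=\tfrac{2^*\alpha_0}{2(2^*-1)}$ is chosen exactly so that the $|u|^{\beta_0}$ term matches the integrability $u\in L^{\alpha_0 2^*/2}$ obtained from the $U$-iteration --- a design constraint your sketch does not address. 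Your claim that ``whenever $u,v\in L^q$ one deduces $u,v\in L^{q\cdot 2^*/2}$'' glosses over this: the paper tracks two separate sequences $\alpha_k$ (for $U$) and $\beta_k=\tfrac{2^*\alpha_k}{2(2^*-1)}$ (for $V$), whose ratio is not $2^*/2$ but $\tfrac{2^*\cdot 2^{**}(b)}{4(2^*-1)}$, and verifying that this ratio exceeds $1$ is part of the proof.

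Second, the Brezis--Kato splitting $A=A_1^\varepsilon+A_2^\varepsilon$ is not needed, and importing it does not fix the first gap. Because the coefficient $A$ sits exactly at the critical integrability $L^{N/(2(s-1))}$ matching the trace Sobolev exponent, the absorption in the paper is achieved simply by choosing the Young parameter $\lambda$ small so that $C(\beta_0)-\tfrac{\lambda(\beta_0-1)}{\beta_0}\bigl(\|A\|+\|B\|\bigr)\widetilde S(N,b)^{-1}>0$; no decomposition of the potential into a bounded part and a small critical part is required. In short: you should replace the H\"older triple split by a Young-inequality split that separates $u$ from $V$, and spell out the resulting exponent bookkeeping (in particular the choice $\beta_k=\tfrac{2^*\alpha_k}{2(2^*-1)}$ and the verification that $\beta_{k+1}/\beta_k>1$) before the argument can be regarded as complete.
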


\begin{proof} The proof is quite standard and it is based on a Moser-Trudinger
iteration scheme inspired by the paper of Brezis-Kato \cite{BrezisKato}.

If we combine \eqref{eq:Opic-Kufner} with \eqref{eq:PA2-2} we
obtain
\begin{equation} \label{eq:Sobolev-pesata}
\overline C(N,b) \left(\int_{B_1^+} t^b |W|^{2^{**}(b)} dz
\right)^{2/2^{**}(b)} \le \int_{B_1^+} t^b |\nabla W|^2 dz \qquad
\text{for any } W\in H^1_0(\Sigma_1^+;t^b) \, ,
\end{equation}
where $\overline C(N,b)=\overline S(N,b)\cdot \left[1+\left(\frac
2{N+b-1}\right)^2\right]^{-1}$.

Let $\frac 12<r_U<1$ and let $\eta_U \in C^\infty_c(\overline{\R^{N+1}_+})$ be a cut-off function
such that ${\rm supp}(\eta_U)\subset \Sigma_1^+$ and $\eta_U \equiv 1$ in
$\Sigma_{r_U}^+$. For any $n\in \N$, set $U^n:=\min\{|U|,n\}$,
$V^n:=\min\{|V|,n\}$. Put $\alpha_0=2^{**}(b)$. Testing the first
equation in \eqref{eq:system-BK} with $\eta_U^2 (U^n)^{\alpha_0-2} U$
and exploiting the respective boundary condition, we obtain
\begin{align} \label{eq:IDE}
& \int_{B_1^+} t^b \nabla U
\nabla\left(\eta_U^2(U^n)^{\alpha_0-2}U\right)\, dz=-\int_{B_1^+} t^b
V \eta_U^2 (U^n)^{\alpha_0-2} U \, dz \, .
\end{align}
By direct computation (see the proof of Lemma 9.1 in \cite{FFT1}
for more details), one can verify that if we put $C(q)=\min\{\frac
14,\frac 4{q+4}\}$ for any $q\ge 1$, we have
\begin{align*} & C(\alpha_0)
\int_{B_1^+} t^b \left|\nabla\Big(\eta_U(U^n)^{\frac{\alpha_0-2}2} U
\Big)\right|^2 dz \\
&  \le \int_{B_1^+} t^b \nabla U \nabla \Big(\eta_U^2
(U^n)^{\alpha_0-2}U
\Big)dz+\left(2+C(\alpha_0)\frac{\alpha_0+2}2\right)\int_{B_1^+}
t^b (U^n)^{\alpha_0-2}|U|^2 |\nabla \eta_U|^2 dz \, .
\end{align*}
Combing this with \eqref{eq:IDE}, using Young inequality and the fact that $U^n\le |U|$, we obtain
\begin{align*}
& C(\alpha_0) \int_{B_1^+} t^b
\left|\nabla\Big(\eta_U(U^n)^{\frac{\alpha_0-2}2} U \Big)\right|^2 dz \\
& \le -\int_{B_1^+} t^b V \eta_U^2 (U^n)^{\alpha_0-2} U \, dz
+\left(2+C(\alpha_0)\frac{\alpha_0+2}2\right)\int_{B_1^+} t^b
(U^n)^{\alpha_0-2}|U|^2 |\nabla \eta_U|^2 dz \\
& \le \frac 1{\alpha_0} \int_{B_1^+} t^b \eta_U^2 |V|^{\alpha_0}
dz+\frac{\alpha_0-1}{\alpha_0} \int_{B_1^+} t^b \eta_U^2
(U^n)^{\frac{\alpha_0(\alpha_0-2)}{\alpha_0-1}}
|U|^{\frac{\alpha_0}{\alpha_0-1}} dz
\\
& \qquad +\left(2+C(\alpha_0)\frac{\alpha_0+2}2\right)\int_{B_1^+}
t^b
(U^n)^{\alpha_0-2}|U|^2 |\nabla \eta_U|^2 dz \\
& \le \frac 1{\alpha_0} \int_{B_1^+} t^b \eta_U^2 |V|^{\alpha_0}
dz+\frac{\alpha_0-1}{\alpha_0} \int_{B_1^+} t^b \eta_U^2
(U^n)^{\alpha_0-2} |U|^{2} dz
\\
& \qquad +\left(2+C(\alpha_0)\frac{\alpha_0+2}2\right)\int_{B_1^+}
t^b (U^n)^{\alpha_0-2}|U|^2 |\nabla \eta_U|^2 dz \, .
\end{align*}
Since $U,V\in H^1(B_1^+;t^b)\subset L^{\alpha_0}(B_1^+;t^b)$, letting $n\to +\infty$, by Fatou Lemma, we deduce that $\nabla\Big(\eta_U
|U|^{\frac{\alpha_0-2}2}U\Big)\in L^2(B_1^+;t^b)$. Moreover, since $\eta_U |U|^{\frac{\alpha_0-2}2}U\in
L^2(B_1^+;t^b)$, being $U\in L^{\alpha_0}(B_1^+;t^b)$, then $\eta_U
|U|^{\frac{\alpha_0-2}2}U\in H^1(B_1^+;t^b)$.

In the rest of this proof, in order to simplify the notation, we will
denote the critical exponent $2^*(N,s-1)=\frac{2N}{N-2s+2}$ by $2^*$.
By Lemma \ref{l:sobolev} and \eqref{eq:Sobolev-pesata}, we have that
$\eta_U |U|^{\frac{\alpha_0-2}2}U\in
L^{2^{**}(b)}(B_1^+;t^b)$ and
 $\eta_U (\cdot,0)
|U(\cdot,0)|^{\frac{\alpha_0-2}2}U(\cdot,0)\in
L^{2^*}(B_1')$.
This implies that
\begin{equation} \label{eq:primo-passo}
U(\cdot,0) \in L^{\frac{\alpha_0 \cdot 2^*}2}(B_{r_U}')
\qquad \text{and} \qquad U\in L^{\frac{\alpha_0 \cdot
2^{**}(b)}2}(B_{r_U}^+;t^b) \, .
\end{equation}
Now, let $\frac 12<r_V<r_U$ and let $\eta_V\in C^\infty_c(\overline{\R^{N+1}_+})$ be such that
${\rm supp}(\eta_V)\subset \Sigma_{r_U}^+$ and $\eta_V \equiv 1$ in $\Sigma_{r_V}^+$.

Testing the second equation in \eqref{eq:system-BK} with $\eta_V^2
(V^n)^{\beta_0-2}V$, being $\beta_0=\frac{2^*\cdot
\alpha_0}{2(2^*-1)}\in (2,2^{**}(b))$, and exploiting the corresponding
boundary condition, we obtain
\begin{align*}
& \int_{B_1^+} t^b \nabla V\nabla\Big(\eta_V^2 (V^n)^{\beta_0-2}
V\Big)
dz \\
& \qquad =\int_{B_1'} [A(x)U(x,0)+B(x)]\,
\eta_V^2(x,0)(V^n(x,0))^{\beta_0-2}V(x,0)\, dx \, .
\end{align*}
Proceeding as above we infer
\begin{align*}
& C(\beta_0) \int_{B_1^+} t^b
\left|\nabla\Big(\eta_V(V^n)^{\frac{\beta_0-2}2} V \Big)\right|^2 dz \\
& \le\int_{B_1'} [A(x)U(x,0)+B(x)]\,
\eta_V^2(x,0)(V^n(x,0))^{\beta_0-2}V(x,0)\, dx \\
& \qquad + \left(2+C(\beta_0)\tfrac{\beta_0+2}2\right)\int_{B_1^+}
t^b
(V^n)^{\beta_0-2}|V|^2 |\nabla \eta_V|^2 dz  \, ,
\end{align*}
\begin{align*}
& \text{(by Young inequality)} \le \tfrac{\lambda^{1-\beta_0}}{\beta_0} \int_{B_1'} \eta_V^2(x,0) |A(x)|\, |U(x,0)|^{\beta_0}
dx \\
&\  +\tfrac{\lambda(\beta_0-1)}{\beta_0} \int_{B_1'} |A(x)|
\eta_V^2(x,0) (V^n(x,0))^{\frac{\beta_0(\beta_0-2)}{\beta_0-1}}
|V(x,0)|^{\frac{\beta_0}{\beta_0-1}}dx \\
& \
+\tfrac{\lambda^{1-\beta_0}}{\beta_0} \int_{B_1'} \eta_V^2(x,0) |B(x)|\, dx+\tfrac{\lambda(\beta_0-1)}{\beta_0} \int_{B_1'} |B(x)|
\eta_V^2(x,0) (V^n(x,0))^{\frac{\beta_0(\beta_0-2)}{\beta_0-1}}
|V(x,0)|^{\frac{\beta_0}{\beta_0-1}}dx \\
& \ +\left(2+C(\beta_0)\tfrac{\beta_0+2}2\right)\int_{B_1^+}
t^b (V^n)^{\beta_0-2}|V|^2 |\nabla \eta_V|^2 dz \, ,
\end{align*}

\begin{align*}
& (\text{by H\"older inequality and the fact that } V^n\le |V|) \\
& \le \tfrac{\lambda^{1-\beta_0}}{\beta_0} \|A\|_{L^{\frac{N}{2(s-1)}}(B_1')}|B_1'|^{\frac{2(s-1)[N-2(s-1)]}{N[N+2(s-1)]}} \left(\int_{B_1'} \eta_V^2(x,0)|U(x,0)|^{\beta_0(2^*-1)} dx \right)^{\frac 1{2^*-1}}\\
& \ +\tfrac{\lambda(\beta_0-1)}{\beta_0} \int_{B_1'} |A(x)|\left(\eta_V(x,0)(V^n(x,0))^{\frac{\beta_0-2}2}|V(x,0)|\right)^2 dx \\
& \ +\tfrac{\lambda^{1-\beta_0}}{\beta_0}\, \|B\|_{L^{\frac N{2(s-1)}}(B_1')}|B_1'|^{\frac{N-2(s-1)}N}
+\tfrac{\lambda(\beta_0-1)}{\beta_0}\,  \int_{B_1'}|B(x)|
\left(\eta_V(x,0)(V^n(x,0))^{\frac{\beta_0-2}2}|V(x,0)| \right)^2 \, dx  \\
& \
+\left(2+C(\beta_0)\tfrac{\beta_0+2}2\right)\int_{B_1^+} t^b
(V^n)^{\beta_0-2}|V|^2 |\nabla \eta_V|^2 dz
\end{align*}
\begin{align*}
& \le \tfrac{\lambda^{1-\beta_0}}{\beta_0} \|A\|_{L^{\frac{N}{2(s-1)}}(B_1')}|B_1'|^{\frac{2(s-1)[N-2(s-1)]}{N[N+2(s-1)]}} \left(\int_{B_{r_U}'} |U(x,0)|^{\beta_0(2^*-1)} dx \right)^{\frac 1{2^*-1}}\\
& +\tfrac{\lambda(\beta_0-1)}{\beta_0} \Big(\|A\|_{L^{\frac
N{2(s-1)}}(B_1')}\!\!\!+
\|B\|_{L^{\frac N{2(s-1)}}(B_1')} \Big) \!\! \left(
 \int_{B_1'} \left|\eta_V(x,0)(V^n(x,0))^{\frac{\beta_0-2}2}V(x,0)\right|^{2^*} \!\!\! dx\right)^{\frac 2{2^*}} \\
&  +\tfrac{\lambda^{1-\beta_0}}{\beta_0}\,
\|B\|_{L^{\frac N{2(s-1)}}(B_1')}|B_1'|^{\frac{N-2(s-1)}N}
+\left(2+C(\beta_0)\tfrac{\beta_0+2}2\right)\int_{B_1^+} t^b
(V^n)^{\beta_0-2}|V|^2 |\nabla \eta_V|^2 dz \, ,
\end{align*}
and finally by \eqref{eq:Sobolev-traccia}
\begin{align*}
& \le \tfrac{\lambda^{1-\beta_0}}{\beta_0} \|A\|_{L^{\frac{N}{2(s-1)}}(B_1')}|B_1'|^{\frac{2(s-1)[N-2(s-1)]}{N[N+2(s-1)]}} \left(\int_{B_{r_U}'} |U(x,0)|^{\beta_0(2^*-1)} dx \right)^{\frac 1{2^*-1}}\\
& +\tfrac{\lambda(\beta_0-1)}{\beta_0} \Big(\|A\|_{L^{\frac
N{2(s-1)}}(B_1')}+\|B\|_{L^{\frac
N{2(s-1)}}(B_1')} \Big)
\widetilde S(N,b)^{-1} \int_{B_1^+} t^b \left|\nabla\Big(\eta_V (V^n)^{\frac{\beta_0-2}2} V\Big)\right|^2 dz  \\
& +\tfrac{\lambda^{1-\beta_0}}{\beta_0}\,
\|B\|_{L^{\frac N{2(s-1)}}(B_1')}|B_1'|^{\frac{N-2(s-1)}N}
+\left(2+C(\beta_0)\frac{\beta_0+2}2\right)\int_{B_1^+} t^b
(V^n)^{\beta_0-2}|V|^2 |\nabla \eta_V|^2 dz \, .
\end{align*}
Choosing $\lambda>0$ small enough, in such a way that the constant
$$
K:=C(\beta_0)-\tfrac{\lambda(\beta_0-1)}{\beta_0}
\Big(\|A\|_{L^{\frac N{2(s-1)}}(B_1')}+\|B\|_{L^{\frac N{2(s-1)}}(B_1')} \Big)
\widetilde S(N,b)^{-1}
$$
becomes positive, we obtain
\begin{multline}\label{eq:stima-BK}
 K \int_{B_1^+} t^b \left|\nabla\Big(\eta_V (V^n)^{\frac{\beta_0-2}2} V\Big)\right|^2 dz \\
 \le \tfrac{\lambda^{1-\beta_0}}{\beta_0}
 \|A\|_{L^{\frac{N}{2(s-1)}}(B_1')}|B_1'|^{\frac{2(s-1)[N-2(s-1)]}{N[N+2(s-1)]}}
 \left(\int_{B_{r_U}'} |U(x,0)|^{\beta_0(2^*-1)} dx \right)^{\frac
   1{2^*-1}}\\
+\tfrac{\lambda^{1-\beta_0}}{\beta_0}\, \|B\|_{L^{\frac N{2(s-1)}}(B_1')}|B_1'|^{\frac{N-2(s-1)}N}
+\left(2+C(\beta_0)\tfrac{\beta_0+2}2\right)\int_{B_1^+} t^b
(V^n)^{\beta_0-2}|V|^2 |\nabla \eta_V|^2 dz \, .
\end{multline}
We observe that by \eqref{eq:primo-passo} and the definition of
$\beta_0$ we have that the integral in the right hand side of \eqref{eq:stima-BK}
involving the function $U$ is finite and so it is the one involving the function $V$ since
$V\in L^{\beta_0}(B_1^+;t^b)$ being $\beta_0 \in (2,2^{**}(b))$.

Passing to the limit as $n\to +\infty$, by Fatou Lemma, we have that
$\nabla\Big(\eta_V |V|^{\frac{\beta_0-2}2} V\Big)\in
L^2(B_1^+;t^b)$ and hence $\eta_V |V|^{\frac{\beta_0-2}2} V\in
H^1(B_1^+;t^b)$. By \eqref{eq:Sobolev-pesata} we then have $V\in
L^{\frac{2^{**}(b)\cdot \beta_0}2}(B_{r_V}^+;t^b)$.

Now we want to iterate the procedures previously applied to the functions $U$
and $V$ to improve their summability. To this
purpose we define two sequences of radii in the following way:
\begin{equation*}
\rho_0=\frac 34 \, , \quad r_0=\frac 78 \, , \quad
\rho_{k+1}:=\frac 12\left(\rho_k+\frac 12\right) \, , \quad
r_{k+1}:=\frac 12\left(\rho_k+\rho_{k+1}\right) \qquad \text{for
any } k\ge 0 \, .
\end{equation*}
Then we define two sequences of exponents in the following way:
\begin{equation*}
\alpha_{k+1}:=\beta_k\cdot \frac{2^{**}(b)}2 \, , \quad
\beta_{k+1}:=\frac{2^*\cdot \alpha_{k+1}}{2(2^*-1)}
\qquad \text{for any } k\ge 0 \, .
\end{equation*}
We observe that
\begin{equation} \label{eq:precisazione}
\alpha_{k+1}<\alpha_k \cdot \frac{2^{**}(b)}2 \qquad \text{and}
\qquad \beta_{k+1}<\beta_k \cdot \frac{2^{**}(b)}2 \, .
\end{equation}
We apply inductively the two procedures to $U$ and $V$
respectively, replacing every time $r_U$ with $r_k$, $r_V$ with
$\rho_k$, $\alpha_0$ with $\alpha_k$ and $\beta_0$ with $\beta_k$.

If after a certain step we obtained that $U(\cdot,0)\in
L^{\frac{\alpha_k\cdot 2^*}2}(B_{r_k}')$, $U\in
L^{\frac{\alpha_k\cdot 2^{**}(b)}2}(B_{r_k}^+;t^b)$ and $V\in
L^{\frac{\beta_k\cdot 2^{**}(b)}2}(B_{\rho_k}^+;t^b)$, then at the
beginning of the subsequent step, by \eqref{eq:precisazione}, we
have in particular $U\in L^{\alpha_{k+1}}(B_{r_k}^+;t^b)$ and $V\in
L^{\beta_{k+1}}(B_{\rho_k}^+;t^b)$. Applying the two procedures first
to $U$ and then to $V$, we obtain $U(\cdot,0)\in
L^{\frac{\alpha_{k+1}\cdot 2^*}2}(B_{r_{k+1}}')$, $U \in
L^{\frac{\alpha_{k+1}\cdot 2^{**}(b)}2}(B_{r_{k+1}}^+;t^b)$
and $V\in L^{\frac{\beta_{k+1}\cdot
2^{**}(b)}2}(B_{\rho_{k+1}}^+;t^b)$.

It is easy to check that $\beta_{k+1}/\beta_k=\frac{2^*\cdot 2^{**}(b)}{4(2^*-1)}>1$ so that
$\lim_{k\to +\infty} \alpha_k=\lim_{k\to +\infty}
\beta_k=+\infty$. Since $r_k>\rho_k>\frac 12$ for any $k$ the
proof of the lemma then follows.
\end{proof}

\begin{remark} \label{r:raggi} We observe that in Propositions \ref{p:reg-1}, \ref{p:reg-2}, \ref{p:Brezis-Kato}
the equations are set in the half ball in $\R^{N+1}$ of radius 1
and that the regularity or summability result is obtained in the
half ball of radius 1/2 or 1/4. The special choice of those radii
was made only for simplicity of notation but it easy to understand
that completely similar results still hold true with the equations
set in a half ball of arbitrary radius $R_1$ and with the
conclusion on regularity or summability obtained on a half ball of
arbitrary radius $R_2<R_1$. \endproof
\end{remark}

We now state a H\"older regularity result for solutions of system \eqref{eq:system-BK}.

\begin{Proposition} \label{p:6.5} Let $s\in (1,2)$, $b=3-2s\in (-1,1)$, $A\in L^{\bar q}(B_1')$, $B\in L^{\bar q}(B_1')$ for some $\bar q>\frac N{2(s-1)}$.
If $U,V\in H^1(B_1^+;t^b)$ weakly solve \eqref{eq:system-BK} then $U,V\in C^{0,\alpha}(\overline B_{1/2}^+)$ for some $\alpha\in (0,1)$ and moreover there exists a constant $K_2$ depending only on $N,b,\|A\|_{L^{\bar q}(B_1')},\|B\|_{L^{\bar q}(B_1')}, \|U\|_{L^{2^{**}(b)}(B_1^+;t^b)}$ and $\|V\|_{L^{2^{**}(b)}(B_1^+;t^b)}$ such that
\begin{align*}
& \|U\|_{C^{0,\alpha}(\overline B_{1/2}^+)} \le K_2 \, , \qquad  \|V\|_{C^{0,\alpha}(\overline B_{1/2}^+)} \le K_2 \, .
\end{align*}
\end{Proposition}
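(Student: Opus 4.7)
The plan is to combine the Brezis--Kato bootstrap of Proposition \ref{p:Brezis-Kato} with the H\"older regularity of Proposition \ref{p:reg-1}(i), exploiting Remark \ref{r:raggi} to freely adjust the radii of the nested half-balls on which these results are applied. Note that since $\bar q > N/(2(s-1))$ and $B_1'$ is bounded, $A,B\in L^{N/(2(s-1))}(B_1')$, so Proposition \ref{p:Brezis-Kato} applies to the system \eqref{eq:system-BK}.

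First, I would apply Proposition \ref{p:Brezis-Kato}, in its rescaled form provided by Remark \ref{r:raggi}, to conclude that for some $r_0\in(1/2,1)$ and every finite $q\geq 1$,
\[
U,V\in L^q(B_{r_0}^+;t^b), \qquad U(\cdot,0),V(\cdot,0)\in L^q(B_{r_0}'),
\]
with norms controlled by
\[
\mathcal{M}:=1+\|U\|_{L^{2^{**}(b)}(B_1^+;t^b)}+\|V\|_{L^{2^{**}(b)}(B_1^+;t^b)}
\]
and by constants depending only on $N,b,q,\|A\|_{L^{\bar q}(B_1')},\|B\|_{L^{\bar q}(B_1')}$.

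Next, I would apply Proposition \ref{p:reg-1}(i) to the second equation of \eqref{eq:system-BK}, viewing $V$ as the unknown on $B_{r_0}^+$: writing $-\dive(t^b\nabla V)=0$ and the boundary condition $-\lim_{t\to 0^+}t^b V_t=A(x)U(x,0)+B(x)$ in the form of \eqref{eq:W}, one has $A_{\rm eq}\equiv 0$, $B_{\rm eq}=A\,U(\cdot,0)+B$, $D\equiv 0$. Since $N/(1-b)=N/(2(s-1))$, choosing $q_1\in(N/(2(s-1)),\bar q]$ and applying H\"older's inequality together with the bootstrapped integrability of $U(\cdot,0)$ from Step 1, I obtain $B_{\rm eq}\in L^{q_1}(B_{r_0}')$. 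Proposition \ref{p:reg-1}(i) then yields $V\in C^{0,\alpha_1}(\overline{B_{1/2}^+})$ for some $\alpha_1\in(0,1)$, with norm controlled by $\mathcal{M}$ and the $L^{\bar q}$ norms of $A,B$. Finally, I would apply Proposition \ref{p:reg-1}(i) to the first equation of \eqref{eq:system-BK} on $B_{r_0}^+$: rewriting $-\dive(t^b\nabla U)=t^b(-V)$ and $\lim_{t\to 0^+}t^b U_t=0$ in the form of \eqref{eq:W} gives $A_{\rm eq}\equiv B_{\rm eq}\equiv 0$ and $D=-V$. For any $q_2>(N+b+1)/2$, Step 1 ensures $-V\in L^{q_2}(B_{r_0}^+;t^b)$, whence $U\in C^{0,\alpha_2}(\overline{B_{1/2}^+})$ with analogous control. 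Taking $\alpha=\min\{\alpha_1,\alpha_2\}$ and assembling the successive estimates produces the claimed bound $K_2$.

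The argument is essentially routine; the only care required is bookkeeping with Remark \ref{r:raggi} and checking, via H\"older's inequality, that the $L^q$ integrability produced by Brezis--Kato meets the strict thresholds $q_1>N/(1-b)$ and $q_2>(N+b+1)/2$ demanded by Proposition \ref{p:reg-1}(i). Since the bootstrap delivers every finite $L^q$, these thresholds are met with ample room, so no new conceptual difficulty arises beyond applying the two preparatory propositions in the correct order.
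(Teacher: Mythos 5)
Your proposal is correct and follows exactly the route the paper takes: first Proposition \ref{p:Brezis-Kato} (with Remark \ref{r:raggi}) to bootstrap $U,V$ and their traces to $L^q$ for all finite $q$ on a slightly smaller half-ball, then Proposition \ref{p:reg-1}(i) applied separately to each equation of \eqref{eq:system-BK}; you merely spell out the role of each term, which the paper leaves implicit. One tiny imprecision: when you allow $q_1=\bar q$ for $B_{\rm eq}=A\,U(\cdot,0)+B$, H\"older would then require $U(\cdot,0)\in L^\infty$; choosing $q_1$ strictly between $N/(2(s-1))$ and $\bar q$ avoids this and costs nothing.
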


\begin{proof} We first apply Proposition \ref{p:Brezis-Kato} to $U$
  and $V$ and,
 taking into account Remark \ref{r:raggi}, we obtain $U,V\in L^q(B_r^+;t^b)$ and $U,V\in L^q(B_r')$ for any $1\le q<\infty$ and $r\in (1/2,1)$. Then, by \eqref{eq:system-BK}, by the assumptions on $A$ and $B$, by Proposition \ref{p:reg-1} (i) applied to $U$ and $V$ respectively and by Remark \ref{r:raggi}, we obtain $U,V\in C^{0,\alpha}(\overline B_{1/2}^+)$ for some $\alpha\in (0,1)$.
\end{proof}

We are now ready to prove a H\"older regularity estimate for
derivatives of solutions $(U,V)$ of \eqref{eq:system-BK}.

\begin{Proposition} \label{p:6.4}
Let $s\in (1,2)$, $b=3-2s\in (-1,1)$, $A,B\in W^{1,\bar q}(B_1')$ for some $\bar q>\frac N{2(s-1)}$.
Then the following statements hold true:

\begin{itemize}

\item[$(i)$] if $U,V\in H^1(B_1^+;t^b)\cap C^{0,\alpha}(\overline
B_1^+)$, for some $\alpha\in (0,1)$, weakly solve
\eqref{eq:system-BK} then $\nabla_x U,\nabla_x V\in
C^{0,\beta}(\overline B_{1/2}^+)$ for some $\beta\in (0,\alpha)$
and moreover there exists a constant $K_3$ depending only on
$N,b,\|A\|_{W^{1,\bar q}(B_1')}, \|B\|_{W^{1,\bar q}(B_1')},
\|U\|_{C^{0,\alpha}(\overline B_1^+)}$ and
$\|V\|_{C^{0,\alpha}(\overline B_1^+)}$ such that
\begin{align*}
& \|\nabla_x U\|_{C^{0,\beta}(\overline B_{1/2}^+)} \le K_3 \, , \qquad  \|\nabla_x V\|_{C^{0,\beta}(\overline B_{1/2}^+)} \le K_3 \, .
\end{align*}

\item[$(ii)$] if we also assume $A,B \in C^{0,\alpha}(B_1')$
for some $\alpha\in (0,1)$ and if $U,V\in H^1(B_1^+;t^b)\cap
C^{0,\alpha}(\overline B_1^+)$ weakly solve \eqref{eq:system-BK} in
$B_1^+$ then $t^b U_t,t^b V_t\in C^{0,\beta}(\overline B_{1/2}^+)$
for some $\beta\in (0,\alpha)$
and moreover there exists a constant $K_4$ depending only on $N,b,\|A\|_{C^{0,\alpha}(B_1')}, \|B\|_{C^{0,\alpha}(B_1')}, \\
    \noindent
    \|U\|_{C^{0,\alpha}(\overline B_1^+)}$ and $\|V\|_{C^{0,\alpha}(\overline B_1^+)}$ such that
\begin{align*}
& \|t^b U_t \|_{C^{0,\beta}(\overline B_{1/2}^+)} \le K_4 \, , \qquad  \|t^b V_t\|_{C^{0,\beta}(\overline B_{1/2}^+)} \le K_4 \, .
\end{align*}
\end{itemize}
\end{Proposition}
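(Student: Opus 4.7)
\textbf{Plan for Proposition \ref{p:6.4}.}

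For part $(i)$, the natural tool is Proposition \ref{p:reg-1}(ii), but its hypothesis $A,B\in W^{1,\infty}$ (respectively $D,\nabla_x D\in L^\infty$) is strictly stronger than what we have available. The plan is therefore to obtain tangential gradient regularity via a \emph{difference quotient} argument. For $i\in\{1,\dots,N\}$ and $|h|$ small, set $D^h_i f(x,t):=h^{-1}\bigl(f(x+he_i,t)-f(x,t)\bigr)$. Since the weight $t^b$ depends only on $t$, the pair $(D^h_i U,D^h_i V)$ weakly solves a shifted version of \eqref{eq:system-BK} on $B^+_{1-|h|}$, namely $-\dive(t^b\nabla D^h_iU)=-t^bD^h_iV$, $-\dive(t^b\nabla D^h_iV)=0$, with $\lim_{t\to 0^+}t^b(D^h_iU)_t=0$ and
\[
-\lim_{t\to 0^+}t^b(D^h_iV)_t=A(x+he_i)\,D^h_iU(x,0)+(D^h_iA)(x)\,U(x,0)+(D^h_iB)(x).
\]
The ``remainders'' $(D^h_iA)U$ and $D^h_iB$ are uniformly bounded in $L^{\bar q}(B_r')$ thanks to $A,B\in W^{1,\bar q}$ and $U\in C^{0,\alpha}\hookrightarrow L^\infty$, while the standard tangential difference quotient estimate gives uniform $H^1(B_r^+;t^b)$ bounds on $D^h_iU,D^h_iV$. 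Applying Proposition \ref{p:Brezis-Kato} to the shifted system promotes these to uniform $L^q$-bounds for every finite $q$, and then Proposition \ref{p:reg-1}(i) applied first to $D^h_iV$ (with boundary source in $L^{\bar q}$, $\bar q>N/(1-b)$) and then to $D^h_iU$ (with interior source $-D^h_iV\in L^\infty$ uniformly in $h$) yields uniform $C^{0,\beta}(\overline{B_{1/2}^+})$ bounds. Sending $h\to 0$ along a subsequence and using Ascoli--Arzel\`a together with the weak identification $D^h_iU\rightharpoonup \partial_{x_i}U$ in $L^2$ delivers $\nabla_x U,\nabla_x V\in C^{0,\beta}(\overline{B_{1/2}^+})$ with the claimed quantitative dependence.

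For part $(ii)$, the plan is to invoke Proposition \ref{p:reg-2}, which applies only to problems with homogeneous Neumann-type boundary condition. For $U$ this is already the case, so that it suffices to apply Proposition \ref{p:reg-2} with $D=V$, provided one first establishes that $t^b V_t\in L^\infty(B_r^+)$. To obtain this, I would treat $V$ separately by splitting $V=V_1+V_2$, where $V_1$ is an explicit extension of $AU+B$ to $B_1^+$ designed so that $\lim_{t\to 0^+} t^b(V_1)_t=-(AU+B)$ (for instance via the Poisson-type extension associated with $\dive(t^b\nabla\cdot)$, whose regularity for $C^{0,\alpha}$ data is well understood); then $V_2:=V-V_1$ has homogeneous Neumann boundary condition and solves $-\dive(t^b\nabla V_2)=t^b\widetilde D$ with a source $\widetilde D$ whose size and $\partial_t$-derivative can be controlled using the $C^{0,\alpha}$ hypothesis on $A$ and $B$ together with the $C^{0,\alpha}$ regularity of $U$ (and its gradient, from part~(i)). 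Proposition \ref{p:reg-2} applied to $V_2$ gives $t^b(V_2)_t\in C^{0,\beta}(\overline{B_{1/2}^+})$, while the explicit form of $V_1$ produces $t^b(V_1)_t\in C^{0,\beta}$ as well; summing, $t^b V_t\in C^{0,\beta}(\overline{B_{1/2}^+})$, in particular $t^b V_t\in L^\infty$. Feeding this back into Proposition \ref{p:reg-2} applied to $U$ (with $D=V$) yields $t^b U_t\in C^{0,\beta}(\overline{B_{1/2}^+})$.

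The main obstacle is the non-homogeneous weighted Neumann condition for $V$ in part $(ii)$: one cannot apply Proposition \ref{p:reg-2} directly, and the most natural correction, $V+\tfrac{(AU+B)t^{1-b}}{1-b}$, forces a source term involving $\Delta_x(AU+B)$, which lies outside the regularity available from the hypotheses $A,B\in C^{0,\alpha}$ and $U\in C^{0,\alpha}$ (even after using part~(i)). Constructing a suitable Poisson-type extension $V_1$ whose weighted normal derivative at $t=0$ matches $-(AU+B)$ \emph{and} whose regularity can be read off from $C^{0,\alpha}$ boundary data, in a way that meshes with the hypotheses of Proposition \ref{p:reg-2}, is the technically delicate step of the argument.
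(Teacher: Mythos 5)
For part $(i)$ your argument is essentially the paper's: the paper forms the tangential difference quotients $U^\xi, V^\xi$, observes that they solve the same system with boundary datum $A(x)U^\xi + B_\xi$ where $\|B_\xi\|_{L^{\bar q}}$ is controlled by $\|A\|_{W^{1,\bar q}}\|U\|_{C^{0,\alpha}}+\|B\|_{W^{1,\bar q}}$, applies Proposition \ref{p:6.5} to obtain uniform $C^{0,\beta}$ bounds, and passes to the limit by Ascoli--Arzel\`a. You unroll Proposition \ref{p:6.5} into Proposition \ref{p:Brezis-Kato} followed by Proposition \ref{p:reg-1}$(i)$, which is precisely what its proof does; this is the same route.

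For part $(ii)$ there is a genuine gap, and you flag it yourself. You correctly observe that Proposition \ref{p:reg-2} requires a \emph{homogeneous} weighted Neumann condition and so cannot be applied directly to $V$, and you propose to fix this by splitting $V=V_1+V_2$, where $V_1$ absorbs the boundary datum. But the construction of such a $V_1$ with both the correct weighted normal trace $-\lim_{t\to 0^+}t^b(V_1)_t=A\,U(\cdot,0)+B$ \emph{and} a source term compatible with the hypotheses of Proposition \ref{p:reg-2} is left unspecified, and is exactly the step you label delicate; as written, the argument does not close. The paper sidesteps the decomposition entirely by invoking \cite[Lemma 4.5]{CaSi}, a boundary Schauder-type estimate for the degenerate equation $\dive(t^b\nabla V)=0$ with inhomogeneous weighted Neumann datum: if $-\lim_{t\to 0^+}t^b V_t=g$ with $g\in C^{0,\alpha}$, then $t^bV_t\in C^{0,\beta}$ up to $\{t=0\}$. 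Since $A,B\in C^{0,\alpha}(B_1')$ and $U\in C^{0,\alpha}(\overline B_1^+)$ give $g=A\,U(\cdot,0)+B\in C^{0,\alpha}(B_1')$, this yields $t^bV_t\in C^{0,\beta}(\overline B_{1/2}^+)$ directly. Only then is Proposition \ref{p:reg-2} applied, and only to $U$ (whose Neumann condition is homogeneous), with $D=V$ and $t^bD_t=t^bV_t\in L^\infty$ now available. Note also that, unlike your plan, the paper's proof of $(ii)$ does not use part $(i)$ at all, so the dependence you invoke on $\nabla_x U$ is unnecessary.
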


\begin{proof} In order to prove (i) we proceed as in the proof of Lemma 3.3 in \cite{FF}. We define for any $\xi\in \R^N$ with $|\xi|$ small enough the functions
\begin{equation*}
U^\xi(x,t):=\frac{U(x+\xi,t)-U(x,t)}{|\xi|} \qquad \text{and} \qquad V^\xi(x,t):=\frac{V(x+\xi,t)-V(x,t)}{|\xi|}
\end{equation*}
for any $(x,t)\in B_{3/4}^+$. Then we have
\begin{align*}
\begin{cases}
{\rm div} (t^b \nabla U^\xi)=t^b V^\xi  & \qquad \text{in } B_{3/4}^+ \, , \\
{\rm div} (t^b \nabla V^\xi)=0  & \qquad \text{in } B_{3/4}^+ \, , \\
\lim_{t\to 0^+} t^b U^\xi_t=0   & \qquad \text{on } B_{3/4}' \, , \\
-\lim_{t\to 0^+} t^b V^\xi_t=A(x)U^\xi+B_\xi & \qquad \text{on }
B_{3/4}' \, ,
\end{cases}
\end{align*}
where
\begin{equation*}
B_\xi(x):=\frac{A(x+\xi)-A(x)}{|\xi|}\,
U(x+\xi,0)+\frac{B(x+\xi)-B(x)}{|\xi|} \, .
\end{equation*}
We observe that
\begin{align*}
&\|B_\xi\|_{L^{\bar q}(B_{3/4}')}\le \|A\|_{W^{1,\bar q}(B_1')}\,
\|U\|_{C^{0,\alpha}(\overline B_1^+)}+\|B\|_{W^{1,\bar q}(B_1')} \, .
\end{align*}
Applying Proposition \ref{p:6.5} to $U^\xi$ and
$V^\xi$ and taking into account Remark \ref{r:raggi}, we infer
that $\|U^\xi\|_{C^{0,\beta}(\overline B_{1/2}^+)},
\|V^\xi\|_{C^{0,\beta}(\overline B_{1/2}^+)}$ are uniformly
bounded with respect to $\xi$ small for some $\beta\in
(0,\alpha)$. Passing to the limit as $\xi\to 0$, by the
Ascoli-Arzel\`a Theorem we deduce that $\nabla_x U,\nabla_x V\in
C^0(\overline B_{1/2}^+)$. Finally, exploiting the uniform
H\"older estimates for $U^\xi$ and $V^\xi$, passing to the limit
as $\xi\to 0$, we obtain the validity of the H\"older estimates
for $\nabla_x U$ and $\nabla_x V$ on $\overline B_{1/2}^+$. This
completes the proof of (i).

It remains to prove (ii). We first observe that
$A(x)U(x,0)+B(x)\in C^{0,\alpha}(B_1')$ and hence by \cite[Lemma
4.5]{CaSi}, applied to the function $V$, we obtain $t^b V_t\in
C^{0,\beta}(\overline B_{1/2}^+)$ for some $\beta\in (0,\alpha)$.
In turn, applying Proposition \ref{p:reg-2} to the function $U$,
we also obtain the H\"older continuity of the function $t^b U_t$
over $\overline B_{1/2}^+$. This completes the proof of (ii).
\end{proof}

\subsection{Properties of Bessel functions} \label{ss:7.3}
We start by recalling an asymptotic estimate for first kind Bessel
functions as $t\to +\infty$:
\begin{equation} \label{eq:Bessel-infty}
J_\nu(t)=O(t^{-1/2}) \qquad \text{as } t\to +\infty \, .
\end{equation}
This property can be deduced from the asymptotic expansion
\cite[(4.8.5)]{AAR}. In order to obtain a similar estimate for
derivatives of $J_\nu$ we start from the following identity
\begin{equation} \label{eq:iterative-Bessel}
J_\nu'(t)=-J_{\nu+1}(t)+\nu t^{-1} J_\nu(t) \, ,
\end{equation}
see for example \cite[Section 4.6]{AAR}. From this identity we
immediately see that $J_\nu'(t)=O(t^{-1/2})$ ad $t\to +\infty$.

Using iteratively \eqref{eq:iterative-Bessel}, we deduce that
\begin{equation} \label{eq:iterative-Bessel-2}
\frac{d^n J_\nu}{dt^n}(t)=O(t^{-1/2}) \qquad \text{as } t\to
+\infty \, .
\end{equation}

We conclude this subsection with an asymptotic estimate for the zeros of $J_\nu$
as $m\to +\infty$:
\begin{equation} \label{eq:zero-bessel}
j_{\nu,m}\sim \pi \, m \qquad \text{as } m\to +\infty  \, .
\end{equation}
For more details on \eqref{eq:zero-bessel}, see \cite[Page
506]{Watson} and also \cite[Eq. (1.5)]{Elbert}.

\end{document}